\newtheorem{thm}{Theorem}[section]
\newtheorem{lem}[thm]{Lemma}
\newtheorem{prop}[thm]{Proposition}
\newtheorem{ques}[thm]{Question}
\newtheorem{cor}[thm]{Corollary}
\newtheorem{defn}[thm]{Definition}
\newtheorem{rem}[thm]{Remark}
\newtheorem{exam}[thm]{Example}
\newcommand{\inv}{invariant}
\newcommand{\im}{invariant measure}
\newcommand{\sq}{sequence}
\newcommand{\z}{\mathbb Z}
\newcommand{\na}{\mathbb N}
\newcommand{\re}{\mathbb R}
\newcommand{\s}{\mathbb S}
\newcommand{\p}{\mathcal P}
\newcommand{\PP}{\mathbb P}
\newcommand{\B}{\mathcal B}
\newcommand{\tl}{topological}
\newcommand{\xbm}{$(X,\mathcal B,\mu)$}
\numberwithin{equation}{section}
\begin{document}

\baselineskip=12pt
\title{A fresh look at the notion of normality}

\author{Vitaly Bergelson}
\address{Department of Mathematics, Ohio State University, Columbus,
  OH 43210, USA}
\email{vitaly@math.ohio-state.edu}

\author{Tomasz Downarowicz}
\address{Faculty of Pure and Applied Mathematics, Wroc\l aw University
  of Science and Technology, Wybrze\.ze Wyspia\'nskiego 21, 50-370
  Wroc\l aw, Poland}
\email{Tomasz.Downarowicz@pwr.edu.pl}

\author{Micha{\l} Misiurewicz}
\address{Department of Mathematical Sciences,
Indiana University-Purdue University Indianapolis,
402 N. Blackford Street, Indianapolis, IN 46202, USA}
\email{mmisiure@math.iupui.edu}

\date{\today}

\begin{abstract}
Let $G$ be a countably infinite cancellative amenable semigroup and let $(F_n)$ be a (left) F{\o}lner sequence in $G$. We
introduce the notion of an $(F_n)$-normal set in $G$ and an
$(F_n)$-normal element of $\{0,1\}^G$. When $G$ = $(\na,+)$ and $F_n
= \{1,2,...,n\}$, the $(F_n)$-normality coincides with the classical
notion. We prove several results about $(F_n)$-normality, for example:
\begin{itemize}
	\item If $(F_n)$ is a F{\o}lner sequence in $G$, such that for
          every $\alpha\in(0,1)$ we have $\sum_n \alpha^{|F_n|}<\infty$,
          then almost every
          (in the sense of the uniform product measure $(\frac12,\frac12)^G$)
          $x\in\{0,1\}^G$ is $(F_n)$-normal.
        \item For any F{\o}lner sequence $(F_n)$ in $G$, there exists an
          effectively defined Cham\-per\-nowne-like $(F_n)$-normal set.
	\item There is a rather natural and sufficiently wide class of
          F{\o}lner sequences $(F_n)$ in $(\na,\times)$, which we call
          ``nice'', for which the Champernowne-like construction can
          be done in an algorithmic way. Moreover, there exists a
          Champernowne-like set which is $(F_n)$-normal for every nice
          F{\o}lner \sq\ $(F_n)$.
\end{itemize}
We also investigate and juxtapose combinatorial and Diophantine
properties of normal sets in semigroups $(\na,+)$ and $(\na,\times)$.
Below is a sample of results that we obtain:
\begin{itemize}
	\item Let $A\subset\na$ be a classical normal set. Then, for
          any F{\o}lner sequence $(K_n)$ in $(\na,\times)$ there exists
          a set $E$ of $(K_n)$-density $1$, such that for any finite
          subset $\{n_1,n_2,\dots,n_k\}\subset E$, the intersection
          $A/{n_1}\cap A/{n_2}\cap\ldots\cap A/{n_k}$ has positive
          upper density in $(\na,+)$. As a consequence, $A$ contains
          arbitrarily long geometric progressions, and, more
          generally, arbitrarily long ``geo-arithmetic''
          configurations of the form $\{a(b+ic)^j,0\le i,j\le k\}$.
	\item For any F{\o}lner \sq\ $(F_n)$ in $(\na,+)$ there exist
          uncountably many $(F_n)$-normal Liouville numbers.
	\item For any nice F{\o}lner sequence $(F_n)$ in $(\na,\times)$
          there exist uncountably many $(F_n)$-normal Liouville
          numbers.
\end{itemize}
\end{abstract}

\maketitle

\section{Introduction}

It follows from the classical law of large numbers
(\cite{Bo09}\footnote{See Appendix for historical notes.}) that
given a fair coin whose sides are labeled 0 and 1, the infinite binary
sequence $(x_n)$, obtained by independent tossing of the coin, is
almost surely \emph{normal}, meaning that, for any
$k\in\na=\{1,2,\dots\}$, any 0-1 word of length $k$, $w=\langle
w_1,w_2,\dots,w_k\rangle\in\{0,1\}^k$, appears in $(x_n)$ with
frequency $2^{-k}$. This provides a proof of existence of normal
sequences (note that \emph{a priori} it is not even clear whether
normal sequences exist!). There are also numerous explicit
constructions of normal sequences (see for instance \cite{Ch33, M33,
  DE52}). For example, the \emph{Champernowne sequence}
$1\,10\,11\,100\,101\,110\,\dots$, which is formed by the sequence
$1,2,3,4,5,6,\dots$ written in base 2, is a normal sequence.

Any 0-1 sequence $(x_n)\in\{0,1\}^\na$ may be viewed as the sequence
of digits in the binary expansion of the real number
$x=\sum_{n=1}^\infty x_n2^{-n}\in[0,1]$, which leads to an equivalent
formulation of the above fact: almost every $x\in[0,1]$ is normal in
base 2. Similarly, due to the natural bijection between 0-1 sequences
and subsets of $\na$ (any subset of $\na$ is identified with its
indicator function which is a 0-1 \sq), one can talk about
\emph{normal sets} in $\na$ (more accurately, in $(\na,+)$; see the
discussion below).

The peculiar combinatorial and Diophantine properties of normal \sq
s/sets/ numbers, together with the fact that they are ``typical'' (in
the sense of measure), make them a natural object of interest and a
source of various generalizations, see~\cite{Ko15,PV15,BB13,Fi05}.

The classical definition of normality of a 0-1
\sq\ $x=(x_n)_{n\in\na}\in\{0,1\}^\na$ is formulated as
follows.\footnote{In \cite{Bo09}, this formulation appears not as the
  definition but as a ``characterization'' of normality, see
  Appendix for more details.}

\begin{defn}\label{clnor}
For $n,k\in\na$ ($k\le n$), and a 0-1 word $w\in\{0,1\}^k$, we let
$\mathsf N(w,x,n)$ be the number of times the word $w$ occurs as a
subword of the word $\langle x_1,x_2,\dots,x_n\rangle\in\{0,1\}^n$:
	\[
	\mathsf{N}(w,x,n)=|\{m\in\{1,\dots,n-k+1\}:
	\langle x_m,x_{m+1},\dots,x_{m+k-1}\rangle=w\}|
	\]
        (here $|\cdot|$ denotes the cardinality of a set). A
        \sq\ $x\in\{0,1\}^\na$ is \emph{normal} if for every
        $k\in\na$ and every $w\in\{0,1\}^k$ we have
\begin{equation}\label{00}
\lim_{n\to\infty} \frac1n\mathsf N(w,x,n)=2^{-k}.
\end{equation}
\end{defn}

One may ask a naive but in some sense natural question whether
replacing the \sq\ of ``averaging intervals'' $\{1,2,\dots,n\}$ (which
are implicit in the above definition because one can write $\langle
x_1,x_2,\dots,x_n\rangle=x|_{\{1,2,\dots,n\}}$) by a more general
\sq\ $(F_n)$ of (a priori arbitrary) finite subsets of $\na$ leads to
a meaningful generalization of the notion of normality. More
precisely, one would like to count the number of times the word $w$
occurs as a subword of $x|_{F_n}$:
\begin{multline*}
\mathsf{N}(w,x,F_n)=\\|\{m\in\na: \{m,m+1,\dots,m+k-1\}\subset F_n
\text{ and } \langle x_m,x_{m+1},\dots,x_{m+k-1}\rangle = w\}|
\end{multline*}
and call a \sq\ $x\in\{0,1\}^\na$ \emph{$(F_n)$-normal} if, for every
$k\in\na$ and any $w\in\{0,1\}^k$, one has
\begin{equation}\label{fnnor}
\lim_{n\to\infty} \frac1{|F_n|}\mathsf N(w,x,F_n)=2^{-k}.
\end{equation}

It turns out that in order for the above notion of $(F_n)$-normality
to be nonvoid, the \sq\ of sets $(F_n)$ has to be a \emph{F{\o}lner
  \sq}, i.e., satisfy the so-called \emph{F{\o}lner condition}:
\begin{equation}\label{33}
\forall k\in\na\ \ \lim_{n\to\infty}\frac{|F_n\cap (F_n-k)|}{|F_n|}=1
\end{equation}
(in particular, it must hold that $|F_n|\to\infty$). As a matter of
fact, the F{\o}lner condition is implied by a rather mild requirement
that there exists an $x\in\{0,1\}^\na$ such that, for each $k\in\na$,
\begin{equation}\label{44}
 \lim_{n\to\infty} \frac1{|F_n|}\sum_{w\in\{0,1\}^k}\mathsf N(w,x,F_n) = 1.
\end{equation}
The proof will be given later (see Theorem \ref{26} below) in a more
general context.

Our next observation is that if $x\in\{0,1\}^\na$ is $(F_n)$-normal
then not only words, but in fact all \emph{0-1 blocks}, occur in $x$
with ``correct frequencies'', by which we mean the following. Let $K$
be a nonempty finite subset of $\na$. Any element (function)
$B\in\{0,1\}^K$ will be called a \emph{block}. We will say that
\emph{a shift of a block $B\in\{0,1\}^K$ occurs} in the block
$x|_{F_n}\in\{0,1\}^{F_n}$ at a position $m\in\na\cup\{0\}$ if
\[
(\forall i\in K)\ i+m\in F_n\text{ and } x_{i+m}=B(i).
\]
We let $\mathsf N(B,x,F_n)$ be the number of shifts of the block $B$
occurring in $x|_{F_n}$, i.e.,
	\[
	\mathsf{N}(B,x,F_n)=|\{m\in\na\cup\{0\}: (\forall i\in
        K)\ i+m\in F_n\text{ and }
	x_{i+m}=B(i)\}|.
	\]
Then $(F_n)$-normality of $x$ implies
\begin{equation}\label{fnbnor}
\lim_{n\to\infty} \frac1{|F_n|}\mathsf N(B,x,F_n)=2^{-|K|},
\end{equation}
for any nonempty finite set $K$ and every block $B\in\{0,1\}^K$. We
will prove this implication in Section \ref{pre} using the language of
dynamics (see Lemma \ref{nn}).

Once we are driven into considering F{\o}lner \sq s of the ``averaging
sets'', the natural context for continuing our discussion of normality
becomes that of countably infinite amenable cancellative semigroups\footnote{
	A semigroup $G$ is (two-sided) \emph{cancellative}
  if, for any $a,b,c\in G$, $ab=ac\implies b=c$ and $ba=ca\implies
  b=c$.}
$G$ (it is known that such semigroups admit (left) F{\o}lner \sq s
see \cite[Theorem 3.5 and Corollary 4.3]{Na64}, see also Definition \ref{fol}).
In order to adapt the
definition of $(F_n)$-normality to this context, we pick a F{\o}lner
\sq\ $(F_n)$ in $G$, fix a 0-1-valued function $x=(x_g)_{g\in
  G}\in\{0,1\}^G$, and, for each finite set $K\subset G$ and a block $B\in\{0,1\}^K$, 
  denote
\begin{equation}\label{enen}
	\mathsf N(B,x,F_n)=\{g\in G\cup\{e\}: (\forall h\in K)\ hg\in
        F_n \text{ and }x_{hg}=B(h)\},
\end{equation}
where $e$ is the formal identity element added to $G$ in case $G$
lacks an identity. We will say that $x$ is $(F_n)$-normal if for any
nonempty finite $K\subset G$ and every $B\in\{0,1\}^K$, one has (as in
the case of $(\na,+)$),
\begin{equation}\label{fnbnorm}
\lim_{n\to\infty} \frac1{|F_n|}\mathsf N(B,x,F_n)=2^{-|K|}.
\end{equation}

Let us now examine closer the dynamical underpinnings of the notion of
normality. Let $G$ be a countably infinite amenable cancellative
semigroup. The semigroup $G$ acts naturally on the symbolic space
$\{0,1\}^G$ by shifts, as follows: for $g\in G$ and $x=(x_h)_{h\in
  G}$, $\sigma_g(x)=(x_{hg})_{h\in G}$.\footnote{In the classical case
  $G=(\na,+)$, the action is given by
  $\sigma^m(x)=(x_{n+m})_{n\in\na}$ (where $m\in\na$ and
  $x=(x_n)_{n\in\na}$).}

For any nonempty finite set $K\subset G$, each block $B\in\{0,1\}^K$
determines a \emph{cylinder}
\[
[B]=\{x\in\{0,1\}^G: x|_{K}=B\}.
\]
As we will explain later (see Theorem \ref{tff}), if
  $(F_n)$ is a F{\o}lner \sq\ then $(F_n)$-normality can be expressed
  in terms of the shift action and cylinder sets in the following way:
\begin{itemize}
\item An element $x\in\{0,1\}^G$ is $(F_n)$-normal if and only if for
  every nonempty finite set $K$ and every block $B\in\{0,1\}^K$ one
  has
\[
\lim_{n\to\infty}\frac1{|F_n|}|\{g\in F_n: \sigma_g(x)\in[B]\}|=2^{-|K|}.
\]
\end{itemize}

When dealing with a general amenable semigroup $G$ and a F{\o}lner
\sq\ $(F_n)$, it is not a priori obvious whether $(F_n)$-normal
elements $x\in\{0,1\}^G$ exist. We solve this problem in the
affirmative by showing, in Theorem~\ref{B1} below, that for any
countably infinite cancellative amenable semigroup $G$ and any F{\o}lner \sq\ $(F_n)$ in $G$, with $|F_n|$ strictly
increasing, $\lambda$-almost every $x\in\{0,1\}^G$ is $(F_n)$-normal,
where $\lambda$ is the uniform product measure $(\frac12,\frac12)^G$
on $\{0,1\}^G$.
In an equivalent form (see Theorem \ref{co}), our result can be
interpreted as a sort of pointwise ergodic theorem for Bernoulli
shifts. Namely, for any F{\o}lner \sq\ $(F_n)$ with $|F_n|$ strictly
increasing\footnote{Actually, our assumption in Theorems \ref{B1} and
  \ref{co} on the F{\o}lner \sq\ $(F_n)$ is even weaker: for any
  $\alpha\in(0,1)$, $\sum_{n\in\na}\alpha^{|F_n|}<\infty$.}, any
\emph{continuous} function $f$ on $\{0,1\}^G$ and $\lambda$-almost
every $x\in\{0,1\}^G$ we have
\begin{equation*}\label{cesaro1}
\lim_{n\to\infty}\frac1{|F_n|}\sum_{g\in F_n}f(\sigma_gx)=\int f\,d\lambda.
\end{equation*}
We emphasize that the pointwise ergodic theorem for general actions of
amenable groups (and measurable functions) holds only for
\emph{tempered} F{\o}lner \sq s which satisfy the so-called Shulman's
condition:
\begin{equation}\label{tempered}
\left|\bigcup_{i=1}^n F_i^{-1}F_{n+1}\right|\le C|F_{n+1}|
\end{equation}
(see \cite[p.\,83]{Li01}, see also \cite{AJ75} for the necessity of
Shulman's condition).

The set $\na$ of natural numbers has two natural semigroup operations:
addition and multiplication. This leads to two parallel notions of
normality of subsets of $\na$, which will be referred to as
\emph{additive }and \emph{multiplicative} normality, respectively. The
possibility of juxtaposing the Diophantine and combinatorial
properties of additively and multiplicatively normal subsets of $\na$
served as the initial motivation for this paper.

The notion of $(F_n)$-normality in $(\na,+)$ and $(\na,\times)$ allows
one to reconsider, from the more general point of view, the classical
results dealing with the existence of normal Liouville numbers\footnote{
Let us recall that an irrational number $x$ is called a Liouville number
if for every natural $k$ there exists a rational number $\frac pq$ such
that $|x-\frac pq| < \frac1{q^k}$.}
(see \cite{Bu02}\footnote{In fact, in \cite{Bu02} Bugeaud proves the
  existence of \emph{absolutely normal} (i.e., classical normal with
  respect to any base) Liouville numbers. We are interested in
  $(F_n)$-normality in base $2$, but for a general F{\o}lner
  \sq\ $(F_n)$ in $(\na,+)$, as well as in $(\na,\times)$.}). While it
is true that the set of Liouville numbers is residual (i.e.,
topologically large, see \cite[Theorem 5]{Gr83}), the set of
$(F_n)$-normal numbers is, as we will show in subsection \ref{tsmall},
of the first category (i.e., topologically small). This holds for any
F{\o}lner \sq\ $(F_n)$ in either $(\na,+)$ or $(\na,\times)$. As for
the largeness in the sense of measure, the situation is reversed: as
we have already mentioned, the set of $(F_n)$-normal numbers is (for
any F{\o}lner \sq\ $(F_n)$, in either $(\na,+)$ or $(\na,\times)$, with
$|F_n|$ strictly increasing) of full Lebesgue measure, while it is
well known that the set of Liouville numbers has Lebesgue measure zero
(see for example~\cite{Ox80}). So, using just the criteria of
topological or measure-theoretic largeness it is impossible to decide
whether the sets of Liouville numbers and of $(F_n)$-normal numbers
have nonempty intersection.

Below is a brief description of results obtained in this paper.

\smallskip
$\bullet$ Section \ref{pre} is devoted to reviewing or establishing basic
facts about amenable groups and semigroups, which are needed in the
sequel. In particular we prove an auxilliary theorem which shows that
in many situations one can deal, without loss of generality, with
amenable groups rather than semigroups.

\smallskip
$\bullet$ In Section \ref{two} we establish left invariance of the
class of $(F_n)$-normal sets in countably infinite amenable cancellative semigroups.

\smallskip
$\bullet$ Section \ref{for} contains our ``ergodic theorem for
Bernoulli shifts and continuous functions'' which says that the set
$\mathcal N((F_n))$ of $(F_n)$-normal elements of $\{0,1\}^G$ is large
in the sense of measure. By the way of contrast, we also prove that
the set $\mathcal N((F_n))$ is small in the sense of topology (is of
first category).

\smallskip
$\bullet$ In Section \ref{cha} we give a general Champernowne-like
construction of an $(F_n)$-normal element $x\in\{0,1\}^G$ for any
countably infinite amenable cancellative semigroup $G$ and
any F{\o}lner \sq\ $(F_n)$ in $G$.

\smallskip
$\bullet$ Section \ref{mnor} focuses on the notion of normality in the
semigroup $(\na,\times)$ of multiplicative positive integers. We
introduce a natural class of F{\o}lner \sq s which we call ``nice''.
For any nice F{\o}lner \sq\ $(F_n)$ we construct a Champernowne-like
$(F_n)$-normal element $x\in\{0,1\}^\na$. Due to monotileability of
the semigroup $(\na, \times)$ and properties of a nice F{\o}lner \sq,
the construction resembles that of the classical Champernowne number
and is much more transparent than the one described in the preceding
section.

We also study the class of elements $x\in\{0,1\}^\na$ which are normal
with respect to all nice F{\o}lner \sq s in $(\na,\times)$. We call
these elements \emph{net-normal}. We prove that the set of net-normal
elements has measure zero but is nonempty (to this end we use a
modification of the Champernowne-like construction from the preceding
section).

\smallskip
$\bullet$ Section \ref{s4} is devoted to the study of combinatorial
and Diophantine properties of additively and multiplicatively normal
subsets of $\na$. In particular, we prove the following results:

\begin{itemize}
\item[--] Let $(F_n)$ be a F{\o}lner sequence in $(\na,+)$. Then any
  $(F_n)$-normal set $S$ contains solutions of any partition-regular
  system of linear equations.\footnote{A system of equations is called
    \emph{partition-regular} if for any finite coloring of $\na$ there exists
    a monochromatic solution.}
\item[--] Let $(F_n)$ be a F{\o}lner sequence in $(\na,\times)$. Then
  any $(F_n)$-normal set $S$ contains solutions of any homogeneous
  system of polynomial equations which has solutions in $\na$.
\item[--] Let $S$ be any classical normal set in $(\na,+)$. Then
\begin{enumerate}[(i)]
\item $S$ contains solutions $a,b,c$ of any equation $ia+jb=kc$, where
  $i,j,k$ are arbitrary positive integers,
\item $S$ contains pairs $\{n+m,nm\}$ with arbitrary large $n,m$,
\item $S$ contains arbitrarily long geometric progressions, and, more
generally, arbitrarily long ``geo-arithmetic'' configurations of the
form \hfill\break $\{a(b+ic)^j,\ 0\le i,j\le k\}$.
\end{enumerate}
\end{itemize}

\smallskip
$\bullet$ In Section \ref{fifa} we show that for any F{\o}lner
\sq\ $(F_n)$ in $(\na,+)$ there exists an $(F_n)$-normal Liouville
number (actually, we construct a Cantor set of such numbers) and,
likewise, for any nice F{\o}lner \sq\ $(F_n)$ in $(\na,\times)$ there
exists an $(F_n)$-normal Liouville number (and indeed a Cantor set of
$(F_n)$-normal Liouville numbers).

\section{Preliminaries}\label{pre}
We now present some background material concerning properties of
F{\o}lner \sq s in countably infinite amenable cancellative
semigroups, and tilings in countably infinite amenable groups.

Let $G$ be a cancellative semigroup. Recall
that given $g\in G$ and a finite subset $F\subset G$, $g^{-1}F$ stands
for $\{h\in G: gh\in F\}$.

\begin{defn}\label{fol}
A \sq\ $(F_n)$ of finite subsets of $G$ is a \emph{F\o lner \sq} if it
satisfies the \emph{F{\o}lner condition}:
\[
\forall g\in G\ \ \lim_{n\to\infty}\frac{|F_n\cap g^{-1}F_n|}{|F_n|}=1.
\]
\end{defn}
We have $g^{-1}F\cap F = \{f\in F: gf\in F\}$. By
cancellativity, $f\in g^{-1}F\cap F \iff gf\in gF\cup F$, and thus
\begin{equation}\label{gF}
|g^{-1}F\cap F| = |gF\cap F|.
\end{equation}

It follows that the F\o lner condition is equivalent to
\[
\forall g\in G\ \ \lim_{n\to\infty}\frac{|gF_n\cap F_n|}{|F_n|}=1.
\]
Another useful equivalent form of the F{\o}lner condition utilizes the
notion of a $(K,\varepsilon)$-\inv\ set.

\begin{defn}
Given a nonempty finite set $K\subset G$ and $\varepsilon>0$ we will
say that a finite set $F\subset G$ is
\emph{$(K,\varepsilon)$-invariant} if
\[
\frac{|KF\triangle F|}{|F|}\le\varepsilon
\]
($\triangle$ stands for the symmetric difference of sets).
\end{defn}

It is not hard to see that a \sq\ of finite sets $(F_n)$ is F{\o}lner
if and only if for any nonempty finite $K\subset G$ and
$\varepsilon>0$, the sets $F_n$ are eventually $(K,\varepsilon)$-\inv.

We remark that a general F{\o}lner \sq\ need not be increasing with
respect to inclusion (in particular, it can consist of disjoint sets),
the cardinalities $|F_n|$ need not increase (but, of course
$|F_n|\to\infty$), and the union $\bigcup_{n\ge 1} F_n$ need not equal
the whole semigroup.

Given a F{\o}lner sequence $(F_n)$ in $G$ and a set $V\subset G$, one
defines the \emph{upper and lower $(F_n)$-densities} of $V$ by the
formulas
\begin{align*}
\overline d_{(F_n)}(V)&=\limsup_{n\to\infty}\frac{|F_n\cap V|}{|F_n|},
\\
\underline d_{(F_n)}(V)&=\liminf_{n\to\infty}\frac{|F_n\cap V|}{|F_n|}.
\end{align*}
If $\overline d_{(F_n)}(V)=\underline d_{(F_n)}(V)$, then we denote
the common value by $d_{(F_n)}(V)$ and call it the
\emph{$(F_n)$-density} of $V$. The F{\o}lner property of $(F_n)$ and
cancellativity immediately imply that for any $V\subset G$ and any
$g\in G$,
\begin{equation}\label{transl}
\overline d_{(F_n)}(V) = \overline d_{(F_n)}(gV) = \overline d_{(F_n)}(g^{-1}V)
\end{equation}
(analogous equalities hold for $\underline d_{(F_n)}(\cdot)$ and
$d_{(F_n)}(\cdot)$).

\begin{defn} Let $K$ and $F$ be nonempty finite subsets of $G$.
\begin{enumerate}
\item The \emph{$K$-core} of $F$ is the set $F_K=\{h\in G:Kh\subset
  F\}=\bigcap_{g\in K}g^{-1}F$.
\item $K$ is called an \emph{$\varepsilon$-modification} of $F$ if
  $\frac{|K\triangle F|}{|F|}\le\varepsilon$, \ ($\varepsilon>0$).
\end{enumerate}
\end{defn}

The following elementary lemma is a slightly more general form of
Lemma~2.6 in~\cite{DHZ17}. We include the proof for reader's
convenience.

\begin{lem}\label{estim}
For any $\varepsilon>0$ and any nonempty finite subset $K$ of an
amenable cancellative semigroup $G$, there exists $\delta>0$ (in fact
$\delta = \frac\varepsilon{2|K|}$), such that if $F\subset G$ is
finite and $(K,\delta)$-invariant then the $K$-core of $F$ is an
$\varepsilon$-modification of $F$.
\end{lem}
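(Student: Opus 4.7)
The plan is to split $F_K\triangle F$ into the two pieces $F\setminus F_K$ and $F_K\setminus F$ and to bound each in terms of $|KF\triangle F|\le\delta|F|$, arriving at a bound of the form $(|K|+1)\delta|F|$, which with $\delta=\varepsilon/(2|K|)$ is at most $\varepsilon|F|$ because $|K|\ge 1$.

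For the piece $F\setminus F_K$, I would use the representation $F_K=\bigcap_{g\in K}g^{-1}F$, which gives $F\setminus F_K\subset\bigcup_{g\in K}(F\setminus g^{-1}F)$. For each $g\in K$, equation~\eqref{gF} yields $|F\cap g^{-1}F|=|F\cap gF|$, so $|F\setminus g^{-1}F|=|F\setminus gF|$. Since $gF\subset KF$, we have $F\setminus KF\subset F\setminus gF$ and also $|F\setminus gF|=|gF\setminus F|\le|KF\setminus F|\le|KF\triangle F|\le\delta|F|$. Summing over $g\in K$ gives $|F\setminus F_K|\le|K|\,\delta\,|F|$.

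For the piece $F_K\setminus F$, note that $F_K$ need not be contained in $F$ when $K$ does not contain an identity, so this step requires a separate argument. Fix any $g_0\in K$. If $h\in F_K\setminus F$, then $g_0h\in F$ (since $Kh\subset F$), while $h\notin F$ forces $g_0h\notin g_0F$ by cancellativity. Thus $g_0h\in F\setminus g_0F$, and the map $h\mapsto g_0h$ being injective yields $|F_K\setminus F|\le|F\setminus g_0F|\le\delta|F|$ by the same computation as above.

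Adding the two bounds produces $|F_K\triangle F|\le(|K|+1)\delta|F|$, and choosing $\delta=\varepsilon/(2|K|)$ completes the proof since $(|K|+1)/(2|K|)\le 1$ for $|K|\ge 1$. The main subtlety is handling the ``outgoing'' part $F_K\setminus F$: one must not be tempted to assume $F_K\subset F$, and the trick is to translate by one element of $K$ and use cancellativity both to obtain injectivity of the translation and to ensure that $h\notin F$ implies $g_0h\notin g_0F$. Everything else reduces to systematically swapping $g^{-1}F$ for $gF$ via~\eqref{gF} and exploiting $gF\subset KF$.
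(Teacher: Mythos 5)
Your proof is correct and follows essentially the same route as the paper's: both reduce to the estimates $|F\setminus g^{-1}F|\le\delta|F|$ and $|g^{-1}F\setminus F|\le\delta|F|$ obtained from \eqref{gF} and cancellativity, and then combine them over $g\in K$ (the paper writes $|F_K\triangle F|=|F_K\cup F|-|F_K\cap F|$, which is the same decomposition as your $|F\setminus F_K|+|F_K\setminus F|$). Your bound $(|K|+1)\delta|F|$ is marginally sharper than the paper's $2|K|\delta|F|$ because you control $F_K\setminus F$ by a single translate rather than summing over all of $K$, but this makes no difference for the stated choice of $\delta$.
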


\begin{proof} Note that $(K,\delta)$-invariance of $F$ implies that
\[
(\forall g\in K)\ \ |gF\setminus F|\le\delta|F|,
\]
i.e., using \eqref{gF},
\[
(\forall g\in K)\ \ |g^{-1}F\cap F| = |gF\cap F| \ge(1-\delta)|F|,
\]
in particular, $|g^{-1}F\setminus F|\le\delta|F|$.
Using the above, we get
\[
|F_K\cap F|=\left|\bigcap_{g\in K}(g^{-1}F\cap
F)\right|\ge(1-|K|\delta)|F|,
\]
while $|F_K\cup F|\le\sum_{g\in K}|g^{-1}F\setminus F|+|F|
\le(1+\delta|K|)|F|$. Combining the two estimates above, we obtain
$|F_K\triangle F|=|F_K\cup F|-|F_K\cap
F|\le2\delta|K||F|=\varepsilon|F|$.
\end{proof}

\begin{defn}\label{equi}
We will say that two F{\o}lner \sq s $(F_n)$ and $(F'_n)$ in an
amenable semigroup $G$ are \emph{equivalent} if $\frac{|F'_n\triangle
  F_n|}{|F_n|}\to 0$ (equivalently, $\frac{|F'_n\triangle
  F_n|}{|F'_n|}\to 0$).
\end{defn}

Note that if $(F_n)$ is a F{\o}lner \sq\ in $G$ and, for each $n$,
$F_n'$ is an $\varepsilon_n$-modification of $F_n$, where
$\varepsilon_n\to 0$, then $(F_n')$ is a F{\o}lner \sq\ equivalent to
$(F_n)$.

\begin{rem}\label{i-ii}
It is not hard to see that if $(F_n)$ and $(F_n')$ are
equivalent F{\o}lner \sq s then
\begin{enumerate}
	\item[(i)] the notions of (upper/lower) $(F_n)$-density and
          $(F_n')$-density coincide,
	\item[(ii)] the notions of $(F_n)$-normality and
          $(F_n')$-normality coincide.
\end{enumerate}
\end{rem}
Invoking Lemma \ref{estim}, we obtain the following lemma.

\begin{lem}\label{25}
If $(F_n)$ is a F{\o}lner \sq\ in a countably infinite cancellative
semigroup $G$ and $K\subset G$ is nonempty finite then the
\sq\ $(F_{n,K})$ (of the $K$-cores of $F_n$), is a F{\o}lner
\sq\ equivalent to $(F_n)$.
\end{lem}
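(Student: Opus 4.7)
The plan is to deduce the lemma as a direct consequence of Lemma \ref{estim} combined with the remark (stated just before the lemma) that any sequence of $\varepsilon_n$-modifications of a F{\o}lner sequence, with $\varepsilon_n\to 0$, is itself a F{\o}lner sequence equivalent to the original one. So the only real content is to verify that $F_{n,K}$ is an $\varepsilon_n$-modification of $F_n$ for some $\varepsilon_n\to 0$.

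First I would fix an arbitrary $\varepsilon>0$ and let $\delta=\frac{\varepsilon}{2|K|}$ as in Lemma \ref{estim}. Since $(F_n)$ is F{\o}lner, the characterization in terms of $(K,\delta)$-invariance (noted right after Definition 2.3) yields an index $N=N(\varepsilon)$ such that $F_n$ is $(K,\delta)$-invariant for every $n\ge N$. Applying Lemma \ref{estim} to each such $F_n$, I get that the $K$-core $F_{n,K}$ is an $\varepsilon$-modification of $F_n$, i.e.
\[
\frac{|F_{n,K}\triangle F_n|}{|F_n|}\le\varepsilon\quad\text{for all }n\ge N.
\]
Because $\varepsilon>0$ was arbitrary, this gives $\frac{|F_{n,K}\triangle F_n|}{|F_n|}\to 0$. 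Equivalently, setting $\varepsilon_n:=\frac{|F_{n,K}\triangle F_n|}{|F_n|}$, one has $\varepsilon_n\to 0$ and $F_{n,K}$ is an $\varepsilon_n$-modification of $F_n$.

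Finally, I would invoke the remark following Definition \ref{equi}: since $(F_n)$ is F{\o}lner and each $F_{n,K}$ is an $\varepsilon_n$-modification of $F_n$ with $\varepsilon_n\to 0$, the sequence $(F_{n,K})$ is itself a F{\o}lner sequence, equivalent to $(F_n)$ in the sense of Definition \ref{equi}. This proves the lemma.

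The argument involves no real obstacle; the potential subtlety lies entirely in Lemma \ref{estim}, whose proof is already given. One small point worth double-checking is that $F_{n,K}$ is nonempty for all sufficiently large $n$ (so that it makes sense to call it a F{\o}lner sequence); this however follows automatically, since an $\varepsilon_n$-modification of $F_n$ with $\varepsilon_n<1$ has cardinality at least $(1-\varepsilon_n)|F_n|\to\infty$.
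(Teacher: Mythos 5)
Your proof is correct and follows exactly the route the paper intends: Lemma \ref{estim} applied to the eventually $(K,\delta)$-invariant sets $F_n$, followed by the remark that $\varepsilon_n$-modifications with $\varepsilon_n\to 0$ yield an equivalent F{\o}lner sequence. The paper's own proof is just the one-line invocation of Lemma \ref{estim}, so your write-up simply fills in the same argument in more detail.
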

In particular, for any $g\in G$, $(g^{-1}F_n)$ is a F{\o}lner
\sq\ equivalent to $(F_n)$. By~\eqref{gF}, the same holds for
$(gF_n)$.\footnote{It is easy to see that if $(F_n)$ is a F{\o}lner
  \sq\ in a countably infinite cancellative semigroup $G$ and $g\in G$
  then the \sq\ $(F_ng)$ satisfies the F{\o}lner condition. However,
  unless $G$ is commutative, $(F_ng)$ need not be equivalent to
  $(F_n)$.}

We can now rephrase slightly the definition of $(F_n)$-normality using
a ``dynamical'' modification $\mathsf{\tilde N}(B,x,F_n)$ of the
quantity $\mathsf N(B,x,F_n)$ introduced in \eqref{enen}. For two
nonempty finite sets $F,K\subset G$, a 0-1-valued function
$x\in\{0,1\}^G$, and a block $B\in\{0,1\}^K$ let us denote by
$\mathsf{\tilde N}(B,x,F)$ the number of visits of the orbit of $x$ to
the cylinder $[B]$ at ``times'' belonging to $F$:
\[
\mathsf{\tilde N}(B,x,F)=|\{g\in F: \sigma_g(x)\in[B]\}|.
\]
For comparison, as easily verified, $\mathsf N(B,x,F)$ (see
\eqref{enen}) counts the visits of the orbit of $x$ in $[B]$ at
``times'' belonging to the $K$-core of $F$ in the extended semigroup
$G\cup\{e\}$:
\[
\mathsf{N}(B,x,F)=|\{g\in F^o_K: \sigma_g(x)\in[B]\}|,
\]
where
\[
F^o_K=\{g\in G\cup\{e\}: Kg\subset F\}.
\]
(Clearly, if $G$ has an identity element then $F^o_K=F_K$. In any
case, $F_K\subset F^o_K\subset F_K\cup\{e\}$). The difference between
$\mathsf{\tilde N}(B,x,F)$ and $\mathsf{N}(B,x,F)$ is best seen in the
classical case of $(\na,+)$. Let $F=\{1,2,\dots,n\}$ and let $B$ be a
word $w$ of length $k$. Then we have
\begin{gather*}
	\mathsf{\tilde N}(w,x,F)=|\{m\in\{1,\dots,n\}:\langle
        x_m,x_{m+1},\dots,x_{m+k-1}\rangle=w\}|,\\ \mathsf{N}(w,x,F)=
        |\{m\in\{1,\dots,n-k+1\}:\langle
        x_m,x_{m+1},\dots,x_{m+k-1}\rangle=w\}|.
\end{gather*}

Here is now a reformulation of the definition of normality in terms of
$\mathsf{\tilde N}(B,x,F_n)$.

\begin{thm}\label{tff}
Let $G$ be a countably infinite amenable cancellative semigroup and
let $(F_n)$ be a F{\o}lner \sq\ in $G$. An element $x\in\{0,1\}^G$ is
$(F_n)$-normal of and only if, for any nonempty finite $K\subset G$
and each $B\in\{0,1\}^K$, we have
\begin{equation}\label{tfnbnorm}
\lim_{n\to\infty} \frac1{|F_n|}\mathsf{\tilde N}(B,x,F_n)=2^{-|K|}.
\end{equation}
\end{thm}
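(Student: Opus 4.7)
The plan is to reduce this equivalence to a direct comparison of the two counting quantities $\mathsf{N}(B,x,F_n)$ and $\mathsf{\tilde N}(B,x,F_n)$, and to show that they differ by $o(|F_n|)$, so that the normalized limits in \eqref{fnbnorm} and \eqref{tfnbnorm} coincide (and exist simultaneously).

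First, I would rewrite both quantities as counts of visits of the orbit of $x$ to the cylinder $[B]$, over slightly different index sets, namely
\[
\mathsf{N}(B,x,F_n)=\bigl|\{g\in F^o_{n,K}:\sigma_g(x)\in[B]\}\bigr|
\quad\text{and}\quad
\mathsf{\tilde N}(B,x,F_n)=\bigl|\{g\in F_n:\sigma_g(x)\in[B]\}\bigr|,
\]
a description already made available in the paragraph preceding the theorem. Consequently
\[
\bigl|\mathsf{N}(B,x,F_n)-\mathsf{\tilde N}(B,x,F_n)\bigr|\le |F_n\triangle F^o_{n,K}|.
\]

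Next, since $F^o_{n,K}\subset F_{n,K}\cup\{e\}$, the right-hand side is bounded by $|F_n\triangle F_{n,K}|+1$. To control this I would invoke Lemma \ref{estim} with the fixed finite set $K$: for any $\varepsilon>0$, the F{\o}lner property of $(F_n)$ guarantees that $F_n$ is $(K,\varepsilon/(2|K|))$-invariant for all sufficiently large $n$, and therefore $F_{n,K}$ is an $\varepsilon$-modification of $F_n$. Letting $\varepsilon\to 0$ yields $|F_n\triangle F_{n,K}|=o(|F_n|)$, so that
\[
\frac{|\mathsf{N}(B,x,F_n)-\mathsf{\tilde N}(B,x,F_n)|}{|F_n|}\longrightarrow 0.
\]
This estimate immediately gives the equivalence of \eqref{fnbnorm} and \eqref{tfnbnorm} for the chosen $K$ and $B$; since $K$ and $B$ were arbitrary, the two notions of $(F_n)$-normality coincide.

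There is no serious obstacle: the argument is essentially bookkeeping once Lemma \ref{estim} is in hand. The only minor nuisance is that $G$ may lack an identity, which introduces the auxiliary element $e$ in $F^o_{n,K}$; this contributes at most $1$ to the difference of the two counts and is absorbed by the normalizing factor $1/|F_n|$ since $|F_n|\to\infty$.
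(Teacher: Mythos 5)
Your proof is correct and takes essentially the same route as the paper's: the paper identifies $\mathsf N(B,x,F_n)$ with $\mathsf{\tilde N}(B,x,F_{n,K})$ (up to an additive $1$ when $G$ lacks an identity) and then appeals to Lemma \ref{estim} through Lemma \ref{25} and Remark \ref{i-ii}, which is precisely your symmetric-difference estimate $|F_n\triangle F^o_{n,K}|=o(|F_n|)$ made explicit.
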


\begin{proof}
Notice that $\mathsf N(B,x,F_n)$ equals either $\mathsf{\tilde
  N}(B,x,F_{n,K})$ (if $G$ contains an identity element) or at most
$\mathsf{\tilde N}(B,x,F_{n,K})+1$ otherwise. Now apply Remark
\ref{i-ii} (ii) and Lemma \ref{25} above.
\end{proof}

One of two main advantages of using the function
$B\mapsto\mathsf{\tilde N}(B,x,F)$ over $B\mapsto \mathsf N(B,x,F)$ is
its finite additivity on cylinders (even when the domains of the
blocks involved in the summation do not necessarily
coincide)\footnote{To see that the function $B\mapsto \mathsf
  N(B,x,F)$ is not finitely additive, recall that a word is an element
  of $\{0,1\}^{\{1,2,\dots,k\}}$, where $k$ is its length, and
  consider the words $u=\langle 0\rangle,\ v=\langle
  0,1\rangle\ w=\langle 0,0,0\rangle, y =\langle 0,0,1\rangle$. Notice
  that $[u]=[v]\cup[w]\cup[y]$ is a disjoint union of cylinders
  corresponding to words of lengths 2 and 3. Let
  $x=(0,0,0,0,0,\dots)\in\{0,1\}^\na$ and consider the set
  $F=\{1,2,3\}$. We have $\mathsf N(u,x,F)=3$, $\mathsf N(v,x,F)=0$,
  $\mathsf N(w,x,F)=1$, $\mathsf N(y,x,F)=0$, and $\mathsf
  N(u,x,F)\neq\mathsf N(v,x,F)+\mathsf N(w,x,F)+\mathsf N(y,x,F)$. }.

\begin{lem}\label{add}
Let $K_0,K_1,\dots,K_r$ be nonempty finite subsets of a countably
infinite semigroup $G$ and let $B_i\in\{0,1\}^{K_i}$, $i=0,1,\dots,r$,
be blocks such that $[B_0]=[B_1]\cup[B_2]\cup\cdots\cup[B_r]$ is a
disjoint union. Let $F\subset G$ be finite and let $x\in\{0,1\}^G$.
Then
\[
\mathsf{\tilde N}(B_0,x,F)= \sum_{i=1}^r\mathsf{\tilde N}(B_i,x,F).
\]
\end{lem}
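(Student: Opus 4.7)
The plan is to observe that the quantity $\mathsf{\tilde N}(B,x,F)$ is, by its very definition, the cardinality of $\{g\in F:\sigma_g(x)\in[B]\}$, which can be rewritten as a sum of indicator functions:
\[
\mathsf{\tilde N}(B,x,F)=\sum_{g\in F}\mathbf{1}_{[B]}(\sigma_g(x)).
\]
From this reformulation the lemma will follow almost mechanically from the hypothesis that $[B_0]=[B_1]\cup[B_2]\cup\cdots\cup[B_r]$ is a disjoint union of cylinders.

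First I would note that the disjointness of the union $[B_0]=\bigsqcup_{i=1}^r[B_i]$ translates, at the level of characteristic functions, into the pointwise identity
\[
\mathbf{1}_{[B_0]}(y)=\sum_{i=1}^r\mathbf{1}_{[B_i]}(y)\qquad(y\in\{0,1\}^G).
\]
Next, I would apply this identity at each of the points $y=\sigma_g(x)$ for $g\in F$, and sum over $g\in F$. Interchanging the two (finite) summations yields
\[
\mathsf{\tilde N}(B_0,x,F)=\sum_{g\in F}\mathbf{1}_{[B_0]}(\sigma_g(x))=\sum_{i=1}^r\sum_{g\in F}\mathbf{1}_{[B_i]}(\sigma_g(x))=\sum_{i=1}^r\mathsf{\tilde N}(B_i,x,F),
\]
which is exactly the claimed equality.

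There is essentially no obstacle here; the whole content of the lemma is the finite additivity of the counting measure $g\mapsto\mathbf{1}_F(g)\delta_{\sigma_g(x)}$ on the Boolean algebra of cylinders, and the point of recording it is precisely the contrast with $\mathsf N(B,x,F)$ explained in the footnote, where the $K$-core dependence on $K$ (which varies with the block $B_i$) destroys additivity. No assumption on $F$ being F{\o}lner, no cancellativity, and no amenability is used; the statement is a purely set-theoretic identity about indicator functions of disjoint unions of cylinders.
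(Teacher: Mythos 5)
Your proof is correct and is essentially the paper's argument: the paper counts $\{g\in F:\sigma_g(x)\in[B_0]\}$ as a disjoint union of the sets $\{g\in F:\sigma_g(x)\in[B_i]\}$ and invokes finite additivity of cardinality, which is exactly your indicator-function computation in set-theoretic form. No gap.
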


\begin{proof}
Recall that for $i=0,1,2\dots,r$ we have $\mathsf{\tilde N}(B_i,x,F)=|\{g\in
F:\sigma_g(x)\in[B_i]\}|$, and notice that by the assumption
\[
\{g\in F:\sigma_g(x)\in[B_0]\} = \bigcup_{i=1}^r \{g\in
F:\sigma_g(x)\in[B_i]\}
\]
is a disjoint union. Since cardinality is a finitely additive
function, we are done.
\end{proof}

Another advantage of working with $\mathsf{\tilde
    N}(B,x,F)$ (rather than $\mathsf N(B,x,F)$) is that it can be
  represented as an \emph{ergodic sum} of the indicator function of
  $[B]$:
\begin{equation}\label{ergs}
\mathsf{\tilde N}(B,x,F) = \sum_{g\in F_n}\mathbbm
1_{[B]}(\sigma_g(x)).
\end{equation}
We will be referring to this interpretation later.

\medskip

We shall now fulfill the promise made in the introduction and prove
that the rather mild condition \eqref{44} forces the \sq\ $(F_n)$ to
be F{\o}lner.

\begin{thm}\label{26}
Let $(F_n)$ be a \sq\ of arbitrary finite subsets of a countably
infinite semigroup $G$. Suppose that there exists a 0-1-valued
function $x\in\{0,1\}^G$ such that for every two-element set $K\subset
G$ we have
\begin{equation}\label{asabove}
\lim_n\frac1{|F_n|}\sum_{B\in\{0,1\}^K}\mathsf N(B,x,F_n)=1
\end{equation}
(this assumption weakens and generalizes \eqref{44}). Then $(F_n)$ is a F{\o}lner
\sq\ in $G$.
\end{thm}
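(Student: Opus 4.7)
The plan is to first reformulate the hypothesis in terms of intersections of left-translates of $F_n$, and then derive the F{\o}lner condition via a cancellativity-based bijection argument. For the reformulation, note that $\sum_{B\in\{0,1\}^K}\mathsf N(B,x,F_n)$ does not in fact depend on $x$: every $g\in G\cup\{e\}$ with $Kg\subset F_n$ contributes $1$ to exactly one summand, namely the unique block $B$ with $B(h)=x_{hg}$ for all $h\in K$. Hence the sum equals $|\{g\in G\cup\{e\}:Kg\subset F_n\}|$, which for a two-element $K=\{g_1,g_2\}$ differs from $|g_1^{-1}F_n\cap g_2^{-1}F_n|$ by at most $1$ (the possible contribution of the adjoined identity when $K\subset F_n$). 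Dividing by $|F_n|$ and passing to the limit, the assumption \eqref{asabove} is equivalent to
\[
\lim_{n\to\infty}\frac{|g_1^{-1}F_n\cap g_2^{-1}F_n|}{|F_n|}=1 \text{ for every pair of distinct }g_1,g_2\in G.
\]

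Given $g\in G$, I now aim to deduce $|F_n\cap g^{-1}F_n|/|F_n|\to 1$. If $g$ happens to be the identity of $G$ (when one exists) the statement is trivial. Otherwise, I first produce a witness $g_1\in G$ with $gg_1\neq g_1$: if no such $g_1$ existed then $g$ would be a left identity of $G$, and by the standard fact that in a cancellative semigroup a one-sided identity is automatically two-sided (apply cancellation to $g^2=g$) this would contradict our assumption on $g$. Applying the reformulated hypothesis to $K=\{g_1,gg_1\}$ then yields
\[
\frac{|g_1^{-1}F_n\cap(gg_1)^{-1}F_n|}{|F_n|}\to 1.
\]

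The heart of the argument is the bijection that transports this information back to $F_n$: by left cancellativity, the map $h\mapsto g_1 h$ is a bijection between $g_1^{-1}F_n\cap(gg_1)^{-1}F_n$ and $g_1 G\cap F_n\cap g^{-1}F_n$, since $h$ lies in the source precisely when $g_1 h\in F_n$ and $g(g_1 h)\in F_n$. Consequently $|g_1 G\cap F_n\cap g^{-1}F_n|/|F_n|\to 1$, and since this set is contained in $F_n\cap g^{-1}F_n\subset F_n$, a sandwich argument yields the F{\o}lner condition at $g$.

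The main obstacle I expect is this last step: the hypothesis is only available for two-element subsets of $G$, so when $G$ lacks an identity one cannot simply take $K=\{e,g\}$; the bijection trick above is what bridges this gap, and it is the sole place where the cancellativity assumption pervading the paper plays a genuinely essential role.
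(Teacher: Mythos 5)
Your argument is correct and is essentially the paper's own proof: both apply the hypothesis to the two-element set $K=\{g_1,gg_1\}$ and use left cancellativity to identify $g_1^{-1}F_n\cap(gg_1)^{-1}F_n$ with a subset of $F_n\cap g^{-1}F_n$ of the same cardinality, after which the sandwich $|F_n\cap g^{-1}F_n|\le|F_n|$ finishes the job. The only (welcome) addition is your explicit treatment of the degenerate case in which $gg_1=g_1$ for every $g_1$, which the paper passes over in silence.
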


\begin{proof}
Let $x\in\{0,1\}^G$ satisfy \eqref{asabove} for every nonempty finite
$K\subset G$. Notice that, for any nonempty finite sets $K$ and $F$,
we have
\[
\sum_{B\in\{0,1\}^K}\mathsf N(B,x,F) = |F_K|.
\]

Now, let $g\in G$ be arbitrary. We fix some $h\in G$ and we let
$K=\{h,gh\}$. By assumption, given any $\varepsilon>0$, for $n$ large
enough we have $|F_{n,K}|\ge (1-\varepsilon)|F_n|$. For our particular
$K$, we have $F_{n,K}=h^{-1}F_n\cap {h^{-1}g^{-1}}F_n$, so, we obtain
\[
(1-\varepsilon)|F_n|\le |h^{-1}F_n\cap h^{-1}g^{-1}F_n| =
|h^{-1}(F_n\cap g^{-1}F_n)|\le |F_n\cap g^{-1}F_n|.
\]
This implies the F{\o}lner condition $\frac{|F_n\cap
  g^{-1}F_n|}{|F_n|}\to 1$.
\end{proof}

We remark that, without assuming the F{\o}lner condition, Theorem
\ref{tff} may fail very badly, i.e., the notion of $(F_n)$-normality
via the sets $\mathsf N(B,x,F_n)$ may differ drastically from the
notion involving the sets $\mathsf{\tilde N}(B,x,F_n)$. By Theorem
\ref{26}, if $(F_n)$ is not F{\o}lner, the set of $(F_n)$-normal
elements $x\in\{0,1\}^G$ (i.e., elements satisfying \eqref{fnbnorm})
is empty. On the other hand, taking for example $F_n=\{2,4,\dots,2n\}$
in $(\na,+)$ we see that even though $(F_n)$ is not a F{\o}lner \sq,
almost every element $x\in\{0,1\}^\na$ satisfies, for every word $w$,
the condition \eqref{tfnbnorm}: $\lim_n\frac1{|F_n|}\mathsf{\tilde
  N}(w,x,F_n)=2^{-|w|}$.

The following lemma shows that the formulas \eqref{fnnor} and
\eqref{fnbnor} in the Introduction lead to the same notion of
$(F_n)$-normality in $(\na,+)$.

\begin{lem}\label{nn}
Let $(F_n)$ be a F{\o}lner \sq\ in $(\na,+)$. If $x\in\{0,1\}^\na$ is
$(F_n)$-normal, i.e., satisfies \eqref{fnnor} (for words) then it
satisfies \eqref{fnbnor} (for blocks), i.e., for every nonempty finite
$K\subset\na$ and every block $B\in\{0,1\}^K$, we have
\[
\lim_n \frac1{|F_n|}\mathsf N(B,x,F_n)=2^{-|K|}.
\]
\end{lem}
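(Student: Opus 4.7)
The plan is to route the argument through the dynamical counter $\tilde{\mathsf N}(B,x,F_n)$ and exploit its finite additivity stated in Lemma \ref{add}; the combinatorial content is that every block-cylinder decomposes into finitely many word-cylinders.

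First I would invoke Theorem \ref{tff} specialized to words: the hypothesis that $x$ satisfies \eqref{fnnor} for every word is equivalent to
\[
\lim_{n\to\infty}\frac{1}{|F_n|}\tilde{\mathsf N}(w,x,F_n)=2^{-k}
\qquad\text{for every }k\in\na\text{ and every }w\in\{0,1\}^k.
\]
Given any nonempty finite $K\subset\na$ and block $B\in\{0,1\}^K$, set $N=\max K$, so that $K\subset\{1,\dots,N\}$. Let
\[
\mathcal W=\bigl\{w\in\{0,1\}^N : w(i)=B(i)\text{ for all }i\in K\bigr\};
\]
since the coordinates in $\{1,\dots,N\}\setminus K$ are free, $|\mathcal W|=2^{N-|K|}$ and $[B]=\bigsqcup_{w\in\mathcal W}[w]$ is a disjoint union of word-cylinders. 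Applying Lemma \ref{add} to this decomposition and dividing by $|F_n|$,
\[
\frac{1}{|F_n|}\tilde{\mathsf N}(B,x,F_n)=\sum_{w\in\mathcal W}\frac{1}{|F_n|}\tilde{\mathsf N}(w,x,F_n)\;\longrightarrow\;2^{N-|K|}\cdot 2^{-N}=2^{-|K|}
\]
as $n\to\infty$; the passage to the limit is legitimate because $\mathcal W$ is finite. One more application of Theorem \ref{tff}, this time in the reverse direction, converts this limit back into the desired statement \eqref{fnbnor} for $\mathsf N(B,x,F_n)$.

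There is no serious analytic obstacle beyond bookkeeping: the conventions for ``words'' (blocks on an initial segment, with the shift measured by the starting position) and for general blocks (with the shift measured as an additive translate of $K$) differ slightly, but both discrepancies are already absorbed into Theorem \ref{tff}. Once one passes to $\tilde{\mathsf N}$, the proof reduces to finite additivity of cardinality exactly as in Lemma \ref{add}.
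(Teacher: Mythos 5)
Your proof is correct and follows essentially the same route as the paper's: pass to $\tilde{\mathsf N}$ via Theorem \ref{tff}, decompose $[B]$ as a disjoint union of word-cylinders over the initial segment $\{1,\dots,\max K\}$, and sum the limits using Lemma \ref{add}. The only difference from the paper is notational (the paper calls your $\{1,\dots,N\}$ the interval $I$), so there is nothing further to add.
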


\begin{proof}
By Theorem \ref{tff}, in \eqref{fnnor}, we can replace $\mathsf
N(w,x,F_n)$ by $\mathsf {\tilde N}(w,x,F_n)$, and in~\eqref{fnbnor} we
can replace $\mathsf N(B,x,F_n)$ by $\mathsf {\tilde N}(B,x,F_n)$. If
$B\in\{0,1\}^K$ then, letting $I$ be the shortest interval in $\na$ of
the form $\{1,2,\dots,r\}$, $r\in\na$, which contains $K$, we have the
disjoint union representation of the cylinder $[B]$:
\[
[B]=\bigcup_{w\in \{0,1\}^I,\ w|_K=B}[w].
\]
By Lemma \ref{add}, the function $[B]\mapsto
\lim_n\frac1{|F_n|}\mathsf{\tilde N}(B,x,F_n)$ is finitely additive on
cylinders for which the limits exist. By \eqref{fnnor}, for each
$w\in\{0,1\}^I$, we have
\[
\lim_n\frac1{|F_n|}\mathsf{\tilde
  N}(w,x,F_n)=\lim_n\frac1{|F_n|}\mathsf N(w,x,F_n)=2^{-|I|}.
\]
Thus,
\begin{multline*}
\lim_n \frac1{|F_n|}\mathsf N(B,x,F_n)=\lim_n \frac1{|F_n|}\mathsf
    {\tilde N}(B,x,F_n)=\\ \sum_{w\in\{0,1\}^I,\ w|_K=B} \lim_n
    \frac1{|F_n|}\mathsf {\tilde
      N}(w,x,F_n)=2^{|I|-|K|}\cdot2^{-|I|}=2^{-|K|}.
\end{multline*}
\end{proof}

The next theorem will allow us to reduce the proofs of some results
pertaining to countably infinite amenable cancellative semigroups to
the setup of countably infinite amenable groups (in particular, we
will be able to use the machinery of tilings). The fact given below appears independently in \cite[Corollary 2.12]{DFG19}. 

\begin{thm}\label{sgrvsgr}
Let $G$ ba a countably infinite amenable cancellative semigroup $G$. Then there exists an amenable group $\widetilde G$ containing $G$ as a subsemigroup, such that any
F{\o}lner \sq\ $(F_n)$ in $G$ is a F{\o}lner \sq\ in $\widetilde G$.
\end{thm}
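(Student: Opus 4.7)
The natural candidate for $\widetilde G$ is the group of right fractions $GG^{-1}$, which exists whenever $G$ satisfies the right Ore condition $aG\cap bG\neq\emptyset$ for all $a,b\in G$. My plan is to first deduce right Ore from the F{\o}lner property, then invoke Ore's classical embedding theorem to realize $G$ as a subsemigroup of such a group $\widetilde G$, and finally verify that $(F_n)$ remains F{\o}lner in $\widetilde G$ by exploiting the fact that in a group, left multiplication is a cardinality-preserving bijection commuting with symmetric differences.

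For the Ore condition, fix $a,b\in G$. For $n$ large enough, the F{\o}lner condition in the form \eqref{gF} gives $|aF_n\cap F_n|\ge(1-\varepsilon)|F_n|$ and $|bF_n\cap F_n|\ge(1-\varepsilon)|F_n|$; since $|aF_n|=|bF_n|=|F_n|$ by cancellativity, inclusion--exclusion inside $aF_n\cup bF_n\cup F_n$ gives $|aF_n\cap bF_n|\ge(1-2\varepsilon)|F_n|>0$ as soon as $\varepsilon<1/2$. Any element of $aF_n\cap bF_n$ is a common right multiple of $a$ and $b$, so $aG\cap bG\neq\emptyset$. Two-sided cancellativity plus right Ore is exactly the input needed for Ore's classical theorem, which embeds $G$ as a subsemigroup of its group of right fractions $\widetilde G$; every element of $\widetilde G$ has the form $ab^{-1}$ with $a,b\in G$.

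To verify the F{\o}lner property in $\widetilde G$, fix $g=ab^{-1}\in\widetilde G$. Applying the triangle inequality for symmetric differences with intermediate set $aF_n$, and using that left multiplication by any element of $\widetilde G$ is a bijection preserving cardinalities and commuting with $\triangle$, one obtains
\[
|gF_n\triangle F_n|\le|ab^{-1}F_n\triangle aF_n|+|aF_n\triangle F_n|=|b^{-1}F_n\triangle F_n|+|aF_n\triangle F_n|,
\]
and a further bijection (left multiplication by $b$) gives $|b^{-1}F_n\triangle F_n|=|F_n\triangle bF_n|$. Both of the resulting terms $|cF_n\triangle F_n|$ for $c\in\{a,b\}$, divided by $|F_n|$, tend to $0$ by the F{\o}lner property of $(F_n)$ in $G$. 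Thus $|gF_n\triangle F_n|/|F_n|\to 0$ for every $g\in\widetilde G$, so $(F_n)$ is a F{\o}lner sequence in $\widetilde G$, which in particular makes $\widetilde G$ amenable.

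The main obstacle is really the algebraic step, namely invoking Ore's theorem to produce the embedding; this is a classical input but essential, since the F{\o}lner verification crucially uses that left multiplication by $a^{-1}$ and $b^{-1}$ makes sense and acts bijectively on all of $\widetilde G$. Once the embedding is in hand, the analytic argument collapses to the one-line triangle inequality above.
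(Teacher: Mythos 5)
Your proof is correct, and it takes a route that differs from the paper's in a way worth noting. The paper simply cites the fact that an amenable cancellative semigroup embeds in some group $H$ (via \cite{Pa88}), takes $\widetilde G$ to be the subgroup of $H$ generated by $G$, and must therefore handle a general element $\bar g=g_1g_2^{-1}\cdots g_{2k-1}g_{2k}^{-1}$, iterating the triangle inequality for $|\cdot\triangle\cdot|$ roughly $2k$ times. You instead extract the right Ore condition $aG\cap bG\neq\emptyset$ directly from the F{\o}lner property (the observation that $aF_n\cap F_n$ and $bF_n\cap F_n$ are both large subsets of $F_n$, hence meet), and then invoke Ore's localization theorem, so that every element of $\widetilde G=GG^{-1}$ has the normal form $ab^{-1}$ and the F{\o}lner verification collapses to a single triangle inequality. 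The two groups coincide (once $G$ satisfies Ore, $GG^{-1}$ is already the subgroup generated by $G$ in any ambient group), so the difference is in how the embedding is justified and how much bookkeeping the verification requires: your version is more self-contained in that the amenability-implies-embeddability step is made explicit and rests on the more classical and elementary input (Ore), at the cost of having to check the Ore condition; the paper's version avoids that check by outsourcing the embedding entirely, but pays for it with the longer inductive estimate over arbitrary alternating products. One very minor point: in your inclusion--exclusion step it is cleanest to intersect everything with $F_n$ first, i.e.\ to note that $aF_n\cap F_n$ and $bF_n\cap F_n$ are subsets of $F_n$ each of cardinality at least $(1-\varepsilon)|F_n|$, so their intersection has cardinality at least $(1-2\varepsilon)|F_n|$; this yields $aF_n\cap bF_n\neq\emptyset$ exactly as you intend.
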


\begin{proof}
First of all, any amenable cancellative semigroup is embeddable in a group $H$ (see \cite{Pa88}).  Then each element $g\in G$ has an inverse $g^{-1}\in H$. Let $\widetilde G\subset H$ be the set of all finite products $g_1g_2^{-1}\cdots g_{2k-1}g_{2k}^{-1}$, $k\in\na$,
where all the terms $g_i$ belong to $G\cup\{e\}$ ($e$ is the identity
element of $H$ and must be added only in case $G$ does not have an
identity element). Clearly, $\widetilde G$ is a subgroup of $H$ and it
contains $G$ (alternatively, $\widetilde G$ can be defined as the smallest
subgroup of $H$ containing $G$). Let $(F_n)$ be a F{\o}lner \sq\ in
$G$. Fix an element $\bar g\in \widetilde G$ and write it as a product
$g_1g_2^{-1}\cdots g_{2k-1}g_{2k}^{-1}$. Also fix an $\varepsilon>0$
and denote $\varepsilon'=\frac\varepsilon{2k}$. For large $n$, the set
$F_n$ is $(g_i,\varepsilon')$-invariant for each $i=1,2,\dots,2k$.
Then
\[
|F_n\triangle g_iF_n|=|F_n\triangle g^{-1}_iF_n|\le \varepsilon'|F_n|
\]
(the set $g_i^{-1}F_n$ is understood in $\widetilde G$). Mutiplying both
sets in $F_n\triangle g^{-1}_iF_n$ by $g_j$ on the left (for some
$j\in\{1,2,\dots,2k\}$), we obtain
\[
|g_jF_n\triangle g_jg^{-1}_iF_n|\le \varepsilon'|F_n|.
\]
Now, by the triangle inequality for the metric $|\cdot\triangle\cdot|$
(on finite sets), we have
\[
|F_n\triangle g_jg^{-1}_iF_n|\le |F_n\triangle
g_jF_n|+|g_jF_n\triangle g_jg^{-1}_iF_n|\le 2\varepsilon'|F_n|.
\]
Repeating this argument $k$ times (with the appropriate order of
indices) we get
\[
|F_n\triangle\bar gF_n|=|F_n\triangle g_1g_2^{-1}\cdots
g_{2k-1}g_{2k}^{-1}F_n|\le 2k\varepsilon'|F_n|=\varepsilon|F_n|,
\]
which means that $F_n$ is $(\bar g,\varepsilon)$-invariant.
We have shown that $(F_n)$ is a F{\o}lner \sq\ in $\widetilde G$. This also
implies that the group $\widetilde G$ is amenable.
\end{proof}

In this context we have the following fact.

\begin{lem}\label{ll}
Let $(F_n)$ be a F{\o}lner \sq\ in a countably infinite amenable
semigroup $G$ which is embeddable in a group $\widetilde G$ such that
$(F_n)$ is a F{\o}lner \sq\ in $\widetilde G$. A subset $A\subset G$ is
$(F_n)$-normal in $G$ if and only if it is $(F_n)$-normal, viewed as a
subset of the group $\widetilde G$ (in other words, $\mathbbm
1_A\subset\{0,1\}^G$ is $(F_n)$-normal if and only if $\mathbbm
1_A\subset\{0,1\}^{\widetilde G}$ is $(F_n)$-normal).
\end{lem}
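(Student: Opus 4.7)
The plan is to dispatch the two implications separately. The ``if'' direction is immediate: any finite $K\subset G$ is a fortiori a finite subset of $\widetilde G$, and for $g\in F_n\subset G$ the products $hg$ defining the $\widetilde G$-shift coincide with those of the $G$-shift, so the $\widetilde G$-normality limits specialised to $K\subset G$ are precisely the ones required for $G$-normality.

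For the ``only if'' direction, fix a finite $K\subset\widetilde G$ and $B\in\{0,1\}^K$; the idea is to right-translate $K$ into $G$ by a carefully chosen element. First I would produce an auxiliary $g_0\in G$ with $Kg_0\subset G$. Since $(F_n)$ is F{\o}lner in $\widetilde G$ by hypothesis, Lemma \ref{25} applied inside $\widetilde G$ gives that the $K$-core $F_{n,K}^{\widetilde G}:=\{g\in\widetilde G:Kg\subset F_n\}$ is F{\o}lner-equivalent to $(F_n)$; consequently $F_n\cap F_{n,K}^{\widetilde G}$ is nonempty for all large $n$, and any $g_0$ in this intersection automatically lies in $F_n\subset G$ and satisfies $Kg_0\subset F_n\subset G$. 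Setting $K':=Kg_0\subset G$ and $B'(hg_0):=B(h)$ (well-defined with $|K'|=|K|$ by cancellativity in $\widetilde G$), a short computation using associativity gives, for every $g'\in G$,
\[
\sigma_{g_0g'}(\mathbbm 1_A)|_K=B\iff\sigma_{g'}(\mathbbm 1_A)|_{K'}=B',
\]
because $hg_0g'=(hg_0)g'\in G$, so only the original (unextended) values of $\mathbbm 1_A$ are involved on both sides.

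Summing this equivalence over $g'\in F_n$ and reindexing via $g=g_0g'$ yields the identity $|\{g\in g_0F_n:\sigma_g(\mathbbm 1_A)|_K=B\}|=\mathsf{\tilde N}(B',\mathbbm 1_A,F_n)$, whose right-hand side is a count in $G$. The hypothesised $(F_n)$-normality of $A$ in $G$ then delivers $|F_n|^{-1}\mathsf{\tilde N}(B',\mathbbm 1_A,F_n)\to 2^{-|K'|}=2^{-|K|}$. To transfer this limit from $(g_0F_n)$ to $(F_n)$ I would invoke that $(g_0F_n)$ is a F{\o}lner \sq\ equivalent to $(F_n)$ (remark after Lemma \ref{25}, using $g_0\in G$), so $|F_n\triangle g_0F_n|/|F_n|\to 0$ and the two averages coincide in the limit, giving $|F_n|^{-1}\mathsf{\tilde N}(B,\mathbbm 1_A,F_n)\to 2^{-|K|}$ in $\widetilde G$. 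The only genuinely delicate point, and the sole place where the assumption that $(F_n)$ is F{\o}lner in $\widetilde G$ is actually used, is the production of the translating element $g_0$; once it has been found the remainder is a bookkeeping identification of counts between $G$ and $\widetilde G$.
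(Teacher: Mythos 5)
Your proof is correct and follows essentially the same route as the paper's: both arguments hinge on picking a single element $g_0$ (resp.\ $g$) in $F_{n_0}\cap F_{n_0,K}$ for large $n_0$, so that $K'=Kg_0$ is a finite subset of $G$, and then transferring the relevant counts between $K$ and $K'$ using the translation-invariance of $(F_n)$-density (equivalently, the equivalence of $(g_0F_n)$ and $(F_n)$). The only cosmetic difference is that you verify the condition for arbitrary blocks $B$ directly via $\mathsf{\tilde N}$ and Theorem \ref{tff}, whereas the paper checks only the intersections $\bigcap_{h\in K}h^{-1}A$ and invokes the equivalence (1)$\iff$(3) of Theorem \ref{defn1.2} to upgrade to full normality.
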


\begin{proof} In the proof we will use Theorem \ref{defn1.2} which will be proved later and is independent from Lemma \ref{ll}.
If $A$ is $(F_n)$-normal in $\widetilde G$ then clearly it is $(F_n)$-normal
in $G$ (because every nonempty finite set $K\subset G$ is also a
subset of $\widetilde G$). Suppose $A$ is $(F_n)$-normal in $G$ and let $K$
be a nonempty finite subset of $\widetilde G$. For large enough $n_0$, the
intesection $F_{n_0}\cap F_{n_0,K}$ is nonempty, i.e., there exists
$g\in F_n\subset G$ and a bijection $h\mapsto f_h$ from $K$ onto some
$K'\subset F_{n_0}$, such that $hg=f_h$ for each $h\in K$. Then, in
the group $\widetilde G$, we have
\[
d_{(F_n)}\Bigl(\bigcap_{h\in
  K}h^{-1}A\Bigr)=d_{(F_n)}\Bigl(g^{-1}\bigcap_{h\in
  K}h^{-1}A\Bigr)=d_{(F_n)}\Bigl(\bigcap_{h\in
  K}f_h^{-1}A\Bigr)=d_{(F_n)}\Bigl(\bigcap_{f\in K'}f^{-1}A\Bigr).
\]
The meaning of $f^{-1}A$ is different in $\widetilde G$ and in $G$ (in $G$
it means $f^{-1}A\cap G$), however, since the sets $F_n$ are contained
in $G$, the value of $d_{(F_n)}\Bigl(\bigcap_{f\in K'}f^{-1}A\Bigr)$
does not depend on whether it is considered in $\widetilde G$ or in $G$.
Since $K'\subset G$ and $A$ is $(F_n)$-normal as a subset of $G$, the
equivalence (1)\,$\iff$\,(3) in Theorem \ref{defn1.2} and formula
\eqref{1.2} yield that $d_{(F_n)}\Bigl(\bigcap_{h\in
  K}h^{-1}A\Bigr)=2^{-|K'|}=2^{-|K|}$. By invoking Theorem
\ref{defn1.2} again, we obtain $(F_n)$-normality of $A$ as a subset of
$\widetilde G$.
\end{proof}

Throughout the remainder of this section we assume that $G$ is a
countably infinite amenable \emph{group}. Our key tool for handling
$(F_n)$-normality in $G$ is a special \emph{system of tilings}
$(\mathcal T_k)_{k\ge 1}$ of $G$ which was constructed in
\cite{DHZ17}. (We could employ instead an older concept of quasi-tilings
introduced in \cite{OW87}, but the system $(\mathcal T_k)$ is a more
convenient tool for our purposes.)

Let $\mathcal S$ be a collection of finite subsets of $G$, each
containing the identity element, which we will call \emph{shapes}. To
each $S\in\mathcal S$ we associate a \emph{set of translates} (of
$S$), $C_S\subset G$. We require that the sets $C_S$ be pairwise
disjoint and write $\mathcal C=\{C_S:S\in\mathcal S\}$. If the family
\[
\mathcal T=\{Sc: S\in\mathcal S, c\in C_S\},
\]
is a partition of $G$, we call it the \emph{tiling} of $G$ associated
with the pair $(\mathcal S,\mathcal C)$.

An element $Sc$ of this partition will be called a \emph{tile of shape
  $S$ centered at $c$}. By disjointness of the tiles, the assignment
$(S,c)\mapsto Sc$ is a bijection from $\{(S,c): S\in\mathcal S, c\in
C_S\}$ to $\mathcal T$, i.e., each tile has a uniquely determined
center and shape.

Given a tiling $\mathcal T$ and a set $F\subset G$, the
\emph{$\mathcal T$-saturation} of $F$ is defined as
\[
F^{(\mathcal T)}=\bigcup\{Sc\in\mathcal T:Sc\cap F\neq\emptyset\}.
\]
Let $K$ be the union of the sets $SS^{-1}$ over all shapes $S$ of
those tiles of $\mathcal T$ which have nonempty intersections with $F$.
Formally,
\[
K=\bigcup\{SS^{-1}: S\in\mathcal S, (\exists c\in C_S) Sc\cap
F\neq\emptyset\}.
\]
It is an easy observation that if $F$ is a finite and
$(K,\varepsilon)$-invariant set, then $|F^{(\mathcal T)}\setminus
F|\le\varepsilon|F|$.

A tiling whose set of shapes is finite will be called \emph{proper}.

A \sq\ of proper tilings $(\mathcal T_k)_{k\ge 1}$ is called \emph{a
  congruent system of tilings} if for each $k$ every tile of $\mathcal
T_{k+1}$ is a union of some tiles of $\mathcal T_k$.

A congruent system of tilings is \emph{deterministic}, if, for each
$k\ge 1$, all tiles of $\mathcal T_{k+1}$ having the same shape are
partitioned into the tiles of $\mathcal T_k$ the same way. More
precisely, we require that whenever $T'_1=S'c_1$ and $T'_2=S'c_2$ are
two tiles of $\mathcal T_{k+1}$ of the same shape $S'$ (note that then
$c_1,c_2\in \mathcal C_{S'}$) and $T'_1=\bigcup_{i=1}^l T_{1,i}$ is
the partition of $T'_1$ into the tiles of $\mathcal T_k$, then the
sets $T_{2,i}=T_{1,i}c_1^{-1}c_2$ (with $i=1,2,\dots,l$) are also
tiles of $\mathcal T_k$ (and clearly they partition $T'_2$). It
follows that in the deterministic case, the tiling $\mathcal T_{k+1}$
\emph{determines} all the tilings $\mathcal T_1,\dots,\mathcal T_k$.
Also note that, with the above notation, the family
$\{T_{1,i}c_1^{-1}:i=1,2\dots,l\}$ (which is the same as
$\{T_{2,i}c_2^{-1}:i=1,2\dots,l\}$) is a partition of the shape $S'$
into shifted shapes of the tiling $\mathcal T_k$. We will call this
partition the \emph{standard tiling of $S'$ by the tiles of $\mathcal
  T_k$} (although formally, the sets $T_{1,i}c_1^{-1}$ need not be
tiles of $\mathcal T_k$).

We will say that a system of proper tilings $(\mathcal T_k)_{k\ge 1}$
is \emph{F{\o}lner} if for every nonempty finite set $K\subset G$
and every $\varepsilon>0$, for large enough $k$, all shapes of $\mathcal
T_k$ (and thus also all tiles) are $(K,\varepsilon)$-invariant (in
other words, if $(S_j)_{j\in\na}$ is obtained by enumerating the
collection $\bigcup_k\mathcal S_k$ of all shapes used in the system of
tilings, then $(S_j)$ is a F{\o}lner \sq).

A proper tiling is called \emph{syndetic} if for every shape $S$ the
set of translates $C_S$ is (left) syndetic, i.e., such that $KC_S=G$
for some finite set $K$ (depending on $S$).

It is proved in \cite{DHZ17} that every countably infinite amenable
group $G$ admits a congruent, deterministic, F{\o}lner system of proper
tilings $(\mathcal T_k)_{k\ge 1}$. One can actually obtain a system of
\emph{syndetic} tilings with all the above properties, as follows.
First, as noted in \cite{DHZ17}, any proper tiling $\mathcal T$ of $G$
can be represented as an element of the symbolic space $(\mathcal
S\cup\{0\})^G$ (where the role of the alphabet is played by the finite
collection $\mathcal S$ of shapes of $\mathcal T$ with an additional
symbol $0$). Now, a system of proper tilings $(\mathcal T_k)_{k\ge 1}$
becomes an element $\mathbf T$ of the space $\prod_{k\ge 1}(\mathcal
S_k\cup\{0\})^G = (\prod_{k\ge 1}(\mathcal S_k\cup\{0\}))^G$, on which
$G$ acts by shifts. Let $\overline O(\mathbf T)$ denote the orbit
closure of $\mathbf T$ with respect to the shift action. Every element
$\mathbf T'\in\overline O(\mathbf T)$ is again a system of proper
tilings $(\mathcal T'_k)_{k\ge 1}$ with the respective sets of shapes
$\mathcal S'_k$ satisfying $\mathcal S'_k\subset\mathcal S_k$ for each
$k$.\footnote{In fact, if for every element $\mathcal T'_k\in\overline
  O(\mathcal T_k)$ we have $\mathcal S'_k=\mathcal S_k$ then $\mathcal
  T_k$ is already syndetic.} Note that if $\mathbf T$ is a F{\o}lner
system of tilings, so is every $\mathbf T'\in\overline O(\mathbf T)$.
Also, the properties of being congruent and deterministic pass from
$\mathbf T$ to all members of $\overline O(\mathbf T)$. The system
$\overline O(\mathbf T)$ (with the shift action) has a minimal
subsystem. Any element of this minimal subsystem, in addition to the
preceding properties, is a system of syndetic tilings, which follows
by a standard characterization of minimality in symbolic dynamics.

We define $\mathcal T_0$ to be the tiling all tiles of which are
singletons ($\mathcal T_0$ has one shape $S=\{e\}$ and the
corresponding set of translates $C_S$ is the whole group).

\section{Left invariance of $(F_n)$-normality}\label{two}

We call a subset $A\subset G$ $(F_n)$-normal if its indicator function
$\mathbbm 1_A$, viewed as an element of $\{0,1\}^G$, is
$(F_n)$-normal. The goal of this section is to prove that if $G$ is a
countably infinite amenable cancellative semigroup and $(F_n)$ is a F{\o}lner \sq\ in $G$ then a set $A\subset G$ is $(F_n)$-normal if and only if so is $gA$, and also if and only if so is $g^{-1}A$.

The following theorem provides a characterization of normal sets in
terms of ``combinatorial independence''.

\begin{thm}\label{defn1.2}
Let $G$ be a countably infinite amenable cancellative semigroup and
let $(F_n)$ be a F{\o}lner sequence in $G$. Let $A\subset G$. We will
use the following notation: $A^1=A$, $A^0=G\setminus A$. Consider the
following five conditions:
\begin{enumerate}
\item $A$ is $(F_n)$-normal,
\item for any nonempty finite set $K$ and any 0-1 block
  $B\in\{0,1\}^K$ we have
\begin{equation}\label{1.0}
d_{(F_n)}\Bigl(\bigcap_{h\in K}h^{-1}A^{B(h)}\Bigr) = 2^{-|K|},
\end{equation}
\item for any nonempty finite set $K$ we have
\begin{equation}\label{1.2}
d_{(F_n)}\Bigl(\bigcap_{h\in K}h^{-1}A\Bigr) = 2^{-|K|},
\end{equation}
\item for any nonempty finite set $K$ and any 0-1 block
  $B\in\{0,1\}^K$ we have
\begin{equation}\label{1.3}
d_{(F_n)}\Bigl(\bigcap_{h\in K}hA^{B(h)}\Bigr) = 2^{-|K|},
\end{equation}
\item for any nonempty finite set $K$ we have
\begin{equation}\label{1.4}
d_{(F_n)}\Bigl(\bigcap_{h\in K}hA\Bigr) = 2^{-|K|}.
\end{equation}
\end{enumerate}
Then (1)$\iff$(2)$\iff$(3)$\implies$(4)$\iff$(5). If $G$ is a group or
$G$ is commutative then all conditions (1)--(5) are equivalent.
\end{thm}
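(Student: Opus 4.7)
The plan is to prove the equivalences in the order $(1)\iff(2)\iff(3)$, then $(4)\iff(5)$, then the key implication $(3)\implies(5)$, and finally the converses in the group or commutative case.

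For $(1)\iff(2)$ I simply unroll definitions: with $x=\mathbbm 1_A$, the condition $\sigma_g(x)\in[B]$ reads $\mathbbm 1_A(hg)=B(h)$ for every $h\in K$, i.e., $g\in\bigcap_{h\in K}h^{-1}A^{B(h)}$, so Theorem \ref{tff} delivers the equivalence. The implication $(2)\implies(3)$ is trivial (take $B\equiv 1$), while $(3)\implies(2)$ follows from the indicator identity
\[ \mathbbm 1_{\bigcap_{h\in K}h^{-1}A^{B(h)}}=\prod_{h\in K_1}\mathbbm 1_{h^{-1}A}\prod_{h\in K_0}(1-\mathbbm 1_{h^{-1}A})=\sum_{J\subseteq K_0}(-1)^{|J|}\mathbbm 1_{\bigcap_{h\in K_1\cup J}h^{-1}A}, \]
where $K_1=B^{-1}(1)$ and $K_0=B^{-1}(0)$: applying (3) to each $K_1\cup J$ and summing yields $2^{-|K_1|}(1-1/2)^{|K_0|}=2^{-|K|}$. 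The equivalence $(4)\iff(5)$ uses the same inclusion-exclusion, with the proviso that $hA$ and $hA^0$ partition $hG$ rather than $G$. However, by \eqref{transl} each $hG$ is a left translate of $G$ and so has $(F_n)$-density $1$; hence so does $H:=\bigcap_{h\in K}hG$, the error $|F_n\setminus H|$ is $o(|F_n|)$, and the same expansion goes through on $H$.

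The main new content is the implication $(3)\implies(5)$. I work in the amenable group $\widetilde G\supset G$ from Theorem \ref{sgrvsgr}, inside which $(F_n)$ is still a F\o lner sequence. Since $d_{(F_n)}(H)=1$, some $N_0$ satisfies $F_{N_0}\cap H\neq\emptyset$; fix any $h_0\in F_{N_0}\cap H$. Then for each $h\in K$ the element $c_h:=h^{-1}h_0$, computed in $\widetilde G$, actually lies in $G$ because $h_0\in hG$, and by cancellativity $C:=\{c_h:h\in K\}$ has $|C|=|K|$. The bijection $\phi:\widetilde G\to\widetilde G$, $g\mapsto h_0^{-1}g$, restricts to a bijection $F_n\cap h_0G\to(h_0^{-1}F_n)\cap G$, and the identity $h^{-1}g=c_h\phi(g)$ shows that for $g\in F_n\cap h_0G$,
\[ g\in\bigcap_{h\in K}hA\iff\phi(g)\in\bigcap_{c\in C}c^{-1}A. \]
Since $d_{(F_n)}(h_0G)=1$ and $(F_n)$ is F\o lner in $\widetilde G$, both $|F_n\setminus h_0G|$ and $|h_0^{-1}F_n\triangle F_n|$ are $o(|F_n|)$, whence $|F_n\cap\bigcap_{h\in K}hA|=|F_n\cap\bigcap_{c\in C}c^{-1}A|+o(|F_n|)$; dividing by $|F_n|$ and invoking (3) for $C\subset G$ gives the limit $2^{-|C|}=2^{-|K|}$.

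For the converses: if $G$ is a group, then $K^{-1}\subset G$ and $(h^{-1})^{-1}A=hA$, so $\bigcap_{h\in K}h^{-1}A=\bigcap_{h'\in K^{-1}}h'A$ has density $2^{-|K|}$ by (5) applied to $K^{-1}$. In the commutative case the change of variable above runs in reverse: commutativity of $\widetilde G$ yields $h_0h^{-1}=h^{-1}h_0=c_h\in G$, so $h_0\cdot\bigcap_{h\in K}h^{-1}A=\bigcap_{c\in C}cA$, and (5) applied to $C$ delivers (3). The main obstacle throughout is arranging $c_h\in G$ when passing between $h^{-1}A$- and $hA$-type intersections; this is precisely what the full $(F_n)$-density of $\bigcap_{h\in K}hG$, a consequence of the left-translation invariance \eqref{transl}, allows us to do.
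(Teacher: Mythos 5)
Your proof is correct, and for most steps it follows the same architecture as the paper: (1)$\iff$(2) by unwinding Theorem \ref{tff}, (2)$\iff$(3) and (4)$\iff$(5) by inclusion--exclusion (the paper phrases this as an iterated ``independence condition'', which is the same computation), and (3)$\implies$(5) by translating $\bigcap_{h\in K}hA$ by an element $h_0\in\bigcap_{h\in K}hG$ so that, up to an $o(|F_n|)$ error, it is counted by $\bigcap_{c\in C}c^{-1}A$ for a set $C\subset G$ with $|C|=|K|$. The paper does this last step intrinsically in $G$ (choosing $g\in\bigcap_{h\in K}hF_{n_0}$ and invoking \eqref{transl}), whereas you route it through the group $\widetilde G$ of Theorem \ref{sgrvsgr}; both are fine. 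You are also somewhat more careful than the paper about the fact that $hA$ and $hA^0$ partition $hG$ rather than $G$ in the (4)$\iff$(5) step.

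The genuinely different part is the commutative case. The paper proves (4)$\implies$(2): it observes that in the semigroup one only has the inclusion $g(h^{-1}A)\subset f_hA$ (not equality), obtains a one-sided inequality of counting functions for each block $B$, and then recovers equality by summing over all $2^{|K|}$ blocks --- which is why it needs the block version (4) as hypothesis rather than (5). You instead prove (5)$\implies$(3) directly by the change of variables $g\mapsto h_0g$ in $\widetilde G$. This works and is arguably cleaner, but one caveat: the displayed identity $h_0\cdot\bigcap_{h\in K}h^{-1}A=\bigcap_{c\in C}cA$ is literally false if $h^{-1}A$ denotes the semigroup preimage $\{x\in G:hx\in A\}$ --- only ``$\subset$'' holds, which is exactly the pitfall the paper flags. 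What is true, and what your argument actually needs, is the pointwise equivalence: for $g\in G$ one has $g\in\bigcap_{h\in K}h^{-1}A\iff h_0g\in\bigcap_{c\in C}cA$ (since $c_h^{-1}h_0g=h_0^{-1}hh_0g=hg$ by commutativity of $\widetilde G$), which yields $|F_n\cap\bigcap_{h\in K}h^{-1}A|=|h_0F_n\cap\bigcap_{c\in C}cA|=|F_n\cap\bigcap_{c\in C}cA|+o(|F_n|)$ by the F{\o}lner property of $(F_n)$ in $\widetilde G$. With that reading your argument is complete; it buys a shorter commutative case at the price of the group embedding, while the paper's summing-over-blocks trick avoids the embedding but must start from the full block condition (4).
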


\begin{rem}
If we enumerate the set $K$ as $\{h_1,h_2,\dots,h_k\}$ then the blocks
$B\in\{0,1\}^K$ stand in 1-1 correspondence to 0-1 words of length
$k$. Then, we can rewrite conditions (2)--(5) as follows
\begin{enumerate}
\item[(2)] for any nonempty finite set $K=\{h_1,h_2,\dots,h_k\}$ and
  any 0-1 word $w$ of length $k$, we have
\begin{equation*}
d_{(F_n)}(h_1^{-1}A^{w_1}\cap h_2^{-1}A^{w_2}\cap\cdots\cap
h_k^{-1}A^{w_k}) = 2^{-k},
\end{equation*}
\item[(3)] for any nonempty finite set $K=\{h_1,h_2,\dots,h_k\}$ we have
\begin{equation*}
d_{(F_n)}(h_1^{-1}A\cap h_2^{-1}A\cap\cdots\cap h_k^{-1}A) = 2^{-k},
\end{equation*}
\item[(4)] for any nonempty finite set $K=\{h_1,h_2,\dots,h_k\}$ and
  any 0-1 word $w$ of length $k$, we have
\begin{equation*}
d_{(F_n)}(h_1A^{w_1}\cap h_2A^{w_2}\cap\cdots\cap h_kA^{w_k}) = 2^{-k},
\end{equation*}
\item[(5)] for any nonempty finite set $K=\{h_1,h_2,\dots,h_k\}$ we
  have
\begin{equation*}
d_{(F_n)}(h_1A\cap h_2A\cap\cdots\cap h_kA) = 2^{-k}.
\end{equation*}
\end{enumerate}
\end{rem}

\begin{proof}[Proof of Theorem \ref{defn1.2}]
In view of Theorem \ref{tff}, $(F_n)$-normality can be defined via the
condition \eqref{tfnbnorm}. Observe that
\[
g\in\bigcap_{h\in K}h^{-1}A^{B(h)}\iff(\forall_{h\in K})\ hg\in
A^{B(h)}\iff\sigma_g(\mathbbm 1_A)\in[B].
\]
Thus \eqref{1.0} is just \eqref{tfnbnorm} written in terms of
$(F_n)$-density, which immediately gives the equivalence (1)$\iff$(2).
Next, (2) implies (3) because \eqref{1.2} is the particular case of
\eqref{1.0} for the block $B$ equal to the constant function $1$ on
$K$. By the same argument (4) implies (5).

We pass to proving that (3)$\implies$(2). Suppose that some two sets
$A_1,A_2\subset G$ have well defined $(F_n)$-densities and satisfy the
``independence condition'':
\[
d_{(F_n)}(A_1\cap A_2)=d_{(F_n)}(A_1)\cdot d_{(F_n)}(A_2).
\]
Then, by finite additivity of $(F_n)$-density, we have
\begin{multline*}\label{3.5}
d_{(F_n)}(A_1\cap A_2^0)=
d_{(F_n)}(A_1)-d_{(F_n)}(A_1\cap A_2)=\\
d_{(F_n)}(A_1)-d_{(F_n)}(A_1)\cdot
d_{(F_n)}(A_2)=d_{(F_n)}(A_1)(1-d_{(F_n)}(A_2))=\\
d_{(F_n)}(A_1)\cdot d_{(F_n)}(A_2^0).
\end{multline*}
Iterating the above calculation one shows that if a finite family
$\{A_1,A_2,\dots,A_k\}$ of subsets of $G$ satisfies the ``independence
condition'':
\begin{itemize}
\item for any subset $E\subset\{1,2,\dots,k\}$ one has
  $d_{(F_n)}\Bigl(\bigcap_{i\in E}A_i\Bigr) = \prod_{i\in
  E}d_{(F_n)}(A_i)$,
\end{itemize}
then, for any 0-1-word $w\in\{0,1\}^k$, the family
$\{A^{w_1}_1,A^{w_2}_2,\dots,A^{w_k}_k\}$ also satisfies the
independence condition. Next, notice that condition (3) applied to
all possible nonempty subsets of $K$ is precisely the independence
condition for the family $\{h^{-1}A:h\in K\}$. This, combined with the
preceding observation, implies (2).

The same argument proves the implication (5)$\implies$(4).

To prove the implication (3)$\implies$(5), we note that for large $n$
the ``$K^{-1}$-core'' of $F_n$, i.e., the set $\bigcap_{h\in K}hF_n$
is nonempty (like the $K$-core, it is eventually an
$\varepsilon$-modification of $F_n$). Thus there exists an
$n_0\in\na$, a $g\in G$ and a bijection $h\mapsto f_h$ from $K$ onto
some $K'\subset F_{n_0}$, such that $g=hf_h$ for each $h\in K$. By
\eqref{transl}, the $(F_n)$-density of $\bigcap_{h\in K}hA$ is the
same as that of $g^{-1}\bigcap_{h\in K}hA=\bigcap_{h\in K}g^{-1}hA =
\bigcap_{h\in K}f_h^{-1}A = \bigcap_{f\in K'}f^{-1}A$. By~(3), this
density equals $2^{-|K'|}=2^{-|K|}$, as needed.

If $G$ is a group then the equivalence (3)$\iff$(5) is obvious: the
family $\{hA:h\in K\}$ is the same as $\{h^{-1}A:h\in K^{-1}\}$.

Suppose $G$ is commutative and assume (4). Let $K$ be a nonempty
finite subset of $G$. As before, there exists $g\in G$ and a bijection
$h\mapsto f_h$ from $K$ onto some $K'\subset G$, such that $g=hf_h$
for each $h\in K$. By \eqref{transl} we have
\begin{equation*}\label{1.5}
d_{(F_n)}\Bigl(\bigcap_{h\in K}h^{-1}A^{B(h)}\Bigr) =
d_{(F_n)}\Bigl(\bigcap_{h\in K}gh^{-1}A^{B(h)}\Bigr).
\end{equation*}
We would like to replace $gh^{-1}$ by $f_h$ (using commutativity),
however, in general $gh^{-1}A$ is only a subset of $h^{-1}gA=f_hA$ (an
analogous inclusion holds for $A^0=G\setminus A$). Thus
\[
\frac1{|F_n|}\Bigl|F_n\cap\bigcap_{h\in K}h^{-1}A^{B(h)}\Bigr|\le
\frac1{|F_n|}\Bigl|F_n\cap\bigcap_{h\in
  K}f_hA^{B(h)}\Bigr|=\frac1{|F_n|}\Bigl|F_n\cap\bigcap_{f\in
  K'}fA^{B'(f)}\Bigr|,
\]
where $B'$ is defined on $K'$ by $B'(f_h)=B(h)$. By (4), the right
hand side tends to $2^{-|K'|}=2^{-|K|}$. But since the sum of the left
hand sides over all blocks $B\in\{0,1\}^K$ equals 1, we have
convergence of the left hand side to $2^{-|K|}$ for every block, i.e.,
\eqref{1.0}. We have proved that (4)$\implies$(2).
\end{proof}

\begin{thm}\label{linv}
Let $G$ be a countably infinite amenable cancellative semigroup and
let $(F_n)$ be a F{\o}lner \sq\ in $G$. For any $A\subset G$ and $g\in
G$ we have the equivalences
\begin{equation}\label{imp}
\text{$gA$ is $(F_n)$-normal $\iff$ $A$ is $(F_n)$-normal
  $\iff$ $g^{-1}A$ is $(F_n)$-normal.}
\end{equation} 
\end{thm}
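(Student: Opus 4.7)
I would prove \eqref{imp} by reducing to the case of a group. By Theorem \ref{sgrvsgr} there exists a countably infinite amenable group $\widetilde G \supset G$ in which $(F_n)$ remains a F\o lner \sq. Each of the three sets $A$, $gA$, $g^{-1}A$ (the last interpreted in $G$ as $\{x \in G : gx \in A\}$) is a subset of $G$, so Lemma \ref{ll} tells me that each is $(F_n)$-normal in $G$ if and only if it is $(F_n)$-normal when viewed as a subset of $\widetilde G$. It therefore suffices to verify the equivalences \eqref{imp} inside the group $\widetilde G$.

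Working in $\widetilde G$, I would use condition (3) of Theorem \ref{defn1.2} as the operative criterion: $A \subset \widetilde G$ is $(F_n)$-normal iff $d_{(F_n)}\bigl(\bigcap_{h\in K}h^{-1}A\bigr) = 2^{-|K|}$ for every nonempty finite $K \subset \widetilde G$. Because $\widetilde G$ is a group, for every $h \in \widetilde G$ one has the identities
\[
h^{-1}(gA) = (g^{-1}h)^{-1}A, \qquad h^{-1}(g^{-1}A) = (gh)^{-1}A,
\]
whence, intersecting over $h \in K$,
\[
\bigcap_{h \in K} h^{-1}(gA) = \bigcap_{h' \in g^{-1}K} (h')^{-1}A, \qquad \bigcap_{h \in K} h^{-1}(g^{-1}A) = \bigcap_{h' \in gK} (h')^{-1}A.
\]
The maps $K \mapsto gK$ and $K \mapsto g^{-1}K$ are mutually inverse, cardinality-preserving bijections of the family of finite subsets of $\widetilde G$, so condition (3) for $A$, for $gA$, and for $g^{-1}A$ amount to the same family of density identities, merely reindexed. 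This yields the equivalences \eqref{imp} in $\widetilde G$, and Lemma \ref{ll} transfers them back to $G$.

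\textbf{Where the difficulty lies.} One direction, ``$A$ normal $\Rightarrow g^{-1}A$ normal,'' can actually be carried out directly inside the semigroup: for any finite $K \subset G$, the identity $h^{-1}(g^{-1}A) = (gh)^{-1}A$ holds in $G$, and $gK \subset G$ with $|gK| = |K|$ by cancellativity, so condition (3) for $A$ at the set $gK$ gives the required density $2^{-|K|}$. The real obstacle is the reverse implication ``$g^{-1}A$ normal $\Rightarrow A$ normal'': the analogous reindexing would require every finite $K' \subset G$ to be of the form $gK$ with $K \subset G$, but in a general cancellative semigroup $gG$ can be a proper subset of $G$, so this approach stalls. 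The passage to $\widetilde G$ via Theorem \ref{sgrvsgr} is precisely what supplies a two-sided inverse for left multiplication by $g$ and thereby dissolves this asymmetry.
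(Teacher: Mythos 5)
Your proposal is correct and rests on the same three ingredients as the paper's proof: Theorem \ref{sgrvsgr}, Lemma \ref{ll}, and the characterization (3) of Theorem \ref{defn1.2}. The organization differs slightly: the paper proves the two ``forward'' implications ($gA$ normal $\Rightarrow$ $A$ normal $\Rightarrow$ $g^{-1}A$ normal) directly in the semigroup, exactly as in your closing paragraph (the reindexing $K\mapsto gK$ always stays inside $G$), gives a separate argument via condition (4) when $G$ is a group or commutative, and invokes the group embedding only for the remaining reverse implications; you funnel all of \eqref{imp} through $\widetilde G$ at once, which is cleaner and makes condition (4) unnecessary. One point you should make explicit: in $\widetilde G$ the symbol $g^{-1}A$ denotes the full preimage $\{x\in\widetilde G: gx\in A\}$, whereas Lemma \ref{ll} is applied to the subset $\{x\in G: gx\in A\}=g^{-1}A\cap G$ of $G$; your group-theoretic reindexing concerns the former, your transfer back to $G$ the latter, and these two sets generally differ outside $G$. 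The bridge is that $(F_n)$-normality in $\widetilde G$ depends only on the trace of the set on $G$: if $f\in F_n$ and $hf\notin G$ for some $h\in K$, then $f\in F_n\setminus h^{-1}F_n$, which by the F{\o}lner property of $(F_n)$ in $\widetilde G$ has cardinality $o(|F_n|)$; hence the sets $F_n\cap\bigcap_{h\in K}h^{-1}B$ for $B=g^{-1}A$ and $B=g^{-1}A\cap G$ differ by $o(|F_n|)$ elements and yield the same densities. The paper disposes of the same point with the phrase ``since all sets $F_n$ are contained in $G$''; with that observation added, your argument is complete.
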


\begin{proof}
The first half of the proof relies on the equivalence (1)$\iff$(3) in
Theorem~\ref{defn1.2}. Assume that $gA$ is $(F_n)$-normal and let $K$
be a nonempty finite subset of $G$. Condition \eqref{1.2} applied
to $K'=gK$ and the set $gA$ reads
\[
2^{-|K|}= d_{(F_n)}\Bigl(\bigcap_{h\in K}(gh)^{-1}gA\Bigr) =
d_{(F_n)}\Bigl(\bigcap_{h\in K}h^{-1}A\Bigr),
\]
i.e., we have obtained \eqref{1.2} for $K$ and $A$. Now assume that
$A$ is $(F_n)$-normal and let $K$ be a nonempty finite subset of $G$.
Condition \eqref{1.2} applied to $K'=gK$ and the set $A$ is
\[
2^{-|K|}= d_{(F_n)}\Bigl(\bigcap_{h\in K}(gh)^{-1}A\Bigr) =
d_{(F_n)}\Bigl(\bigcap_{h\in K}h^{-1}g^{-1}A\Bigr),
\]
which gives \eqref{1.2} for $K$ and $g^{-1}A$.

If $G$ is a group or is commutative then we can use the equivalence
(1)$\iff$(4) to reverse the implications: Assume that $g^{-1}A$ is
$(F_n)$-normal and let $K$ be a nonempty finite subset of $G$ and
$B\in\{0,1\}^K$. We have
\[
\frac1{|F_n|}\Bigl|F_n\cap\bigcap_{h\in K}hg^2g^{-1}A^{B(h)}\Bigr|\le
\frac1{|F_n|}\Bigl|F_n\cap\bigcap_{h\in K}hgA^{B(h)}\Bigr|.
\]
Te condition \eqref{1.3} applied to $K'=Kg^2$, the block
$B'\in\{0,1\}^{Kg^2}$ defined by $B'(hg^2)=B(h)$, and the set
$g^{-1}A$ implies that the left hand side tends to $2^{-|K|}$. But
since the sum of the right hand sides over all blocks $B\in\{0,1\}^K$
equals 1, we have convergence of the right hand side to $2^{-|K|}$ for
every block, i.e., \eqref{1.3} holds for $gA$. Since (4)$\implies$(1)
we have proved $(F_n)$-normality of $gA$.

Finally, we can use the fact that $G$ is cancellative. By Theorem \ref{sgrvsgr}, 
it can be embedded in a group $\widetilde G$ such that $(F_n)$ is a F{\o}lner \sq\ in $\widetilde G$. Suppose $g^{-1}A$ is
$(F_n)$-normal as a subset of $G$. By definition, the set $g^{-1}A$
regarded as a subset of $G$ is equal, in $\widetilde G$, to $g^{-1}A\cap G$.
By Lemma~\ref{ll}, $g^{-1}A\cap G$ is $(F_n)$-normal as a subset of
$\widetilde G$. Since all sets $F_n$ are contained in $G$, also the set
$g^{-1}A$ is $(F_n)$-normal in $\widetilde G$. In the group $\widetilde G$,
$(F_n)$-normality of $g^{-1}A$ implies $(F_n)$-normality $gA$.
Finally, by the trivial direction of Lemma~\ref{ll}, $gA$ is also
$(F_n)$-normal when viewed as a subset of $G$.
\end{proof}

\begin{rem}
We were unable to prove the implication (4)$\implies$(2) in Theorem
\ref{defn1.2} for semigroups embeddable in groups.
\end{rem}

\begin{rem}\label{3.6}
In general, even if $G$ is a group, $(F_n)$-normality is {\bf not}
right invariant: if $A$ is $(F_n)$-normal then $Ag$ is not guaranteed
to be $(F_n)$-normal\footnote{For instance, a
  counterexample can be constructed in the group
  $G=\langle\sigma,\tau\rangle$ of transformations of the symbolic
  space $\{0,1\}^\z$, generated by the shift $\sigma$ and the flip
  $\tau$ of the zero-coordinate symbol (note that $\tau^{-1}=\tau$).
  This group is solvable: the subset
  $H=\langle\sigma^{-k}\tau\sigma^k:k\in\z\rangle$ (consisting of
  flips at finitely many coordinates, with no shift) is a normal
  subgroup of $G$ and $G/H=\langle\sigma\rangle$ is Abelian. In
  particular, $G$ is amenable. Each $g\in G$ is representable in a
  unique way as $\sigma^{k_g}h_g$ with $h_g\in H$. For each $h\in H$
  denote by $m_h\in\z$ the rightmost coordinate on which $h$ applies
  the flip. Let $(F'_n)$ be a F{\o}lner \sq\ in $G$. Let
  $m_n=\max\{m_{h_g}:g\in F_n'\}$. Now we create a new F{\o}lner
  \sq\ $(F_n)$ by setting $F_n=F_n'\sigma^{m_n+1}$. Notice that any
  $g\in F_n$ does not flip the zero-coordinate symbol (but perhaps
  shifts it). This implies that $F_{n_1}$ and $F_{n_2}\tau$ are
  disjoint for any $n_1,n_2\in\na$. As we know, there exist an
  $(F_n)$-normal set $A'\subset G$ and its intersection with the union
  $A=\bigcup_nF_n$ is also $(F_n)$-normal. The set $A$ is disjoint
  from $F_n\tau$ for all $n\ge 1$, which implies that $A\tau$ has
  $(F_n)$-density zero and hence cannot be $(F_n)$-normal.}. For
this reason, $(F_n)$-normality of the elements of $\{0,1\}^G$ is {\bf
  not} preserved by the shift-action: $\sigma_g(x)$ need not be
$(F_n)$-normal if $x$ is (if $x$ is the indicator function of a set
$A$ then $\sigma_g(x)$ is the indicator function of $Ag^{-1}$).
Nevertheless, under very mild assumptions on $(F_n)$, this may happen
only with probability zero, see Corollary \ref{totnor} below.
\end{rem}

\section{Properties of the family of $(F_n)$-normal sets}\label{for}

\subsection{Ergodic interpretation of normality}\label{three}

Fix a countably infinite amenable cancellative semigroup $G$ and a
F{\o}lner \sq\ $(F_n)$ in $G$. Suppose that $G$ acts by continuous maps
$T_g$ on a compact metric space $X$, preserving a Borel probability
measure $\mu$. We will tacitly assume that, when convenient or
necessary, the identity element (always denoted by $e$) is attached to
the semigroup, and $T_e$ is the identity mapping. A point $x\in X$ is
called \emph{$(F_n)$-generic for $\mu$} if for any continuous function
$f\in C(X)$ one has
\begin{equation}\label{cesaro}
\lim_{n\to\infty}\frac1{|F_n|}\sum_{g\in F_n}f(T_gx)=\int f\,d\mu,
\end{equation}
in other words, if the measures $\frac1{|F_n|}\sum_{g\in
  F_n}\delta_{T_g x}$ converge to $\mu$ in the weak-star topology.

Note that the shift action on the symbolic space $X=\{0,1\}^G$ preserves
(among many other measures) the product measure $m^G$, where $m$ is
the $(\frac12,\frac12)$-measure on $\{0,1\}$. The measure $m^G$ will
be henceforth denoted by $\lambda$ and called the (uniform) Bernoulli
measure.

We have the following equivalent formulation of normality of a set
$A\subset G$, in dynamical terms.

\begin{prop}\label{ro}
A set $A\subset G$ is $(F_n)$-normal if and only if its indicator
function $\mathbbm 1_A$ is $(F_n)$-generic for the Bernoulli measure
$\lambda$ on $\{0,1\}^G$.
\end{prop}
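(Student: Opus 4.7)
The plan is to combine the reformulation of $(F_n)$-normality from Theorem~\ref{tff} with the ergodic-sum representation \eqref{ergs}, and then extend the resulting convergence from cylinder indicator functions to arbitrary continuous functions via the Stone--Weierstrass theorem.

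For any nonempty finite $K\subset G$ and any block $B\in\{0,1\}^K$, the cylinder $[B]$ is clopen in the product topology on $\{0,1\}^G$, so $\mathbbm 1_{[B]}$ is continuous, and $\int\mathbbm 1_{[B]}\,d\lambda = \lambda([B])=2^{-|K|}$. By Theorem~\ref{tff} combined with \eqref{ergs}, $(F_n)$-normality of $A$ (equivalently, of $\mathbbm 1_A$) is precisely the statement that
\[
\lim_{n\to\infty}\frac1{|F_n|}\sum_{g\in F_n}\mathbbm 1_{[B]}(\sigma_g(\mathbbm 1_A)) = \lambda([B])
\]
for every such $B$. Thus the direction ``$(F_n)$-generic $\Rightarrow$ $(F_n)$-normal'' follows immediately by specializing \eqref{cesaro} to $f=\mathbbm 1_{[B]}$.

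For the converse, assume $A$ is $(F_n)$-normal and let $\mathcal A$ denote the linear span of the functions $\mathbbm 1_{[B]}$, as $K$ ranges over nonempty finite subsets of $G$ and $B$ over $\{0,1\}^K$. Since the product $\mathbbm 1_{[B_1]}\mathbbm 1_{[B_2]}$ equals $\mathbbm 1_{[B_1]\cap [B_2]}$, and $[B_1]\cap[B_2]$ is either empty or a cylinder on the common shape $K_1\cup K_2$, the set $\mathcal A$ is closed under multiplication; it contains the constants and separates points of $\{0,1\}^G$, so by the Stone--Weierstrass theorem it is uniformly dense in $C(\{0,1\}^G)$. By linearity of the ergodic sum, \eqref{cesaro} with $x=\mathbbm 1_A$ holds for every $f\in\mathcal A$. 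For an arbitrary $f\in C(\{0,1\}^G)$ and $\varepsilon>0$, choose $f_\varepsilon\in\mathcal A$ with $\|f-f_\varepsilon\|_\infty<\varepsilon$; then a standard $\varepsilon/3$ estimate comparing the ergodic averages and the $\lambda$-integrals of $f$ and $f_\varepsilon$ yields \eqref{cesaro} for $f$, so $\mathbbm 1_A$ is $(F_n)$-generic for $\lambda$.

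I do not anticipate any substantive obstacle: once Theorem~\ref{tff} and \eqref{ergs} are invoked, the statement reduces to the routine passage from convergence on a generating unital subalgebra of $C(X)$ to convergence on all of $C(X)$. The only minor point deserving explicit mention is the multiplicative closure of $\mathcal A$, which, as noted above, is an immediate consequence of how cylinders intersect in a product space.
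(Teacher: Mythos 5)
Your proof is correct and follows essentially the same route as the paper: translate $(F_n)$-normality into convergence of the ergodic averages of cylinder indicators via Theorem~\ref{tff} and \eqref{ergs}, then pass to all of $C(\{0,1\}^G)$ by density of the linear span of cylinder indicators. The only difference is that you supply the Stone--Weierstrass justification for that density (and the routine $\varepsilon/3$ approximation), which the paper simply asserts.
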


\begin{proof}
First of all, note that for any nonempty finite set $K\subset G$ and
any $B\in\{0,1\}^K$, we have $2^{-|K|}=\lambda([B])$. Thus, using
\eqref{tfnbnorm} and \eqref{ergs}, we can see that the
$(F_n)$-normality of $x$ can be equivalently expressed by the
condition \eqref{cesaro} (with $\mu=\lambda$ and $T_g=\sigma_g$) for
all functions of the form $f=\mathbbm 1_{[B]}$. Finally, observe that
the indicator functions of cylinders are linearly dense in the space
$C(\{0,1\}^G)$ of continuous functions on $\{0,1\}^G$, which clearly
ends the proof.
\end{proof}

As was already mentioned in the Introduction, the existence of
$(F_n)$-normal 0-1 \sq s (and the fact that the set of such \sq s has
full measure) is often derived with the help of the pointwise ergodic
theorem, which, in general, holds only along rather special (tempered)
F{\o}lner \sq s. However, in the specific case of the Bernoulli measure
and continuous functions, the conventional pointwise ergodic theorem
can be replaced by Theorem~\ref{B1} below (more precisely, by its
equivalent version Theorem~\ref{co}), which is valid under much weaker
restrictions on F{\o}lner \sq s.

\begin{thm}\label{B1}
Let $G$ be a countably infinite amenable cancellative semigroup. Let $(F_n)_{n\ge 1}$ be a F{\o}lner \sq\ in $G$
such that for any $\alpha\in(0,1)$ we have $\sum_{n=1}^\infty
\alpha^{|F_n|}<\infty$. Then $\lambda$-almost every $x\in\{0,1\}^G$ is
$(F_n)$-normal, i.e., for any nonempty finite set $K\subset G$ and any
block $B\in\{0,1\}^K$, one has
\begin{equation}\label{nnn}
\lim_{n\to\infty}\frac1{|F_n|}{|\{g\in F_n:\sigma_g(x)\in [B]\}|}=2^{-|K|}.
\end{equation}
\end{thm}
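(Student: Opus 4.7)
The plan is to reduce \eqref{nnn}, for each fixed triple $(K,B,\varepsilon)$ with $K\subset G$ finite and nonempty, $B\in\{0,1\}^K$, and $\varepsilon>0$ rational, to the quantitative estimate
\[
\lambda\Bigl(\Bigl|\tfrac1{|F_n|}\sum_{g\in F_n}\bigl(\mathbbm 1_{[B]}(\sigma_g x)-2^{-|K|}\bigr)\Bigr|>\varepsilon\Bigr)\le C\,\alpha^{|F_n|}
\]
for some constants $C>0$ and $\alpha\in(0,1)$ depending on $(K,B,\varepsilon)$. Once this is in hand, the hypothesis $\sum_n\alpha^{|F_n|}<\infty$ allows one to invoke the Borel--Cantelli lemma for each such triple; since $G$ is countable, there are only countably many nonempty finite $K\subset G$, only finitely many $B$ for each $K$, and countably many rational $\varepsilon>0$, so a single $\lambda$-null exceptional set absorbs all failures, and the complementary full-measure set consists of genuinely $(F_n)$-normal points.

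The key step is a concentration-of-measure argument obtained by decomposing $F_n$ into pieces within which the random variables
\[
Y_g(x):=\mathbbm 1_{[B]}(\sigma_g x)-2^{-|K|},\qquad g\in F_n,
\]
are mutually $\lambda$-independent. Each $Y_g$ is bounded by $1$, has mean zero under $\lambda$, and depends only on the coordinates $x_{hg}$ with $h\in K$, i.e.\ on $x|_{Kg}$. Declare two elements $g,g'\in F_n$ \emph{in conflict} if $Kg\cap Kg'\ne\emptyset$. Cancellativity of $G$ guarantees that the equation $hg=h'g'$ with $h,h'\in K$ determines $g'$ uniquely from $(g,h,h')$, so each $g\in F_n$ is in conflict with at most $|K|^2-1$ other elements. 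A standard greedy coloring of the conflict graph therefore partitions $F_n$ into at most $M:=|K|^2$ classes $F_n^{(1)},\dots,F_n^{(M)}$, each consisting of pairwise non-conflicting elements; within any such class the supports $Kg$ are pairwise disjoint, and because $\lambda$ is a product measure, the variables $\{Y_g\}_{g\in F_n^{(i)}}$ are $\lambda$-independent.

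Hoeffding's inequality, applied to each of the $M$ independent partial sums $\sum_{g\in F_n^{(i)}}Y_g$, yields a bound of the form $2\exp(-2\varepsilon^2|F_n|/M^2)$ on the probability that a given partial sum exceeds $\varepsilon|F_n|/M$ in absolute value. Since the full sum $\sum_{g\in F_n}Y_g$ can deviate by more than $\varepsilon|F_n|$ only if at least one of the $M$ partial sums deviates by more than $\varepsilon|F_n|/M$, a union bound gives
\[
\lambda\Bigl(\Bigl|\sum_{g\in F_n}Y_g\Bigr|>\varepsilon|F_n|\Bigr)\le 2M\,\alpha^{|F_n|},\qquad\alpha:=\exp(-2\varepsilon^2/M^2)\in(0,1),
\]
which is precisely the summable bound required to feed Borel--Cantelli.

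The only place I anticipate a genuine difficulty is the independence decomposition: without cancellativity one cannot bound the conflict-degree by $|K|^2-1$, and the chromatic number $M$ could in principle grow with $n$, destroying the concentration estimate. With cancellativity the decomposition is immediate, and the remainder of the argument is routine; Hoeffding supplies exponential decay in $|F_n|$, and the sub-exponential hypothesis $\sum_n\alpha^{|F_n|}<\infty$ is tailored exactly to close the Borel--Cantelli loop. Notably, unlike the general pointwise ergodic theorem for amenable group actions, no temperedness or Shulman-type assumption on $(F_n)$ is required, since the combination of Bernoulli measure and cylinder test functions admits a direct large-deviation attack.
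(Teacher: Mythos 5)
Your argument is correct, and it reaches the conclusion by a genuinely different and more elementary route than the paper. The paper first reduces to the case where $G$ is a group (via Theorem \ref{sgrvsgr} and Lemma \ref{ll}) in order to invoke the congruent, deterministic, F{\o}lner, syndetic tilings of \cite{DHZ17}; it then partitions the whole group $G$ (not $F_n$) into a set $D_0$ of small upper $(F_n)$-density together with finitely many syndetic sets $D_1,\dots,D_r$ of positive lower density, on each of which the translates $Kg$ are pairwise disjoint, and applies Bernstein's inequality to the independent families $\{\mathsf Y_g\}_{g\in F_n\cap D_i}$. Your conflict-graph coloring achieves the same independence decomposition working directly inside each $F_n$ and directly in the cancellative semigroup: left cancellation recovers $g'$ uniquely from $h'g'=hg$, bounding the conflict degree by $|K|^2-1$, while right cancellation gives $|Kg|=|K|$, so that each $Y_g$ has $\lambda$-mean zero; a greedy coloring into $M\le|K|^2$ classes then suffices, and Hoeffding's bound $2\exp\bigl(-2(\varepsilon|F_n|/M)^2/N_i\bigr)\le 2\exp\bigl(-2\varepsilon^2|F_n|/M^2\bigr)$ absorbs the varying class sizes $N_i\le|F_n|$ automatically (classes of size below $\varepsilon|F_n|/M$ contribute probability zero). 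This dispenses with the group embedding, the tiling machinery, and the density bookkeeping for $D_0$ and the $D_i$, at no cost to the conclusion; both proofs close with the same union bound and Borel--Cantelli step over the countably many triples $(K,B,\varepsilon)$. The only thing the paper's route buys here is reuse of the tiling apparatus, which it needs anyway for the Champernowne constructions in later sections.
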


\begin{proof}
By Theorem \ref{sgrvsgr} and Lemma \ref{ll}, it suffices to consider
the case where $G$ is a group. Because there are countably many blocks
over finite subsets of $G$, it suffices to prove that for any nonempty
finite set $K\subset G$ and any block $B\in\{0,1\}^K$, \eqref{nnn}
holds for $\lambda$-almost every $x\in\{0,1\}^G$.

Given $\varepsilon>0$, we will partition the group $G$ into finitely
many sets $D_0,D_1,\dots,D_r$, such that $\overline
d_{(F_n)}(D_0)\le\varepsilon$ (the set $D_0$ may be empty), and for
every $i>0$, we have
\begin{enumerate}
	\item $\underline d_{(F_n)}(D_i)>0$,
	\item for all distinct $g_1,g_2\in D_i$, $Kg_1\cap
          Kg_2=\emptyset$.
\end{enumerate}

We start by showing that the existence of the sets $D_0,D_1,\dots,D_r$
as above implies the assertion of the theorem. Choose a positive
$\beta<\min\{\underline d_{(F_n)}(D_i),i=1,2,\dots,r\}$. Let $n_0$ be
such that for every $n\ge n_0$,
\[
\frac{|F_n\cap D_0|}{|F_n|}<2\varepsilon,\ \text{ and, for each }
i\in\{1,2,\dots,r\},\ \ \frac{|F_n\cap D_i|}{|F_n|}>\beta.
\]

Let $\Omega=(\{0,1\}^G,\mathcal B,\lambda)$ where $\mathcal B$ denotes
the Borel $\sigma$-algebra in $\{0,1\}^G$. Fix an $n\ge n_0$ and
consider the finite \sq\ of $\{0,1\}$-valued random variables defined
on $\Omega$ by
\[
\mathsf Y_g(x)=\mathbbm1_{[B]}(\sigma_g(x)), \ \ g\in F_n.
\]
Also, for each $i=0,1,\dots,r$ define
\[
\bar{\mathsf Y}_i = \frac1{|F_n\cap D_i|}\sum_{g\in F_n\cap
  D_i}\mathsf Y_g.
\]
By (2), for each $i>0$ the variable $\bar{\mathsf Y}_i$ is the average
of finitely many independent random variables $\mathsf Y_g$, each
assuming the value $1$ with probability $2^{-|K|}$. Clearly, the
expected value of $\bar{\mathsf Y}_i$ equals $2^{-|K|}$. Now, the
classical Bernstein's inequality (see, e.g., \cite{Ber27}) implies that
\[
\lambda(\{x:|\bar{\mathsf Y}_i(x)-2^{-|K|}|>\varepsilon\}) \le
\gamma^{|F_n\cap D_i|}<\gamma^{\beta|F_n|},
\]
where $\gamma\in(0,1)$ is some constant (not depending on $n$). Then,
denoting by $X_\varepsilon=\{x:\exists{i=1,2,\dots,r}:|\bar{\mathsf
  Y}_i(x)-2^{-|K|}|>\varepsilon\}$, we have
\begin{multline*}
\lambda(X_\varepsilon) =
\lambda(\bigcup_{i=1,2,\dots,r}\{x:|\bar{\mathsf
  Y}_i(x)-2^{-|K|}|>\varepsilon\})\le\\ \sum_{i=1,2,\dots,r}\lambda(\{x:|\bar{\mathsf
  Y}_i(x)-2^{-|K|}|>\varepsilon\})\le r\gamma^{\beta|F_n|}.
\end{multline*}
On the complementary set $\{0,1\}^G\setminus X_\varepsilon$, for each
$i=1,2,\dots,r$, we have the inequality $|\bar{\mathsf
  Y}_i(x)-2^{-|K|}|\le\varepsilon$, i.e.,
\begin{equation}\label{lili}
\frac1{|F_n\cap D_i|}\sum_{g\in F_n\cap D_i}\mathsf Y_g(x)\in
[2^{-|K|}-\varepsilon,2^{-|K|}+\varepsilon].
\end{equation}
For $i=0$, recall that $\frac{|F_n\cap D_0|}{|F_n|}<2\varepsilon$, and
we have the trivial estimate
\begin{equation}\label{lolo}
\frac1{|F_n\cap D_0|}\sum_{g\in F_n\cap D_0}\mathsf Y_g(x)\in[0,1].
\end{equation}
Averaging the left hand sides of \eqref{lili} and \eqref{lolo} over
$i=0,1,2,\dots,r$ (with weights $\frac{|F_n\cap D_i|}{|F_n|}$) we
obtain
\[
\frac1{|F_n|}\sum_{g\in F_n}\mathsf Y_g(x)\in
      [2^{-|K|}-3\varepsilon,2^{-|K|}+3\varepsilon].
\]
So, the set on which the inequality
\[
\left|\frac1{|F_n|}\sum_{g\in F_n}\mathsf
Y_g(x)-2^{-|K|}\right|>3\varepsilon
\]
holds is contained in $X_\varepsilon$, thus has measure at most
$r\gamma^{\beta|F_n|}$. Summarizing, we have shown that
\[
\lambda\left\{\left|\frac{|\{g\in F_n:\sigma_g(x)\in [B]\}|}{|F_n|} -
2^{-|K|}\right|> 3\varepsilon\right\}\le r\gamma^{\beta|F_n|}.
\]
Let $\alpha=\gamma^\beta$ and note that $\alpha\in(0,1)$. By the
assumption, $\sum_n \alpha^{|F_n|}<\infty$. The Borel-Cantelli Lemma
now yields that for $\lambda$-almost every $x$, the numbers
\[
\frac{|\{g\in F_n:\sigma_g(x)\in
  [B]\}|}{|F_n|}=\frac1{|F_n|}\mathsf{\tilde N}(B,x,F_n)
\]
eventually remain within $3\varepsilon$ from $2^{-|K|}$. Since
$\varepsilon$ is arbitrary, we have proved the desired almost
everywhere convergence.

It remains to define the sets $D_i$. We will do that with the help of
tilings. As we have mentioned earlier, $G$ admits a congruent,
deterministic, F{\o}lner system of proper, syndetic tilings $(\mathcal
T_k)_{k\ge 1}$. Let $k$ be such that all shapes $S\in\mathcal S$ of
the tiling $\mathcal T=\mathcal T_k=(\mathcal S,\mathcal C)$ are
$(K,\delta)$-invariant, where $\delta =\frac\varepsilon{2|K|}$. Then,
for each $S\in\mathcal S$, the \emph{$K$-core} of $S$, i.e., the set
$S_K=\{g\in G:Kg\subset S\}$ satisfies
\[
\frac{|S_K|}{|S|}\ge 1-\varepsilon
\]
(see Lemma \ref{estim} and notice since that $K$
contains the identity element, we have $S_K\subset S$). Also, if $T$
is any tile of $\mathcal T$ and $T_K$ denotes the $K$-core of $T$ then
\[
\frac{|T_K|}{|T|}\ge 1-\varepsilon
\]
(recall that $T=Sc$ where $S\in\mathcal S$, $c\in C_S$, in which case
$T_K=S_Kc$). Let now
\[
D_0 =\bigcup_{T\in\mathcal T} T\setminus T_K.
\]
We claim that $\overline
d_{(F_n)}(D_0)\le\varepsilon$. Indeed, this inequality is obvious if
  the F{\o}lner \sq\ $(F_n)$ is replaced by the \sq\ $(F_n^{(\mathcal
    T)})$ of the $\mathcal T$-saturations of the sets $F_n$. But the
  F{\o}lner \sq s $(F_n)$ and $(F_n^{(\mathcal T)})$ are equivalent
  (see Definition \ref{equi}) and hence they define the same upper
  densities of sets.
For $S\in\mathcal S$ and $g\in S_K$, let
$D_{(S,g)}=gC_S$. Since for any such pair $(S,g)$ we have $Kg\subset
S$ (and hence $Kgc\subset Sc$) and the sets $Sc$, $c\in C_S$, are
tiles (and thus are pairwise disjoint), the sets $Kh$ are pairwise
disjoint when $h=gc$ varies over $D_{(S,g)}$. By syndeticity of the
tiling, each set $C_S$ is syndetic, and so is each of the sets
$D_{(S,g)}$. It follows immediately from finite subadditivity of
$\underline d_{(F_n)}(\cdot)$ and the fact that for any $D\subset G$
and $g\in G$, $\underline d_{(F_n)}(gD)=\underline d_{(F_n)}(D)$, that
syndetic sets have positive lower $(F_n)$-density. In particular,
$\underline d_{(F_n)}(D_{(S,g)})>0$.
Finally, since there are finitely many pairs $(S,g)$, the sets
$D_{(S,g)}$ can be enumerated as $D_1,D_2,\dots,D_r$ ($r\in\na$). By
construction, the family $\{D_i:i=0,1,2,\dots,r\}$, is a partition of
$G$. This ends the proof.
\end{proof}

Recall (see Remark \ref{3.6}) that if the semigroup $G$ is not
commutative then generally speaking, the action $\sigma_g$ need not
preserve $(F_n)$-normality. Nevertheless, by a straightforward
application of shift-invariance of $\lambda$, the following holds.

\begin{cor}\label{totnor}
Let $G$ be an infinitely countable amenable cancellative semigroup. If a F{\o}lner \sq\ $(F_n)$ in $G$ satisfies, for each
$\alpha\in(0,1)$, the summability condition
$\sum_{n\in\na}\alpha^{|F_n|}<\infty$, then $\lambda$-almost every element
$x\in\{0,1\}^G$ has the property that all the images $\sigma_g(x)$
($g\in G$) are $(F_n)$-normal.
\end{cor}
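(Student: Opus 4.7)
The plan is to reduce the statement to Theorem~\ref{B1} by using a standard countable-intersection argument together with the shift-invariance of the Bernoulli measure $\lambda$. Let $\mathcal N$ denote the set of $(F_n)$-normal elements of $\{0,1\}^G$. By Theorem~\ref{B1}, the hypothesis $\sum_n\alpha^{|F_n|}<\infty$ for every $\alpha\in(0,1)$ guarantees $\lambda(\mathcal N)=1$. The conclusion to be established is that $\lambda(\mathcal N^*)=1$, where
\[
\mathcal N^*=\{x\in\{0,1\}^G:\sigma_g(x)\in\mathcal N\text{ for every }g\in G\}=\bigcap_{g\in G}\sigma_g^{-1}(\mathcal N).
\]
Since $G$ is countable, it suffices to show that $\lambda(\sigma_g^{-1}(\mathcal N))=1$ for every single $g\in G$, and then take the countable intersection.

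First I would verify that each shift $\sigma_g$ preserves $\lambda$. The measure $\lambda$ is the product measure $(\tfrac12,\tfrac12)^G$, which is determined on cylinders by $\lambda([B])=2^{-|K|}$ for $B\in\{0,1\}^K$. For any cylinder $[B]$, unwinding the definition of $\sigma_g$ gives
\[
\sigma_g^{-1}([B])=\{x\in\{0,1\}^G:x_{hg}=B(h)\text{ for all }h\in K\},
\]
which is the cylinder over the finite set $Kg$ determined by the block $h g\mapsto B(h)$. By cancellativity the map $h\mapsto hg$ is injective on $K$, so $|Kg|=|K|$, and therefore $\lambda(\sigma_g^{-1}([B]))=2^{-|Kg|}=2^{-|K|}=\lambda([B])$. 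Since cylinders generate the Borel $\sigma$-algebra, $\lambda\circ\sigma_g^{-1}=\lambda$.

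Combining the two steps: by shift-invariance, $\lambda(\sigma_g^{-1}(\mathcal N))=\lambda(\mathcal N)=1$ for every $g\in G$, and hence
\[
\lambda(\mathcal N^*)=\lambda\Bigl(\bigcap_{g\in G}\sigma_g^{-1}(\mathcal N)\Bigr)=1,
\]
because $G$ is countable and a countable intersection of sets of full measure has full measure. Any $x\in\mathcal N^*$ has, by construction, the property that every shift $\sigma_g(x)$ is $(F_n)$-normal, which is exactly the assertion of the corollary.

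There is essentially no obstacle here: the only conceptual point is that in a noncommutative semigroup the set $\mathcal N$ need not itself be shift-invariant (as explained in Remark~\ref{3.6}), so one cannot conclude by saying ``$\mathcal N$ is invariant''; instead one must intersect the $\sigma_g$-preimages and use countability of $G$. Measurability of the shift and of $\mathcal N$ is automatic since $\sigma_g$ is continuous and $\mathcal N$ is a countable intersection of countable unions of Borel sets determined by the convergence conditions~\eqref{tfnbnorm}.
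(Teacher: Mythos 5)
Your proposal is correct and is exactly the argument the paper intends: the paper dismisses this corollary as ``a straightforward application of shift-invariance of $\lambda$,'' and your write-up simply fills in the details (invariance of $\lambda$ under $\sigma_g$ via cancellativity on cylinders, plus the countable intersection $\bigcap_{g\in G}\sigma_g^{-1}(\mathcal N)$ of full-measure sets). Your remark that one cannot argue via invariance of $\mathcal N$ itself, in view of Remark~\ref{3.6}, is also the right caveat.
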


Proposition \ref{ro} allows us to formulate now an (ostensibly
stronger) equivalent version of Theorem \ref{B1} as follows

\begin{thm}\label{co}
Under the assumptions of Theorem \ref{B1}, $\lambda$-almost every
$x\in\{0,1\}^G$ is $(F_n)$-generic for $\lambda$, i.e., the
convergence
\begin{equation*}
\lim_{n\to\infty}\frac1{|F_n|}\sum_{g\in F_n}f(\sigma_gx)=\int f\,d\lambda.
\end{equation*}
holds for any continuous function $f$ on $\{0,1\}^G$.
\end{thm}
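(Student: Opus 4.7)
The plan is to derive Theorem \ref{co} as a direct consequence of Theorem \ref{B1} together with Proposition \ref{ro}. Under the canonical identification
\[
\{0,1\}^G\ni x\longleftrightarrow A_x:=\{g\in G:x_g=1\}\subset G,
\]
condition \eqref{nnn} in the statement of Theorem \ref{B1} is exactly the defining condition of $(F_n)$-normality of the subset $A_x$. Therefore Theorem \ref{B1}, whose hypotheses coincide with those of Theorem \ref{co}, already guarantees that the set
\[
X_0=\{x\in\{0,1\}^G: A_x \text{ is }(F_n)\text{-normal}\}
\]
is $\lambda$-conull.

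The second step is to transfer normality into genericity. By Proposition \ref{ro}, for every $x\in X_0$ the indicator function $\mathbbm 1_{A_x}=x$ is $(F_n)$-generic for $\lambda$; equivalently, the Ces\`aro convergence \eqref{cesaro} (with $\mu=\lambda$ and $T_g=\sigma_g$) holds for every continuous $f$ on $\{0,1\}^G$. Taking the union over $x\in X_0$ gives exactly the conclusion of Theorem \ref{co}.

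The genuine analytic work has already been carried out elsewhere, so there is no new obstacle here. The probabilistic estimate, namely the Bernstein-type tail bound applied to the partition of $G$ into the syndetic sets $D_0,D_1,\dots,D_r$ derived from a sufficiently invariant tile of $\mathcal T_k$, together with the Borel--Cantelli summation powered by the assumption $\sum_n\alpha^{|F_n|}<\infty$, is contained in the proof of Theorem \ref{B1}. The passage from convergence on cylinder indicators $\mathbbm 1_{[B]}$ to convergence on arbitrary $f\in C(\{0,1\}^G)$ is handled inside the proof of Proposition \ref{ro}, using that $\mathrm{span}\{\mathbbm 1_{[B]}:K\subset G\text{ finite},\, B\in\{0,1\}^K\}$ is uniformly dense in $C(\{0,1\}^G)$ and a routine three-$\varepsilon$ approximation. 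Thus Theorem \ref{co} requires nothing beyond chaining these two previously established results.
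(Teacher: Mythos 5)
Your proposal is correct and is exactly the paper's route: the paper presents Theorem \ref{co} as an immediate reformulation of Theorem \ref{B1}, obtained by applying the equivalence of $(F_n)$-normality and $(F_n)$-genericity for $\lambda$ established in Proposition \ref{ro} (where the passage from cylinder indicators to arbitrary continuous functions is carried out). Nothing further is needed.
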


\begin{rem}
\begin{enumerate}[(i)]\nopagebreak
  \item The requirement $\sum_{n=1}^\infty\alpha^{|F_n|}<\infty$ for
    each $\alpha\in(0,1)$ is much weaker than \eqref{tempered} (of
    $(F_n)$ being tempered), and is satisfied, for example, by any
    F{\o}lner \sq\ $(F_n)$ such that $|F_n|$ strictly increases as
    $n\to\infty$.
  \item On the other hand, some condition on the growth of $|F_n|$ is
    necessary. For example, for $G=\z$ consider the F{\o}lner
    \sq\ consisting of pairwise disjoint intervals: $n_1$ intervals of
    length $1$ followed by $n_2$ intervals of length $2$, followed by
    $n_3$ intervals of length $3$, etc. If a number $n_k$ is very
    large compared with $k$, then there exists a set $X_k\subset\{0,1\}^\mathbb Z$ with
    $\lambda(X_k)$ close to 1 and such that for every $x\in X_k$ 
    the restriction of $x$ to at least one of the intervals $F_n$ of
    length $k$ will be filled entirely by $0$'s. We can thus arrange
    the \sq\ $n_k$ so that the measures of the complements of the sets $X_k$
    are summable over $k$. Then, by the Borel-Cantelli Lemma, for almost
    every $x$ there will be arbitrarily far F{\o}lner sets filled
    entirely with $0$'s (instead of being filled nearly half-half by
    $0$'s and $1$'s), contradicting $(F_n)$-genericity of $x$ already
    on cylinders of length $1$. In fact, in this example the set of
    $(F_n)$-generic elements has measure zero.
\end{enumerate}
\end{rem}

\begin{rem}\label{popraw}
It is worth mentioning that the method used in the proof of
Theorem~\ref{B1} fails in proving the pointwise ergodic theorem (even
for the Bernoulli measure) for discontinuous $L^\infty$ functions. In
\cite{AJ75}, Akcoglu and del Junco proved that in any ergodic (and
aperiodic) $(\z,+)$-action the pointwise ergodic theorem along the
F{\o}lner sequence of intervals $[n,n+\lfloor\sqrt n\rfloor]$ fails
for the indicator function of some measurable set $A$. Note that, for
any $a\in(0,1)$, the \sq\ $a^{|F_n|}=a^{\lfloor\sqrt n\rfloor}$ is
summable, thus in the case of the uniform Bernoulli measure, according
to Theorem \ref{B1}, the set $A$ cannot be clopen in $\{0,1\}^\z$.
\end{rem}

\subsection{Normal elements form a first category set}\label{tsmall}

In contrast to the measure-theoretic largeness established in
Theorem~\ref{B1}, the following simple proposition de\-monstrates that
the set of $(F_n)$-normal elements in $\{0,1\}^G$ is always
topologically small (i.e., is of first category), without any
assumptions on the F{\o}lner \sq.

\begin{prop} \label{norissm}
Let $G$ be a countably infinite cancellative amenable semigroup and
let $(F_n)$ be a F{\o}lner \sq\ in $G$. Then the set $\mathcal
N((F_n))$ of $(F_n)$-normal elements is of first Baire category in
$\{0,1\}^G$.
\end{prop}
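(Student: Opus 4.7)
The plan is to use a standard Baire category argument based on a single one-coordinate block. Pick any fixed $h_0\in G$ and let $B_0\in\{0,1\}^{\{h_0\}}$ be given by $B_0(h_0)=0$. By Theorem \ref{tff}, every $(F_n)$-normal $x\in\{0,1\}^G$ must satisfy
\[
\lim_{n\to\infty}\frac{1}{|F_n|}\bigl|\{g\in F_n : x_{h_0 g}=0\}\bigr|=\tfrac{1}{2},
\]
since $\sigma_g(x)\in[B_0]\iff x_{h_0 g}=0$. Let $W$ denote the set of all $x\in\{0,1\}^G$ for which this limit holds. Then $\mathcal{N}((F_n))\subseteq W$, so it suffices to prove that $W$ is of first category.

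For each $n$ define the open set
\[
V_n=\bigl\{x\in\{0,1\}^G : \bigl|\{g\in F_n : x_{h_0 g}=0\}\bigr|>\tfrac{3}{4}|F_n|\bigr\},
\]
which depends only on coordinates in the finite set $h_0 F_n$. If $x\in W$, then $x\notin V_n$ for all sufficiently large $n$, hence
\[
W\subseteq\bigcup_{N\ge 1}A_N,\qquad A_N:=\bigcap_{n\ge N}V_n^c.
\]
Each $A_N$ is closed, so it remains to show that every $A_N$ has empty interior.

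Suppose toward a contradiction that some $A_N$ contains a basic cylinder $[C]$ determined by a block $C\in\{0,1\}^E$ on a finite set $E\subset G$. Since $(F_n)$ is a F{\o}lner \sq, $|F_n|\to\infty$; by cancellativity, $|h_0 F_n|=|F_n|$, so we can choose $n\ge N$ large enough that $|h_0 F_n\setminus E|>\tfrac{3}{4}|F_n|$. Define $x\in\{0,1\}^G$ by $x|_E=C$, by $x_{g'}=0$ for $g'\in h_0 F_n\setminus E$, and arbitrarily elsewhere. Then $x\in[C]$, while
\[
\bigl|\{g\in F_n : x_{h_0 g}=0\}\bigr|\ge |h_0 F_n\setminus E|>\tfrac{3}{4}|F_n|,
\]
so $x\in V_n$, contradicting $[C]\subseteq A_N\subseteq V_n^c$. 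Hence each $A_N$ is nowhere dense, $\bigcup_N A_N$ is meager, and therefore $\mathcal{N}((F_n))$ is of first Baire category.

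There is no real obstacle here; the only mild subtlety is the need for cancellativity (to ensure $|h_0 F_n|=|F_n|$ so that we can still force a large majority of zeros on $h_0 F_n$ after fixing the block $C$ on $E$) and the fact that $|F_n|\to\infty$, which is automatic for F{\o}lner \sq s.
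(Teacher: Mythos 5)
Your proof is correct and follows essentially the same Baire category argument as the paper: express a superset of $\mathcal N((F_n))$ as a countable union of closed sets $\bigcap_{n\ge N}V_n^c$ defined by frequency conditions along the $F_n$, and kill their interiors by perturbing any cylinder to be mostly constant on a large set $h_0F_n$. Your restriction to a single one-coordinate block (rather than the paper's full family of blocks $B$ and tolerances $\varepsilon$) is a harmless simplification, since containment in a meager set is all that is needed.
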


\begin{proof}
For $n\in\na$, a nonempty finite set $K\in G$, $B\in\{0,1\}^K$ and
$\varepsilon\in(0,2^{-|K|})$, let $W(F_n,B,\varepsilon)$ be the union of all
cylinders corresponding to the blocks $C$ over the F{\o}lner set $F_n$,
such that
\[
2^{-|K|}-\varepsilon\le \frac1{|F_n|}|\{g\in F_n: (\forall h\in
K)\ hg\in F_n \text{ and }C(hg)=B(h)\}|\le 2^{-|K|}+\varepsilon.
\]
Since $W(F_n,B,\varepsilon)$ is a finite union of cylinders, it is
clopen. The set $\mathcal N((F_n))$ can be written as
\[
\bigcap_{K\subset G,\, K \text{ nonempty
    finite}}\ \,\bigcap_{B\in\{0,1\}^K}\ \,\bigcap_{0<\varepsilon<2^{-|K|}}\ \,
\bigcup_{n_0\in\na}\ \,\bigcap_{n\ge n_0}\ \,W(F_n,B,\varepsilon).
\]
Note that for each $B,\varepsilon$ and $n_0$ as above, the closed set
$\bigcap_{n\ge n_0}\ \,W(F_n,B,\varepsilon)$ has empty interior,
because the set of the elements of $\{0,1\}^G$ which are constant on
complements of finite sets
is dense in $\{0,1\}^G$. Thus by the Baire theorem, the set
$\bigcup_{n_0\in\na}\ \,\bigcap_{n\ge n_0}\ \,W(F_n,B,\varepsilon)$ is
of first category and contains the set $\mathcal N((F_n))$, which ends
the proof.
\end{proof}

\begin{cor}\label{normnumberscategory}
Let $G$ be either $(\na,+)$ or $(\na,\times)$ and let $(F_n)$ be an
arbitrary F{\o}lner \sq\ in $G$. Then the set of $(F_n)$-normal numbers
in $[0,1]$ (i.e., numbers which have $(F_n)$-normal binary expansions)
is of first category.
\end{cor}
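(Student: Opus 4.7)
The plan is to transfer the conclusion of Proposition~\ref{norissm} from the symbolic space $\{0,1\}^{\na}$ to the interval $[0,1]$ via the binary expansion map, handling the small nuisance posed by the dyadic rationals.

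Let $\phi\colon\{0,1\}^{\na}\to[0,1]$ denote the continuous surjection $(x_n)\mapsto\sum_{n=1}^\infty x_n 2^{-n}$. Its only failure of injectivity occurs on the countable set $D\subset[0,1]$ of dyadic rationals, each of which has exactly two preimages (the expansions terminating in all $0$'s and in all $1$'s). Let $A\subset\{0,1\}^{\na}$ be the set of \sq s that are not eventually constant; then $A$ is a dense $G_\delta$ whose complement is countable, and $\phi|_A$ is a continuous bijection onto $[0,1]\setminus D$. A short compactness argument shows that $\phi|_A$ is in fact a homeomorphism: any \sq\ in $A$ whose image under $\phi$ converges to $y\in[0,1]\setminus D$ can have, by compactness of $\{0,1\}^{\na}$, only one possible accumulation point, namely the unique binary expansion of $y$.

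Next, I would observe that no eventually constant \sq\ is $(F_n)$-normal, since $|F_n|\to\infty$ forces the asymptotic frequency of any fixed nontrivial block to be $0$ or $1$, whereas $(F_n)$-normality requires it to equal $2^{-|K|}$. In particular, no dyadic rational is $(F_n)$-normal, and hence the set $N\subset[0,1]$ of $(F_n)$-normal numbers is exactly $\phi(\mathcal N((F_n))\cap A)$.

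By Proposition~\ref{norissm}, $\mathcal N((F_n))$ is of first Baire category in $\{0,1\}^{\na}$, and deleting the countable set $\{0,1\}^{\na}\setminus A$ preserves this property. The homeomorphism $\phi|_A$ then transports first category onto $[0,1]\setminus D$, and since $D$ is itself countable and hence meager in $[0,1]$, the set $N$ is of first category in $[0,1]$. The argument applies uniformly to both $G=(\na,+)$ and $G=(\na,\times)$, because $\phi$ and the notion of $(F_n)$-normal binary expansion do not depend on the semigroup operation on the index set. The only step with any potential subtlety is verifying that first category on a dense $G_\delta$ subspace with countable complement coincides with first category on the ambient space, but this is a routine topological fact, so I do not anticipate a substantive obstacle.
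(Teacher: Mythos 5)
Your argument is correct, and at the top level it follows the same strategy as the paper: establish first category of $\mathcal N((F_n))$ in the symbolic space via Proposition~\ref{norissm} and push it forward through the binary expansion map. The implementations differ in how the non-injectivity of that map is handled. The paper proves a small general lemma: any continuous map on a compact domain, all but countably many of whose fibers are singletons (the rest being of first category), preserves first category; the proof writes the first category set inside a union of compact nowhere dense sets $B_n$ and shows each image $\phi(B_n)$ is compact with empty interior. You instead restrict the expansion map to the comeager set $A$ of non-eventually-constant sequences, verify by a compactness/unique-accumulation-point argument that this restriction is a homeomorphism onto the complement of the dyadic rationals, and exploit the extra observation that $\mathcal N((F_n))\cap A=\mathcal N((F_n))$ (no eventually constant sequence is normal, since the frequency of the one-symbol block $\langle 1\rangle$ along $(F_n)$ tends to $0$ or $1$ rather than $\tfrac12$); the two relativization steps you invoke (a first category subset of a dense subspace is first category in the ambient space, and conversely for intersections with a dense subspace) are indeed routine, since nowhere density passes between a space and a dense subspace. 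Your route is slightly more self-contained for this specific corollary, while the paper's lemma is more flexible in that it transfers \emph{arbitrary} first category sets without needing them to avoid the exceptional fibers; both are complete proofs.
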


\begin{proof}
For any countably infinite semigroup $G$ in which we have a fixed
\emph{enumeration}, i.e., a bijection between $G$ and $\na$, $n\mapsto
g_n$, the formula
\[
\psi(x) = \sum_{n\in\na}2^{-n}x(g_n), \text{ where} \ \ x=(x(g))_{g\in
  G}\in\{0,1\}^G,
\]
establishes a continuous map from $\{0,1\}^G$ onto $[0,1]$. This map
is injective except on a countable set, on which it is two-to-one.
Note that every continuous map $\phi$ on a compact domain, such that
all but countably many fibers (preimages of points) are singletons and
all other fibers are of first category, preserves the first category.
Indeed, let $A$ be a first category subset of the domain, i.e.,
$A\subset B=\bigcup_n B_n$, where each $B_n$ is compact and has empty
interior. The set $C=\bigcup_n\phi^{-1}(\phi(B_n))$ contains the first
category set $B$ and differs from it by at most a countable union of
fibers (which is of first category), so $C$ is also a first category
set. By continuity, for each $n$ the set $\phi(B_n)$ is compact and,
moreover, it has empty interior (otherwise $\phi^{-1}(\phi(B_n))$
would have nonempty interior, which is impossible since $C$ is of
first category). Thus $\phi(B)=\bigcup_n\phi(B_n)$ is of first
category, and so is its subset $\phi(A)$. We conclude that the binary
expansion map $\psi$ preserves the first category. Now it remains to
apply this fact to $(\na,+)$ or $(\na,\times)$ and invoke Proposition
\ref{norissm}.
\end{proof}

\section{An effectively defined normal set}\label{cha}

Although Lebesgue-almost every number is normal (in the classical
sense) in any base $b\in\na$,
the set of \emph{computable} numbers (i.e., the numbers whose $b$-ary
expansion can be computed with the help of a Turing machine, like, for
example, the Champernowne number) has Lebesgue measure zero. It is so,
because the asymptotic Kolmogorov complexity of such expansions is
zero, while, as shown by A. A. Brudno~\cite{Br82}, a typical expansion
has Kolmogorov complexity $\log b$. Hence computable normal numbers
are highly exceptional among normal numbers.

Let $G$ be a countably infinite amenable group and let $(F_n)$ be an
arbitrary F{\o}lner \sq\ in $G$. In this subsection we describe an
``effective'' construction of an $(F_n)$-normal Cham\-per\-nowne-like
set (viewed, when convenient, as an element of $\{0,1\}^G$). We use
the term \emph{effective} to indicate that our construction is given
by an inductive algorithm which allows to determine, for every $g\in
G$, whether it belongs to the set or not, in finitely many inductive
steps. We cannot claim that our construction gives a computable set,
since we make no assumptions on computability of the group $G$ or the
F{\o}lner \sq\ $(F_n)$.

In the construction of the classical binary Champernowne number three
types of 0-1 words are involved:
\begin{enumerate}
	\item The binary words which are expansions of natural
          numbers. We will call these words ``bricks''. Notice that a
          brick never starts (on the left) with the symbol $0$, and
          there are exactly $2^{k-1}$ bricks of length $k$.
	\item The ``packages''. For each $k$, the $k$th ``package'' is
          the concatenation (in the lexicographical order) of all
          bricks of length $k$. The length of the $k$th package is
          $k2^{k-1}$.
	\item Finally, the ``chains''. For each $k$, the $k$th
          ``chain'' is formed by the packages, from the first to the
          $k$th, concatenated together (by increase of $k$). The $k$th
          chain stretches from the coordinate $1$ to the coordinate
          $\sum_{i=1}^k i2^{i-1}$.
\end{enumerate}
Once the chains are defined, the \sq\ representing the binary
Champernowne number is obtained by taking the coordinatewise limit in
$\{0,1\}^\na$ of the chains (extended to infinite 0-1 words by adding
zeros).

In the construction of the binary Champernowne number described above
one can introduce the following three modifications which do not
destroy the normality:
\begin{enumerate}
	\item one can include as bricks also the words starting with
          the symbol $0$ (the reason why they are not used is purely
          aesthetic) so that there are $2^k$ (rather than $2^{k-1}$)
          bricks of length $k$,
	\item the package of order $k$ may contain every brick of
          order $k$ repeated more than once, as long as the number of
          repetitions is the same (or nearly the same) for every
          brick; then the length of the package of order $k$ is
          $m_kk2^k$ for some \sq\ $m_k$,
	\item in the chain, one may repeat each package of order $k$
          more than once, say $n_k$ times (then the length of the
          $k$th chain equals $\sum_{i=1}^k n_im_ii2^{i-1}$).
\end{enumerate}
While the modifications described above are not necessary in the
construction of the classical Champernowne number, they contain an
idea instrumental for the proof of the following theorem.

\begin{thm}\label{groups}
Let $G$ be a countably infinite amenable group and let $(F_n)$ be an
arbitrary F{\o}lner \sq\ in $G$. Then there exists an effectively
defined $(F_n)$-normal element $x\in\{0,1\}^G$.
\end{thm}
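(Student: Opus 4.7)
My plan is to generalize the three-tier Champernowne construction (bricks, packages, chains) to the amenable group setting using a tiling system. Invoke the congruent, deterministic, syndetic F\o lner system of proper tilings $(\mathcal T_k)_{k\ge 1}$ of $G$ guaranteed at the end of Section~\ref{pre}. For each $k$ and shape $S\in\mathcal S_k$, the \emph{level-$k$ bricks of shape $S$} will be the $2^{|S|}$ elements of $\{0,1\}^S$. A \emph{level-$k$ package} will be a family of $\mathcal T_k$-tiles of the same shape $S$ on which each brick appears exactly once. The goal is to define $x$ as an increasing union of ``chains'' $E_1\subset E_2\subset\cdots$ with $\bigcup_jE_j=G$, where each $E_j$ is a finite union of $\mathcal T_{k_j}$-tiles grouped into packages. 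Consistency across levels comes for free from the congruence/determinism of the tilings: a level-$(k+1)$ brick on a tile $S'c'$ restricts, through the standard tiling of $S'$, to level-$k$ bricks filling the sub-tiles, and this restriction is the same on all tiles of shape $S'$.

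\textbf{Construction.} Fix effective enumerations of $G$ and of each $C_S$, and set $E_0=\emptyset$. At stage $j\ge 1$, pick a level $k_j>k_{j-1}$ large enough that every shape in $\mathcal S_{k_j}$ is $(F_j,2^{-j})$-invariant, guaranteeing $|F_j^{(\mathcal T_{k_j})}\triangle F_j|\le 2^{-j}|F_j|$. Choose a finite collection $\mathcal U_j$ of $\mathcal T_{k_j}$-tiles containing $F_j^{(\mathcal T_{k_j})}\cup E_{j-1}\cup\{g_1,\dots,g_j\}$, enlarged with padding tiles so that, for each shape $S'\in\mathcal S_{k_j}$, the number of tiles of shape $S'$ in $\mathcal U_j\setminus E_{j-1}$ is a multiple of $2^{|S'|}$. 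Set $E_j=\bigcup\mathcal U_j$. On the ``constrained'' tiles of $\mathcal U_j$ (those meeting $E_{j-1}$), inherit the brick forced by the stage-$(j-1)$ assignment on their $\mathcal T_{k_{j-1}}$ sub-tiles and fill any unused sub-tiles with zeros. On the remaining tiles of each shape $S'$, partition them into groups of size $2^{|S'|}$ and distribute the $2^{|S'|}$ bricks bijectively within each group. The whole construction is algorithmic because the tilings and enumerations are.

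\textbf{Verification and main obstacle.} To prove $(F_n)$-normality, fix a finite $K\subset G$, a block $B\in\{0,1\}^K$, and $\varepsilon>0$. Choose $j_0$ large enough that for every $j\ge j_0$ each shape in $\mathcal S_{k_j}$ is $(K,\varepsilon)$-invariant. By Lemma~\ref{estim}, on any tile $S'c$ of $\mathcal T_{k_j}$ the count $|\{g\in S'c:\sigma_g(x)\in[B]\}|$ depends on the brick $x|_{S'c}$ up to a boundary error of at most $\varepsilon|S'|$, and averaging that count over all $2^{|S'|}$ bricks gives $2^{-|K|}|S'|+O(\varepsilon|S'|)$. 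For $n\ge j_0$, the bricks inside $F_n^{(\mathcal T_{k_n})}$ split into complete packages plus a vanishing fraction of constrained or leftover tiles, and the elementary fact that a uniformly distributed level-$k_n$ brick restricts to a uniformly distributed brick on each standard sub-tile lets us transfer the balance across shapes. Since $|F_n^{(\mathcal T_{k_n})}\triangle F_n|=o(|F_n|)$, we conclude $\mathsf{\tilde N}(B,x,F_n)/|F_n|\to 2^{-|K|}$, which by Theorem~\ref{tff} is $(F_n)$-normality. The principal technical obstacle is controlling simultaneously the constrained tiles inherited from earlier stages and the leftover ``boundary'' tiles outside complete packages. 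The first is handled by the inequality $|E_{j-1}|=o(|\mathcal U_j|)$, achieved by choosing sufficiently many padding tiles; the second is small because the number of tiles of each shape in $F_j^{(\mathcal T_{k_j})}$ tends to infinity with $j$, provided $k_j$ is chosen to grow slowly enough relative to $|F_j|$.
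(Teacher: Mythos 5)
Your overall architecture is the right one and is essentially the paper's: bricks are all blocks over the shapes of the tiling system of \cite{DHZ17}, packages make every brick appear (nearly) equally often, and $x$ is assembled as a mixed concatenation of packages of growing order adapted to $(F_n)$. But the central quantitative step is stated backwards, and this is exactly where the real difficulty of the theorem lives. You require that ``every shape in $\mathcal S_{k_j}$ is $(F_j,2^{-j})$-invariant'' and claim this gives $|F_j^{(\mathcal T_{k_j})}\triangle F_j|\le 2^{-j}|F_j|$. It gives the opposite. A shape $S$ can be $(F_j,\varepsilon)$-invariant only if $|S|\gtrsim|F_j|/\varepsilon$, so $F_j$ then meets only a handful of tiles, each of cardinality far exceeding $|F_j|$, and the saturation $F_j^{(\mathcal T_{k_j})}$ is vastly larger than $F_j$ (already in $\z$ with $F_j=\{1,\dots,m\}$ and $S=\{0,\dots,N-1\}$, $N\ge (m+1)/\varepsilon$, the saturation has cardinality at least $N\gg m$). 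The correct hypothesis for the saturation estimate is that $F_j$ be $(\mathbf S,2^{-j})$-invariant for $\mathbf S=\bigcup_{S\in\mathcal S_{k_j}}SS^{-1}$ --- the F{\o}lner set must dominate the shapes, not the other way around. This caps $k_j$ from above in terms of $F_j$, and since an arbitrary F{\o}lner sequence has no monotonicity of cardinalities, an increasing sequence $(k_j)$ with $F_j$ nearly $\mathcal T_{k_j}$-saturated for every $j$ need not exist; your closing caveat that $k_j$ should ``grow slowly enough relative to $|F_j|$'' contradicts the displayed choice rather than repairing it. The paper resolves precisely this tension by introducing, for each level $k$, a threshold $n_k$ past which all $F_n$ are $(\mathbf S_k,\frac1k)$-invariant, and by laying level-$r(k\!-\!1)$ tiles only on the still-untiled part of the saturation of $F_1\cup\cdots\cup F_{n_k}$; this guarantees that each $F_n$ is covered exclusively by tiles under whose shapes it is almost invariant, which is what makes $|F_n\triangle F_n^{(\Theta)}|$ small.

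Two further points would need repair even after the choice of levels is fixed. First, your ``constrained'' tiles are padded with zeros on the sub-tiles not lying in $E_{j-1}$; this padded set is the $\mathcal T_{k_j}$-collar of $E_{j-1}$, which can be far larger than $E_{j-1}$ itself and can occupy a non-negligible fraction of some later $F_n$ --- the bound $|E_{j-1}|=o(|\mathcal U_j|)$ controls a global proportion inside $E_j$, not the proportion inside the specific sets $F_n$ along which normality must be verified. (The paper avoids padding altogether: by congruence the untiled region is always a union of complete higher-order tiles, so old tiles are never wrapped into new ones.) Second, your packages are scattered groups of $2^{|S'|}$ tiles, so the brick statistics are correct only for sets containing complete groups; the paper instead makes each single package a self-contained $(K,\varepsilon)$-normal block (using syndeticity to choose $r(k)$ so that every shape of $\mathcal T_{r(k)}$ contains at least $2k2^{|S|}$ sub-tiles of each shape $S$ of $\mathcal T_k$), so that any concatenation of high-order packages is automatically almost normal, with no bookkeeping about how groups sit relative to $F_n$.
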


\begin{rem}
The theorem provides $(F_n)$-normal elements even when the
cardinalities $|F_n|$ do not strictly increase, in which case Theorem
\ref{B1} does not necessarily apply.
\end{rem}

\begin{proof}[Proof of Theorem \ref{groups}]
The construction involves a congruent, deterministic, F{\o}lner system
$(\mathcal T_k)_{k\ge 0}$ of proper, syndetic tilings of $G$, starting
with the tiling $\mathcal T_0$ comprised of singletons (see Section
\ref{pre}). We can choose the system $(\mathcal T_k)$ independently of
the F{\o}lner \sq\ $(F_n)$; any such system of tilings will lead to an
$(F_n)$-normal element.

For each $k\ge 1$ and each shape $S$ of $\mathcal T_k$ let
$\B_S=\{0,1\}^S$ be the set of all possible 0-1 blocks over $S$
(clearly, $|\B_S|=2^{|S|}$) and let $\bigcup_{S\in\mathcal S_k}\B_S$
be the set of ``bricks'' of order $k$. Syndeticity of the sets $C_S$
together with the fact that $(\mathcal T_k)$ is a F{\o}lner and
deterministic system of tilings imply that for each $k\ge 1$ there
exists an index $r(k)$ such that the standard tiling of each shape
$S'$ of the tiling $\mathcal T_{r(k)}$, by the tiles of $\mathcal
T_k$, contains, for each shape $S$ of $\mathcal T_k$, at least
$2k2^{|S|}$ tiles of shape $S$. Let $\ell(S',S)\ge 2k2^{|S|}$ denote
the number of tiles of $\mathcal T_k$, having the shape $S$, in the
standard tiling of $S'$. We are now in a position to associate with
each shape $S'$ of $\mathcal T_{r(k)}$ a package of order $k$,
$P(S')\in\{0,1\}^{S'}$. Since for each $S\in\mathcal S_k$ we have
$\ell(S',S)\ge 2k2^{|S|}$, one can divide the collection of all tiles
$T$ of shape $S$, occurring in the standard tiling of $S'$, into
$2^{|S|}$ nonempty and disjoint families $\mathbb T^{(S,S')}_B$
indexed bijectively by the bricks $B\in \B_S$, and having roughly
equal cardinalities. More precisely we can arrange that, for each
$B\in \B_S$, $|\mathbb T^{(S,S')}_B|\in
[\ell(S',S)2^{-|S|}-1,\ell(S',S)2^{-|S|}+1]$ (since $2\le
\frac1k{\ell(S',S)2^{-|S|}}$, the above cardinalities differ by at
most $\frac{100}k$ percent). Then, for each tile $T$ of shape $S$
occurring in the standard tiling of $S'$ we define the restriction of
$P(S')$ to $T$ as the unique brick $B$ such that $T\in\mathbb
T^{(S,S')}_B$. This concludes the definition of the packages $P(S')$
of order $k\ge 1$. For completeness, we let $r(0)=0$ and define the
package of order $0$ as the single symbol $0$. This is consistent with
the previous conventions: the package of order $0$ has a shape
corresponding to the tiling $\mathcal T_{r(0)}=\mathcal T_0$. Since
$\mathcal T_0$ has only one shape (the singleton), the $0$th package
is a block over a singleton (i.e., a single symbol).

At this point we need to introduce some additional terminology. For a
nonempty finite set $K\subset G$ and $\varepsilon>0$, a block $C\in
\{0,1\}^F$ over another finite set $F\subset G$ is
$(K,\varepsilon)$-normal if for every block $B\in\{0,1\}^K$ one has
\[
2^{-|K|}-\varepsilon\le \frac1{|F|}|\{g\in F: (\forall h\in K)\ hg\in
F\text{ and }C(hg)=B(h)\}|\le 2^{-|K|}+\varepsilon.
\]
Summing over all blocks $B\in\{0,1\}^K$ one obtains that in order for
$C$ to be $(K,\varepsilon)$-normal, $F$ must be
$(K,2|K|\varepsilon)$-invariant.

The following fact is now easily verified:
\begin{enumerate}
	\item For any nonempty finite set $K\subset G$ and any
          $\varepsilon>0$, if $k$ is sufficiently large then every
          package of order $k$ is $(K,\varepsilon)$-normal. So is
          every concatenation of such (shifted) packages.
\end{enumerate}

In order to define the $(F_n)$-normal element $x\in\{0,1\}^G$ we first
create a (not proper) \emph{mixed tiling} $\Theta$, i.e., a partition
of $G$ into tiles belonging to different tilings from the
sub\sq\ $(\mathcal T_{r(k)})_{k\ge 0}$ (this will be possible due to
the fact that we are working with a congruent system of tilings). Then
we will define $x$ as follows: $x$ restricted to a tile $T$ of
$\Theta$ equals the (appropriately shifted) package associated to the
shape of $T$ (if $T$ belongs to the tiling $\mathcal T_{r(k)}$ then
the order of the package is $k$). In this manner $x$ becomes an
infinite concatenation of packages of various orders. We remark that
working with a mixed tiling is equivalent to working with chains: one
can define the $k$th chain as the part of $x$ covered by the tiles of
$\Theta$ belonging to the tilings $\mathcal T_{r(0)},\mathcal
T_{r(1)},\dots,\mathcal T_{r(k)}$. Conversely, whenever a Champernowne
set is defined via the concept of chains, as a concatenation of
packages of different orders, then the tiles of $\Theta$ are simply
the domains of these packages.

It remains to describe how we define the mixed tiling $\Theta$. The
procedure will depend on the a priori given F{\o}lner \sq\ $(F_n)$
(which so far was not involved in the construction).

For each $k\ge 1$ let $n_k$ be such that the F{\o}lner sets $F_n$ with
$n>n_k$ are $(\mathbf S_k,\frac1k)$-invariant, where $\mathbf
S_k=\bigcup_{S'\in\mathcal T_{r(k)}} S'{S'}^{-1}$ (then $F_n$ is also
$(\mathbf S_i,\frac1k)$-invariant for all $i\le k$). We begin by
defining $\Theta$ on the $\mathcal T_{r(1)}$-saturation (denoted by
$\mathbf F_1$) of the union $F_1\cup F_2\cdots\cup F_{n_1}$ simply as
$\mathcal T_0$. Notice that $\Theta$ remains undefined on the
complement of $\mathbf F_1$ which is a union of complete tiles of
$\mathcal T_{r(1)}$. Inductively, let $k\ge 2$ and suppose that, after
step $k\!-\!1$, $\Theta$ remains undefined on a union of complete
tiles of $\mathcal T_{r(k-1)}$. In the $k$th step we define $\Theta$
as $\mathcal T_{r(k-1)}$ on the yet untiled part of the $\mathcal
T_{r(k)}$-saturation $\mathbf F_k$ of the union $F_1\cup F_2\cdots\cup
F_{n_k}$. Note that $\Theta$ remains undefined on a union of complete
tiles of $\mathcal T_{r(k)}$. Continuing in this way we will define
the mixed tiling $\Theta$ on a set containing the union of all
F{\o}lner sets $F_n$. If any part of the group remains untiled, we define
$\Theta$ on that part as $\mathcal T_0$. This concludes the
construction of the mixed tiling $\Theta$.

Observe that the mixed tiling $\Theta$ has the following properties:
\begin{enumerate}
	\item[(2)] Each $F_n$ is covered only by tiles of those
          shapes $S'$ for which $F_n$ is
          $(S'{S'}^{-1},\frac1k)$-invariant (where $k$ is the largest
          index such that $n>n_k$). This implies that $F_n$ differs
          from its $\Theta$-saturation by at most $\frac1k|F_n|$
          elements.
	\item[(3)] For each $k\ge 1$, $\Theta$ uses only finitely many
          tiles belonging to $\mathcal T_{r(k)}$.
\end{enumerate}

As we have already explained earlier, $\Theta$ determines some
$x\in\{0,1\}^G$. It remains to verify the $(F_n)$-normality of $x$.
Let $K\subset G$ be a nonempty finite set and let us fix some
$\varepsilon>0$. It is enough to show that, for $n$ sufficiently
large, $x|_{F_n}$ is $(K,3\varepsilon)$-normal. Pick
$k\ge\frac1\varepsilon$ so large that all packages of orders larger
than or equal to $k$ are $(K,\varepsilon)$-normal (see (1) above).
Choose $n\ge n_k$. In order to determine the parameter $\delta$ for
which $x|_{F_n}$ is $(K,\delta)$-normal we first replace $F_n$ by its
$\Theta$-saturation. By (2), this affects the estimation of $\delta$
by at most $\varepsilon$. Next, we remove from this saturation all
tiles of orders smaller than $k$ (there are finitely many such tiles).
If $n$ is large enough, this last step also affects the estimation of
$\delta$ by at most $\varepsilon$. Now it remains to examine the
restriction of $x$ to a set on which it is a concatenation of packages
of orders at least $k$. By the choice of $k$, this restriction is
$(K,\varepsilon)$-normal. It follows that $x$ restricted to $F_n$ is
$(K,3\varepsilon)$-normal, as required. This concludes the proof.
\end{proof}

Via Theorem \ref{sgrvsgr} and Lemma \ref{ll}, the above construction
applies also to cancellative semigroups.

\begin{cor}
Let $G$ be a countably infinite amenable cancellative semigroup and let $(F_n)$ be a F{\o}lner \sq\ in $G$. Then there exists an
effectively defined $(F_n)$-normal subset of $G$.
\end{cor}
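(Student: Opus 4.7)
The plan is to reduce to the group case settled in Theorem \ref{groups}. By Theorem \ref{sgrvsgr}, I first embed $G$ as a subsemigroup of a countably infinite amenable group $\widetilde G$ in such a way that $(F_n)$ remains a F{\o}lner \sq\ in $\widetilde G$. Applying Theorem \ref{groups} to $\widetilde G$ and $(F_n)$ then yields an effectively defined $(F_n)$-normal element $\tilde x\in\{0,1\}^{\widetilde G}$. The candidate set in $G$ is obtained by restriction: I set $A=\{g\in G:\tilde x_g=1\}$, so that $\mathbbm 1_A$ is the restriction of $\tilde x$ to $G$. Because the inductive algorithm of Theorem \ref{groups} decides the value $\tilde x_g$ for every $g\in\widetilde G$ (hence for every $g\in G$) in finitely many steps, the set $A$ is effectively defined as a subset of $G$.

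To check that $A$ is $(F_n)$-normal in $G$, I plan to use the dynamical characterization of Theorem \ref{tff}. Fix a nonempty finite $K\subset G$ and a block $B\in\{0,1\}^K$. The key observation is that, since $G$ is a subsemigroup, for every $g\in F_n\subset G$ and every $h\in K\subset G$ the product $hg$ lies in $G$, and therefore $\tilde x_{hg}=\mathbbm 1_A(hg)$. This yields the identity
\[
\mathsf{\tilde N}(B,\mathbbm 1_A,F_n)=\mathsf{\tilde N}(B,\tilde x,F_n),
\]
with the left-hand side computed in $G$ and the right-hand side in $\widetilde G$. Dividing by $|F_n|$, the right-hand side tends to $2^{-|K|}$ by the $(F_n)$-normality of $\tilde x$ in $\widetilde G$, so the same holds for the left-hand side, which is exactly $(F_n)$-normality of $A$ in $G$.

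I do not expect any serious obstacle: Theorems \ref{sgrvsgr} and \ref{groups} supply the embedding and the normal element, and the only point requiring a small verification is the identification of the two counts above, which hinges on the containment $KF_n\subset G$ (automatic since both $K$ and $F_n$ lie in $G$). One could equivalently route the conclusion through Lemma \ref{ll}, taking advantage of the equivalence between $(F_n)$-normality for subsets of $G$ in $G$ and in $\widetilde G$.
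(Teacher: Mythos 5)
Your proposal is correct and follows essentially the same route as the paper, whose entire proof is the one-line remark that the construction of Theorem \ref{groups} transfers to semigroups via Theorem \ref{sgrvsgr} and Lemma \ref{ll}. Your explicit verification that the counts $\mathsf{\tilde N}(B,\mathbbm 1_A,F_n)$ and $\mathsf{\tilde N}(B,\tilde x,F_n)$ coincide for $K\subset G$ (because $KF_n\subset G$) is exactly the content of the easy direction of Lemma \ref{ll}, just spelled out.
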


\section{Multiplicative normality}\label{mnor}

In this section we will focus on the action of $(\na,\times)$ on the
symbolic space $\{0,1\}^\na$. In this case the shift action,
henceforth called the \emph{multiplicative shift} and denoted by
$(\rho_n)_{n\in\na}$ is defined on $\{0,1\}^\na$ as follows:
\[
\text{if \ }x=(x_j)_{j\in\na}\text{ \ then \ }\rho_n(x) =
(x_{jn})_{j\in\na}.
\]
In other words, $\rho_n$ maps each binary \sq\ to its sub\sq\ obtained
by reading its every $n$th term. Clearly, the classical
$(\frac12,\frac12)$-Bernoulli measure on the symbolic space
$\{0,1\}^\na$ is invariant under both the additive and multiplicative
shift actions, and in both cases it is the unique measure of maximal
entropy. In fact we are dealing here with the case of a \sq\ of
independent identically distributed random variables, which
corresponds to the Bernoulli process regardless of the applied action,
as long as the action ``permutes'' the indices (we use quotation
marks, because our ``permutations'' are not surjective). Note that
both shift actions are ergodic (in fact mixing) and their
Kolmogorov-Sinai entropies equal the entropy of the generating
partition $\{[0],[1]\}$, i.e., to $\log2$, and so are the
\tl\ entropies of both shift actions. For a treatment of entropy 
for actions of amenable groups see for example \cite{Ol85}.

\subsection{F{\o}lner \sq s in $(\na,\times)$}\label{Folners}

The semigroup $(\na,\times)$ is a free Abelian semigroup generated by
the set of primes. We will denote the set of primes by $\PP$ and view
$(\na,\times)$ as the direct sum\footnote{Recall that a direct sum is
  the subset of the Cartesian product (with addition acting
  coordinatewise) consisting of points with at most finitely many
  nonzero coordinates.}
\[
\mathbb G=\bigoplus_{p\in\PP}\na_p,
\]
where, for each $p\in\PP$, $\na_p$ is the same additive semigroup
$(\na\cup\{0\},+)$. The isomorphism is given by
\[
(k_1,k_2,\dots,k_r)\mapsto p_1^{k_1}p_2^{k_2}\cdots p_r^{k_r},
\]
where $p_1=2,\ p_2=3\ ,p_3=5,$ etc. are consecutive prime numbers.
Notice that $\mathbb G$ is \emph{additive}, i.e., in
this representation multiplication of natural numbers is interpreted
as addition of vectors.

In order to deal with the actions of $(\na,\times)$ it is crucial to
identify convenient choices of F{\o}lner \sq s in this semigroup. A
natural choice of a F{\o}lner \sq\ in $\mathbb G$ is given by
\emph{anchored} (i.e., containing the origin) rectangular boxes of the form
\begin{equation}\label{box}
F=\{0,1,\dots,k_1\}\times\{0,1,\dots,k_2\}\times\cdots\times
\{0,1,\dots,k_d\}\times\{0\}\times\{0\}\times\cdots.
\end{equation}
The parameter $d$ (i.e., the largest index $i$ such that $k_i>0$) will
be referred to as the \emph{dimension} of $F$. The number $k_i$ will
be called the \emph{size of $F$ in the $i$th direction}. Let
now \begin{equation}\label{boxn}
  F_n=\{0,1,\dots,k^{(n)}_1\}\times\{0,1,\dots,k^{(n)}_2\}\times
\cdots\times\{0,1,\dots,k^{(n)}_{d^{(n)}}\}\times\{0\}\times\{0\}\times
\cdots
\end{equation}
be a \sq\ of anchored rectangular boxes. With this notation, $(F_n)$
is a F{\o}lner \sq\ in $\mathbb G$ if and only if $\lim_n d_n=\infty$
and $\lim_n k^{(n)}_i=\infty$ for each $i\in\na$). Any such F{\o}lner
\sq\ will be called \emph{anchored rectangular}. The verification of
the F{\o}lner property is straightforward. Preferably, the F{\o}lner
sets should increase with respect to inclusion, which means that the
\sq s $(d_n)$ and $(k^{(n)}_i)$ for each $i$ should be nondecreasing
and the sum $k_1^{(n)}+k_2^{(n)}+\cdots+k_{d_n}^{(n)}$ should be
strictly increasing. Such increasing F{\o}lner \sq s will be called
\emph{nice}. Not every nice F{\o}lner \sq\ $(F_n)$ is tempered.
However, since the cardinalities $|F_n|$ strictly increase, Theorem
\ref{B1} applies.

Every nice F{\o}lner \sq\ occurs as a sub\sq\ of a specific \emph{nice and
slow} F{\o}lner \sq, such that at each step the sum
$k_1^{(n)}+k_2^{(n)}+\cdots+k_{d_n}^{(n)}$ increases by~$1$. The
choice of a nice and slow F{\o}lner \sq\ is equivalent to fixing a
``\sq\ of directions'' $(i_n)$, in which every natural number appears
infinitely many times, and letting
$k^{(n)}_i=|\{j\in\{1,\dots,n\}:i_j=i\}|$. There are several fairly
natural options for choosing the \sq\ $(i_n)$, for example:
\begin{align}
&1;\,1,2;\,1,2,3;\,1,2,3,4;\,1,2,3,4,5;\,\dots\ \ \text{the
    \emph{staircase type}},\label{tt}\\
&1,2,1,3,1,2,1,4,1,2,1,3,1,2,1,5,\dots\ \ \text{the \emph{Toeplitz
      type}}.
\end{align}

Now we can translate all this to the multiplicative representation
$(\na,\times)$. In $(\na,\times)$ we have the natural partial order
given by $m\preccurlyeq M\ \iff\ m|M$. The set $\na$ equipped with
this order is a \emph{directed set} (i.e., every two elements have a
common upper bound; in this case a common multiple). A \sq\ of natural
numbers $(L_n)$ \emph{multiplicatively tends to infinity}, if for any
$m\in\na$ there exists $n_0$ such that $m\preccurlyeq L_n$ for all
$n\ge n_0$. If, in addition, the sequence $L_n$ strictly increases
with respect to the multiplicative order, we will say that $(L_n)$
\emph{multiplicatively increases to infinity}. For an anchored
rectangular box $F\subset\mathbb G$ (see \eqref{box}) the number $L =
p_1^{k_1}p_2^{k_2}\cdots p_d^{k^d}$ will be called the \emph{leading
  parameter} of $F$. Interpreting $F$ as a subset of $(\na,\times)$,
notice that $F=\{m:m\preccurlyeq L\}$ (i.e., $F$ is the set of all
divisors of $L$). With this terminology, a \sq\ of anchored
rectangular boxes in $(\na,\times)$ is a F{\o}lner \sq\ (resp. nice
F{\o}lner \sq) if and only if the \sq\ $(L_n)$ of their leading parameters
multiplicatively tends (resp. multiplicatively increases) to infinity.
A nice F{\o}lner \sq\ $(F_n)$ in $(\na,\times)$ is nice and slow if and
only if $\frac{L_{n+1}}{L_n}$ is a prime for every $n$. Notice that
even if $(F_n)$ is nice and slow, the cardinalities $|F_n|$ grow
relatively fast. Indeed, from time to time the dimension $d_{n+1}$ of
$F_{n+1}$ has to increase, i.e., a new direction has to be included,
and then the cardinality doubles: $|F_{n+1}|=2|F_n|$. Otherwise the
cardinality is multiplied by a factor smaller than $2$, but in any
case a rectangular box of dimension $d_n-1$ is added. In particular,
$|F_{n+1}|-|F_n|>1$ for $n>1$.

Obviously, there are many other F{\o}lner \sq s in $\mathbb G$. The
rectangular boxes need not be anchored at zero, and moreover, they can
be replaced by other shapes. For instance, it is possible to create a
F{\o}lner \sq\ with $|F_n|=n$, but it is not going to be rectangular
(however, it may have a nice and slow F{\o}lner subsequence). We skip
further details. While there is no preferred ``canonical'' choice for
a F{\o}lner \sq\ in $\mathbb G$, it will be convenient for our purposes
to focus on anchored rectangular, and, in particular, on nice F{\o}lner
\sq s (mainly due to advantageous arithmetic properties of their
multiplicative interpretation).

\subsection{Multiplicative Champernowne set}\label{champer}

The construction of a Champernowne set in $(\na,\times)$ can be made
significantly more transparent than in the general case discussed in
Section \ref{cha}. This is due to the fact that the semigroup
$(\na,\times)$ admits a system of (congruent, deterministic, F{\o}lner,
syndetic) \emph{monotilings}, i.e., tilings with only one shape. In
fact, any rectangular box tiles the semigroup, while a congruent
system of tilings is obtained from a specific F{\o}lner \sq, which we
will call \emph{doubling}. This will enable us to create ``condensed''
packages which contain every brick exactly once (like in the classical
Champernowne construction). For every $k$, the $k$th chain still has
to contain more than one repetition of every package of order $k$
(this we would have to do even in the two-dimensional semigroup
$(\na^2,+)$), but we will use the least possible number of repetitions
to fill a rectangular box the size of the next order package. In this
manner we will obtain a ``compendious'' Champernowne set, which will
turn out to be normal at least with respect to the same doubling
F{\o}lner \sq\ which is used in its construction. Later we will present a
slight modification of the same construction, which produces a
``net-normal'' set, i.e., normal with respect to any nice F{\o}lner
\sq, at the cost of repeating each package of order $k$ an infinite
number of times.

We begin by formally introducing the notion of a doubling F{\o}lner
\sq. Again, we will interpret $(\na,\times)$ as the additive
semigroup~$\mathbb G$.

\begin{defn}\label{doubling}
A nice F{\o}lner \sq\ $(F_n)$ is called \emph{doubling} if $F_{n+1}$ is
a disjoint union $F_n\cup (v_n+F_n)$ for some $v_n\in \mathbb G$.
\end{defn}

Note that since $F_{n+1}$ is a anchored rectangular box, $v_n$ must be
equal to one of the vectors spanning $F_n$, i.e., if
\[
F_n=\{0,1,\dots,k^{(n)}_1\}\times\{0,1,\dots,k^{(n)}_2\}\times\cdots
\times\{0,1,\dots,k^{(n)}_{d_n}\}\times\{0\}\times\{0\}\times\cdots,
\]
then $v_n$ is of the form $(0,0,\dots,0,k^{(n)}_i+1,0,0,\dots)$, where
$k^{(n)}_i+1$ occurs as the $i$th term, $i=1,2,\dots,d_n$, or
$v_n=(0,0,\dots,0,1,0,0,\dots)$, where $1$ occurs at a position larger
than $d_n$.

Any doubling F{\o}lner \sq\ can be obtained by the following procedure.
As before, in the construction of a nice and slow F{\o}lner \sq, we fix
a \sq\ of directions $(i_n)_{n\ge 1}$ in which each natural $i$ is
repeated infinitely many times. We begin with the ``zero F{\o}lner
set'' $F_0=\{0\}$. Once the F{\o}lner set $F_n$ is determined, the next
one, $F_{n+1}$, instead of growing in \emph{by a unit} in the
direction $i_{n+1}$, is \emph{doubled} in that direction. The
cardinality of $F_n$ will hence be equal to $2^n$. For example, if
$(i_n)$ is the staircase \sq\ $1;\,1,2;\,1,2,3;\dots$, the first six
F{\o}lner sets are
\begin{align*}
F_0&=\{0\}\\
F_1&=\{0,1\}\\
F_2&=\{0,1,2,3\}\\
F_3&=\{0,1,2,3\}\times\{0,1\}\\
F_4&=\{0,1,2,3,4,5,6,7\}\times\{0,1\}\\
F_5&=\{0,1,2,3,4,5,6,7\}\times\{0,1,2,3\}\\
F_6&=\{0,1,2,3,4,5,6,7\}\times\{0,1,2,3\}\times\{0,1\}.
\end{align*}
(for convenience we skip the infinite product of singletons $\{0\}$
that should follow to the right in the formula for each of the above
sets).

\subsubsection{The construction}\label{constru}

We shall now construct an $(F_n)$-normal element $x\in\{0,1\}^\na$ for
a doubling F{\o}lner sequence. We describe how we build the bricks and
packages, and how we place them in $x$ (we will use the language of
chains rather than that of mixed tilings). The bricks of order $k$
will simply be blocks over the F{\o}lner set $F_k$ (of cardinality
$2^k$), i.e., the bricks will belong to $\{0,1\}^{F_k}$. We accept as
bricks of order $k$ all blocks over $F_k$. Thus there will be
$2^{2^k}$ different bricks, which, when concatenated together (each
used exactly once), produce a package of cardinality $2^{2^k+k}$.
Since the sizes in all directions of all our objects (F{\o}lner sets,
bricks, packages, etc.) are powers of $2$, we can arrange the package
so that it is a block over $F_{2^k+k}$ (the index $2^k+k$ plays the
role of $r(k)$ from the general construction). We define the $0$th
chain as the concatenation of $4$ copies of the package of order zero
arranged to fill a block over $F_3$ (see Figure \ref{fig1} below). For
$k\ge 1$ we assume inductively that the $(k\!-\!1)$st chain is a block
over the same set as the package of order $k$ (for $k=1$ this holds).
This assumption guarantees that the $(k\!-\!1)$st chain is saturated
with respect to the tiling number $r_k$, so it can be concatenated
together with packages of order $k$ without gaps or overlaps. Now we
can build the $k$th chain. It consists of
\begin{enumerate}
	\item[(i)] the $(k\!-\!1)$st chain occupying the ``lower
          left'' corner, i.e., containing the origin, and
	\item[(ii)] $2^{2^k+1}-1$ shifted copies of the $k$th order
          package,
\end{enumerate}
so that the chain has cardinality $2^{2^{k+1}+k+1}$. The chain can be
arranged to be a block over $F_{2^{k+1}+k+1}$, i.e., over the same set
as the package of order $(k+1)$, as required in the induction.

Figure \ref{fig1} corresponds to the (mentioned above) ``staircase
type'' doubling F{\o}lner \sq. It shows the package of order $0$ with
the initial ``zero'' brick of order $0$ shaded, and next to it the
$0$th chain, which is a concatenation of four such packages. In the
next line we show the package of order $1$ with the initial ``zero''
brick of order $1$ shaded, and next to it the $1$st chain which is a
concatenation of the preceding chain (shaded) and seven identical
packages of order $1$. The last picture shows the package of order $2$
with the initial ``zero'' brick of order $2$ shaded. The $2$nd chain
is too large to be shown. It is a concatenation of the $1$st chain and
$31$ copies of the package of order $2$, and it is a block over
$F_{11}$ (which is four-dimensional).

\begin{figure}
\begin{flushleft}
\begin{tikzpicture}
\matrix (mA) [matrix of nodes,inner sep=0,nodes={draw,inner sep=1mm}]
{
|[fill=black!15]|0&1&\\
};
\end{tikzpicture} \ $0$th package \ \ \ \
\begin{tikzpicture}
\matrix (mA) [matrix of nodes,inner sep=0,nodes={draw,inner sep=1mm}]
{
0&1&0&1\\
0&1&0&1\\
};
\end{tikzpicture} \ $0$th chain

\bigskip
\begin{tikzpicture}
\matrix (mA) [matrix of nodes,inner sep=0,nodes={draw,inner sep=1mm}]
{
1&0&1&1\\
|[fill=black!15]|0&|[fill=black!15]|0&0&1\\
};
\end{tikzpicture}\ $1$st package \ \ \ \
\begin{tikzpicture}
\matrix (mA) [matrix of nodes,inner sep=0,nodes={draw,inner sep=1mm}]
{
1&0&1&1&1&0&1&1\\
0&0&0&1&0&0&0&1\\
1&0&1&1&1&0&1&1\\
0&0&0&1&0&0&0&1\\
};

\matrix (mB) [matrix of nodes,fill=white,inner sep=0,nodes={draw,inner
    sep=1mm}] at ($(mA.south west)+(1,.3)$)
{
1&0&1&1&1&0&1&1\\
0&0&0&1&0&0&0&1\\
|[fill=black!15]|0&|[fill=black!15]|1&|[fill=black!15]|0&|[fill=black!15]|1&1&0&1&1\\
|[fill=black!15]|0&|[fill=black!15]|1&|[fill=black!15]|0&|[fill=black!15]|1&0&0&0&1\\};

\draw[dashed](mA.north east)--(mB.north east);
\draw[dashed](mA.north west)--(mB.north west);
\draw[dashed](mA.south east)--(mB.south east);
\end{tikzpicture}\ $1$st chain
\bigskip

\begin{tikzpicture}
\matrix (mA) [matrix of nodes,inner sep=0,nodes={draw,inner sep=1mm}]
{
1&1&1&0&1&1&1&1\\
1&1&0&1&1&1&1&0\\
1&0&1&0&1&0&1&1\\
1&0&0&0&1&0&0&1\\};

\matrix (mB) [matrix of nodes,fill=white,inner sep=0,nodes={draw,inner
    sep=1mm}] at ($(mA.south west)+(1,.3)$)
{
0&1&1&0&0&1&1&1\\
0&1&0&1&0&1&1&0\\
0&0&1&0&0&0&1&1\\
|[fill=black!15]|0&|[fill=black!15]|0&|[fill=black!15]|0&|[fill=black!15]|0&0&0&0&1\\};

\draw[dashed](mA.north east)--(mB.north east);
\draw[dashed](mA.north west)--(mB.north west);
\draw[dashed](mA.south east)--(mB.south east);
\end{tikzpicture}\ $2$nd package
\end{flushleft}
\caption{}\label{fig1}
\end{figure}

The chains converge to an element $x\in\{0,1\}^{\mathbb G}$. The set
$\{g\in \mathbb G:c(g)=1\}$ and the real number with binary expansion
$x$ will be called the \emph{multiplicative Champernowne set} and
\emph{multiplicative Champernowne number}, respectively.

\subsubsection{$(F_n)$-normality of $x$}\label{normc}

Recall that given a nonempty finite set $K$ and $\varepsilon>0$, for
some large $k_0$, packages of orders $k\ge k_0$ are multiplicatively
$(K,\varepsilon)$-normal. So, to prove $(F_n)$-normality of $x$ we
need to check that, as $n$ increases, we have $\frac{a_n}{|F_n|}\to
1$, where $a_n$ is the cardinality of the portion $F_n'$ of $F_n$ such
that $x|_{F_n'}$ is a concatenation of packages of orders $k\ge k_0$.
First, we will check this for indices $n$ of the special form
$n=r(k)=2^k+k$. For such an $n$, $F_n$ is filled with the $k$th chain,
consisting of the $(k\!-\!1)$st chain and many (precisely,
$2^{2^k+1}-1$) packages of order $k$ (having the same size as the
$(k\!-\!1)$st chain), so these packages ``dominate'' in $x|_{F_n}$
(precisely, $\frac{a_n}{|F_n|}\ge 1-\frac1{2^{2^k+1}}$). If a large
$n$ is not of this form, then, for some $k\ge k_0$, we have $2^k+k< n
< 2^{k+1}+k+1$, and the block $x|_{F_n}$ is a concatenation of the
$k$th chain and some number of packages of order $k+1$, so
$\frac{a_n}{|F_n|}>\frac{a_{2^k+k}}{|F_{2^k+k}|}$. This completes the proof
of $(F_n)$-normality of $x$.

\begin{rem}
We can also deduce multiplicative normality of $x$ with respect to the
nice and slow F{\o}lner \sq\ of which $(F_k)$ is a subsequence (to
obtain such a nice and slow F{\o}lner \sq, instead of doubling a
direction we increase it by $1$ several times). We omit the details.
On the other hand, $x$ is definitely not normal for some other nice
F{\o}lner \sq s. For instance, if the F{\o}lner sets increase in the
first direction much faster than in other directions (elongated
shapes) then the symbol $0$ will prevail. We skip the details again.
\end{rem}

\subsection{Net-normal sets}\label{univers}

The notion of an anchored rectangular box or F{\o}lner \sq\ is meaningful
not only in $\mathbb G$, but also in $\na^d$ ($d\in\na$) with addition. The elements
of such a \sq\ are $d$-dimensional anchored rectangular boxes given by
\begin{equation}\label{box2}
F=\{0,1,\dots,k_1\}\times\{0,1,\dots,k_2\}\times\cdots\times\{0,1,\dots,k_d\}
\end{equation}
(cf. \eqref{box}). Denoting by $G$ either $\mathbb G$ or $\na^d$ for
some $d\in\na$, let $\mathcal F_G$ stand for the family of all
anchored rectangular boxes in $G$. In either case, this family,
ordered by inclusion\footnote{When working with $(\na,\times)$ rather
  than with $\mathbb G$, the above order on $\mathcal F_G$ coincides
  with the multiplicative order $\preccurlyeq$ (see Section
  \ref{Folners}) applied to the respective leading parameters.}, is a
directed set: any two such boxes are contained in a third one. So any
function with domain $\mathcal F_G$ is a \emph{net}. With slight abuse
of terminology, the directed set $\mathcal F_G$ will be called the
\emph{F\o lner net} (formally, this term should refer to the identity
function on $\mathcal F_G$).

Now we introduce the notion of net-normality.

\begin{defn}
Let $G$ be either $\mathbb G$ or $\na^d$ for some $d\in\na$. A set
$A\subset G$ (as well as its indicator function $\mathbbm
1_A\in\{0,1\}^G$) is \emph{net-normal} if for any finite set $K\subset
G$ and every block $B\in\{0,1\}^K$, the net of averages (indexed by
$F\in\mathcal F_G$)
\[
\frac1{|F|}\mathsf N(B,x,F)
\]
converges\footnote{A net $\iota\mapsto a_\iota$ of real numbers,
  indexed by a directed set $(I,\ge)$, \emph{converges} to a limit $a$
  if for every $\varepsilon>0$ there exists $\iota_0$ such that
  $|a_\iota-a|<\varepsilon$ for every $\iota\ge\iota_0$ in $I$.} to
$2^{-|K|}$ (comp. with \eqref{fnbnorm}). If $G=\mathbb G$ is
interpreted as the multiplicative semigroup $(\na,\times)$, a
net-normal set $A\subset\na$ (and its indicator function $\mathbbm
1_A\in\{0,1\}^\na$) will be called \emph{multiplicatively net-normal}.
\end{defn}

\begin{prop}\label{34}
A set $A\subset\na$ is multiplicatively net-normal if and only if it is 
$(F_n)$-normal with respect to every
anchored rectangular F{\o}lner \sq\ $(F_n)$ in $(\na,\times)$. Also,
$A$ is multiplicatively net-normal if and only if it is $(F_n)$-normal
with respect to every nice (i.e., anchored rectangular and increasing
by inclusion) F{\o}lner \sq\ $(F_n)$ in $(\na,\times)$. 
\end{prop}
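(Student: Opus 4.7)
Both forward implications are immediate: any anchored rectangular F{\o}lner sequence $(F_n)$ is cofinal in the directed set $\mathcal F_G$, since $d_n \to \infty$ and $k_i^{(n)} \to \infty$ for each $i$ force every $F \in \mathcal F_G$ to satisfy $F \subset F_n$ for all sufficiently large $n$. Therefore net convergence along $\mathcal F_G$ restricts to sequential convergence along $(F_n)$, which establishes both ``$\Rightarrow$'' directions (the nice case being a special case of the anchored rectangular one).

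For both converses it suffices to prove the following single claim: \emph{if $A \subset \na$ is not multiplicatively net-normal, then there exists a nice F{\o}lner sequence $(F_n)$ for which $A$ fails to be $(F_n)$-normal}. Since every nice F{\o}lner sequence is a fortiori an anchored rectangular F{\o}lner sequence, this yields the converses of both equivalences. To use it, assume $A$ is not net-normal; negating net convergence produces a finite $K \subset \na$, a block $B \in \{0,1\}^K$ and an $\varepsilon > 0$ such that for every $F_0 \in \mathcal F_G$ there is some $F \in \mathcal F_G$ with $F_0 \subset F$ and $\bigl|\frac{1}{|F|}\mathsf N(B,\mathbbm 1_A,F) - 2^{-|K|}\bigr| \ge \varepsilon$. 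Call such $F$ \emph{bad}.

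The plan is to construct $(F_n)$ inductively by alternating two types of extensions of the current box $\tilde F$. A \emph{growth step} at stage $n$ enlarges $\tilde F$ to an anchored rectangular box of dimension at least $n$ with $k_i \ge n$ for $i=1,\dots,n$. A \emph{bad step} at stage $n$ enlarges $\tilde F$ to some bad $F^* \supset \tilde F$ supplied by the hypothesis. In either case I interpolate the extension by adding one unit at a time --- either incrementing one existing coordinate by $1$ or adjoining a new direction of initial size $1$ --- and append every intermediate box to the sequence. The growth steps force $d_n \to \infty$ and $k_i^{(n)} \to \infty$ for each $i$, so $(F_n)$ is a genuine nice F{\o}lner sequence; the bad steps guarantee $\bigl|\frac{1}{|F_n|}\mathsf N(B,\mathbbm 1_A,F_n) - 2^{-|K|}\bigr| \ge \varepsilon$ for infinitely many $n$, precluding $(F_n)$-normality of $A$.

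The main obstacle is not a deep technical one but rather the bookkeeping of the interleaving: one must enforce F{\o}lner growth (via growth steps) while simultaneously inserting bad boxes cofinally often, and the two demands must be arranged not to interfere. Once the order of steps is fixed, the interpolation from $\tilde F$ to the target is routine, because $\tilde F \subset F^*$ with both anchored rectangular forces $k_i^{\tilde F} \le k_i^{F^*}$ for every $i$ and $d(\tilde F) \le d(F^*)$, so the target is reachable by finitely many single-coordinate increments, producing a nice chain at every stage.
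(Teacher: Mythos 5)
Your proof is correct and follows essentially the same route as the paper: the forward directions come from the fact that an anchored rectangular F{\o}lner sequence eventually contains any fixed box (so it is a subnet of the F{\o}lner net), and the converse comes from detecting the failure of net convergence along an increasing sequence of boxes that is arranged to be a nice F{\o}lner sequence. Your explicit interleaving of growth steps with bad boxes is just a careful spelling-out of the paper's one-line remark that such a failure ``can be detected along some increasing subsequence of the net,'' which indeed needs the growth steps to guarantee the F{\o}lner property.
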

\begin{proof} Every anchored rectangular F{\o}lner \sq\ is a \emph{subnet} of the F{\o}lner net, hence net-normality implies normality with respect to any anchored rectangular 
F\o lner \sq. The fact that normality with respect to every nice F\o lner \sq\ implies net-normality follows from the trivial observation that the
failure of convergence of any countable net can be detected along some
increasing sub\sq\ of that net (in our case, an increasing sub\sq\ of the F\o lner 
net is a nice F\o lner \sq).
\end{proof}

It is natural to inquire about the existence of net-normal sets and
their typicality, in $\na^d$ and in $\mathbb G$. Curiously enough, it
turns out that the answers are different for $\na^d$ and $\mathbb G$.
This difference is captured by the following two theorems.

\begin{thm}
For any $d\ge 1$, almost every (with respect to the Bernoulli measure
$\lambda$) element of $\{0,1\}^{\na^d}$ is net-normal.
\end{thm}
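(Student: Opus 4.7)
The plan is to reduce the statement to a fixed pair $(K, B)$ and then combine the Bernstein-type concentration developed in the proof of Theorem \ref{B1} with a Borel--Cantelli estimate made possible by the \emph{finite-dimensional} combinatorics of $\mathcal F_{\na^d}$. Concretely, since the collection of pairs $(K, B)$ with $K \subset \na^d$ finite and $B \in \{0,1\}^K$ is countable, by intersecting countably many full-measure sets it suffices to show that for each fixed $K, B$ and each $\varepsilon > 0$, $\lambda$-almost every $x$ satisfies $|\frac{1}{|F|}\mathsf N(B, x, F) - 2^{-|K|}| \le \varepsilon$ for all $F \in \mathcal F_{\na^d}$ containing some $F_0 = F_0(x, \varepsilon)$.

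For the concentration step, I would use exactly the partition-and-Bernstein argument from the proof of Theorem \ref{B1}: tile $\na^d$ by congruent translates of a large cube $\{0, 1, \dots, L-1\}^d$ with $L$ chosen so that the cube is $(K, \varepsilon)$-invariant, select within each tile the $K$-core, and sort the resulting points of $F$ (minus a boundary layer $D_0$ of relative density at most $\varepsilon$) into finitely many shifted syndetic classes $D_1, \dots, D_r$ on which the random variables $\mathsf Y_g = \mathbbm 1_{[B]}(\sigma_g x)$ are independent. Bernstein's inequality then yields constants $C = C(K, \varepsilon)$ and $\gamma = \gamma(K, \varepsilon) \in (0,1)$ such that for every sufficiently $K$-invariant anchored rectangular box $F$,
\[
\lambda\Bigl\{x : \Bigl|\tfrac{1}{|F|}\mathsf N(B, x, F) - 2^{-|K|}\Bigr| > 3\varepsilon\Bigr\} \le C \gamma^{|F|}.
\]

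For the summability step, I parametrize $F \in \mathcal F_{\na^d}$ by $(k_1, \dots, k_d) \in \na_0^d$ with $|F| = \prod_{i=1}^d (k_i + 1)$. The number of such tuples with $\max_i k_i = M$ is at most $d(M+1)^{d-1}$, and any such $F$ satisfies $|F| \ge M+1$. Hence
\[
\sum_{F \in \mathcal F_{\na^d}} \gamma^{|F|} \le \sum_{M=0}^{\infty} d(M+1)^{d-1} \gamma^{M+1} < \infty.
\]
By the Borel--Cantelli lemma, for $\lambda$-a.e.\ $x$ there are only finitely many $F \in \mathcal F_{\na^d}$, say $F_1, \dots, F_m$, for which the deviation exceeds $3\varepsilon$. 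Choosing $F_0$ to be any anchored box strictly containing each $F_j$ (e.g.\ with side lengths $1 + \max_j k_i^{(j)}$ in the $i$th direction), every $F \supseteq F_0$ is distinct from every $F_j$ and hence lies within $3\varepsilon$ of $2^{-|K|}$. This is precisely net convergence at tolerance $3\varepsilon$; letting $\varepsilon$ range over a countable null sequence completes the argument.

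The substantive point -- and the place where the argument would fail for $\mathbb G = \bigoplus_{p \in \PP}\na_p$ -- is the polynomial count $\#\{F : \max_i k_i = M\} \le d(M+1)^{d-1}$ used in the Borel--Cantelli sum. For $\na^d$ the finite dimension keeps this bound polynomial in $M$ while $|F|$ still grows at least like $M+1$, so $\sum_F \gamma^{|F|}$ converges; in $\mathbb G$ the unbounded number of available directions destroys this estimate, which matches the contrast between $\na^d$ and $\mathbb G$ announced in the introduction to this subsection. Everything else in the plan -- the tiling/Bernstein input and the passage from ``finitely many exceptions'' to net convergence via directedness of $\mathcal F_{\na^d}$ -- is routine once this summability is in hand.
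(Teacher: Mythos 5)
Your proposal is correct and follows essentially the same route as the paper, which proves this theorem in two sentences: run the argument of Theorem \ref{B1} over the countable directed family $\mathcal F_{\na^d}$ and observe that $\sum_{F\in\mathcal F_{\na^d}}\alpha^{|F|}<\infty$. Your polynomial count of boxes with $\max_i k_i=M$ is exactly the justification of that summability which the paper leaves implicit, and your diagnosis of why the argument breaks for $\mathbb G$ matches the paper's remark about infinitely many boxes of cardinality $2$.
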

\begin{proof}
It is not hard to check that the proof of Theorem \ref{B1} works also
for countable F{\o}lner nets. It now suffices to notice that in $\na^d$
the sum $\sum_{F\in\mathcal F_{\na^d}}e^{-|F|}$ is finite.
\end{proof}

The above argument fails for $\mathbb G$, because the sum
$\sum_{F\in\mathcal F_{\mathbb G}}e^{-|F|}$ diverges (for example,
there are infinitely many anchored rectangular boxes of cardinality
$2$). In fact, we have the following theorem.

\begin{thm}\label{unizero}
The collection of all net-normal elements in $\{0,1\}^{\mathbb G}$ has
measure zero for the Bernoulli measure $\lambda$.
\end{thm}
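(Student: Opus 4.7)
The plan is to show that net-normality fails $\lambda$-almost surely already at the simplest cylinder. Take $K=\{1\}$ (the identity of $(\na,\times)$) and $B\in\{0,1\}^K$ with $B(1)=0$; then net-normality at this block would demand
\[
f(F,x):=\frac{|\{g\in F:x_g=0\}|}{|F|}\longrightarrow\tfrac12\qquad\text{along the F{\o}lner net }\mathcal F_{\mathbb G}.
\]
Thus it suffices to prove $\lambda\bigl(\{x:f(\cdot,x)\to\tfrac12\text{ along }\mathcal F_{\mathbb G}\}\bigr)=0$. Fix $\varepsilon\in(0,\tfrac14)$ and, for each $F_0\in\mathcal F_{\mathbb G}$, put
\[
\tilde E_{F_0}=\bigl\{x:|f(F,x)-\tfrac12|<\varepsilon\ \text{for every }F\in\mathcal F_{\mathbb G},\ F\supset F_0\bigr\}.
\]
Every $x$ for which the net converges to $\tfrac12$ lies in some $\tilde E_{F_0}$, and $\mathcal F_{\mathbb G}$ is countable, so it is enough to show $\lambda(\tilde E_{F_0})=0$ for each $F_0$.

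The heart of the argument exploits a feature specific to $\mathbb G$ (and absent in $\na^d$): every anchored box admits infinitely many ``minimal doublings'' inside $\mathcal F_{\mathbb G}$. If $F_0$ has dimension $d_0$ and size $s=|F_0|$, then for every prime index $i>d_0$ the set
\[
F^{(i)}:=F_0\cup p_iF_0
\]
is again an anchored rectangular box (it has $k_i=1$ and all other $k_j$'s unchanged), strictly contains $F_0$, has size $2s$, and the ``new halves'' $p_iF_0$ are pairwise disjoint and disjoint from $F_0$ because $F_0$ has no extent in any direction $j>d_0$. Consequently the blocks $x|_{p_iF_0}$, $i>d_0$, are i.i.d.\ under $\lambda$ and independent of $x|_{F_0}$.

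Fix $F_0$ and condition on $x|_{F_0}$; write $n_0:=|\{g\in F_0:x_g=0\}|$. If $x\in\tilde E_{F_0}$ then in particular $|f(F^{(i)},x)-\tfrac12|<\varepsilon$ for every $i>d_0$, which forces the i.i.d.\ Binomial$(s,\tfrac12)$ random variables $N_i:=|\{g\in p_iF_0:x_g=0\}|$ to lie in the deterministic interval $I:=(s-n_0-2\varepsilon s,\,s-n_0+2\varepsilon s)$. Since $\varepsilon<\tfrac14$ we have $|I|=4\varepsilon s<s$, so $I$ is a proper sub-interval of $[0,s]$ and at least one of the extreme integer values $0,s$ lies outside $I$. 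Each such value is attained by $N_i$ with probability $2^{-s}$, so the conditional probability that $N_i\in I$ is at most $1-2^{-s}<1$. By independence across the infinitely many admissible indices $i>d_0$ the conditional probability of simultaneous satisfaction is $0$, hence $\lambda(\tilde E_{F_0})=0$; the countable union over $F_0$ finishes the proof.

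The only routine verification is that $F^{(i)}\in\mathcal F_{\mathbb G}$ and that the sets $p_iF_0$ are pairwise disjoint --- both immediate from the definition \eqref{box} and the fact that $F_0$ has no extent beyond dimension $d_0$. I expect no genuine obstacle; the argument is essentially a Borel--Cantelli-style exploitation of the infinitely many unused prime directions in $\mathbb G$, and it is precisely this feature that distinguishes $\mathbb G$ from $\na^d$, where the preceding theorem shows net-normality holds almost surely.
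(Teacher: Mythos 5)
Your argument is correct, and it takes a genuinely different route from the paper's. The paper proves the theorem by constructing, for $\lambda$-almost every $x$, a doubling (hence nice) F{\o}lner sequence $(F_n(x))$ adapted to $x$: it alternates deterministic doublings (to secure the F{\o}lner property) with doublings into a fresh prime direction chosen so that the newly added half of the box is filled entirely with zeros, forcing the lower density of zeros along $(F_n(x))$ up to at least $\tfrac12+\tfrac18+\tfrac1{32}+\cdots=\tfrac23$; it then invokes Proposition \ref{34} (net-normality implies normality along every nice F{\o}lner sequence). You instead attack the net definition head-on at the single-symbol block: you observe that net convergence of the zero-frequency to $\tfrac12$ would pin down, for every anchored box $F_0$ and every one-step enlargement $F_0\cup p_iF_0$ into a direction $i$ beyond the dimension of $F_0$, the binomial count on the new half $p_iF_0$ to a window of width $4\varepsilon|F_0|<|F_0|$, and since these new halves are pairwise disjoint and i.i.d.\ over the infinitely many admissible $i$, the probability of this happening for all $i$ is zero; a countable union over $F_0$ finishes. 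Both proofs exploit the same structural feature of $\mathbb G$ --- the infinitely many unused prime directions producing independent doublings, which is exactly what fails in $\na^d$ --- but your version is shorter and more elementary (no tilings, no Proposition \ref{34}, no construction of an $x$-dependent F{\o}lner sequence), while the paper's version yields the extra information that almost every $x$ fails normality along some single nice F{\o}lner sequence with a quantitative density discrepancy. All the verifications you flag as routine (that $F_0\cup p_iF_0$ is again an anchored box containing $F_0$, that the sets $p_iF_0$ for distinct $i>d_0$ are pairwise disjoint and disjoint from $F_0$, and that $\mathcal F_{\mathbb G}$ is countable) do indeed check out.
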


\begin{proof}
For $\lambda$-almost every $x\in\{0,1\}^{\mathbb G}$ we will construct
a nice (in fact doubling) F{\o}lner \sq\ $(F_n(x))_{n\in\na}$ for which
$x$ is not $(F_n(x))$-normal. By Proposition \ref{34} this will imply that
any such $x$ is not net-normal. For even $n$ the definition of
$F_n(x)$ will depend on $x$ and will apply to a subset of full measure
of the set of points $x$ for which $F_{n-1}(x)$ was defined. For odd
$n$, $F_n(x)$ will be defined for all points $x$ for which
$F_{n-1}(x)$ is defined (at step 1 this will be the whole space
$\{0,1\}^\mathbb G$). For even $n$ the rectangular box $F_n(x)$ will
grow (relatively to $F_{n-1}(x)$) in a ``random'' (i.e., depending on
$x$) direction. However, for $F_n(x)$ to be a F{\o}lner \sq, the
rectangles must grow in every direction infinitely many times. This
property will be guaranteed by judicial (deterministic) choice of the
directions at odd steps of the construction.

We start by defining $F_1(x)$ (for every $x\in X_1=\{0,1\}^\mathbb G$)
as the ``zero rectangle'':
\[
F_1(x)=\{0\}\times\{0\}\times\{0\}\times\cdots.
\]
Next, for each $x$ and every $k\ge 1$ we consider the rectangle which
is ``doubled'' in the $k$th direction:
\[
F_1(x,k) = \{0\}\times\{0\}\times\{0\}\times\cdots\times\{0\}
\times\{0,1\}\times\{0\}\times\cdots,
\]
where $\{0,1\}$ appears at the $k$th position in the product. Note
that the sets $F_1(x,k)\setminus F_1(x)$ (which at this step of
construction are singletons) are disjoint for different $k$'s, and
hence the functions $x\mapsto x_{g_k}$ where $g_k\in F_1(x,k)\setminus
F_1(x)$ form an i.i.d. \sq\ of random variables. Thus, there exists a
full measure set $X_2\subset \{0,1\}^\mathbb G$, such that for every
$x\in X_2$ there exists $k$ such that $x_{g_k}=0$. We let $k_1(x)$ be
the smallest such $k$ and we define $F_2(x)$ as $F_1(x,k_1(x))$. In
this manner, for almost every $x$, we have guaranteed at least half of
the symbols $x_g$, $g\in F_2(x)$, to be zeros. From now on we consider
only the points $x\in X_2$.

Next, we produce $F_3(x)$ by doubling $F_2(x)$ in the direction
provided (for example) by the staircase \sq\ \eqref{tt}. Since the
first term of the staircase \sq\ is~$1$, we simply double the first
coordinate:
\[
F_3(x) =\begin{cases}
\{0,1\}\times\{0\}\times\{0\}\times\cdots\times\{0\}\times\{0,1\}
\times\{0\}\times\cdots& \text{ if }k_1(x)>1\\
\{0,1,2,3\}\times\{0\}\times\{0\}\times\cdots&\text{ if }k_1(x)=1.
\end{cases}
\]
This time for any $x$ some two ``random'' symbols $x_g$ with $g\in
F_3(x)\setminus F_2(x)$ appear in the block $x|_{F_3(x)}$. We let
$X_3=X_2$. At the fourth step, for each $x$ and every
$k>k_1(x)$,\footnote{The requirement $k>k_1(x)$ is inessential. We put
  it only to reduce the variety of possible formulas for $F_3(x,k)$.}
we consider the rectangle which is ``doubled'' in the $k$th direction:
\[
F_3(x,k)=\begin{cases}\{0,1\}\times\{0\}\times\cdots\times\{0\}\times
\{0,1\}\times\{0\}\times\cdots\times\{0\}\times\{0,1\}\times\{0\}\times
\cdots\\
\ \ \ \ \ \ \ \ \ \ \ \ \ \ \ \ \ \ \ \ \ \ \ \ \ \ \ \ \ \ \ \ \ \ \ \
\ \ \ \ \ \ \ \ \ \ \ \ \ \ \ \ \ \ \ \ \ \ \ \ \ \ \ \ \ \ \ \ \ \ \
\ \,\text{ if }k_1(x)>1\\
\{0,1,2,3\}\times\{0\}\times\{0\}\times\cdots\times\{0\}\times\{0,1\}
\times\{0\}\times\cdots\text{ if }k_1(x)=1,
\end{cases}
\]
where the last appearance of $\{0,1\}$ takes place at the position $k$
in the product. As before, there exists a set of full measure,
$X_4\subset X_3$, such that for every $x\in X_4$ there exists $k$ for
which all symbols $x_g$ with $g\in F_3(x,k)\setminus F_3(x)$ are zeros
(again, it is essential that the sets $F_3(x,k)\setminus F_3(x)$ are
disjoint for different $k$'s). We let $k_2(x)>k_1(x)$ be the smallest
such $k$ and define $F_4(x)=F_3(x,k_2(x))$. In this way, we have
guaranteed at least the fraction $\frac12+\frac18$ of zeros in the
block $x|_{F_4(x)}$.

Continuing in this way, at the odd steps we will double the rectangles
in the directions provided by the staircase \sq, and at the even steps
(restricting to a full measure set) we will double the rectangles so
that all the symbols $x_g$ with $g\in F_n(x)\setminus F_{n-1}(x)$
(which constitutes half of $F_n(x)$) will be zeros.

It is clear that eventually, for $\lambda$-almost every $x$ (more
precisely for $x\in\bigcap_n X_n$), we will obtain a doubling F{\o}lner
sequence $(F_n(x))$, such that the lower $(F_n(x))$-density of zeros
in $x$ is at least $\frac12+\frac18+\frac1{32}+\cdots=\frac23$. Thus
$x$ is not $(F_n(x))$-normal.
\end{proof}

We remark that the set of net-normal elements of $\{0,1\}^G$ is an
intersection of the sets of $(F_n)$-normal elements for a family of
F{\o}lner \sq s $(F_n)$, hence, by Proposition \ref{norissm}, it is of
the first category. Further, in view of the Theorem \ref{unizero}, in
the case $G=\mathbb G$, it is not only \tl ly, but also
measure-theoretically small. Nevertheless, we will prove in Theorem \ref{uni}
that this set is nonempty, and moreover, as follows from Remark \ref{mage}
below, it is even uncountable.

We begin with a preparatory lemma.

\begin{lem}\label{corone}
Fix some nonempty finite set $K\subset \na$ and $\varepsilon>0$. Let
$Z(K,\varepsilon)$ denote the union of all anchored rectangular boxes
which are not multiplicatively $(K,\varepsilon)$-invariant. Then
$Z(K,\varepsilon)$ has density zero with respect to any (not
necessarily rectangular) F{\o}lner \sq\ $(F_n)$ in $\mathbb G$.
\end{lem}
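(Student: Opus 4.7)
The plan is to reduce the problem to the statement that a single principal ideal in $(\na,\times)$ has full $(F_n)$-density, after which an immediate application of the F{\o}lner condition finishes the proof.

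First I would switch to the additive representation $\mathbb G=\bigoplus_{p\in\PP}\na_p$ and record the data of $K$. Let $D(K)\subset\PP$ be the finite set of primes dividing some element of $K$, and set $A=\max\{v_p(k):p\in D(K),\,k\in K\}$, where $v_p$ denotes the $p$-adic valuation. For an anchored rectangular box $F=\prod_p\{0,\dots,K_p\}$ and $k\in K$, a direct product computation gives
\[
\frac{|(k+F)\cap F|}{|F|}=\prod_{p\in D(K)}\Bigl(1-\frac{v_p(k)}{K_p+1}\Bigr),
\]
valid whenever $K_p\ge v_p(k)$ for every $p$. I would then choose $N$ so large that
\[
\Bigl(1-\frac{A}{N+1}\Bigr)^{|D(K)|}\ge 1-\frac{\varepsilon}{2|K|}.
\]
Using $|(k+F)\triangle F|=2|F|\bigl(1-|(k+F)\cap F|/|F|\bigr)$ and summing over $k\in K$, this ensures that $F$ is multiplicatively $(K,\varepsilon)$-invariant whenever $K_p\ge N$ for every $p\in D(K)$.

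Contrapositively, any anchored rectangular box $F$ which is \emph{not} $(K,\varepsilon)$-invariant must satisfy $K_p<N$ for at least one $p\in D(K)$, and then every $g\in F$ satisfies $v_p(g)\le K_p<N$, i.e., $p^N\nmid g$. Setting $h_0=\prod_{p\in D(K)}p^N$ and noting that divisibility by every $p^N$ ($p\in D(K)$) is equivalent to divisibility by $h_0$, this yields the inclusion
\[
Z(K,\varepsilon)\subseteq\bigcup_{p\in D(K)}(\na\setminus p^N\na)=\na\setminus\bigcap_{p\in D(K)}p^N\na=\na\setminus h_0\na.
\]

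Finally, for any F{\o}lner \sq\ $(F_n)$ in $(\na,\times)$, cancellativity gives $|h_0F_n|=|F_n|$ and $h_0F_n\subset h_0\na$, so
\[
\frac{|F_n\cap h_0\na|}{|F_n|}\ge\frac{|F_n\cap h_0F_n|}{|F_n|}=1-\frac{|F_n\setminus h_0F_n|}{|F_n|}\xrightarrow[n\to\infty]{}1
\]
by the F{\o}lner condition applied to the single element $h_0\in\na$. Hence $d_{(F_n)}(h_0\na)=1$, so $d_{(F_n)}(\na\setminus h_0\na)=0$, and a fortiori $d_{(F_n)}(Z(K,\varepsilon))=0$. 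There is no serious obstacle here; the only mildly technical point is the elementary product estimate producing $N$, after which the F{\o}lner property for a single element does all the work.
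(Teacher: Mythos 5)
Your proof is correct and follows essentially the same route as the paper's: a box failing $(K,\varepsilon)$-invariance must be narrow in some direction indexed by a prime dividing an element of $K$, so $Z(K,\varepsilon)$ is contained in a finite union of ``slabs'' of bounded $p$-adic valuation, each of density zero. The paper leaves that last step as an assertion (``clear, because its size in one of the directions is bounded''), whereas you substantiate it cleanly by observing that the complement of the union of slabs is the principal ideal $h_0\na$, whose full $(F_n)$-density is immediate from the F{\o}lner condition applied to the single element $h_0$.
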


\begin{proof}
For sake of convenience we will write $[0,n]$ instead of
$\{0,1,2,\dots,n\}$. Intuitively, a rectangular box is not
multiplicatively $(K,\varepsilon)$-invariant if it is ``narrow'' in
some direction, and narrow sets have density zero. More precisely, let
$\bar K=[0,k_1]\times[0,k_2]\times\cdots\times[0,k_q]$ be the smallest
rectangular box containing $K$. If a rectangular box
$B=[0,b_1]\times[0,b_2]\times\cdots\times[0,b_r]$ is not
$(K,\varepsilon)$-invariant, then it is not $(\bar
K,\varepsilon)$-invariant, i.e., there is an index
$i\in\{1,2,\dots,q\}$ such that $b_i<\alpha_i$, where
$\alpha_i=\frac{2k_i}{\varepsilon}$. Thus, the union
$Z(K,\varepsilon)$ of all such rectangles $B$ is contained in the
finite union $\bigcup_{i=1}^q X_i$, where $X_i$ is the set of all
vectors in $\mathbb G$ whose $i$th coordinate is smaller than
$\alpha_i$. It is clear that each set $X_i$ has density zero with
respect to any F{\o}lner \sq\ $(F_n)$, because its size in one of the
directions is bounded. The proof is complete since density zero is
preserved under finite unions.
\end{proof}

\begin{thm}\label{uni}
There exists an effectively defined net-normal set $A\subset\mathbb
G$.
\end{thm}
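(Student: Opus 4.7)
The plan is to modify the compendious Champernowne construction of Subsection \ref{champer} so that packages of every order are reused indefinitely throughout $\mathbb G$, thereby achieving normality with respect to every nice F{\o}lner sequence at once. As in Subsection \ref{champer}, fix a congruent, deterministic, F{\o}lner, syndetic system of monotilings $(\mathcal T_k)_{k\ge 0}$ of $\mathbb G$ with anchored rectangular shapes $S_k$ (and $S_0=\{0\}$); its existence rests on the monotileability of $\mathbb G$ by anchored rectangular boxes. For each $k\ge 1$, construct a package $P_k\in\{0,1\}^{S_{r(k)}}$ as a balanced concatenation of all $2^{|S_k|}$ bricks of order $k$ (the blocks in $\{0,1\}^{S_k}$), with $r(k)$ chosen so large that $P_k$ is $(K_k,\varepsilon_k)$-normal for some prescribed exhaustion $K_k\nearrow\mathbb G$ and $\varepsilon_k\searrow 0$.

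Next, build a mixed tiling $\Theta$ of $\mathbb G$ as follows. Pick an exhausting increasing sequence of bounded anchored rectangular boxes $R_0\subset R_1\subset\cdots$ with $\bigcup_k R_k=\mathbb G$, where each $R_k$ is aligned as a union of tiles of $\mathcal T_{r(k)}$ and is itself $(S_{r(k)}S_{r(k)}^{-1},2^{-k})$-invariant (both conditions are easily arranged by taking $R_k$ sufficiently large). By congruence of $(\mathcal T_k)$, each shell $R_{k+1}\setminus R_k$ is a union of tiles of $\mathcal T_{r(k)}$; tile it by such tiles. Define $x\in\{0,1\}^\mathbb G$ by placing the appropriately shifted $P_k$ on every order-$k$ tile of $\Theta$; identifying $\mathbb G$ with $(\na,\times)$ produces a subset $A\subset\na$. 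All choices are algorithmic, so $A$ is effectively defined.

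To establish net-normality, fix any nice F{\o}lner sequence $(F_n)$, a finite $K\subset\mathbb G$, and $\varepsilon>0$. Pick $k_0$ so that each order-$\ge k_0$ package is $(K,\varepsilon)$-normal. Since $R_{k_0}$ is finite and $|F_n|\to\infty$, we have $|F_n\cap R_{k_0}|/|F_n|\to 0$. The remainder $F_n\setminus R_{k_0}$ is partitioned by $\Theta$ into pieces of tiles of orders $\ge k_0$; tiles entirely inside $F_n$ contribute frequencies within $\varepsilon$ of $2^{-|K|}$ to $\mathsf{\tilde N}(B,x,F_n)/|F_n|$ for each block $B\in\{0,1\}^K$. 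The chief difficulty is controlling the contribution of boundary tiles (those of $\Theta$ crossing $\partial F_n$) \emph{uniformly} over shells of unbounded order. This is handled by combining the $(S_{r(k)}S_{r(k)}^{-1},2^{-k})$-invariance of each $R_k$ with Lemma \ref{corone}: the former bounds the $S_{r(k)}$-boundary within shell $R_{k+1}\setminus R_k$ by $2^{-k}|R_{k+1}|$, while the latter uniformly controls the mass of ``narrow'' subregions of $F_n$ that could house misbehaving partial tiles. Summing the boundary errors geometrically over $k$ yields total boundary contribution $o(|F_n|)$, whence $\bigl|\mathsf{\tilde N}(B,x,F_n)/|F_n|-2^{-|K|}\bigr|<3\varepsilon$ for all $n$ sufficiently large, establishing net-normality of $A$.
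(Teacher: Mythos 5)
Your overall strategy (reuse the bricks and packages of Subsection \ref{champer}, spread infinitely many tiles of every order over $\mathbb G$ via a mixed tiling, and invoke Lemma \ref{corone} to kill the exceptional part) is the same as the paper's, but the way you build the mixed tiling leaves a genuine gap at exactly the step you flag as ``the chief difficulty''. Your shells $R_{k+1}\setminus R_k$ form a \emph{fixed} exhaustion chosen before the nice F{\o}lner sequence $(F_n)$, whereas $(F_n)$ is arbitrary and chosen afterwards. The order-$k$ tiles have shapes $S_{r(k)}$ whose extents in the various directions are unbounded in $k$, so against any fixed exhaustion one can play a nice F{\o}lner sequence of strongly elongated boxes (huge in direction $1$, of size $1$ in directions $2,\dots,d_n$ with $d_n$ growing slowly). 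For such $F_n$ essentially all of $|F_n|$ lies in shells of orders $k$ so large that $S_{r(k)}$ does not even fit inside $F_n$ in direction $2$; then $F_n$ contains \emph{no} complete tiles of those orders and the ``boundary contribution'' is $\sim|F_n|$ rather than $o(|F_n|)$. Neither of your two ingredients repairs this: the $(S_{r(k)}S_{r(k)}^{-1},2^{-k})$-invariance of $R_k$ bounds the untileable part of the shell relative to $|R_{k+1}|$, not relative to $|F_n|$; and Lemma \ref{corone} concerns the fixed set $Z(K,\varepsilon)$ for fixed $(K,\varepsilon)$ --- it says nothing about the $n$-dependent regions $F_n\setminus (F_n)_{S_{r(k)}S_{r(k)}^{-1}}$ with $k$ growing along with $n$. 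Nothing in the construction controls what $x$ looks like on a thin slice through a package, so the argument does not merely have an unverified step; the construction as described can actually fail.

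The paper closes precisely this gap by \emph{not} using a size-based exhaustion: at step $k$ it tiles (the $\mathcal T_{r(k+1)}$-saturation $\mathbf Z_k$ of) the set $Z(F_k,\tfrac1k)$ from Lemma \ref{corone} itself, which also exhausts $\mathbb G$. This buys two things simultaneously: (a) everything tiled through step $k_0$ sits inside $\mathbf Z_{k_0}$, which has density zero with respect to \emph{every} F{\o}lner sequence, so the low-order tiles are harmless; and (b) any anchored box that reaches beyond the region tiled through step $k$ cannot be one of the boxes making up $Z(F_k,\tfrac1k)$ and is therefore automatically $(F_k,\tfrac1k)$-invariant --- this is the link between ``how far out a F{\o}lner set reaches'' and ``how invariant it is with respect to the tile shapes it meets'', and it is exactly what a fixed exhaustion cannot provide. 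Replacing your shells $R_k$ by the sets $\mathbf Z_k$ turns your argument into the paper's; as written, it does not go through.
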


\begin{rem}\label{mage}
Given one net-normal set $A$ we can easily produce a Cantor set of
net-normal elements of $\{0,1\}^\mathbb G$, by altering the indicator
function $\mathbbm 1_A$ in all possible ways along some infinite
subset of $\mathbb G$ which has density zero for all nice F{\o}lner \sq
s (an example of such a subset is provided by any finitely-generated
sub-semigroup; see also Lemma \ref{corone} above).
\end{rem}

\begin{proof}[Proof of Theorem \ref{uni}]
The construction is a modification of the construction of an
$(F_n)$-normal element for a doubling F{\o}lner \sq\ $(F_n)$ (see
Subsection \ref{constru}). The bricks and packages will be the same
(they depend on the choice of the \sq\ $(F_n)$). The mixed tiling will
be different: this time, for each $k\ge 1$ it will contain infinitely
many tiles of $\mathcal T_{r(k)}$.
We can now describe the modification of the mixed
tiling $\Theta$ (or, equivalently, of the chains) appearing in the
construction \ref{constru}. We continue to use the notation
$r(k)=2^k+k$ and keep denoting by $\mathcal T_{r(k)}$ the tiling by
shifted copies of $F_{2^k+k}$. Step $0$ is unchanged: the $0$th chain
is the concatenation of $4$ packages of order zero arranged to fill a
block over $F_3=F_{r(1)}$. In the language of tilings, this defines
$\Theta$ on $F_{r(1)}$ (as a partition into $4$ rectangles), which
clearly is a $\mathcal T_{r(1)}$-saturated set. For $k\ge 1$ assume
that at the steps $1,\dots,k\!-\!1$ we have defined $\Theta$ on a
$\mathcal T_{r(k)}$-saturated set. Now, at the step $k$, we consider
the set $Z(F_k,\frac1k)$, and its saturation $\mathbf Z_k$ with
respect to the tiling $\mathcal T_{r(k+1)}$. Part of $\mathbf Z_k$ has
been tiled in preceding steps (by tiles of orders $\mathcal T_{r(i)}$
with $i<k$), and this part is $\mathcal T_{r(k)}$-saturated. We now
tile the remaining part of $\mathbf Z_k$ by the tiles of $\mathcal
T_{r(k)}$. Due to the congruency of the system of tilings $(\mathcal
T_k)$, in this manner we tile exactly the set $\mathbf Z_k$ (which is
$\mathcal T_{r(k+1)}$-saturated), so that the inductive assumption is
fulfilled for $k\!+\!1$. Notice that the sets $\mathbf Z_k$ eventually
fill up the whole group, thus the mixed tiling $\Theta$ is well
defined on $\mathbb G$ and it determines an element $x=\mathbbm
1_A\in\{0,1\}^\mathbb G$.

By Proposition~\ref{34}, it remains to verify multiplicative normality of $x$ with respect to
any nice F{\o}lner \sq\ $(H_n)$. As in \eqref{normc}, we need to show
that for each $k_0$ we have $\frac{a_n}{|H_n|}\to 1$, where $a_n$ is
the cardinality of the portion $H_n'$ of $H_n$ such that $x|_{H'_n}$
is a concatenation of packages of orders $k\ge k_0$, equivalently, the
portion of $H_n$ tiled by the tiles belonging to $\mathcal T_{r_k}$
with $k\ge k_0$. In other words, we need to show the convergence
$\frac{b_n}{|H_n|}\to 0$, where $b_n$ is the cardinality of the
portion $H_n\setminus H'_n$ of $H_n$ tiled by the tiles belonging to
$\mathcal T_{r_k}$ with $k<k_0$. This convergence follows directly
from three facts:
\begin{itemize}
	\item tiles belonging to $\mathcal T_{r_k}$ with $k<k_0$
          appear only in $\mathbf Z_{k_0}$,
	\item by Lemma \ref{corone}, $Z(F_{k_0},\frac1{k_0})$ has
          $(H_n)$-density zero,
	\item for any F{\o}lner \sq\ $(H_n)$ in any countably infinite
          group $G$, if a set has $(H_n)$-density zero, then so does
          its saturation with respect to any proper (i.e., having
          finitely many shapes) tiling of the group, in particular,
          $\mathbf Z_{k_0}$ has $(H_n)$-density zero. \qedhere
\end{itemize}
\end{proof}

\begin{rem}
Notice that for any countably infinite amenable (semi)group $G$ it
is impossible to find an element $x\in\{0,1\}^G$ which is normal with
respect to all F{\o}lner \sq s. For instance, if $A$ is $(F_n)$-normal
for some F{\o}lner \sq\ $(F_n)$ in $(\na,+)$, then its complement $A^c$
contains arbitrarily long intervals, which constitute a F{\o}lner
\sq\ disjoint from $A$. A similar argument applies to any amenable
semigroup. The existence of a multiplicatively net-normal subset of
$(\na,\times)$ shows that the restriction to anchored rectangular
boxes is a well balanced level of generality.
\end{rem}

\section{Combinatorial and Diophantine properties of additively and
  multiplicatively normal sets in $\na$}\label{s4}

In this section we will be focusing on the combinatorial and
Diophantine richness of normal sets in $(\na,+)$ and $(\na,\times)$.
Before starting the discussion we review some terminology.

\begin{enumerate}[(i)]
\item We say that a set $S\subset\na$ is \emph{additively
  (multiplicatively) large} if \emph{there exists} a F{\o}lner
  \sq\ $(F_n)$ in $(\na,+)$ (resp. $(\na,\times)$) for which
  $\overline d_{(F_n)}(S)>0$.
\item A set $S$ in a semigroup $G$ is called \emph{thick} if it
  contains a right translate of every finite set. The family of thick
  sets in $G$ is denoted by $\mathcal T(G)$. Note that $S\in\mathcal
  T(\na,+)$ if and only if $S$ contains arbitrarily long intervals,
  and that $S\in\mathcal T(\na,\times)$ if and only if $S$ contains
  arbitrarily large sets of the form
  $a_n\{1,2,\dots,n\}=\{a_n,2a_n,\dots,na_n\}$.
\item We say that $S\subset\na$ is \emph{additively normal} if it is
  $(F_n)$-normal for some F{\o}lner \sq\ $(F_n)$ in $(\na,+)$. If
  $(F_n)=(\{1,2,\dots,n\})$, we will call $S$ a \emph{classical normal
  set}. Similarly, a set $S$ is called \emph{multiplicatively normal}
  if it is $(F_n)$-normal for some F{\o}lner \sq\ $(F_n)$ in
  $(\na,\times)$ (there is no classical notion in this
  case).\footnote{We remark that in contrast to the set of classical
    normal numbers (which is of first category, see Corollary
    \ref{normnumberscategory}), the set of additively normal numbers
    is residual. Indeed, given a nonempty finite set $K\subset\na$ and
    $\varepsilon>0$, it is easy to see that the set $S(K,\varepsilon)$
    of all 0-1 sequences $x\in\{0,1\}^\na$, such that there exists an
    interval $I\subset\na$ for which $x|_I$ is
    $(K,\varepsilon)$-normal, is open and dense. The countable
    intersection $\bigcap_{K,\varepsilon}S(K,\varepsilon)$ over all
    nonempty finite sets $K$ and all rational $\varepsilon\in(0,1)$ is
    residual and consists of additively normal \sq s. Residuality of
    the set of additively normal numbers in $[0,1]$ now follows by a
    proof similar to that of Corollary \ref{normnumberscategory}.

An analogous argument establishes the residuality of the set of
multiplicatively normal numbers.}

\item Let $(n_i)_{i=1}^\infty$ be a \sq\ of (not necessarily distinct)
  positive integers. The set
\[
FS(n_i)_{i=1}^\infty =
\{n_{i_1}+n_{i_2}+\cdots+n_{i_k}:\ i_1<i_2<\ldots<i_k,\ k\in\na\}
\]
is called an \emph{additive IP-set}. Likewise, the set
\[
FP(n_i)_{i=1}^\infty= \{n_{i_1}n_{i_2}\cdots
n_{i_k}:\ i_1<i_2<\ldots<i_k,\ k\in\na\}
\]
is called a \emph{multiplicative IP-set}.
\end{enumerate}

\subsection{Multiplicative versus additive density and normality --
  some basic observations}

Recall that $(F_n)$-normality (additive or multiplicative) of an
element $x\in\{0,1\}^\na$ is defined as the property that for any
finite set $K\subset\na$ and every block $B\in\{0,1\}^K$ the
(additive or multiplicative) shifts of $B$ \emph{occur} in $x$ with
$(F_n)$-density $2^{-|K|}$. We emphasize that the additive and
multiplicative shifts of a block are quite different. For example, if
$w$ is a word over $\{1,2,\dots,k\}$, its additive shift occurs at a
position $n$ of some $x$ if $x|_{\{n+1,n+2,\dots, n+k\}}=w$, while its
multiplicative shift occurs at $n$ if $x|_{\{n,2n,3n,\dots,kn\}}=w$.

Theorem \ref{B1} implies that for any F{\o}lner \sq\ $(F_n)$ in
$(\na,+)$ such that $|F_n|$ increases, $\lambda$-almost every
$x\in\{0,1\}^\na$ is $(F_n)$-normal and, similarly, for any F{\o}lner
\sq\ $(K_n)$ in $(\na,\times)$ such that $|K_n|$ increases,
$\lambda$-almost every $x$ is $(K_n)$-normal. So $\lambda$-almost
every $x$ is both additively $(F_n)$-normal and multiplicatively
$(K_n)$-normal. On the other hand, the two notions of normality are
``in general position'': additively normal sets can be
multiplicatively trivial (have multiplicative density $0$ or $1$), and
vice-versa, multiplicatively normal sets can have additive density $0$
or~$1$. More precisely, the following holds.

\begin{thm}\label{orto}
For any F{\o}lner \sq\ $(K_n)_{n\in\na}$ in $(\na,\times)$ there exists
a set $A\subset\na$ with $(K_n)$-density $1$ and having universal
additive density $0$ (here \emph{universal} means that the additive
density can be computed with respect to an arbitrary F{\o}lner \sq\ in
$(\na,+)$).
\end{thm}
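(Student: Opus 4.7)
The proof rests on the key fact that $|K_n|/L_n\to 0$, where $L_n:=\max K_n$. Writing $K_n=\{p_1^{a_1}\cdots p_{d_n}^{a_{d_n}}:0\le a_i\le k_i^{(n)}\}$, we have
\[
\frac{|K_n|}{L_n}=\prod_{i=1}^{d_n}\frac{k_i^{(n)}+1}{p_i^{k_i^{(n)}}},
\]
and the F\o lner condition applied to each prime $p_i$ forces $k_i^{(n)}\to\infty$; since $(k+1)/p^k\to 0$ for every $p\ge 2$, each factor (hence the product) tends to zero.

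The plan is to construct $A$ by a diagonal argument exploiting this. I will pass to a rapidly-growing subsequence $(n_k)$ with $L_{n_{k+1}}/L_{n_k}$ growing super-exponentially, $|K_{n_k}|/L_{n_k}\le 1/k^2$, and $|K_{n_k}|\ge k\cdot L_{n_{k-1}}$ (all simultaneously achievable, since $|K_n|\to\infty$ and $|K_n|/L_n\to 0$). Define $A_k:=K_{n_k}\cap(L_{n_{k-1}},L_{n_k}]$; since the elements of $K_{n_k}$ in $[1,L_{n_{k-1}}]$ number at most $L_{n_{k-1}}\le |K_{n_k}|/k$, we have $|A_k|/|K_{n_k}|\ge 1-1/k$. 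Set $A:=\bigcup_k A_k$.

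For the $(K_n)$-density claim, note that for $n\in(n_{k-1},n_k]$ the inclusion $K_n\subseteq K_{n_k}$ gives $A\cap K_n\supseteq K_n\cap(L_{n_{k-1}},L_n]$, and the missed part $K_n\cap[1,L_{n_{k-1}}]$ has cardinality at most $L_{n_{k-1}}$, which should be shown to be $o(|K_n|)$. For universal additive density $0$, the elements of $A$ contributed by the $k$-th scale window $(L_{n_{k-1}},L_{n_k}]$ number at most $|K_{n_k}|\le L_{n_k}/k^2$, bounding the $A$-density in that window by $1/k^2\to 0$; the super-exponential growth of $L_{n_k}$ ensures that any interval of length $N$ eventually lies essentially within a single scale window, inheriting the scale density.

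The main obstacles I anticipate are (i) ensuring that the missed-elements bound $L_{n_{k-1}}=o(|K_n|)$ holds \emph{uniformly} for all $n$ in each gap $(n_{k-1},n_k]$---not only for $n=n_k$---which may require refining the subsequence or supplementing $A$ with a few additional small elements of $K_n$ chosen so as not to spoil additive sparsity; and (ii) controlling the $A$-density at sub-window scales, where the divisors of $L_{n_k}$ may cluster locally within $(L_{n_{k-1}},L_{n_k}]$. Obstacle (ii) I expect to handle either by further thinning each $A_k$ to remove local clusters, or by appealing to an equidistribution estimate for divisors of $L_{n_k}$ on a logarithmic scale (of Erd\H os--Kac type), which forces $K_{n_k}$ to have density $o(1)$ in every sufficiently long subinterval.
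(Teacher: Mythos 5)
You correctly identify the two soft spots in your own argument, but both are fatal rather than technical, and there is a third problem of scope. First, the theorem concerns an \emph{arbitrary} F{\o}lner sequence $(K_n)$ in $(\na,\times)$, whereas your entire setup (writing $K_n$ as the divisor set of $L_n=\max K_n$ and using $K_n\subseteq K_{n_k}$) silently assumes $(K_n)$ is anchored rectangular and increasing; general F{\o}lner sequences in $(\na,\times)$ need not have this form. Second, even in the nice case the $(K_n)$-density-$1$ claim fails at intermediate indices: for $n=n_{k-1}+1$, the set $K_n\setminus A$ contains every divisor $d$ of $L_n$ with $d\le L_{n_{k-1}}$ and $d\nmid L_{n_{k-1}}$, and none of these lie in any $A_j$. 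If, say, $L_n=pL_{n_{k-1}}$ for a small new prime $p$, there are at least $|K_{n_{k-1}}|-p=\tfrac12|K_n|-p$ such divisors, so the density of $A$ in $K_n$ drops to about $\tfrac12$. No refinement of the subsequence avoids this (the problem sits at $n_{k-1}+1$ for any choice), and ``supplementing $A$ with a few small elements'' means adding points inside already-closed windows, which reopens the additive question. Third, obstacle (ii) is not curable by equidistribution: take $L_n=\mathrm{lcm}(1,\dots,m_n)$ with $m_{n+1}\ge e^{e^{m_n}}$, a legitimate nice F{\o}lner sequence. Then for \emph{every} subsequence $(n_k)$ one has $m_{n_k}\ge 2L_{n_{k-1}}$, every integer up to $m_{n_k}$ divides $L_{n_k}$, and hence $A_k\supseteq(L_{n_{k-1}},\,2L_{n_{k-1}}]$ is a full integer interval of length tending to infinity. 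Your $A$ is then additively thick and has upper additive density $1$ along a suitable additive F{\o}lner sequence; the divisors of highly composite numbers genuinely fill long intervals, so no Erd\H{o}s--Kac-type estimate can rescue the construction as it stands.

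For comparison, the paper's proof sidesteps all of this with one observation: for each $m$ the set $m!\,\na$ is a multiplicative translate of $\na$, hence by \eqref{transl} has $(K_n)$-density $1$ for \emph{any} F{\o}lner sequence $(K_n)$ in $(\na,\times)$. One then chooses $n_m$ so that multiples of $m!$ occupy a fraction $>1-\frac1m$ of each $K_n$ with $n>n_m$, and lets $A$ consist, on the block of indices $n\in(n_m,n_{m+1}]$, of exactly those multiples of $m!$ lying in the corresponding $K_n$'s. This gives $(K_n)$-density $1$ directly, while beyond a finite threshold all elements of $A$ are multiples of $(m+1)!$, so the gaps of $A$ tend to infinity and the additive density is $0$ for every additive F{\o}lner sequence. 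The moral is that one must thin the tail of $\bigcup_n K_n$ by a \emph{multiplicatively structured} sparse set (which has full multiplicative density by translation invariance but forces large gaps), not by restriction to an interval $(L_{n_{k-1}},L_{n_k}]$, which does neither job.
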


\begin{proof}
First observe that for any $m$ the set $m\na$ (being a multiplicative
shift of a set of $(K_n)$-density $1$) has $(K_n)$-density~$1$. Let
$n_m$ be such that for each $n> n_m$ the fraction of multiples of $m!$
in $K_n$ is larger than $1-\frac1m$. The set $A$ is defined as the
union $K_1\cup K_2\cup\cdots\cup K_{n_2}$ to which we add all
multiples of $2!$ contained in the union $K_{n_2+1}\cup
K_{n_2+2}\cup\cdots\cup K_{n_3}$, all multiples of $3!$ contained in
the union $K_{n_3+1}\cup K_{n_3+2}\cup\cdots\cup K_{n_4}$, etc. It is
obvious that the $(K_n)$-density of $A$ equals $1$. On the other
hand, since $A$ has gaps which tend to infinity in length, its
additive density is equal to $0$ (for any additive F{\o}lner \sq).
\end{proof}

Note that the complementary set $A^c$ has $(K_n)$-density $0$ and
universal additive density~$1$. Further, given a F{\o}lner \sq\ $(F_n)$
in $(\na,+)$, let $B\subset\na$ be a set which is both
multiplicatively $(K_n)$-normal and additively $(F_n)$-normal. Then
$A\cap B$ is multiplicatively $(K_n)$-normal, while it has universal
additive density zero, and on the other hand, $A^c\cap B$ is
additively $(F_n)$-normal and has multiplicative $(K_n)$-density~$0$.
These examples justify our claim above that the notions of
multiplicative and additive normality are in ``general position''.

\begin{rem}\label{orto1}
A statement symmetric to Theorem \ref{orto}, in which one fixes a
F{\o}lner \sq\ $(F_n)$ in $(\na,+)$ (for instance the classical one) and
looks for a set of $(F_n)$-density~$1$ and universal multiplicative
density $0$, does not hold. As a matter of fact, any set of upper
density $1$ with respect to the classical F{\o}lner \sq\ in $(\na,+)$
has density $1$ with respect to some F{\o}lner \sq\ in $(\na,\times)$.
Indeed, in the proof of \cite[Theorem 6.3]{BeM16} it is shown that if
$A$ has classical upper density $1$, so does $A/n\cap A$ for every $n$
(see Definition \ref{def A/n} below). By an obvious iteration, we get
that $A\cap A/2\cap A/3\cap\dots\cap A/n$ is nonempty, which implies
that $A$ contains arbitrarily large sets of the form
$a_n\{1,2,\dots,n\}$, i.e., $A$ is multiplicatively thick (see (ii)
above). This, in turn, implies that for some F{\o}lner \sq\ $(K_n)$ in
$(\na,\times)$ one has $d_{(K_n)}(A)=1$.

On the other hand, there are sets $A\subset(\na,+)$ with
$d(A)=1-\varepsilon$ such that $A$ has universal multiplicative
density zero (take for example all numbers not divisible by some large
$n$).
\end{rem}

\subsection{Elementary combinatorial properties of additively and
  multiplicatively normal sets}

The above Theorem \ref{orto} and Remark \ref{orto1} hint that, in
general, the combinatorial properties of additively and
multiplicatively normal sets are distinct. We will see below that this
is indeed the case.

In this subsection we will focus on properties of
additively/multiplicatively normal sets which follow from the fact
that these sets are additively/multi\-plica\-tively thick. Since we
are interested in properties of normal sets, in the statements of our
theorems we will make the ostensibly stronger assumption that the sets
in question are normal rather than just thick. Note that, since every
thick set obviously contains an $(F_n)$-normal set for some F{\o}lner
\sq\ $(F_n)$, in all theorems in this subsection the normality and
thickness assumptions are in fact equivalent.

For example, it is not hard to see that every thick, in particular
every normal, set contains an IP-set (this applies to both additive
and multiplicative setups). Now, IP-sets can be defined as solutions
of (an infinite) system of certain equations, and in our quest for
patterns in normal sets, it is natural to inquire which Diophantine
equations and systems thereof are always solvable in normal sets. The
following two theorems shed some light on this question.

\begin{thm}\label{tratra}
If $S$ is a multiplicatively normal set then any homogeneous system of
finitely many polynomial equations (with several variables) which is
solvable in $\na$ is solvable in $S$.
\end{thm}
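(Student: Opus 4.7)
The plan is to reduce this to a simple combination of \textit{multiplicative thickness} of $S$ and \textit{homogeneity} of the system. First, I would observe that multiplicative normality of $S$ implies multiplicative thickness: if $S$ is $(F_n)$-normal for some F{\o}lner \sq\ $(F_n)$ in $(\na,\times)$, then by the equivalence (1)$\iff$(3) in Theorem \ref{defn1.2}, for any nonempty finite $K\subset\na$,
\[
d_{(F_n)}\Bigl(\bigcap_{h\in K}h^{-1}S\Bigr)=2^{-|K|}>0.
\]
The intersection on the left consists precisely of those $g\in\na$ for which $hg\in S$ for every $h\in K$, i.e., $gK\subset S$ (using commutativity of $(\na,\times)$). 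In particular this set is nonempty, which gives the thickness property listed in (ii): for every finite $K\subset\na$ there exists $c\in\na$ with $cK\subset S$.

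Next, I would use the fact that a system of homogeneous polynomial equations is invariant under diagonal multiplicative scaling. Suppose the system is given by polynomials $P_1,\dots,P_r\in\z[y_1,\dots,y_m]$, each $P_j$ homogeneous of degree $d_j$, and that $(a_1,\dots,a_m)\in\na^m$ is a solution. Then for every $c\in\na$,
\[
P_j(ca_1,\dots,ca_m)=c^{d_j}P_j(a_1,\dots,a_m)=0,\qquad j=1,\dots,r,
\]
so $(ca_1,\dots,ca_m)$ is again a solution in $\na^m$. Applying thickness to the finite set $K=\{a_1,\dots,a_m\}$ yields some $c\in\na$ with $cK\subset S$, i.e., $ca_i\in S$ for each $i$. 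Hence $(ca_1,\dots,ca_m)$ is a solution of the system with all coordinates in $S$.

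There is essentially no obstacle to overcome here: the only mild point is to ensure that the thickness statement derived from Theorem \ref{defn1.2} covers arbitrary finite sets $K$ (not just initial segments $\{1,2,\dots,n\}$ as in the characterization mentioned in (ii)), which is immediate from the definition of thickness and the positivity of $(F_n)$-density established above. Everything else is a one-line application of the homogeneity identity $P(c\mathbf{x})=c^{\deg P}P(\mathbf{x})$.
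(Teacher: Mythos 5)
Your proof is correct and follows essentially the same route as the paper: multiplicative normality gives multiplicative thickness, and homogeneity lets you scale a given solution by the dilation factor that places it inside $S$. The only cosmetic difference is that you apply thickness directly to the set of solution coordinates $\{a_1,\dots,a_m\}$ rather than to an initial segment $\{1,2,\dots,n\}$ containing them, and you spell out the (correct) derivation of thickness from Theorem \ref{defn1.2}, which the paper takes for granted.
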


\begin{proof}
Since $S\in \mathcal T(\na,\times)$ (i.e., is multiplicatively thick),
$S$ contains arbitrarily long sets of the form $a_n\{1,2,\dots,n\}$.
If a given homogeneous system is solvable in $\na$ then it is solvable
in $\{1,2,\dots,n\}$ for some $n$ and hence, due to homogeneity,
also in $a_n\{1,2,\dots,n\}$.
\end{proof}

\begin{rem}
Note that it follows from Theorem \ref{tratra} that any
multiplicatively normal set contains, for any $m\in\na$, ``finite-sums
sets'' of the form
\[
FS(n_i)_{i=1}^m =
\{n_{i_1}+n_{i_2}+\cdots+n_{i_k}:\ i_1<i_2<\cdots<i_k\le
m,\ k\in\{1,2,\dots,m\}\}.
\]
Indeed, these sets can be described as solutions of finite homogeneous
systems of linear equations\footnote{The set $FS(n_i)_{i=1}^m$ is the
  solution of the following system of equations (with variables
  $n_T$):
\[
n_T = \sum_{i\in T}n_i,
\]
where $T$ ranges over all nonempty finite subsets of the set $\{1, 2,
..., m\}$ (this applies also to $m=\infty$).}. On the other hand, we
will now show that, in general, multiplicatively normal sets need not
contain additive IP-sets $FS(n_i)_{i=1}^\infty$ or shifts thereof.
Take any F{\o}lner \sq\ $(F_n)$ in $(\na,\times)$ and let $A$ be the
set of $(F_n)$-density $1$ constructed in the proof of Theorem
\ref{orto} ($A$ has universal additive density zero). For each $n$,
this set contains only finitely many numbers not divisible by $n$. On
the other hand, it is well known (and also easy to see) that every
additive IP set contains, for arbitrarily large $n$, infinitely many
numbers divisible by $n$ as well as infinitely many numbers not
divisible by $n$. Thus any (shifted or not) additive IP-set contains
infinitely many numbers not divisible by $n$, and hence cannot be
contained in $A$.
\end{rem}

\begin{rem}
We remark that additively normal sets (even the classical ones) need
not contain multiplicative IP-sets. In fact, they do not need to
contain triples of the form $\{a,b,ab\}$ (see \cite{Fi05}).
\end{rem}

\begin{thm}\label{5.1}
Let $A\overset{\to}x=0$ be a partition-regular (see Introduction) system of finitely many
linear equations with $n$ variables. Then, for any additively normal
set $S$ one can find a solution $\overset{\to}x=(x_1,x_2,\dots,x_n)$
with all entries in $S$.
\end{thm}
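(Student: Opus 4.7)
My plan is to reduce Theorem \ref{5.1} to the classical Ramsey-theoretic fact that every (additively) thick subset of $\na$ contains solutions to every partition-regular homogeneous linear system. The reduction itself is immediate from the machinery developed in this paper: by the equivalence (1)$\iff$(3) of Theorem \ref{defn1.2}, an additively $(F_n)$-normal set $S$ satisfies
\[
d_{(F_n)}\Bigl(\bigcap_{h\in K}(S-h)\Bigr)\;=\;2^{-|K|}\;>\;0
\]
for every nonempty finite $K\subset\na$. In particular the set $\{m\in\na : m+K\subset S\}$ is nonempty, and specializing to $K=\{1,2,\dots,N\}$ for arbitrary $N$ shows that $S$ contains arbitrarily long intervals, i.e., $S\in\mathcal T(\na,+)$.

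Next, I would exhibit an additive IP-set inside $S$ by a standard greedy construction. Starting with any $n_1\in S$, and having chosen $n_1<\cdots<n_k$ with $FS(n_i)_{i=1}^k\subset S$, I would use thickness to pick an interval $[a,a+M]\subset S$ with $a>n_k$ and $M=\max FS(n_i)_{i=1}^k$, and set $n_{k+1}:=a$. Every element of $FS(n_i)_{i=1}^{k+1}$ either already lies in $FS(n_i)_{i=1}^k\subset S$ or has the form $a+s$ with $s\in FS(n_i)_{i=1}^k\cup\{0\}$ and $s\le M$, hence lies in $[a,a+M]\subset S$. Iterating produces an infinite IP-set $FS(n_i)_{i=1}^\infty\subset S$.

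Finally, I would invoke the classical theorem (a consequence of Rado's columns-condition characterization of partition regularity together with Deuber's theory of $(m,p,c)$-sets) that every additive IP-set contains a solution to every partition-regular homogeneous linear system. Combined with the two preceding steps, this produces a solution $\vec x\in S^n$ to $A\vec x=0$. The main obstacle is this last step, which rests on a nontrivial but well-established body of Ramsey-theoretic results; the genuinely new contribution of the argument is the reduction from the measure-theoretic hypothesis of $(F_n)$-normality to the purely combinatorial hypothesis of thickness, which is automatic from Theorem \ref{defn1.2}.
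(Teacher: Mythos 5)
Your first two steps are sound, and the first coincides with how the paper begins: Theorem \ref{defn1.2} gives that $\{m:m+\{1,\dots,N\}\subset S\}$ has positive $(F_n)$-density for every $N$, so $S$ is thick, and your greedy construction of an IP-set inside a thick set is correct. The fatal problem is the final step: the statement ``every additive IP-set contains a solution to every partition-regular homogeneous linear system'' is false, and it does not follow from Rado's or Deuber's theorems. A counterexample: Brauer's system in the unknowns $x_1,x_2,x_3,d$ given by $x_1-2x_2+x_3=0$ and $x_2-x_1-d=0$ is partition-regular (it asks for a monochromatic $3$-term arithmetic progression together with its common difference; one can also verify Rado's columns condition directly), and every solution in $\na$ has $d\ge 1$, hence is a nondegenerate progression. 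Yet the IP-set $FS(4^i)_{i=1}^\infty$ contains no nondegenerate $3$-term progression: its elements have base-$4$ digits in $\{0,1\}$, so in $x+z=2y$ no carries occur on either side and the equation decouples digit by digit into $x_j+z_j=2y_j$ with $x_j,z_j,y_j\in\{0,1\}$, forcing $x=y=z$. Thus this IP-set contains no solution of a partition-regular system, and the appeal to Deuber's $(m,p,c)$-sets does not rescue the argument: $(m,p,c)$-sets with $p\ge 2$ contain $3$-term progressions and therefore cannot be found inside $FS(4^i)_{i=1}^\infty$; Deuber's theorem produces them in some cell of a finite partition, not inside an arbitrary IP-set.

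The property you actually need after establishing thickness is centrality, not the IP property. A thick set is a member of a minimal idempotent ultrafilter in $(\beta\na,+)$ and is therefore central, and Furstenberg's theorem (\cite[Theorem 8.22]{Fu81}) asserts that every central set contains solutions of every partition-regular system; this is exactly the route the paper takes. Your reduction from normality to thickness is the right first move, but passing from the thick set $S$ to an IP-subset and then retaining only the IP structure discards precisely the combinatorial richness that the conclusion requires.
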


\begin{proof}
The proof is short but uses same facts from Ramsey theory,
\tl\ dynamics and \tl\ algebra in the Stone--\v Cech compactification
$\beta\na$ viewed as a semi\tl\ semigroup obtained by an extension of
the operation in $(\na,+)$. Since this theorem forms only a rather
small fragment of a big picture, in order to save space, we will be
using some terms and results without giving all the needed details
(but remedying this by providing pertinent references).

First, note that our additively normal set is thick and hence is a
member of a minimal idempotent in $(\beta\na,+)$. Further, any member
of a minimal idempotent in $(\beta\na,+)$ is a central set (see, for
example, Definition 5.8 and Lemma 5.10 in \cite{Be10}). Now it only
remains to invoke the theorem due to Furstenberg which states that any
central set contains solutions to any partition-regular system
$A\overset{\to}x=0$ (\cite[Theorem~8.22]{Fu81}).\footnote{See
  \cite[Theorem 4.1]{Fi11} for a more general result of this kind,
  obtained by a different method.}
\end{proof}

One can actually show that a system of linear equations is
partition-regular if and only if it is solvable in any additively
normal set (equivalently, in any thick set). For sake of simplicity we
prove this equivalence in the case of one equation with three
variables. Note that any such equation (which has at least one
solution) can be written as $ia+jb=kc$ with $i,j,k\in\na$ and $a,b,c$
as unknowns.

\begin{thm}\label{part-reg}
Let $i,j,k$ be three natural coefficients. The following conditions
are equivalent:
\begin{enumerate}
	\item $k\in\{i,j,i+j\}$,
	\item the equation $ia+jb=kc$ is partition-regular,
	\item the equation $ia+jb=kc$ is solvable in any thick set,
	\item the equation $ia+jb=kc$ is solvable in any additively
          normal set.
\end{enumerate}
\end{thm}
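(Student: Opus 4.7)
The plan is to close the cycle $(1)\Rightarrow(2)\Rightarrow(4)\Rightarrow(3)\Rightarrow(1)$, with the equivalence $(3)\Leftrightarrow(4)$ being essentially trivial, as follows from the paper's observation that thickness and $(F_n)$-normality are interchangeable hypotheses in this subsection.

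First, $(1)\Leftrightarrow(2)$ is an instance of Rado's single-equation theorem: $\sum c_\ell x_\ell=0$ is partition-regular if and only if some nonempty subset of $\{c_\ell\}$ sums to zero. For the coefficient set $\{i,j,-k\}$ with $i,j,k>0$, the seven nonempty sub-sums are $i,j,-k,i+j,i-k,j-k,i+j-k$, and the only ones that can vanish correspond respectively to $k=i$, $k=j$, and $k=i+j$. Next, $(2)\Rightarrow(4)$ is exactly the content of Theorem~\ref{5.1}. The equivalence $(3)\Leftrightarrow(4)$ is immediate: any $(F_n)$-normal set is thick (applying the definition with $K=\{1,\dots,r\}$ and the all-ones block forces arbitrarily long runs of consecutive elements), and every thick set contains an $(F_n)$-normal subset (for a suitable F{\o}lner \sq\ $(F_n)$), so a solution in a thick set and a solution in a normal set can be traded for one another.

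The substantive step is $(3)\Rightarrow(1)$, which I will prove in contrapositive form. Assuming $k\notin\{i,j,i+j\}$, I will exhibit a thick set $S\subset\na$ with no solution to $ia+jb=kc$. The idea is to use sparsely-spaced intervals so that the equation cannot balance at any admissible combination of scales. Choose sequences $N_n,L_n$ with $L_n\to\infty$ (for thickness), $L_n/N_n\to 0$ (so the oscillation inside a block is negligible compared to its location), and $N_{n+1}/N_n\to\infty$ (so distinct blocks live at incomparable scales); for concreteness take $N_n=n!^{\,2}$ and $L_n=n$. Let $I_n=[N_n,N_n+L_n]$ and $S=\bigcup_{n\ge n_0}I_n$ for some $n_0$ to be chosen. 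Suppose $a\in I_\alpha$, $b\in I_\beta$, $c\in I_\gamma$ satisfy $ia+jb=kc$. The inter-block scale separation forces $\gamma=\max(\alpha,\beta)$: if $\gamma$ were strictly larger, $kc\gtrsim kN_\gamma$ would dwarf $ia+jb$, and if $\gamma$ were strictly smaller, $kc$ would be dwarfed by $ia+jb$. If $\alpha=\beta=\gamma=n$, writing $a=N_n+a'$, $b=N_n+b'$, $c=N_n+c'$ with $a',b',c'\in[0,L_n]$ transforms the equation into $(i+j-k)N_n=kc'-ia'-jb'$, giving $|i+j-k|N_n\le(i+j+k)L_n$, which contradicts $k\ne i+j$ together with $L_n/N_n\to 0$ once $n$ is large. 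If $\alpha=\gamma>\beta$, isolating leading terms yields $(i-k)N_\gamma=-jb-ia'+kc'$, and the right-hand side is $O(N_\beta+L_\gamma)=o(N_\gamma)$, so $i=k$, excluded; the symmetric case $\beta=\gamma>\alpha$ forces $j=k$, also excluded. Taking $n_0$ larger than the finitely many exceptional indices completes the construction.

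The main obstacle is the case analysis in $(3)\Rightarrow(1)$: one must simultaneously arrange the scales so that the equation fails both \emph{within} a single block (ensured by $k\ne i+j$ and $L_n/N_n\to 0$) and \emph{across} blocks (ensured by $k\notin\{i,j\}$ together with $N_{n+1}/N_n\to\infty$), while ruling out all cross-mixings of indices. Once these are handled, the cycle of implications closes and all four conditions are equivalent.
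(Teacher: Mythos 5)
Your proposal is correct and follows essentially the same route as the paper: Rado's condition for $(1)\Leftrightarrow(2)$, Theorem~\ref{5.1} for the positive direction, the thick/normal interchange for $(3)\Leftrightarrow(4)$, and a thick counterexample built from intervals at rapidly separated scales for the remaining implication. The only (cosmetic) difference is in that last construction: the paper uses intervals $r_n[1,1+\delta]$ of length proportional to their location with $\delta$ chosen so that $kI_n$ avoids $iI_n$, $jI_n$ and $(i+j)I_n$, whereas you use intervals of length $o(N_n)$, which makes the within-one-block case collapse to $(i+j-k)N_n=O(L_n)$ — both work equally well.
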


\begin{proof}
Equivalence of (1) and (2) is well known. As a matter of fact, a
necessary and sufficient condition for partition-regularity of an
equation $i_1a_1 +i_2a_2+\dots+i_na_n = 0$ is that some subset of
coefficients sums up to zero, see for example \cite{GRS90}.
Conditions~(3) and~(4) are equivalent since every additively normal
set is thick, while every thick set contains the union $\bigcup_n F_n$
of some F{\o}lner \sq\ and---within this union---an additively normal
set. Solvability of partition-regular linear equations in thick (and
hence additively normal) sets is our Theorem \ref{5.1}. It remains to
consider coefficients for which (1) does not hold and construct a
thick set $A\subset\na$, which contains no solutions, i.e., is such
that $(iA+jA)\cap kA=\emptyset$.

Since $k\notin\{i,j,i+j\}$ there exist a rational number $\delta>0$
such that
\[
k[1,1+\delta]\cap (i[1,1+2\delta]\cup
j[1,1+2\delta]\cup(i+j)[1,1+\delta])=\emptyset.
\]
Let
\[
A=\bigcup_{n=1}^\infty I_n, \text{ \ where \ }I_n=r_n[1,1+\delta],
\]
where the numbers $r_n\in\na$ are such that $r_n\delta\in\na$ and grow
geometrically with a large ratio. Obviously, $A$ is a thick set.
Choose any $a,b\in A$. If $a,b$ belong to the same interval $I_n$,
then $ia+jb\in (i+j)r_n[1,1+\delta]$. If $a,b$ belong to two different
intervals, say $I_m, I_n$ with $m<n$, then, since $r_m$ is much
smaller than $r_n$, $ia+jb$ is either in $ir_n[1,1+2\delta]$ or in
$jr_n[1,1+2\delta]$. In any case, $ia+jb\notin kI_n$ and, due to the
fast growth of $r_n$, $ia+jb\notin kI_l$ for any other $l$. So,
$(iA+jA)\cap kA=\emptyset$, as needed.
\end{proof}

\begin{rem}
We will show later (see Corollary \ref{aaaa}) that any
\emph{classical} normal set $A$ has the stronger property that any
equation $ia+jb=kc$ with $i,j,k\in\na$ is solvable in $A$.
\end{rem}

We conclude this subsection with a simple observation that additively
normal sets always contain at least some modest amount of
multiplicative structure.

\begin{thm}
Any additively normal set $A$ contains ``consecutive product sets'' of
the form $\{y_1,y_1y_2,\dots,y_1y_2\cdots y_k\}$ with arbitrarily
large $k$ and $y_n\ge 2$.
\end{thm}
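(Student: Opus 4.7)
The plan is to reduce the claim to the thickness of $A$, and then to construct the product chain inductively by placing suitable multiples of the running product inside a long interval of $A$.

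First I would observe that every additively normal set is thick in $(\na,+)$: if $A$ is $(F_n)$-normal, then Theorem~\ref{defn1.2} applied with $K=\{0,1,\dots,\ell-1\}$ gives
\[
d_{(F_n)}\Bigl(\bigcap_{h=0}^{\ell-1}(A-h)\Bigr)=2^{-\ell}>0,
\]
so infinitely many $n$ satisfy $\{n,n+1,\dots,n+\ell-1\}\subset A$. Hence $A$ contains intervals of every finite length.

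Now fix $k$ and build the chain $z_i:=y_1y_2\cdots y_i$ by induction on $i$. Pick any $y_1\in A$ with $y_1\ge 2$ (such an element exists since $A$ is infinite) and set $z_1:=y_1$. Given $z_i\in A$, choose by thickness an interval $[M,M+L]\subset A$ with $L\ge 2z_i$, and set
\[
z_{i+1}:=\bigl\lfloor (M+L)/z_i\bigr\rfloor\,z_i.
\]
Then $z_{i+1}$ is a multiple of $z_i$ satisfying $z_{i+1}\le M+L$ and $z_{i+1}>(M+L)-z_i\ge M+z_i>M$, so $z_{i+1}\in[M,M+L]\subset A$. Moreover $z_{i+1}/z_i=\lfloor(M+L)/z_i\rfloor\ge 2$, because $M+L\ge L\ge 2z_i$. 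Putting $y_{i+1}:=z_{i+1}/z_i$ yields an integer $\ge 2$ with $z_{i+1}=y_1y_2\cdots y_{i+1}\in A$. Iterating $k-1$ times produces the required chain $\{z_1,z_2,\dots,z_k\}$.

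There is no substantial obstacle in this plan: the combinatorial content is just pigeonholing multiples of $z_i$ inside a sufficiently long interval contained in $A$. The same argument in fact shows that any thick subset of $(\na,+)$---not merely an additively normal one---admits such consecutive product configurations, which explains why the theorem belongs to this subsection on properties inherited solely from thickness.
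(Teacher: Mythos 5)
Your proof is correct and takes essentially the same route as the paper: the paper's own proof consists of the single remark that the result follows from the ``almost obvious'' fact that any thick set contains arbitrarily large consecutive product sets, and your argument supplies precisely the thickness reduction and the pigeonhole placement of multiples of the running product that make this explicit. The only cosmetic point is that when invoking Theorem~\ref{defn1.2} you should take $K=\{1,\dots,\ell\}\subset\na$ rather than $\{0,\dots,\ell-1\}$, since $0\notin\na$; this changes nothing in the argument.
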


\begin{proof}
The result follows from the (almost obvious) fact that any thick set
contains arbitrarily large product sets.
\end{proof}

\subsection{Covering property of translates of normal sets}

The special case (for $(\z,+)$) of the following result is implicit in
\cite{BeW85}. We give a short proof for arbitrary countably infinite
amenable groups (and cancellative semigroups).

\begin{lem}\label{AA}
Let $G$ be a countably infinite amenable group in which we fix
arbitrarily a F{\o}lner \sq\ $(F_n)$. If $A$ is an $(F_n)$-normal set
and $B\subset G$ is infinite, then the set $BA$ has $(F_n)$-density
$1$.
\end{lem}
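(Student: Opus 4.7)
The plan is to exploit the characterization of $(F_n)$-normality given in Theorem \ref{defn1.2}, specifically the equivalence $(1) \iff (4)$, which is available because $G$ is assumed to be a group. The key idea is that $BA = \bigcup_{b \in B} bA$, so bounding the density of $BA$ from below reduces to bounding the density of finite subunions $\bigcup_{i=1}^{k} b_i A$ for $b_1, \ldots, b_k \in B$, and these subunions can be analyzed by passing to their complements.

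First I would fix an arbitrary $k \in \na$ and choose any $k$ distinct elements $b_1, \ldots, b_k \in B$, which is possible since $B$ is infinite. Setting $K = \{b_1, \ldots, b_k\}$ and letting $B_0 \in \{0,1\}^K$ be the block that is identically $0$, condition (4) of Theorem \ref{defn1.2} applied to the $(F_n)$-normal set $A$ yields
\[
d_{(F_n)}\Bigl(\bigcap_{i=1}^{k} b_i A^0\Bigr) = 2^{-k},
\]
where $A^0 = G \setminus A$. Since $G$ is a group, left multiplication by each $b_i$ is a bijection of $G$, so $b_i A^0 = G \setminus b_i A$, and therefore
\[
G \setminus \bigcup_{i=1}^{k} b_i A = \bigcap_{i=1}^{k} (G \setminus b_i A) = \bigcap_{i=1}^{k} b_i A^0.
\]

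It follows that $d_{(F_n)}\bigl(\bigcup_{i=1}^{k} b_i A\bigr) = 1 - 2^{-k}$. Since $\bigcup_{i=1}^{k} b_i A \subseteq BA$, this gives $\underline d_{(F_n)}(BA) \ge 1 - 2^{-k}$, and letting $k \to \infty$ we conclude $d_{(F_n)}(BA) = 1$.

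There is no real obstacle here: the only subtle point is invoking condition (4) rather than (3), since (4) lets us control $\bigcap_i b_i A^0$ directly (with $b_i$ on the left, no inverse), which is exactly what turns into the complement of $\bigcup_i b_i A$. The fact that $G$ is a group is used in two places — to know (1) $\iff$ (4) holds in Theorem \ref{defn1.2}, and to ensure that $b_i A^0$ is genuinely the complement of $b_i A$ in $G$.
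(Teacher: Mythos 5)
Your proof is correct and is essentially the paper's argument: both identify the complement of $\bigcup_{i}b_iA$ with $\bigcap_i b_iA^0$, use the combinatorial-independence characterization of normality (Theorem \ref{defn1.2}) to see this intersection has $(F_n)$-density $2^{-k}$, and let $k\to\infty$. The only cosmetic difference is that you invoke condition (4) directly, while the paper phrases the same computation via the block condition on $K^{-1}g$ (i.e.\ condition (2) applied to $K^{-1}$), which coincides with yours since $G$ is a group.
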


\begin{proof}
Observe that if $K\subset G$ is nonempty finite then $KA$ has
$(F_n)$-density precisely $1-2^{-|K|}$. Indeed, $g\notin KA$ is
equivalent to $K^{-1}g\cap A=\emptyset$, i.e., the indicator function
$\mathbbm 1_A|_{K^{-1}g}=0$. By normality of $A$, the last equality
holds for elements $g$ whose $(F_n)$-density is $2^{-|K|}$. If $B$ is
infinite, the lower $(F_n)$-density of $BA$ is larger than $1-2^{-k}$
for any $k$, so $BA$ has $(F_n)$-density $1$.
\end{proof}

\begin{cor}\label{wniosek1}
By Theorem \ref{sgrvsgr}, Lemma \ref{AA} holds in countably infinite
amenable cancellative semigroups, in particular in $(\na,+)$
and $(\na,\times)$. Moreover, we also have that $B^{-1}\!A$ defined as
the set of such $g\in G$ that $bg\in A$ for some $b\in B$, has
$(F_n)$-density $1$.
\end{cor}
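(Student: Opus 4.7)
The statement has two parts: the extension of Lemma \ref{AA} from amenable groups to cancellative amenable semigroups, and the additional assertion about the set $B^{-1}A$. I would handle them separately, and I do not expect either to be difficult; the only real subtlety is making sure that densities computed inside $G$ match those computed after one enlarges to an ambient group.

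For the first part, the plan is simply to invoke the ``pass to the ambient group'' device already set up in this paper. By Theorem \ref{sgrvsgr}, $G$ embeds as a subsemigroup of a countably infinite amenable group $\widetilde G$ in which $(F_n)$ is still a F\o lner sequence, and by Lemma \ref{ll} the set $A$ remains $(F_n)$-normal when regarded as a subset of $\widetilde G$. Lemma \ref{AA} then applies inside $\widetilde G$ and gives that $BA$ has $(F_n)$-density $1$ in $\widetilde G$. Because every $F_n$ is contained in $G$ and because products $ba$ with $b\in B\subset G$ and $a\in A\subset G$ already lie in $G$, the quantity $|F_n\cap BA|/|F_n|$ is the same whether computed in $G$ or in $\widetilde G$, so $BA$ has $(F_n)$-density $1$ in $G$ as well.

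For the second part I would not pass to $\widetilde G$ at all, but argue directly inside the semigroup using the ``left-invariant'' form of normality supplied by Theorem \ref{defn1.2}. Writing $A^0=G\setminus A$ and, in the semigroup sense, $b^{-1}A=\{g\in G:bg\in A\}$, I would note that
\[
B^{-1}\!A \;=\; \bigcup_{b\in B} b^{-1}A, \qquad G\setminus \bigcup_{b\in K} b^{-1}A \;=\; \bigcap_{b\in K} b^{-1}A^0
\]
for every finite $K\subset B$. Applying the equivalence (1)$\iff$(2) of Theorem \ref{defn1.2} with the zero block on $K$ yields $d_{(F_n)}\bigl(\bigcap_{b\in K} b^{-1}A^0\bigr)=2^{-|K|}$, hence $d_{(F_n)}\bigl(\bigcup_{b\in K} b^{-1}A\bigr)=1-2^{-|K|}$. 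Since $B$ is infinite, $|K|$ may be taken arbitrarily large, so $\underline d_{(F_n)}(B^{-1}\!A)\ge 1-2^{-|K|}$ for every $K$, forcing $d_{(F_n)}(B^{-1}\!A)=1$.

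The ``hard part'', to the extent there is one, is really just bookkeeping: one has to observe that the semigroup-theoretic $b^{-1}A$ used in the corollary is precisely the object appearing in condition (2) of Theorem \ref{defn1.2}, and that intersecting with $F_n\subset G$ makes the ambient-group subtleties disappear. Once those two points are noted, both parts follow with no further work.
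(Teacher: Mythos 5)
Your proposal is correct and follows the route the paper intends: the first assertion is exactly the combination of Theorem \ref{sgrvsgr}, Lemma \ref{ll} and Lemma \ref{AA} that the corollary's wording alludes to, and the $B^{-1}\!A$ assertion is the direct translation of the proof of Lemma \ref{AA} into condition (2) of Theorem \ref{defn1.2}, which holds in cancellative semigroups without passing to the ambient group. The paper leaves both steps implicit, and your bookkeeping point --- that densities along $F_n\subset G$ are unaffected by enlarging to $\widetilde G$ --- is precisely the observation needed.
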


\begin{rem}\label{vitaly}
The following useful observation generalizes \cite[Theorem 2]{BeW85}:
If $C\subset G$ has positive upper $(F_n)$-density then $bA\cap C$ has
$(F_n)$-density zero for at most finitely many $b\in\na$. Indeed, if
$k$ is such that $\overline d_{(F_n)}(C)>2^{-k}$, then for any
$K\subset\na$ with $|K|=k$ one has $\overline d_{(F_n)}(KA\cap C)>0$.
If there were $k$ different elements $b_1,\dots,b_k\in\na$ satisfying
$d_{(F_n)}(b_iA\cap C)=0$, then the set $K=\{b_1,\dots,b_k\}$ would
violate the last inequality.
\end{rem}

\begin{exam}
The following example, in the classical setup of $(\na,+)$, shows that
for an additively (in particular, classical) normal set $A$ the
complement of $B+A$ need not be finite, even if $B=A$. Start the
construction by choosing a finite word $w_1$ with good normality
properties. Let $\tilde w$ denote the block obtained from $w$ by
switching zeros and ones and writing the symbols in reverse order. The
concatenated word $v_1=w_1\tilde w_1$ also has good normality
properties and is antisymmetric, i.e., it satisfies $v_1(k) =
1-v_1(n_1-k)$, for all $0\le k< n_1$, where $n_1$ is the length of
$v_1$. Note that if $A$ is any set whose indicator function $\mathbbm
1_A$ starts with $v_1$ then $A+A$ misses $n_1$. Let $w_2$ be a word
much longer than $w_1$ and with much better normality properties.
Define $v_2$ as the concatenation $v_1w_2\tilde w_2v_1$. This word
starts with $v_1$, is antisymmetric and has nearly as good normality
properties as $w_2$. If $\mathbbm 1_A$ starts with $v_2$ then $A+A$
misses both $n_1$ and $n_2=|v_2|$. Continuing in this fashion we will
end up with an infinite set $A$ which is normal and such that $A+A$
misses infinitely many integers.
\end{exam}

\begin{rem}
On the other hand, thickness alone easily implies that $A-A=\na$
(where $A-A$ is understood as the set of positive differences of
elements from $A$).
\end{rem}

\subsection{Divisibility properties of classical normal sets. First applications}

Until the end of Section \ref{s4} we will be dealing with classical
normal sets in $(\na,+)$ (and also, briefly, with net-normal sets in
$(\na,\times)$). As we will see, they exhibit especially rich
combinatorial structure (not shared by general additively or
multiplicatively normal sets). In this subsection we focus on general
linear equations with three variables in classical normal sets.

\begin{defn}\label{def A/n}
Given $A\subset\na$, and $n\in\na$, denote by $A/n$ the set $\{m:nm\in
A\}$ (formally, this is $\frac1n(A\cap n\na)$ or, invoking the
multiplicative shift, $\mathbbm 1_{A/n}=\rho_n(\mathbbm 1_A)$).
\end{defn}

\begin{lem}\label{A/n}
If $A$ is a classical normal set, so is $A/n$ for any $n\in\na$.
\end{lem}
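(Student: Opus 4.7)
The plan is to pass to the dynamical formulation of classical normality and exploit the measure-preserving property of the multiplicative shift $\rho_n$. By Proposition~\ref{ro}, classical normality of $A/n$ is equivalent to $\sigma$-genericity of $\mathbbm 1_{A/n}=\rho_n\mathbbm 1_A$ for the Bernoulli measure $\lambda$ on $\{0,1\}^\na$. Using the commutation identity $\sigma^j\circ\rho_n=\rho_n\circ\sigma^{nj}$ together with $\rho_n^*\lambda=\lambda$, for any continuous $f$ on $\{0,1\}^\na$,
\[
\frac{1}{N}\sum_{j=1}^{N}f(\sigma^j\rho_n\mathbbm 1_A) = \frac{1}{N}\sum_{j=1}^{N}(f\circ\rho_n)(\sigma^{nj}\mathbbm 1_A),
\]
and the right-hand side converges to $\int(f\circ\rho_n)\,d\lambda=\int f\,d\lambda$ provided $\mathbbm 1_A$ is $\sigma^n$-generic for $\lambda$.

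Equivalently, via Theorem~\ref{defn1.2}(3) (applicable since $(\na,+)$ is commutative), the goal becomes $d(\{m:nm\in D\})=2^{-|K|}$ for every nonempty finite $K\subset\na$, where $D:=\bigcap_{h'\in nK}(A-h')$. A direct calculation using $m+h\in A/n\iff n(m+h)\in A$ gives $\bigcap_{h\in K}((A/n)-h)=\{m:nm\in D\}$, and normality of $A$ applied to the set $nK$ (which has the same cardinality as $K$) yields $d(D)=2^{-|K|}$. Since $|\{m\le N:nm\in D\}|=|D\cap n\{1,\ldots,N\}|$, the task reduces further to showing $|D\cap n\na\cap[1,M]|/M\to d(D)/n=2^{-|K|}/n$: the occurrence set $D$ of the all-ones pattern on $nK$ in $\mathbbm 1_A$ must be asymptotically equidistributed modulo~$n$.

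The main obstacle is precisely this mod-$n$ equidistribution of occurrence times, which is essentially Wall's classical theorem on the normality of arithmetic subsequences of a normal sequence, equivalent to Maxfield's grouping theorem. I would invoke Wall's theorem directly, or reprove it via a grouping argument: passing to the base-$2^n$ representation $y_k=(x_{(k-1)n+1},\ldots,x_{kn})$ of $x=\mathbbm 1_A$, one shows by block-counting with Theorem~\ref{defn1.2} that the base-$2$ classical normality of $x$ forces base-$2^n$ normality of $y$, from which the desired equidistribution for $(x_{nk})_k$ follows by summing over the $2^{(n-1)L}$ base-$2^n$ words of length $L$ with a prescribed last digit. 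Combining this step with the dynamical reduction of the first paragraph yields $\sigma$-genericity of $\mathbbm 1_{A/n}$, hence the classical normality of $A/n$.
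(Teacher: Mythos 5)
Your two reductions are both correct: the commutation $\sigma^j\circ\rho_n=\rho_n\circ\sigma^{nj}$ together with $\rho_n^*\lambda=\lambda$, and the computation $\bigcap_{h\in K}\bigl((A/n)-h\bigr)=\{m:nm\in D\}$ with $D=\bigcap_{h'\in nK}(A-h')$ of density $2^{-|K|}$, faithfully translate the claim into the statement that the occurrence set $D$ is equidistributed modulo $n$. But that is exactly where the proof has to happen, and your proposal supplies no mechanism for it. The option of invoking Wall's theorem directly is not really available: the lemma \emph{is} Wall's theorem (the remark following it in the paper says precisely that the result is from \cite{Wa49} and that a different, ergodic proof is being given), so citing it amounts to citing the statement to be proved. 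The fallback grouping argument is circular as sketched: base-$2^n$ normality of $y_k=(x_{(k-1)n+1},\dots,x_{kn})$ asks for the frequency of a binary word of length $nL$ among starting positions congruent to $1$ modulo $n$, which is again the mod-$n$ equidistribution you set out to establish. Block-counting via Theorem~\ref{defn1.2} only yields densities of the sets $\bigcap_{h}h^{-1}A^{B(h)}$ computed over all of $\na$; nothing there separates the count by residue class, and a priori all occurrences of a given pattern could cluster in a single class mod $n$.

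The paper closes this gap with a joining/disjointness argument: pair $\mathbbm 1_A$ with the periodic sequence $y$ defined by $y(i)=1\iff n|i$, pass to a weak-star limit $\mu$ of the empirical measures of the pair $(\mathbbm 1_A,y)$ along a subsequence realizing the (hypothetically incorrect) upper density, note that the marginals of $\mu$ are the Bernoulli measure $\lambda$ (by normality of $A$) and the periodic measure $\xi$ (by periodicity of $y$), and invoke Furstenberg's disjointness of K-systems from zero-entropy systems \cite{Fu67} to force $\mu=\lambda\times\xi$. That product structure is exactly the equidistribution of occurrences across residue classes, yielding the contradiction. Some input of this kind --- disjointness, Weyl sums as in Wall's original argument, or the hot-spot criterion --- is indispensable; your write-up identifies the obstacle honestly but leaves it standing.
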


\begin{rem}
\begin{enumerate}[(i)]
	\item Lemma \ref{A/n} says that $(x_k)\in\{0,1\}^\na$ is
          classical normal if and only if, for every $n\in\na$, the
          \sq\ $(x_{nk})$ is classical normal. This result was proved
          in D. Wall's thesis \cite{Wa49}. We provide a different,
          ergodic proof.
	\item A nontrivial fact which is implicitly used in the proof
          is the \emph{divisibility property} of the classical
          F{\o}lner \sq\ $F_n=\{1,2,\dots,n\}$ in $(\na,+)$: for any
          $k\in\na$, $(F_n/k)$ is essentially \emph{the same} F{\o}lner
          \sq. For example, for $k=3$, $(F_n/k)$ is
          $(\emptyset, \emptyset,
          F_1,F_1,F_1,F_2,F_2,F_2,F_3,F_3,F_3,\dots)$.
\end{enumerate}
\end{rem}

\begin{proof}[Proof of Lemma \ref{A/n}]
Suppose $A/n$ is not normal, i.e., some word $w$ having length $k$
does not occur in the indicator function $\mathbbm 1_{A/n}$ of $A/n$
with the correct frequency $2^{-k}$. This means that the ``scattered''
block $\hat w$ (in which the entries of $w$ appear along the
arithmetic progression $\{n,2n,\dots,kn\}$) occurs in $\mathbbm 1_A$
starting at coordinates $m\in n\na$ with, say, upper density different
from $2^k\frac1n$. Let $y$ be the periodic \sq\ $y(i)=1\iff n|i$. Now
consider the pair $(\mathbbm 1_A,y)$ (it is convenient to imagine this
pair as a two-row \sq\ with $\mathbbm 1_A$ written above $y$). This
pair is a \sq\ over four symbols in $\{0,1\}^2$. This means that
the scattered block $(\hat w,\hat v)$ where $\hat v$ denotes the block
of just $1$'s at each bottom position $\{n,2n,\dots,kn\}$, occurs in
$(\mathbbm 1_A,y)$ with the upper density different from $2^k\frac1n$.
There is a subsequence $n_i$ such that the upper density is achieved
along intervals $\{1,\dots,n_i\}$, and moreover, the corresponding
\sq\ of normalized counting measures supported by the sets
\[
\{(\mathbbm 1_A,y),\sigma((\mathbbm 1_A,y)),\sigma^2((\mathbbm
1_A,y)),\dots,\sigma^{n_i}((\mathbbm 1_A,y))\}
\]
(where $\sigma((x,y))=(\sigma(x),\sigma(y))$) converges in the
weak-star topology to a shift-\im\ $\mu$ on $(\{0,1\}^2)^\na$. Since
$\mathbbm 1_A$ is normal (i.e., generic for the Bernoulli measure
$\lambda$) and $y$ is periodic (hence generic for the unique
\inv\ probability measure $\xi$ on the periodic orbit of $y$), the
marginal measures of $\mu$ are $\lambda$ and $\xi$. Now, $\mu([\hat
  w]\times[\hat v])\neq 2^k\frac1n = \mu([\hat w])\xi([\hat v])$,
which means that $\mu\neq \lambda\times\xi$. This contradicts
disjointness of Bernoulli measures from periodic measures (which is a
particular case of disjointness between K-systems and entropy zero
systems, see \cite{Fu67}).
\end{proof}

We can now derive another fact in the classical case.

\begin{thm} \label{A+A}
If $A$ is a classical normal set and $n,m\in\na$ are coprime, then
both $nA+mA$ and $nA-mA$ (restricted to $\na$) have density $1$.
\end{thm}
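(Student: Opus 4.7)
My plan is to pass to a single residue class modulo $nm$ and in each such class to reparametrize the Diophantine equations $na+mb=k$ and $na-mb=k$ so that the problem collapses onto Corollary \ref{wniosek1}. Fix $r\in\{0,1,\dots,nm-1\}$ and write $k=r+nm\kappa$; it will suffice to show that, for each such $r$, density one of $\kappa\in\na$ yield $k\in nA+mA$ (resp.\ $nA-mA$), since summing over the $nm$ residue classes then gives $d(nA\pm mA)=1$.

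For $nA+mA$, using $\gcd(n,m)=1$ pick $a_0\in\{1,\dots,m\}$ with $na_0\equiv r\pmod m$ and set $b_0'=(r-na_0)/m\in\z$; every positive integer solution of $na+mb=k$ is then $(a,b)=(a_0+tm,\,b_0'+n(\kappa-t))$ with $t\in\{0,1,\dots,\kappa+O(1)\}$. Substituting $s=\kappa-t$, the condition $k\in nA+mA$ becomes $\kappa\in\tilde A+B$, where
\[
B=\{t\ge 0:a_0+tm\in A\},\qquad \tilde A=\{s\ge 0:b_0'+ns\in A\}.
\]
Both $B$ and $\tilde A$ are classical normal subsets of $\na$: one applies Lemma \ref{A/n} to the translates $A-a_0$ and $A-b_0'$, which are themselves classical normal by the trivial shift-invariance of the F{\o}lner \sq\ $\{1,2,\dots,n\}$. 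In particular $B$ is infinite, so by Corollary \ref{wniosek1} the sumset $\tilde A+B$ has density $1$ in $\na$, finishing the ``$+$'' direction.

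For $nA-mA$ the setup is symmetric: pick $b_0'\in\{1,\dots,n\}$ with $mb_0'\equiv -r\pmod n$ and set $a_0'=(r+mb_0')/n\in\na$. The positive integer solutions of $na-mb=k$ take the form $(a,b)=(a_0'+m(\kappa+t),\,b_0'+tn)$ with $t\ge 0$ \emph{unbounded} above. The condition $k\in nA-mA$ now reads: there exists $t\in B'$ with $\kappa+t\in\tilde A'$, or equivalently $\kappa\in\tilde A'-B'$, where $\tilde A'=\{s\ge 0:a_0'+ms\in A\}$ and $B'=\{t\ge 0:b_0'+tn\in A\}$. Once again $\tilde A'$ is classical normal (by Lemma \ref{A/n}) and $B'$ infinite, so the ``$B^{-1}A$'' half of Corollary \ref{wniosek1}---which in $(\na,+)$ just says that $A-B$ has density $1$---gives $d(\tilde A'-B')=1$.

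The conceptual point is thus that, after restricting $k$ to a single residue class modulo $nm$, the bilinear condition $na\pm mb=k$ collapses to a one-dimensional sum/difference condition on $\kappa$ between a classical normal set and an infinite set---exactly the hypothesis of Corollary \ref{wniosek1}. The only technical obstacle is confirming that the reparametrized sets $\tilde A,\tilde A',B,B'$ inherit classical normality from $A$, and this is exactly what Lemma \ref{A/n} delivers (applied after a shift). Finite boundary effects at the endpoints of the range of $t$ affect only $O(1)$ values of $k$ per residue class and are irrelevant for the density-one conclusion.
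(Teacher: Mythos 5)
Your argument is correct, and it rests on the same key tool as the paper's proof --- the covering property of Lemma \ref{AA} / Corollary \ref{wniosek1} applied to a sum (resp.\ difference) of a classical normal set and an infinite set --- but the execution differs in two ways. First, you decompose into residue classes modulo $nm$ and explicitly parametrize the solutions of $na\pm mb=k$, whereas the paper works modulo $n$ only: it observes that $(mA\mp i)/n$ is infinite (by thickness and coprimality) and applies Lemma \ref{AA} directly to the pair $\bigl(A,\,(mA\mp i)/n\bigr)$, reading off $A\pm(mA\mp i)/n=(nA\pm mA-i)/n$. Second, because in your version the normal set fed into Corollary \ref{wniosek1} is the reparametrized set $\tilde A$ (resp.\ $\tilde A'$) rather than $A$ itself, you genuinely need Wall's theorem (Lemma \ref{A/n}) together with translation invariance (Theorem \ref{linv}) to certify its normality; the paper's proof of this particular theorem needs neither, since it only requires the auxiliary set to be \emph{infinite}. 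Both routes are sound; the paper's is shorter and uses less machinery, while yours makes the Diophantine bookkeeping completely explicit. Your handling of the boundary effects (the possibly negative $b_0'$ and the $O(1)$ truncation of the range of $t$) is adequate, since these exclude only finitely many parameter values and do not affect density-one conclusions.
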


\begin{proof}
The theorem follows from the fact that, relatively, in every residue
class $\!\!\!\!\mod n$, (i.e., in the set $n\na+i$ for each
$i=0,1,\dots,n-1$), the set $nA\pm mA$ has density $1$. Indeed, since
$m,n$ are coprime, the set $(mA\mp i)/n$ is infinite (this follows
already from the thickness of $A$). Then, by Lemma \ref{AA}, the set
$A\pm(mA\mp i)/n$, which we can write as $(nA\pm mA-i)/n$, has density
$1$. Hence $nA\pm mA$ has density $1$ in the residue class of $i$.
\end{proof}

\begin{cor}\label{aaaa}
For any $i,j,k\in\na$ there are $a,b,c\in A$ solving the equation
$ia+jb=kc$.\footnote{For a more general result of this type, presented
  in a different language and with a different proof see \cite[Theorem
    1.3.2]{Fi11}.}
\end{cor}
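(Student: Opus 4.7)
The plan is to reduce Corollary \ref{aaaa} to Theorem \ref{A+A} by factoring out the greatest common divisor of $i$ and $j$, using Lemma \ref{A/n} to pass to a related normal set when $\gcd(i,j) > 1$. First I would normalize the equation by dividing through by $\gcd(i,j,k)$, so that we may assume $\gcd(i,j,k)=1$ (the set $A$ of course does not change). Then let $d = \gcd(i,j)$ and write $i = di'$, $j = dj'$ with $\gcd(i',j') = 1$; the normalization gives $\gcd(d,k) = 1$.

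The equation $ia+jb = kc$ becomes $d(i'a+j'b) = kc$, and since $\gcd(d,k)=1$, this forces $d \mid c$. Writing $c = dc'$, the problem reduces to finding $a,b \in A$ and $c' \in A/d$ such that
\[
i'a + j'b = kc'.
\]
By Lemma \ref{A/n}, $A/d$ is a classical normal set, so in particular it has natural density $\frac12$, and hence $k(A/d) = \{kc' : c' \in A/d\}$ has density $\frac{1}{2k}$ in $\na$. By Theorem \ref{A+A}, the hypothesis $\gcd(i',j') = 1$ ensures that $i'A + j'A$ has density $1$ in $\na$. Therefore
\[
(i'A + j'A) \cap k(A/d)
\]
has density $\frac{1}{2k} > 0$, and in particular is nonempty. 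Any element of this intersection provides the desired $a,b \in A$ and $c' \in A/d$; setting $c = dc' \in A$ yields $ia + jb = kc$ with all three of $a,b,c$ in $A$.

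The main obstacle is the case $d = \gcd(i,j) > 1$, where Theorem \ref{A+A} cannot be applied directly to $(i,j)$ because the coprimality hypothesis fails. The role of Lemma \ref{A/n} is precisely to circumvent this difficulty: by transferring the problem to the normal set $A/d$, we replace the pair $(i,j)$ by the coprime pair $(i',j')$ at the cost of changing the ``target'' from $kA$ to $k(A/d)$, which still has positive density. Once this reduction is in place, the remaining step is just a density-union argument that exploits the fact that a set of density $1$ meets every set of positive density.
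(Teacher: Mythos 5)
Your proof is correct and rests on the same two pillars as the paper's own argument: Theorem \ref{A+A} (for coprime coefficients, $i'A+j'A$ has density $1$) together with the observation that a density-one set must meet every set of positive density. Your reduction to the coprime case --- dividing out $\gcd(i,j,k)$, factoring $d=\gcd(i,j)$, and replacing the target $kA$ by $k(A/d)$, which is legitimate because Lemma \ref{A/n} guarantees $A/d$ is again classical normal and hence has density $\tfrac12$ --- is in fact more explicit and complete than the paper's, which only sketches the reduction by asserting that after restricting to multiples of a suitable $n$ one may assume two of the coefficients are coprime.
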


\begin{proof}
Restricting to $n\na$, where $n=\mathsf{LCD}(i,j,k)$, we can assume
that some two coefficients, for example $i$ and $j$, are coprime (the
other cases can be treated similarly). By Theorem \ref{A+A}, $iA+jA$
has relative density $1$ in $n\na$, while $kA$ has positive relative
density in $n\na$, so $(iA+jA)\cap kA\neq\emptyset$.
\end{proof}

Actually, one has a more general fact. In the theorem below we use
the following terminology: a set $B\subset\na$ is called
\emph{divisible} if it contains multiples of every natural number $n$
(note that then $B/n$ is infinite for each $n$), and it is called
\emph{substantially divisible} if $B/n$ has positive upper density for
every $n$.

\begin{thm}\label{ixjykz}
Let $A,B,C$ be subsets of $\na$ and assume that $A$ is classical
normal, $B$ is divisible, and $C$ is substantially divisible.
Fix any $i,j,k\in\na$. Then the equation $ia+jb = kc$
is solvable with $a\in A, b\in B, c\in C$.
\end{thm}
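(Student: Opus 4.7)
The plan is to reduce $ia+jb=kc$ to the simpler equation $a' + jb'' = c'$ by a suitable change of variables, and then to solve the reduced equation by a density argument entirely analogous to the one used in the proof of Corollary \ref{aaaa}.

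First I would set $d = \gcd(i,k)$, $i_1 = i/d$, $k_1 = k/d$ (so that $\gcd(i_1,k_1)=1$), and look for solutions of the restricted form
\[
a = k_1 a', \qquad b = k i_1 b'', \qquad c = i_1 c',
\]
with $a' \in A/k_1$, $b'' \in B/(ki_1)$, $c' \in C/i_1$. The identity $ik_1 = ki_1 = dk_1 i_1$ shows that under this substitution, after cancelling the common factor $dk_1 i_1$, the equation $ia+jb=kc$ becomes simply $a' + jb'' = c'$. The merit of this change of variables is that each of the three auxiliary sets inherits the relevant structural property of its parent: by Lemma \ref{A/n}, the set $A/k_1$ is again classical normal; a direct unpacking of the definitions shows that divisibility of $B$ passes to $B/(ki_1)$ (which is therefore, in particular, infinite); and the substantial divisibility of $C$, applied with $n=i_1$, yields that $C/i_1$ has positive upper density.

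It then remains to solve $a' + jb'' = c'$ with the variables in $A/k_1$, $B/(ki_1)$, and $C/i_1$ respectively. Since $B/(ki_1)$ is infinite, so is $j\cdot(B/(ki_1))$, and hence by Corollary \ref{wniosek1} (which transfers Lemma \ref{AA} to $(\na,+)$), the sumset $(A/k_1) + j\cdot(B/(ki_1))$ has density $1$ in $\na$. Its intersection with $C/i_1$ therefore has positive upper density (a density-$1$ set intersects any set of positive upper density in a set of positive upper density), and in particular is nonempty. Any element $c'$ of this intersection decomposes as $c' = a' + jb''$ with $a'\in A/k_1$, $b''\in B/(ki_1)$, and unwinding the substitution produces a triple $(a,b,c)\in A\times B\times C$ that solves the original equation.

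The only non-routine point is identifying the substitution at the start; everything else is a clean combination of Lemma \ref{A/n}, Corollary \ref{wniosek1}, and the defining property of substantial divisibility. I do not anticipate a serious obstacle: the three hypotheses on $A$, $B$, $C$ are exactly matched to what the argument needs — divisibility of $B$ absorbs the coefficient $k$ on the right, substantial divisibility of $C$ absorbs the residual coefficient $i_1$ of the $a$-term, and classical normality of $A$ drives the final density-$1$ covering step.
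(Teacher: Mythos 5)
Your proof is correct. At its core it runs on the same engine as the paper's argument: Lemma \ref{AA} (via Corollary \ref{wniosek1}) produces a density-one sumset out of a classical normal set plus an infinite set, and this sumset must meet the positive-upper-density set supplied by the substantial divisibility of $C$. The difference lies in how the coefficients are absorbed. The paper leaves $A$ untouched: it notes that $jB/i$ is infinite and that $kC/i$ (in either of its two interpretations) has positive upper density, concludes that $(A+jB/i)\cap kC/i$ has positive upper density, and multiplies back by $i$. Your change of variables instead divides all three sets, in particular replacing $A$ by $A/k_1$, so you need Lemma \ref{A/n} (Wall's theorem that $A/n$ is again classical normal) as an extra input; the paper's proof of this particular theorem gets by without it. What your version buys is symmetry and transparency — it makes explicit which hypothesis absorbs which coefficient — at the cost of invoking a nontrivial auxiliary result. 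A minor remark: the $\gcd$ normalization is harmless but not needed; the cruder substitution $a=ka'$, $b=kib''$, $c=ic'$ already turns $ia+jb=kc$ into $a'+jb''=c'$, and the same three structural properties pass to $A/k$, $B/(ki)$, $C/i$.
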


\begin{rem}
The assumptions are satisfied when $A,B,C$ are classical normal sets
(the special case $A=B=C$ was treated in Corrolary \ref{aaaa}).
\end{rem}

\begin{proof}[Proof of Theorem \ref{ixjykz}]
Note that the set $kC/i$ can be interpreted in two ways: as $k\cdot
C/i$ or as $(kC)/i$ with the latter set being possibly larger than the
former. Nevertheless, both sets have positive upper density. Also,
regardless of the interpretation, the set $jB/i$ is infinite. By Lemma
\ref{AA}, $(A+jB/i)\cap kC/i$ has positive upper density (the same as
$kC/i$). Multiplying by $i$ we obtain that $(iA+jB)\cap kC$ has
positive upper density, in particular is nonempty. So, there exist
(many) desired solutions.
\end{proof}

\subsection{Solvability of certain equations in net-normal sets}\label{ucomb}\hfill

\noindent
Motivated by the preceding subsection, let us now turn to the
multiplicative semigroup $(\na,\times)$ and multiplicative normality.
The analogue of the equation $ia+jb=kc$ reads $a^ib^j=c^k$. To see
this analogy even better, let us view $(\na,\times)$ again as the
direct sum $\mathbb G$ which is an additive semigroup. Now the
multiplicative equation $a^ib^j=c^k$ takes on the familiar additive
form $ia+jb=kc$. The problem we immediately encounter in this
``infinite-dimensional'' semigroup is that if $A$ is multiplicatively
normal (even net-normal) then the set $A/n$ (multiplicatively this is
the set $\{m:m^n\in A\}$) need not be multiplicatively normal. In
fact, it can even be empty, because the set $n{\mathbb G}$
(multiplicatively this is the set of $n$th powers) has universal
multiplicative density zero. Below we provide an easy example of
failure for the multiplicative equation $a^2b^2=c^3$, regardless of
the F{\o}lner \sq\ in $(\na,\times)$.

\begin{exam}
Let $(F_n)$ be a F{\o}lner \sq\ in $(\na,\times)$ and let $A$ be an
$(F_n)$-normal set (alternatively, it can be net-normal). By removing
from $A$ all squares (note that the set of squares is a set of
universal multiplicative density $0$), we can assume that $A$ contains
no squares. Then the elements $c^3$ with $c\in A$ are not squares
either. Thus $A$ contains no solutions of $a^2b^2=c^3$.
\end{exam}

This is why we will restrict our attention only to the case with
$i=j=1$, i.e., consider only equations of the form $ab=c^k$.

\begin{thm}\label{xyz3}
Let $A$ and $B$ be net-normal sets and let $C$ contain infinitely many
pairs $(n,rn)$, where $r$ is fixed, while $n$ tends to infinity with
respect to the multiplicative order $\preccurlyeq$ (i.e., for any
$k\in\na$, large enough $n$ is a multiple of $k$; this holds, for
instance, if $C$ is $(F_n)$-normal with respect to some fixed F{\o}lner
\sq\ in $(\na,\times)$). Then for any natural $k$ there exist $a\in A,
b\in B$ and $c\in C$ such that $ab=c^k$.
\end{thm}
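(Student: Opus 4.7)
My plan is to argue by contradiction: I will show that for every sufficiently $\preccurlyeq$-large pair $(n,rn)\subset C$, at least one of $n^k$ and $(rn)^k$ lies in $AB:=\{ab:a\in A,\ b\in B\}$, which provides the desired decomposition $c^k=ab$ with $c\in C$. Fix a sequence $(n_j)$ with $n_j,rn_j\in C$ and $n_j\to\infty$ in the multiplicative order. Viewing $(\na,\times)$ as $\mathbb G$, the set $F_{n_j^k}$ of divisors of $n_j^k$ is an anchored rectangular box; since $n_j\to\infty$ multiplicatively, its side length in every prime direction tends to infinity, so $(F_{n_j^k})$ is an anchored rectangular F{\o}lner sequence in $(\na,\times)$. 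By Proposition~\ref{34}, both $A$ and $B$ are $(F_{n_j^k})$-normal; by the commutative case of Theorem~\ref{linv}, so is $B/r^k:=\{m\in\na:r^km\in B\}$.

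The key tool is the point-reflection bijection $\phi_c\colon F_{c^k}\to F_{c^k}$ given by $\phi_c(x)=c^k/x$: clearly $c^k\in AB$ is equivalent to $\phi_c(B\cap F_{c^k})\cap A\ne\emptyset$. Assume for contradiction that for some large $j$, both $n_j^k\notin AB$ and $(rn_j)^k\notin AB$. The first assumption gives
\[
\phi_{n_j}(B\cap F_{n_j^k})\subset(\na\setminus A)\cap F_{n_j^k}.
\]
For the second, I restrict the analogous containment on $F_{(rn_j)^k}$ to the sub-box $F_{n_j^k}\subset F_{(rn_j)^k}$. A divisibility computation shows that $\phi_{rn_j}(b)\in F_{n_j^k}$ iff $r^k\mid b$, and writing $b=r^kb'$ with $b'\mid n_j^k$ yields $\phi_{rn_j}(b)=n_j^k/b'=\phi_{n_j}(b')$. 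Hence the restricted containment reads
\[
\phi_{n_j}((B/r^k)\cap F_{n_j^k})\subset(\na\setminus A)\cap F_{n_j^k}.
\]
Taking the union of the two and using that $\phi_{n_j}$ is a bijection of $F_{n_j^k}$, we obtain
\[
|(B\cup B/r^k)\cap F_{n_j^k}|\le|(\na\setminus A)\cap F_{n_j^k}|.
\]

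Dividing by $|F_{n_j^k}|$ and letting $j\to\infty$, $(F_{n_j^k})$-normality of $A$ forces the right-hand ratio to tend to $1/2$. On the left, $(F_{n_j^k})$-normality of $B$ gives $d(B)=d(B/r^k)=1/2$, and Theorem~\ref{defn1.2}(3) applied to $B$ with $K=\{1,r^k\}$ (here $r\ge 2$, so $1\ne r^k$) yields $d(B\cap B/r^k)=2^{-2}=1/4$; inclusion-exclusion then gives $d(B\cup B/r^k)=3/4$. The resulting inequality $3/4\le 1/2$ is absurd, so for every sufficiently large $j$ at least one of $n_j^k$, $(rn_j)^k$ lies in $AB$, completing the proof. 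The heart of the argument is that the presence of two commensurable members $n_j,rn_j$ of $C$ boosts the left-hand density from the uninformative $1/2$ (available from a single $c$) up to $3/4$, strictly exceeding the $1/2$ density of $\na\setminus A$; the main technical step is the clean identification of the restriction of $\phi_{rn_j}$ to $F_{n_j^k}$ with $\phi_{n_j}$ after the change of variable $b=r^kb'$.
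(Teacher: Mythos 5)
Your proof is correct, and it is built on the same core idea as the paper's: use the two commensurable elements $n_j$ and $rn_j$ of $C$ together with the reflection $x\mapsto c^k/x$ of the box of divisors of $c^k$ (which encodes the equation $ab=c^k$), and contradict a two-point correlation supplied by normality. The differences lie in the bookkeeping, and they work in your favor. The paper argues additively in $\mathbb G$: absence of solutions forces $A$ to be \emph{nearly} complementary to $kn-B$ and to $k(n+r)-B$ inside the box with leading parameter $kn$, hence $\mathbbm 1_A$ is nearly invariant under the shift by $kr$, contradicting two-point independence of $A$ at lag $kr$; this needs $(kr,\varepsilon)$-invariance of the boxes and several informal ``nearly/small percentage'' estimates. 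You instead prove the exact identity $\phi_{rn_j}(r^kb')=\phi_{n_j}(b')$ on the nested boxes, which packs $\phi_{n_j}\bigl((B\cup B/r^k)\cap F_{n_j^k}\bigr)$ into $(\na\setminus A)\cap F_{n_j^k}$ with no error terms, and then you contradict $d(B\cup B/r^k)=3/4>1/2=d(\na\setminus A)$, the $3/4$ coming from condition (3) of Theorem~\ref{defn1.2} applied to $B$ with $K=\{1,r^k\}$. So the independence is invoked for $B$ rather than for $A$, and the approximate complementarity is replaced by an exact counting inequality; the only extra hypothesis you flag, $r\neq 1$, is also implicitly needed in the paper's argument (otherwise the pair $(n,rn)$ is degenerate and the shift by $kr$ is trivial), so nothing is lost.
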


\begin{cor}
If $A$ is net-normal then for any natural $k$ the equation $ab=c^k$
is solvable in $A$.
\end{cor}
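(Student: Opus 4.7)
The plan is to apply Theorem \ref{xyz3} to the triple $(A,B,C)=(A,A,A)$, where $A$ is the given net-normal set; once this is justified, the conclusion $ab=c^k$ with $a,b,c\in A$ is immediate. Therefore the entire proof reduces to verifying that a net-normal set satisfies the hypothesis imposed on $C$ in that theorem, i.e.\ that $A$ contains infinitely many pairs $(n,rn)$, for some fixed $r$, with $n$ tending to infinity in the multiplicative order.

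By Proposition \ref{34}, net-normality of $A$ implies that $A$ is $(F_n)$-normal with respect to \emph{every} nice F{\o}lner \sq\ $(F_n)$ in $(\na,\times)$; fix one such $(F_n)$. As the parenthetical in the statement of Theorem \ref{xyz3} already notes, $(F_n)$-normality alone suffices to produce the required pairs. Explicitly: applied to the two-element set $K=\{1,r\}\subset\na$ (for any fixed $r$) and the all-ones block on $K$, the equivalence (1)$\iff$(3) in Theorem \ref{defn1.2} yields
$$
d_{(F_n)}\bigl(A\cap(A/r)\bigr)=2^{-|K|}=\tfrac14,
$$
where $A/r=\{m:rm\in A\}$ as in Definition \ref{def A/n} (and where $h^{-1}A$ in the multiplicative semigroup is exactly $A/h$). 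The multiplicative F{\o}lner condition with respect to $j!$ gives $|(j!)F_n\cap F_n|/|F_n|\to1$, so the set $(j!)\cdot\na$ has $(F_n)$-density $1$. Intersecting, the set $A\cap(A/r)\cap\bigl((j!)\cdot\na\bigr)$ has $(F_n)$-density $\tfrac14$ and is in particular nonempty for every $j\in\na$. Choosing $n_j$ from this intersection produces an infinite sequence of pairs $(n_j,rn_j)\in A\times A$ with $n_j\to\infty$ in the multiplicative order (since $k\mid j!\mid n_j$ whenever $j\ge k$). The hypothesis on $C$ in Theorem \ref{xyz3} is thereby met with $C=A$, and Theorem \ref{xyz3} then delivers $a,b,c\in A$ with $ab=c^k$ for any prescribed $k\in\na$.

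There is no substantive obstacle to this argument: the analytic content is entirely carried by Theorem \ref{xyz3}, and the only task of the corollary is the observation that the two-point correlation condition built into $(F_n)$-normality---and a fortiori into net-normality---already produces the required sequence of multiplicatively divergent pairs in $A$ simply by intersecting the positive-density set $A\cap(A/r)$ with the density-$1$ set of multiples of $j!$.
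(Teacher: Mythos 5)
Your proposal is correct and is exactly the argument the paper intends: the corollary is stated as an immediate consequence of Theorem \ref{xyz3} applied with $B=C=A$, the only content being the verification (already flagged in the parenthetical of that theorem's hypothesis on $C$) that a normal set contains the required multiplicatively divergent pairs $(n,rn)$, which you carry out correctly via condition (3) of Theorem \ref{defn1.2} and the density-one set of multiples of $j!$. The only cosmetic point is that $r$ should be taken to be a non-identity element (e.g.\ $r=2$), since the contradiction at the end of the proof of Theorem \ref{xyz3} uses a nontrivial shift by $kr$.
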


\begin{proof}[Proof of Theorem \ref{xyz3}]
We continue to switch freely between the sets $A, B, C$ and their
indicator functions denoted $\mathbbm 1_A, \mathbbm 1_B, \mathbbm
1_C$. We view $(\na,\times)$ again as the additive semigroup $\mathbb
G$. Thus our task becomes to find solutions of the equation $a+b = kc$
with $a\in A, b\in B, c\in C$. From now
on, adjectives ``small'', ``nearly'', ``close'', etc will refer to
quantities (error terms, distances) that are estimated above by
functions of $\varepsilon$ tending to zero as $\varepsilon\to 0$.
Fix a small $\varepsilon>0$.
By the assumption, we can find in $C$ two elements $n$ and $n+r$, where
($r\in\mathbb G$ is fixed a priori), with $n$ multiplicatively so
large that any anchored rectangle $F$ with a leading parameter
multiplicatively larger than or equal to $3n$ is
$(kr,\varepsilon)$-invariant and has the property that both $A$ and
$B$ have in $F$ a proportion nearly $\frac12$ (it is here that we are
using net-normality of $A$ and $B$).

Now suppose that there are no triples $a\in A, b\in B, c\in C$
satisfying $a+b = kc$. This implies that within the rectangle $F$ with
the leading parameter $kn$, $A$ is disjoint from $kn-B$. Since the
proportion of both sets in $F$ is nearly $\frac12$, these two sets are
in fact nearly complementary within $F$, i.e., we can write $\mathbbm
1_A(m)=1\iff \mathbbm 1_B(kn-m)=0$ and this will be true except for a
small percentage of $m$'s in $F$. The same holds with $n+r$ replacing
$n$ within the rectangle $F'$ with the leading parameter $k(n+r)$, in
particular, also in $F$ (because by $(kr,\varepsilon)$-invariance, $F$
is negligibly smaller than $F'$). This implies that the configuration
of symbols in $\mathbbm 1_A$ (and also in $\mathbbm 1_B$) within $F$
is nearly invariant under the shift by $kr$, i.e., in most places
$m\in F$ the symbols at $m$ and $m+kr$ are the same. This contradicts
net-normality of $\mathbbm 1_A$ (and likewise of $\mathbbm 1_B$): if
$F$ is large enough then the proportion of pairs of identical symbols
at positions $m$ and $m+kr$ with $m\in F$ should be close to
$\frac12$, not to $1$.
\end{proof}

\begin{exam}\label{ex9}
Using an idea similar to that utilized in the proof of Theorem
\ref{part-reg}, we will show that assuming multiplicative normality
with respect to just one nice F{\o}lner \sq\ may be insufficient for
the solvability of the equation $ab=c^3$. We continue to use the
additive notation of $\mathbb G$. Let $L_n$ be a multiplicatively
increasing to infinity \sq\ of natural numbers. We assume that
$5L_n\preccurlyeq L_{n+1}$ (recall that multiplicatively this means
$L^5_n|L_{n+1}$). Let $F_n$ be the rectangle with the leading
parameter $3L_n$ and let $B_n$ be $F_n$ with the rectangle with the
leading parameter $2L_n$ removed. Let $B=\bigcup_n B_n$. Note that as
soon as $L_n$ is high-dimensional, say of a large dimension $d$
(multiplicatively, this means that $L_n$ is a product of [powers of]
$d$ different primes) then $B_n$ constitutes the large fraction
$1-(\frac23)^d$ of $F_n$. It is now obvious that $B$ has
$(F_n)$-density $1$. Consider the sum $a+b$ of two elements of $B$.
Let $n$ be the maximal index such that $B_n$ contains either $a$ or
$b$. Then $a+b$ belongs to the rectangle with the leading parameter
$6L_n$ with the rectangle with the leading parameter $2L_n$ removed
(call this difference $C_n$). It is easy to see (it suffices to
consider the one-dimensional case) that the union $\bigcup_n C_n$ is
disjoint from $3B$. We have shown that $a+b=3c$ has no solutions in
$B$. Since $B$ has $(F_n)$-density~$1$, it now suffices to intersect
it with any $(F_n)$-normal set to get an $(F_n)$-normal set without
the considered solutions.
\end{exam}

It is now natural to ask: are all multiplicative equations $ab=c^k$
solvable in classical normal sets? Here the answer is known to be
negative. In \cite{Fi05}, A.~Fish constructed normal sets of the form
$A=\{n: f(n) = -1\}$, where $f$ is a multiplicative function
(so-called random Liouville function) $f:\na\to\{-1,1\}$. In such sets
there are clearly no solutions of the equations $ab=c^k$ for any odd
$k$.\footnote{On the other hand, the equation $ab=c^2$ is solvable in
  any classical normal set. This follows from the fact that classical
  normal sets contain geometric progressions of length 3, see
  Theorems~\ref{classicalarerich} or \ref{ssss} below.}

\subsection{Pairs $\{a+b, ab\}$ in classical additively normal sets}\label{secmn}

In this subsection we establish yet another nontrivial property of classical normal sets.

\begin{thm}\label{xy x+y}
Let $A$ be a classical normal set. For given $a\in\na$ define
\[
S_a=\{b: a+b\in A, ab\in A\}.
\]
Then for every $a\in\na$ either $S_a$ or $S_{a^2}$ has positive upper
density. In particular, $A$ contains pairs $\{a+b,ab\}$ with
arbitrarily large $a$ and $b$.
\end{thm}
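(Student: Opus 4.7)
The plan is to rewrite $S_a=(A-a)\cap(A/a)$, reducing the theorem to a statement about two-point correlations, and then to argue by contradiction. The case $a=1$ is immediate: $S_1=A\cap(A-1)$, and normality of $A$ applied to the block $(1,1)$ over $K=\{1,2\}$ forces $d(S_1)=\frac14$. So I would fix $a\ge 2$ and assume for contradiction that $\overline d(S_a)=\overline d(S_{a^2})=0$. Since $A$ is classical normal, $d(A-a)=\tfrac12$, and by Lemma~\ref{A/n} the set $A/a$ is again classical normal, so $d(A/a)=\tfrac12$. Inclusion-exclusion and the vanishing assumption give $d((A-a)^c\cap(A/a)^c)=0$ as well. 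Writing $f=2\mathbbm 1_A-1$, this says there is a density-1 set $D_1\subset\na$ with
\[
f(b+a)=-f(ab)\qquad\text{for all }b\in D_1,
\]
and analogously a density-1 set $D_2$ on which $f(b+a^2)=-f(a^2b)$.

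The heart of the argument is to combine these two identities algebraically. In the first identity I would substitute $b\mapsto ac$ (legal on a density-1 set of $c$, since $d(C)=0\Rightarrow d(C/a)=0$ via $|C/a\cap[1,N]|\le|C\cap[1,aN]|$) to get $f(a(c+1))=-f(a^2c)$, and also substitute $b\mapsto c+1$ to get $f(c+1+a)=-f(a(c+1))$. Concatenating these gives $f(c+a+1)=f(a^2c)$ on a density-1 set, and combining with the second identity produces
\[
f(c+a^2)=-f(c+a+1)\qquad\text{for $c$ in a density-1 set.}
\]
Consequently $\lim_{N\to\infty}\frac1N\sum_{c=1}^N f(c+a^2)f(c+a+1)=-1$. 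But $a^2\ne a+1$ for every $a\in\na$ (since $a^2-a-1\ne 0$ over $\z$), so normality of $A$ applied to $K=\{a^2,a+1\}$ with the four blocks in $\{0,1\}^K$ forces the same correlation to be $0$. This is the desired contradiction.

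For the final ``arbitrarily large'' assertion, given any $M$ I would pick any $a\ge M$; then whichever of $S_a,S_{a^2}$ has positive upper density is in particular infinite, and so contains some $b\ge M$, yielding a pair $\{a'+b,a'b\}\subset A$ with $a'\in\{a,a^2\}$ and both entries at least $M$. The only delicate step is the legitimacy of the two substitutions above on density-1 sets; the additive substitution is trivial and the multiplicative one is the elementary density bound just mentioned, so the main work is genuinely the symbolic manipulation culminating in the $a^2$-versus-$(a+1)$ correlation identity.
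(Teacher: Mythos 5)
Your proof is correct and follows essentially the same route as the paper's: assume both densities vanish, use Lemma \ref{A/n} to get the half-densities and turn the assumption into two near-complementarity identities, combine them (your substitution $b\mapsto ac$ plays the role of the paper's ``multiply both sides by $a$''), and arrive at an anti-correlation at the gap $a^2-a-1$ that contradicts normality. The only cosmetic difference is that you work with the $\pm1$-valued function $f$ and land the final contradiction on normality of $A$ itself, whereas the paper manipulates sets and lands it on normality of $A/a$.
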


\begin{rem}\label{stip}
The property stipulated in Theorem \ref{xy x+y} does not necessarily
hold for general additively normal sets. Indeed, one can construct an
additively thick set which does not contain pairs $\{a+b,ab\}$
\cite[Theorem 6.2]{BeM16}. Clearly, such a thick set contains an
additively normal set with no pairs $\{a+b,ab\}$.
\end{rem}

\begin{proof}[Proof of Theorem \ref{xy x+y}]
It follows from the definition of the set $A/n$ that $b\in S_a\iff
b\in A/a\cap(A-a)$. Fix some $a\ge 2$ and suppose that both $S_a$ and
$S_{a^2}$ have density zero. This can be written as
\[
A\cap(A/a+a)\approx\emptyset \text{ \ and \ }A\cap
(A/a^2+{a^2})\approx\emptyset,
\]
where $\approx$ means equality up to a set of density zero. Since
every set in the above intersections has density $\frac12$, we get
$A/a+a\approx \na\setminus A$ and $A/a^2+a^2\approx \na\setminus A$,
and in particular
\[
A/a+a\approx A/a^2+a^2.
\]
Multiplying both sides by $a$ we obtain
\[
(A\cap a\na) + a^2\approx (A/a\cap a\na) + a^3.
\]
Since $A/a$ and $A-a$ are nearly disjoint (the intersection has zero
density), we also have that $(A/a\cap a\na) + a^3$ is nearly disjoint
from $(A\cap a\na)+a^3-a$. Plugging this into the last displayed
formula we conclude that $(A\cap a\na)$ is nearly disjoint from
$(A\cap a\na)+a^3-a^2-a$. Dividing both sets by $a$, we get that $A/a$
is nearly disjoint from $A/a+a^2-a-1$. Since $A/a$
has density $\frac12$, we have proved that the indicator function of $A/a$ has the
property that for $n$'s of density $1$ its values at $n$ and at $n+r$
(where $r=a^2-a-1$) are different. This contradicts Lemma \ref{A/n}
(normality of $A/a$), as in normal sets the density of such $n$s
should be $\frac12$.
\end{proof}

\subsection{Multiplicative configurations in classical normal sets}

In this subsection we show that every classical normal set contains
(up to scaling) all configurations which are known to be present in
multiplicatively large sets. The following theorem is the main
technical result allowing us to prove this fact.

\begin{thm}\label{winner}
Let $A\subset\na$ be a classical normal set. Then, for any F{\o}lner
\sq\ $(K_n)$ in $(\na,\times)$ there exists a set $E$ of
$(K_n)$-density $\frac12$ such that for any nonempty finite subset
$\{n_1,n_2,\dots,n_k\}\subset E$ the intersection $A/n_1\cap
A/n_2\cap\cdots\cap A/n_k$ has positive upper density in $(\na,+)$.
\end{thm}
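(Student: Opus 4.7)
The plan is to combine Lemma~\ref{A/n} (Wall's theorem) with a weak-$\ast$ averaging argument over the multiplicative F{\o}lner sequence $(K_n)$. First I would invoke Lemma~\ref{A/n} to conclude that $A/n$ is classical normal for every $n\in\na$, so each $A/n$ has additive density $\tfrac12$ and, for any distinct $s_1,\dots,s_r\in\na$, the set $\bigcap_{j=1}^{r}(A/n - s_j)$ has additive density $2^{-r}$. This additive normality inherited by every multiplicative translate is the engine that will produce positive upper additive densities for the intersections $\bigcap_j A/n_j$.

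Next I would form the probability measures $\mu_N = \tfrac{1}{|K_N|}\sum_{k\in K_N}\delta_{\rho_k\mathbbm 1_A}$ on $\{0,1\}^\na$ and, by weak-$\ast$ compactness, pass to a subsequential limit $\mu$. For any finite $S\subset\na$, the cylinder $C_S=\{y\in\{0,1\}^\na : y(i)=1 \text{ for all } i\in S\}$ satisfies $\mu(C_S) = d_{(K_n)}(\bigcap_{i\in S}A/i)$ along the chosen subsequence. Using Wall's theorem and interchanging a Ces{\`a}ro average over $i$ with the $(K_n)$-average, one checks $\tfrac{1}{N}\sum_{i=1}^{N} \mu(C_{\{i\}})\to\tfrac12$ as $N\to\infty$, because for each fixed $k$ the ratio $|[1,N]\cap A/k|/N$ tends to $\tfrac12$ by classical normality of $A/k$. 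This will be the source of the density $\tfrac12$ appearing in the conclusion.

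I would then construct $E$ by a typicality argument: select $y^\ast\in\{0,1\}^\na$ that is simultaneously $(K_n)$-multiplicatively generic for $\lambda$ (such $y^\ast$ exist $\lambda$-a.s.\ by Theorem~\ref{B1} applied to the multiplicative action) and is ``joined'' to $\mathbbm 1_A$ so that the additive Bernoulli behaviour is preserved across the coupling, and set $E = \{n\in\na : y^\ast(n)=1\}$. Multiplicative $(K_n)$-genericity of $y^\ast$ gives $d_{(K_n)}(E)=\tfrac12$, while the chosen joining forces, for any finite $\{n_1,\dots,n_k\}\subset E$, the sequences $(\mathbbm 1_{A/n_j})_{j=1,\dots,k}$ to be jointly classical normal, which yields the positive lower bound $\overline d(\bigcap_{j=1}^k A/n_j)\geq 2^{-k}>0$ in $(\na,+)$.

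The main obstacle will be producing $y^\ast$ with both properties simultaneously. Since $A$ is only additively generic, $\mathbbm 1_A$ need not be multiplicatively $(K_n)$-generic, so the required joint classical normality of the translates $(\mathbbm 1_{A/n_j})_j$ for $\{n_j\}\subset E$ does not follow from Wall alone; naive random choices of $y^\ast$ can fail because bad tuples (those with $\overline d(\bigcap A/n_j)=0$) may already form a set of positive $\lambda$-measure. I expect the key tool to be a Furstenberg-style joining/disjointness argument between the additive Bernoulli system carrying $\mathbbm 1_A$ and the multiplicative Bernoulli system carrying $y^\ast$, combined with the disjointness of Bernoulli systems from zero-entropy systems of the kind already invoked in the proof of Lemma~\ref{A/n}.
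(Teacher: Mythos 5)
Your opening move (Lemma \ref{A/n}, so that every $A/n$ is classical normal and in particular has additive density $\tfrac12$) matches the paper, but the rest of the argument has two genuine gaps. First, your empirical measures $\mu_N=\frac1{|K_N|}\sum_{k\in K_N}\delta_{\rho_k\mathbbm 1_A}$ average over the \emph{multiplicative} F{\o}lner sets, so any weak-$\ast$ limit $\mu$ satisfies $\mu(C_S)=\lim_j\frac{1}{|K_{N_j}|}\bigl|K_{N_j}\cap\bigcap_{i\in S}A/i\bigr|$; that is, $\mu$ records the multiplicative $(K_n)$-density of $\bigcap_{i\in S}A/i$, not its additive upper density, which is what the theorem is about. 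Positivity of the former does not imply positivity of the latter (cf.\ Theorem \ref{orto}: a set can have multiplicative density $1$ and universal additive density $0$), so this measure cannot certify the conclusion, and the claimed Ces\`aro interchange also takes the two limits in the wrong order. The correct averaging is additive: Theorem \ref{4.24}, via a diagonal argument, Hahn--Banach and the Gelfand representation applied to the family $\{A/n\}$ along the additive F{\o}lner sequence $\{1,\dots,n\}$, produces a genuine probability space $(X,\mu)$ with sets $\tilde A_n$ of measure exactly $\tfrac12$ and with $\overline d\bigl(\bigcap_j A/n_j\bigr)\ge\mu\bigl(\bigcap_j\tilde A_{n_j}\bigr)$.

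Second, the production of $E$ -- the heart of the theorem -- is deferred in your proposal to an unspecified ``joining'' of $y^\ast$ with $\mathbbm 1_A$ that would force every tuple $(\mathbbm 1_{A/n_j})_j$ with indices in $E$ to be jointly classical normal, giving $\overline d(\bigcap_j A/n_j)\ge 2^{-k}$. No mechanism can deliver that strength: classical normality of $A$ controls additive blocks, not the joint distribution of $(\mathbbm 1_A(n_1m),\dots,\mathbbm 1_A(n_km))$ as $m$ varies. Concretely, for Fish's normal sets $A=\{n:f(n)=-1\}$ with $f$ completely multiplicative and $f(2)=-1$ (see \cite{Fi05}) one has $A/2=A^c$ and $A/4=A$, so $A/2\cap A/4=\emptyset$: some pairs are irreparably bad, and nothing like independence holds for general tuples. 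The content of the theorem is precisely the \emph{selection} of a multiplicatively large $E$ avoiding all bad tuples; the paper obtains it by applying the intersectivity result Theorem \ref{4.25} (Lemma 5.10 of \cite{Be06}) to the sets $\tilde A_n\subset X$ of measure $\tfrac12$, which yields $E$ with $\overline d_{(K_n)}(E)\ge\tfrac12$ and $\mu\bigl(\bigcap_j\tilde A_{n_j}\bigr)>0$ for every finite tuple in $E$ -- positivity, not $2^{-k}$, is all one gets and all one needs. Without substitutes for these two steps the proposal does not close.
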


The key role in the proof of Theorem~\ref{winner} will be played by
the following theorem (cf. ~\cite[Theorem~4.19]{Be97}
and~\cite[Theorem~2.1]{Be85}).

\begin{thm}\label{4.24}
Let $(F_n)$ be a F{\o}lner sequence in $(\na,+)$, let $a\in(0,1)$, and
let ${\mathcal F}=\{A_1,A_2,\dots\}$ be a countable family of subsets
in $\na$ such that $d_{(F_n)}(A)\ge a$ for all $A\in{\mathcal F}$.
Then there exists an invariant mean $L$ on the space $B_{\mathbb
  C}(\na)$ of bounded complex-valued functions such that
\begin{enumerate}[(i)]
\item $L(\mathbbm 1_A)=d_{(F_n)}(A)$ for every $A\in{\mathcal F}$,
\item for any $k\in\na$ and any $n_1,n_2,\dots,n_k\in\na$,
\[
\overline d_{(F_n)}(A_{n_1}\cap A_{n_2}\cap\dots\cap A_{n_k})\ge
L(\mathbbm 1_{A_{n_1}}\cdot \mathbbm 1_{A_{n_2}}\cdot \ldots \cdot \mathbbm 1_{A_{n_k}}),
\]
\item there exists a compact metric space $X$, a regular measure $\mu$
on $\B(X)$ (the Borel $\sigma$-algebra of $X$), and sets $\tilde
A_n\in \B(X)$, $n\in\na$, such that for any $n_1,n_2,\dots,n_k\in\na$
one has
\[
L(\mathbbm 1_{A_{n_1}}\cdot \mathbbm 1_{A_{n_2}}\cdot \ldots \cdot \mathbbm 1_{A_{n_k}})=
\mu(\tilde A_{n_1}\cap \tilde A_{n_2}\cap\dots\cap \tilde A_{n_k}).
\]
\end{enumerate}
\end{thm}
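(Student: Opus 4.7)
The plan is to combine a diagonalization argument with a weak-$*$ compactness argument and a Gelfand-type representation. First, I would enumerate the countable collection of all finite intersections $A_{m_1}\cap A_{m_2}\cap\cdots\cap A_{m_k}$ with $m_1,\dots,m_k\in\na$, and by a standard Cantor diagonal procedure extract a subsequence $(F_{n_j})$ of $(F_n)$ along which the $(F_{n_j})$-density
\[
\alpha(m_1,\dots,m_k)=\lim_{j\to\infty}\frac{|F_{n_j}\cap A_{m_1}\cap\cdots\cap A_{m_k}|}{|F_{n_j}|}
\]
exists for every such tuple. Since $d_{(F_n)}(A)$ exists for every $A\in\mathcal F$, its value is automatically preserved on the subsequence, which takes care of (i) in advance. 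Moreover, for any tuple the subsequential limit $\alpha(m_1,\dots,m_k)$ is at most $\overline d_{(F_n)}(A_{m_1}\cap\cdots\cap A_{m_k})$, which is precisely what (ii) will demand.

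Next, I would produce the mean. Consider the positive unit-norm functionals $\varphi_j(f)=\frac1{|F_{n_j}|}\sum_{i\in F_{n_j}}f(i)$ on $B_{\mathbb C}(\na)=\ell^\infty(\na)$. By Banach--Alaoglu they have a weak-$*$ cluster point $L$, and by refining $(F_{n_j})$ once more if necessary one may arrange that $\varphi_j\to L$ along this subsequence on every element of the countable $*$-algebra $\mathcal A_0$ generated over $\mathbb C$ by the indicators $\mathbbm 1_{A_n}$, with $L=\alpha$ on the products of generators. The Følner condition gives
\[
|\varphi_j(\tau_s f)-\varphi_j(f)|\le \|f\|_\infty\,\frac{|F_{n_j}\triangle(F_{n_j}-s)|}{|F_{n_j}|}\xrightarrow[j\to\infty]{}0
\]
for every $s\in\na$ and every $f\in\ell^\infty(\na)$, so $L$ is translation invariant. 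Thus $L$ is an invariant mean that satisfies (i) and the identity
$L(\mathbbm 1_{A_{n_1}}\cdots\mathbbm 1_{A_{n_k}})=\alpha(n_1,\dots,n_k)\le\overline d_{(F_n)}\bigl(A_{n_1}\cap\cdots\cap A_{n_k}\bigr)$, giving (ii).

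For (iii), I would invoke Gelfand representation. Let $\mathcal A\subset\ell^\infty(\na)$ be the uniform closure of $\mathcal A_0$; it is a separable commutative unital $C^*$-algebra, so its Gelfand spectrum $X$ is a compact metrizable space and the Gelfand transform yields an isometric $*$-isomorphism $\mathcal A\cong C(X)$. Under this isomorphism each projection $\mathbbm 1_{A_n}$ (an idempotent self-adjoint element) corresponds to the indicator $\mathbbm 1_{\tilde A_n}$ of a clopen set $\tilde A_n\subset X$. The restriction $L|_{\mathcal A}$ is a positive normalized linear functional on $C(X)$, so by the Riesz--Markov representation theorem it is given by integration against a regular Borel probability measure $\mu$ on $X$, and then
\[
L(\mathbbm 1_{A_{n_1}}\cdots\mathbbm 1_{A_{n_k}})=\int_X\mathbbm 1_{\tilde A_{n_1}}\cdots\mathbbm 1_{\tilde A_{n_k}}\,d\mu=\mu\bigl(\tilde A_{n_1}\cap\cdots\cap\tilde A_{n_k}\bigr),
\]
which is exactly (iii).

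The main delicate point I expect is guaranteeing that one single mean $L$ simultaneously realizes the three requirements: translation invariance, preservation of the prescribed densities on the countable family $\mathcal F$, and compatibility with the upper-density inequality on all finite intersections. Diagonalization on the countable family of finite intersections is what forces these three demands to coexist; invariance then comes for free from the asymptotic invariance of $\varphi_j$ under the Følner property. The Gelfand/Riesz step is then essentially automatic because the relevant algebra is separable and the indicators are projections.
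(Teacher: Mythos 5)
Your proposal is correct and follows the same skeleton as the paper's proof: a Cantor diagonal extraction over the countable family of finite intersections to define the limiting densities (which immediately gives (i) and the inequality in (ii)), followed by the Gelfand representation of the separable $C^*$-algebra generated by the indicators and the Riesz representation theorem to obtain $X$, $\mu$, and the clopen sets $\tilde A_n$ for (iii). The one place you genuinely diverge is in how the functional on the subalgebra is promoted to a mean on all of $B_{\mathbb C}(\na)$: the paper extends by linearity to the span of the indicators and then invokes Hahn--Banach, whereas you realize $L$ directly as a weak-$*$ cluster point of the Ces\`aro functionals $\varphi_j$ along the extracted subsequence. Your route is arguably the more careful one on the point of \emph{invariance}: a generic Hahn--Banach extension of an invariant functional need not itself be invariant, while for a cluster point of the $\varphi_j$ the estimate $|\varphi_j(\tau_s f)-\varphi_j(f)|\le\|f\|_\infty\,|F_{n_j}\triangle(F_{n_j}-s)|/|F_{n_j}|\to 0$ hands you invariance of $L$ on all of $\ell^\infty(\na)$ for free. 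The paper sidesteps this by noting in a footnote that its applications only require the restriction $L_{\mathcal A}$ to the algebra $\mathcal A$, where invariance is not at issue; your construction delivers the full statement as literally written.
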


\begin{proof}
In the proof, when convenient, we will view $L$ as a finitely additive
measure on the family $\p(\na)$ of all subsets of $\na$.
Let $\s$ be the (countable) family of all finite intersections of the
form $A_{n_1}\cap A_{n_2}\cap\dots\cap A_{n_k}$, where $A_{n_j}\in
{\mathcal F}$, $j=1,\dots,k$. By using the diagonal procedure we
arrive at a subsequence $(F_{n_i})$ of our F{\o}lner sequence $(F_n)$,
such that for any $S\in\s$ the limit
\[
L(S)=\lim_{i\to\infty}\frac{|S\cap F_{n_i}|}{|F_{n_i}|}=
\lim_{i\to\infty}\frac1{|F_{n_i}|}\sum_{m\in F_{n_i}}\mathbbm 1_S(m)
\]
exists. Notice that $L(A)=d_{(F_n)}(A)$ for any $A\in{\mathcal F}$,
and that for any $n_1,n_2,\dots,n_k\in\na$ we have
\begin{multline*}
\overline d_{(F_n)}\left(\bigcap_{j=1}^k A_{n_j}\right)=
\limsup_{n\to\infty}\frac{\left|\left(\bigcap_{j=1}^k A_{n_j}\right)
  \cap F_n\right|}{|F_n|}\ge\\
\lim_{i\to\infty}\frac{\left|\left(\bigcap_{j=1}^k A_{n_j}\right) \cap
  F_{n_i}\right|}{|F_{n_i}|}=L\left(\bigcap_{j=1}^k A_{n_j}\right) .
\end{multline*}

Extending by linearity, we get a linear functional $L$ on a subspace
$V\subset B_\re(\na)$. By invoking the Hahn-Banach Theorem\footnote{
	We remark that for our applications we need only a ``restricted'' version
	of Theorem~\ref{4.24} which deals with functional $L_{\mathcal A}$ on
  $\mathcal A$ and does not need appealing to the Hahn--Banach
  Theorem.},
we can extend $L$ from $V$ to $B_\re(\na)$. This $L$ naturally extends to a
functional on the space $B_{\mathbb C}(\na)$, which satisfies
conditions (i) and (ii).

We move now to proving (iii). Let $\mathcal A$ be the uniformly closed
and closed under conjugation algebra of functions on $\na$, which is
generated by indicator functions $\mathbbm 1_A$ of sets $A\in{\mathcal F}$.
Then $\mathcal A$ is a separable $C^*$-subalgebra of
$\ell^\infty(\na,\|\cdot\|_\infty)$, and, by the Gelfand
Representation Theorem, ${\mathcal A}\cong C(X)$, where $X$ is a
compact metric space. The restriction $L_{\mathcal A}$ of the mean
$L$, which we constructed above, induces a positive linear functional
$\tilde L$ on $C(X)$, which by the Riesz Representation Theorem is
given by a Borel measure $\mu$.

Note that the isomorphism ${\mathcal A}\cong C(X)$ sends indicator
functions of subsets of $\na$ to indicator functions of subsets of $X$
(because the isomorphism provided by the Gelfand transform preserves
algebraic operations, and the indicator functions are the only ones
which satisfy the equation $f^2=f$). Let $\tilde A_j$ be the subsets
of $X$ which correspond to sets $A_j\in\mathcal F$ (note that since
$\mathbbm 1_{\tilde A_j}\in C(X)$ for each $j$, the sets $\tilde A_j$ are
measurable). Clearly, we have
\[
L_{\mathcal A}(A_{n_1}\cap A_{n_2}\cap\dots\cap A_{n_k})=
\mu(\tilde A_{n_1}\cap \tilde A_{n_2}\cap\dots\cap \tilde A_{n_k})
\]
for any $n_1,n_2,\dots,n_k$. This completes the proof.
\end{proof}

The last result which is needed for the proof of
Theorem~\ref{winner}, is the following theorem.

\begin{thm}[see Lemma~5.10 in~\cite{Be06}]\label{4.25}
Let $(K_n)$ be a F{\o}lner sequence in $(\na,\times)$, let \xbm\ be a
probability space, and let $A_j$, $j\in\na$, be measurable sets in $X$,
satisfying $\mu(A_j)\ge a$ for some $a>0$. Then there exists a
set $E\in\na$ with $\overline d_{(K_n)}(E)\ge a$, such that for any
nonempty finite set $F\subset E,$ one has $\mu\left(\bigcap_{j\in
  F}A_j\right)>0$.
\end{thm}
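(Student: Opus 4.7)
The plan is to realize the desired $E$ as the set of ``$1$-coordinates'' of a single point $y^*$ lying in the support of the joint distribution of the random sequence $(\mathbbm 1_{A_j})_{j\in\na}$. To set this up, I would define the measurable map $\Phi\colon X\to\{0,1\}^\na$ by $\Phi(x)_j=\mathbbm 1_{A_j}(x)$ and push $\mu$ forward to a Borel probability measure $\nu=\Phi_*\mu$ on the compact metrizable space $\{0,1\}^\na$. The key point of this reformulation is that for every nonempty finite $F\subset\na$,
\[
\nu\bigl(\{y\in\{0,1\}^\na: y_j=1 \text{ for all } j\in F\}\bigr) = \mu\Bigl(\bigcap_{j\in F}A_j\Bigr).
\]
Consequently, the entire theorem reduces to exhibiting a point $y^*\in\mathrm{supp}(\nu)$ such that $\overline d_{(K_n)}(\{j:y^*_j=1\})\ge a$: once such a $y^*$ is found, every finite $F\subset E:=\{j:y^*_j=1\}$ gives rise to a cylinder which is an open neighborhood of $y^*$, and thus has positive $\nu$-measure by definition of the support.

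To locate $y^*$, I would consider the bounded measurable functions $f_n(y)=|K_n|^{-1}\sum_{j\in K_n}y_j$ on $\{0,1\}^\na$ and their pointwise limsup $\phi(y)=\limsup_n f_n(y)$. Since
\[
\int f_n\,d\nu = \frac1{|K_n|}\sum_{j\in K_n}\nu(\{y:y_j=1\}) = \frac1{|K_n|}\sum_{j\in K_n}\mu(A_j)\ge a,
\]
and $0\le f_n\le 1$, the reverse Fatou lemma delivers $\int\phi\,d\nu\ge\limsup_n\int f_n\,d\nu\ge a$. Combined with the pointwise bound $\phi\le 1$, this forces $\nu(\{\phi\ge a\})>0$: otherwise $\phi<a$ would hold $\nu$-almost everywhere, so the (a.e.)\ strictly positive bounded function $a-\phi$ would integrate to a strictly positive number, contradicting $\int\phi\,d\nu\ge a$. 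Since $\mathrm{supp}(\nu)$ is a closed set of full $\nu$-measure, I can pick $y^*\in\mathrm{supp}(\nu)$ with $\phi(y^*)\ge a$.

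Setting $E=\{j\in\na:y^*_j=1\}$ then immediately gives $\overline d_{(K_n)}(E)=\phi(y^*)\ge a$, and for any nonempty finite $F\subset E$ the cylinder $\{y:y|_F\equiv 1\}$ is an open neighborhood of $y^*$, hence has positive $\nu$-measure, which by the identity displayed above equals $\mu(\bigcap_{j\in F}A_j)$. The main technical subtlety in this plan is upgrading $\int\phi\,d\nu\ge a$ to the statement $\nu(\{\phi\ge a\})>0$ (as opposed to merely $\nu(\{\phi\ge a-\varepsilon\})>0$ for every $\varepsilon>0$); this step genuinely relies on the pointwise upper bound $\phi\le 1$ and is where the argument could superficially appear to lose the sharp constant $a$. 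Everything else is a routine packaging of the pushforward/support technique.
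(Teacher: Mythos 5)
Your proof is correct. Note first that the paper does not actually prove Theorem \ref{4.25}; it is quoted from Lemma~5.10 of \cite{Be06}, so there is no internal argument to compare against — what you have written is a complete, self-contained justification of a result the paper imports as a black box. All the steps check out: the pushforward $\nu=\Phi_*\mu$ on the compact metric space $\{0,1\}^\na$ does satisfy $\nu(\{y: y|_F\equiv 1\})=\mu(\bigcap_{j\in F}A_j)$ for finite $F$; the support of $\nu$ exists, is closed, and carries full measure by second countability, so any point of the support lying in a cylinder forces that cylinder to have positive measure; reverse Fatou (legitimate since $0\le f_n\le 1$) gives $\int\phi\,d\nu\ge a$; and your upgrade to $\nu(\{\phi\ge a\})>0$ is sound — if $\phi<a$ held $\nu$-a.e., then $a-\phi$ would be a.e.\ strictly positive, hence $\nu(\{a-\phi>1/m\})>0$ for some $m$ and $\int(a-\phi)\,d\nu>0$, contradicting $\int\phi\,d\nu\ge a$. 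This is indeed the one delicate point, and it is exactly where the sharp constant $a$ (rather than, say, the $a^2$ one would get from a naive $L^2$ weak-limit computation with $\int f^2\ge(\int f)^2$) is preserved. Two minor observations: the Følner property of $(K_n)$ is never used — your argument works for any sequence of nonempty finite subsets of $\na$, which matches the generality of the cited lemma; and the passage to the symbolic space is essential precisely because an abstract probability space has no useful notion of support, whereas $\{0,1\}^\na$ does.
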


\begin{proof}[Proof of Theorem~\ref{winner}]
The result in question follows from Theorem~\ref{4.24} applied to the
F{\o}lner \sq\ $F_n=\{1,2,\dots,n\}$ in $(\na,+)$ and $\mathcal
F=\{A/n:n\in\na\}$ (and then we apply Theorem~\ref{4.25}). Note that
by Lemma \ref{A/n}, each $A/n$ is a
classical normal set and hence has density $a=\frac12$.
\end{proof}

It was shown in \cite{Be05} that multiplicatively large sets in $\na$
have very rich combinatorial structure (which is quite a bit richer
than that of additively large sets). For example, any multiplicatively
large set contains not only arbitrarily long geometric and arithmetic
progressions, but also all kinds of more complex structures which
involve both the addition and multiplication operations. Theorem
\ref{winner} allows us to conclude that classical normal sets in $\na$
are, in a way, as combinatorially rich as multiplicatively large sets.
For example, we can combine it with the following theorems.

\begin{thm}[Theorem 3.10 in~\cite{Be05}]\label{310be05}
Let ${\mathcal S}^{\mathsf a},{\mathcal S}^{\mathsf m}$ be two
families of finite subsets of $\na$ with the following properties:
\begin{enumerate}[(i)]
\item Any additively large set in $\na$ contains a configuration of
  the form $a + F$, where $F\in{\mathcal S}^{\mathsf a}$.
\item Any multiplicatively large set in $\na$ contains a configuration
  of the form $bF$, where $F\in{\mathcal S}^{\mathsf m}$.
\end{enumerate}
Then any multiplicatively large set $E$ contains a configuration of
the form $bF_2 (a + F_1)$, where $F_1\in{\mathcal S}^{\mathsf a}$ and
$F_2\in{\mathcal S}^{\mathsf m}$.
\end{thm}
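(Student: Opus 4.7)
My strategy is to locate a multiplicatively large set $D\subseteq\na$ with the rigidity property that for every finite subset $F\subseteq D$ the intersection $\bigcap_{c\in F}E/c$ is \emph{additively} large (i.e., has positive upper Banach density). Granted such a $D$, the theorem follows by two applications of the given hypotheses: first, apply (ii) to the multiplicatively large set $D$ to extract a configuration $bF_2\subseteq D$ with $F_2\in\mathcal{S}^{\mathsf m}$; second, by the defining property of $D$, the intersection $S=\bigcap_{c\in bF_2}E/c$ is additively large, so applying (i) to $S$ yields $a+F_1\subseteq S$ with $F_1\in\mathcal{S}^{\mathsf a}$. Unwinding, $bf_2(a+f_1)\in E$ for every $f_2\in F_2$ and $f_1\in F_1$, i.e., $bF_2(a+F_1)\subseteq E$.

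The construction of $D$ proceeds in two stages, mimicking the structure of the proof of Theorem~\ref{winner} but with the roles of the additive and multiplicative Følner sequences adapted to the ambient multiplicative setting. Fix a multiplicative Følner sequence $(K_n)$ in $(\na,\times)$ with $\bar d_{(K_n)}(E)\geq c>0$. \textbf{Stage 1 (correspondence).} Via a Furstenberg-type correspondence for the multiplicative action, construct a compact metric space $X$, a multiplicatively shift-invariant Borel probability measure $\mu$ on $X$, and a distinguished measurable set $\tilde E\subseteq X$ with $\mu(\tilde E)\geq c$, satisfying the inheritance bound
\[
\bar d\Bigl(\bigcap_{i=1}^k E/n_i\Bigr)\;\geq\;\mu\Bigl(\bigcap_{i=1}^k T_{n_i}^{-1}\tilde E\Bigr)
\]
for every $n_1,\dots,n_k\in\na$, where $T_n$ denotes the multiplicative shift on $X$ and $\bar d$ is additive upper Banach density on $\na$. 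By multiplicative invariance of $\mu$, one has $\mu(T_n^{-1}\tilde E)=\mu(\tilde E)\geq c$ uniformly in $n$. \textbf{Stage 2 (extraction).} Apply Theorem~\ref{4.25} to the family $\{T_n^{-1}\tilde E:n\in\na\}$ in the probability space $(X,\B(X),\mu)$ with the multiplicative Følner sequence $(K_n)$: this yields $D\subseteq\na$ with $\bar d_{(K_n)}(D)\geq c$ such that $\mu(\bigcap_{n\in F}T_n^{-1}\tilde E)>0$ for every finite $F\subseteq D$. The inheritance bound from Stage 1 then promotes each such intersection into an additively large subset of $\na$, as desired.

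The principal technical obstacle is Stage 1: producing a single correspondence that is simultaneously multiplicatively invariant (so that Theorem~\ref{4.25} applies cleanly) and also preserves additive density estimates (so that the inheritance bound involves additive, rather than multiplicative, density). Because multiplicatively large sets need not be additively large (Theorem~\ref{orto}), the measure $\mu$ cannot be obtained by a naive average of orbit indicator functions along multiplicative Følner sets: such an average would only capture multiplicative density of intersections. The remedy is to work in a larger ambient compact space recording both additive and multiplicative positions, and to exploit amenability of the affine semigroup $\na\rtimes(\na,\times)$---equivalently, to iterate an averaging procedure, first along an additive Følner sequence (yielding an invariant mean that detects additive Banach density, in the spirit of Theorem~\ref{4.24}) and then along $(K_n)$---to obtain a measure on which both structures coexist with the correct inheritance properties. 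Once this bi-adapted correspondence is in hand, Stage~2 and the assembly argument above are routine consequences of the machinery already developed in the preceding subsections.
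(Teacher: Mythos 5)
Your top-level reduction is the natural one, but you have composed the two hypotheses in the wrong order, and the step you yourself flag as ``the principal technical obstacle'' is not an obstacle to be engineered around --- it is false. Your Stage 1 demands a multiplicatively invariant measure $\mu$ with $\mu(\tilde E)>0$ and the inheritance bound $\overline d\bigl(\bigcap_{i}E/n_i\bigr)\ge\mu\bigl(\bigcap_i T_{n_i}^{-1}\tilde E\bigr)$, where $\overline d$ is \emph{additive} upper Banach density. Specializing to $k=1$ this forces $\overline d(E/n)\ge\mu(\tilde E)>0$ for every $n$. But Theorem \ref{orto} of this paper produces, for any F{\o}lner sequence $(K_n)$ in $(\na,\times)$, a set $E$ of $(K_n)$-density $1$ all but finitely many of whose elements are divisible by $N$, for every $N$; consequently every $E/n$ has gaps tending to infinity and additive upper Banach density $0$. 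For such an $E$ no correspondence satisfying your inheritance bound can exist, and, more to the point, no multiplicatively large set $D$ with your rigidity property exists, since already the singleton intersections $E/c$ fail to be additively large. The root cause is an asymmetry your plan overlooks: the dilation $E\mapsto E/m$ preserves multiplicative upper density (this is \eqref{transl}) but not additive upper density, so the intersections $\bigcap_{m}E/m$ can only be guaranteed to be \emph{multiplicatively} large. No amount of averaging over the affine semigroup will establish an inequality that is false.

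The repair is to reverse the order of the two extractions. Since $\overline d_{(K_n)}(E/m)=\overline d_{(K_n)}(E)=c>0$ for every $m$, run the correspondence of Theorem \ref{4.24} in its multiplicative form (the mean taken along a subsequence of $(K_n)$) on the family $\{E/m:m\in\na\}$, obtaining $\mu(\tilde E_m)=c$ for all $m$ together with $\overline d_{(K_n)}\bigl(\bigcap_{m\in F}E/m\bigr)\ge\mu\bigl(\bigcap_{m\in F}\tilde E_m\bigr)$. Then apply Theorem \ref{4.25} with an \emph{additive} F{\o}lner sequence (its proof does not care which amenable semigroup indexes the sets) to obtain an \emph{additively} large set $D'$ such that $\bigcap_{m\in F}E/m$ is \emph{multiplicatively} large for every finite $F\subset D'$. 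Hypothesis (i) applied to $D'$ yields $a+F_1\subset D'$ with $F_1\in\mathcal S^{\mathsf a}$; hypothesis (ii) applied to the multiplicatively large set $\bigcap_{f_1\in F_1}E/(a+f_1)$ yields $bF_2$ inside it with $F_2\in\mathcal S^{\mathsf m}$; unwinding gives $bF_2(a+F_1)\subset E$. (The paper only quotes this result from \cite{Be05} without proof; the argument just sketched is the one that parallels the paper's proof of Theorem \ref{winner}, with the roles of the additive and multiplicative averaging interchanged relative to your proposal.)
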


\begin{thm}[Theorem 3.11 in~\cite{Be05}]\label{311be05}
Let $E\subset\na$ be a multiplicatively large set. Let $S_1,S_2\subset\na$ be
two infinite sets and let $IP^a(S_1)$ and $IP^m(S_2)$ be the additive and multiplicative
IP sets generated by $S_1$ and $S_2$, respectively. Then for any $n\in\na$, there exist
$a,b\in\na$, $d\in IP^a(S_1)$, and $q\in IP^m(S_2)$ such that
\[
\{bq^j(a + id),\, 0\le i,j\le n\}\subset E.
\]
\end{thm}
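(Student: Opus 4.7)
The plan is to invoke Theorem \ref{310be05} with appropriately chosen families $\mathcal S^{\mathsf a}$ and $\mathcal S^{\mathsf m}$. Given the target configuration $\{bq^j(a+id):0\le i,j\le n\}$, I would set
\[
\mathcal S^{\mathsf a} = \bigl\{\{0,d,2d,\dots,nd\} : d\in IP^a(S_1)\bigr\},\qquad
\mathcal S^{\mathsf m} = \bigl\{\{1,q,q^2,\dots,q^n\} : q\in IP^m(S_2)\bigr\}.
\]
With these choices, a translate $a+F_1$ with $F_1\in \mathcal S^{\mathsf a}$ is an arithmetic progression of length $n+1$ whose common difference lies in $IP^a(S_1)$, and a dilate $bF_2$ with $F_2\in \mathcal S^{\mathsf m}$ is a geometric progression of length $n+1$ whose common ratio lies in $IP^m(S_2)$. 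The conclusion $bF_2(a+F_1)\subset E$ supplied by Theorem \ref{310be05} is then exactly the desired pattern $\{bq^j(a+id):0\le i,j\le n\}$.

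Next, I would verify the two hypotheses of Theorem \ref{310be05}. Hypothesis (i) requires that every additively large set $A\subset\na$ contains an arithmetic progression of length $n+1$ whose common difference belongs to the prescribed additive IP-set $IP^a(S_1)$. This is precisely the IP-Szemer\'edi theorem of Furstenberg and Katznelson: applying the Furstenberg correspondence principle produces a measure-preserving $\z$-action on a probability space together with a measurable set of positive measure corresponding to $A$, and IP-multiple recurrence then yields simultaneous returns along $n+1$ affine iterates governed by the chosen IP-set.

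Hypothesis (ii) requires that every multiplicatively large set $E\subset\na$ contains a geometric progression $\{b,bq,\dots,bq^n\}$ with $q\in IP^m(S_2)$. Viewing $(\na,\times)$ as the countable abelian semigroup $\mathbb G=\bigoplus_{p\in \PP}\na_p$, multiplicative largeness of $E$ becomes additive largeness in $\mathbb G$, and the multiplicative IP-set $IP^m(S_2)$ becomes an additive IP-set in $\mathbb G$. The IP-Szemer\'edi theorem for measure-preserving actions of countable abelian groups (again Furstenberg--Katznelson) then furnishes a $b\in\na$ and $q\in IP^m(S_2)$ with $bq^j\in E$ for all $0\le j\le n$.

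Once (i) and (ii) are in hand, Theorem \ref{310be05} immediately produces the required geo-arithmetic configuration in $E$, completing the argument. The principal obstacle is the appeal to the IP-Szemer\'edi theorem in each of the additive and (infinite-dimensional) multiplicative settings; the role of Theorem \ref{310be05} is then to combine the two one-parameter recurrence statements into the genuinely two-parameter pattern $\{bq^j(a+id)\}$, with the common difference $d\in IP^a(S_1)$ and the common ratio $q\in IP^m(S_2)$ drawn from the prescribed IP-sets.
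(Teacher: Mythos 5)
The paper does not prove this statement at all: it is quoted verbatim as Theorem~3.11 of \cite{Be05} and used as a black box, so there is no in-paper argument to compare yours against. That said, your reconstruction is correct and is essentially how the result is derived in the cited source: one feeds into Theorem~\ref{310be05} the families $\mathcal S^{\mathsf a}=\{\{0,d,\dots,nd\}:d\in IP^a(S_1)\}$ and $\mathcal S^{\mathsf m}=\{\{1,q,\dots,q^n\}:q\in IP^m(S_2)\}$, and the two hypotheses are exactly the two instances of the Furstenberg--Katznelson IP-Szemer\'edi theorem you describe. The only step deserving a little extra care is hypothesis (ii): after transporting a multiplicatively large set to a positive upper Banach density subset of $\mathbb G=\bigoplus_{p\in\PP}\na_p$ via the exponent map, you need a correspondence principle for actions of the countable abelian group $\bigoplus_{p\in\PP}\z$ and then IP-multiple recurrence for the commuting family $T_u,T_{2u},\dots,T_{nu}$ with $u$ ranging over the additive IP-set that is the image of $IP^m(S_2)$; this is covered by the general (commuting transformations) form of the Furstenberg--Katznelson theorem, and the image of a multiplicative IP-set is indeed an additive IP-set in $\mathbb G$ because the exponent map is an isomorphism. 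Modulo citing those two recurrence theorems precisely, your argument is complete.
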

Then we get the following result.

\begin{thm}\label{classicalarerich}
Let $\mathcal S^{\mathsf a}$ and $\mathcal S^{\mathsf m}$ be two
families of finite sets in $\na$ which have the following properties:
\begin{enumerate}[(i)]
	\item any additively large set in $\na$ contains a
          configuration of the form $a+F_1$ for some $F_1\in\mathcal
          S^{\mathsf a}$ and $a\in\na$,\footnote{An example of
            $\mathcal S^{\mathsf a}$ is the family
            $\{\{r,2r,\dots,nr\}:r\in\na\}$ with any fixed $n$ (this
            follows the classical Szemer\'edi Theorem).}
	\item any multiplicatively large set in $\na$ contains a
          configuration of the form $bF_2$ for some $F_2\in\mathcal
          S^{\mathsf m}$ and $b\in\na$.\footnote{An example of
            $\mathcal S^{\mathsf m}$ is the family
            $\{\{q,q^2,\dots,q^n\}:q\ge 1\}$ with any fixed $n$ (see
            \cite[Theorem 3.11]{Be05}).}
\end{enumerate}
Then any classical normal set $A\subset\na$ contains a configuration
$bF_2(a+F_1)$ with $F_1\in\mathcal S^{\mathsf a}$, $F_2\in\mathcal
S^{\mathsf m}$ and $a,b\in\na$.

In particular, any classical normal set $A$ contains, for any
$n\in\na$, configurations of the form $\{q^j(a+id):\ 0\le i,\ j\le
n\}$ with some $q>1,\ a,d\in\na$.
\end{thm}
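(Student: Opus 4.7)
The plan is to combine Theorem \ref{winner} (which provides, for a classical normal $A$ and any multiplicative F{\o}lner \sq, a multiplicatively dense set $E$ on which the divisors $A/n$ have a strong joint-intersection property) with the two external hypotheses (i) and (ii) applied, in turn, multiplicatively and then additively.

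Fix an arbitrary F{\o}lner \sq\ $(K_n)$ in $(\na,\times)$, and apply Theorem \ref{winner} to produce a set $E\subset\na$ of $(K_n)$-density $\frac12$ such that for every nonempty finite subset $\{n_1,\dots,n_k\}\subset E$, the intersection
\[
A/n_1\cap A/n_2\cap\cdots\cap A/n_k
\]
has positive upper density in $(\na,+)$. Since $\overline d_{(K_n)}(E)=\tfrac12>0$, the set $E$ is multiplicatively large, so hypothesis (ii) supplies an element $b\in\na$ and a set $F_2\in\mathcal S^{\mathsf m}$ with $bF_2\subset E$.

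Now apply Theorem \ref{winner} to the particular finite subset $\{bf:f\in F_2\}\subset E$. The intersection
\[
D:=\bigcap_{f\in F_2}A/(bf)
\]
has positive upper density with respect to the classical F{\o}lner \sq\ $(\{1,\dots,n\})$, and is therefore additively large. Hypothesis (i) then gives $a\in\na$ and $F_1\in\mathcal S^{\mathsf a}$ such that $a+F_1\subset D$. Unwinding the definitions, $a+i\in A/(bf)$ for every $i\in F_1$ and every $f\in F_2$, i.e., $bf(a+i)\in A$ for all such $i,f$. Hence $bF_2(a+F_1)\subset A$, which is the desired configuration.

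For the concrete consequence about $\{q^j(a+id):0\le i,j\le n\}$, take $\mathcal S^{\mathsf a}=\{\{0,d,2d,\dots,nd\}:d\in\na\}$ (justified by Szemer\'edi's theorem, since every additively large set contains arbitrarily long arithmetic progressions) and $\mathcal S^{\mathsf m}=\{\{1,q,q^2,\dots,q^n\}:q\ge 2\}$ (justified by the multiplicative Szemer\'edi phenomenon of \cite[Theorem 3.11]{Be05}, since every multiplicatively large set contains arbitrarily long geometric progressions). The first part of the theorem then yields $b,q,a',d\in\na$ with $q\ge 2$ such that $\{bq^j(a'+id):0\le i,j\le n\}\subset A$; absorbing $b$ by setting $a=ba'$ and relabeling, we obtain the stated pattern.

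The only substantive step is the appeal to Theorem \ref{winner}; everything else is a formal chaining of the two Szemer\'edi-type hypotheses through the ``additive/multiplicative bridge'' that Theorem \ref{winner} provides. There is no essential obstacle beyond verifying that the intersection $D$ is indeed additively large in the sense of hypothesis (i), which follows immediately from positivity of its upper density along the classical F{\o}lner \sq.
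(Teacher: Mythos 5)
Your argument is correct, and it combines the ingredients differently from the paper. The paper's proof applies Theorem \ref{310be05} (Bergelson's Theorem 3.10 in \cite{Be05}) to the multiplicatively large set $E$ furnished by Theorem \ref{winner}, thereby obtaining the entire configuration $bF_2(a+F_1)$ \emph{inside $E$}; it then uses the intersection property of $E$ to find a single $c\in\bigcap_{n\in bF_2(a+F_1)}A/n$, which yields $cbF_2(a+F_1)\subset A$. You instead bypass Theorem \ref{310be05} entirely: you extract only the multiplicative piece $bF_2\subset E$ from hypothesis (ii), pass to the additively large intersection $D=\bigcap_{f\in F_2}A/(bf)$ via Theorem \ref{winner}, and then extract the additive piece $a+F_1\subset D$ from hypothesis (i). This is a genuinely different decomposition: the paper delegates the interleaving of the additive and multiplicative hypotheses to the black-box combination theorem from \cite{Be05}, whereas you perform that interleaving by hand, using $D$ as the bridge. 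Your route is more self-contained (it uses only (i), (ii) and Theorem \ref{winner}) and makes the theorem's stated hypotheses do the work directly; the paper's route is shorter on the page but imports a stronger external result. Note that your argument exploits the special structure of $E$ (so it does not reprove Theorem \ref{310be05} for arbitrary multiplicatively large sets), which is perfectly fine here. One cosmetic nitpick: in the ``in particular'' step you take $F_1=\{0,d,\dots,nd\}$, but $\mathcal S^{\mathsf a}$ is required to consist of finite subsets of $\na=\{1,2,\dots\}$, so $0$ should be avoided; taking $F_1=\{d,2d,\dots,(n+1)d\}$ and shifting $a$ accordingly (as in the paper's footnote) repairs this without any change to the substance.
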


\begin{proof}
Theorem~\ref{310be05} tells us that any multiplicatively large set $E$
contains a configuration of the form $bF_2(a+F_1)$ with
$a,b\in\na,\ F_1\in\mathcal S^{\mathsf a}, F_2\in\mathcal S^{\mathsf
  m}$. For a classical normal set $A\subset\na$, we can take as $E$
the set given by Theorem~\ref{winner}. Thus, $E$ contains a set
$\{n_1,n_2,\dots,n_k\}$ of the above form $bF_2(a+F_1)$. Then the
intersection $A/n_1\cap A/n_2\cap\cdots\cap A/n_k$ has positive
additive upper density. In particular, this intersection contains some
natural number $c$, and then $cbF_2(a+F_1)\subset A$.

To get the last statement of the theorem, one has to use Theorem~\ref{311be05}
which guarantees the existence of configurations of
the form $\{bq^j(a+id):\ 0\le i,\ j\le n\}$ with some
$q>1,\ a,b,d\in\na$. Observe that we may write $bq^j(a+id)$ as
$q^j(a'+id')$.
\end{proof}

Similarly, we can invoke another theorem.

\begin{thm}[Theorem 3.15 in~\cite{Be05}]\label{315be05}
Let $E\subset\na$ be a multiplicatively large set. For any $k\in\na$
there exist $a,b,d\in\na$ such that $\{b(a + id)^j, 0\le i,j \le k\}
\subset E$.
\end{thm}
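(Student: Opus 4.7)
The plan is to apply a multiplicative Furstenberg-type correspondence principle to reduce the combinatorial claim to a multiple-recurrence statement for a measure-preserving $(\na,\times)$-action, and then to invoke the polynomial Szemer\'edi theorem of Bergelson-Leibman.

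First, since $E$ is multiplicatively large, fix a F\o lner sequence $(F_n)$ in $(\na,\times)$ with $\overline d_{(F_n)}(E)>0$. A standard correspondence construction (in the spirit of Theorem~\ref{4.24}, adapted to the multiplicative action) produces a probability space $(X,\mathcal B,\mu)$, a measure-preserving action $\{T_n\}_{n\in\na}$ satisfying $T_{mn}=T_m\circ T_n$, and a measurable set $A\subset X$ with $\mu(A)\ge\overline d_{(F_n)}(E)>0$, such that
\[
\overline d_{(F_n)}\Bigl(\bigcap_{\ell=1}^{s} n_\ell^{-1}E\Bigr)\ge \mu\Bigl(\bigcap_{\ell=1}^{s} T_{n_\ell}^{-1}A\Bigr)
\]
for all $n_1,\dots,n_s\in\na$, where $n^{-1}E=\{m:nm\in E\}$. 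Consequently, it suffices to exhibit $a,d\in\na$ with
\begin{equation}\label{recur}
\mu\Bigl(\bigcap_{0\le i,j\le k}T_{(a+id)^j}^{-1}A\Bigr)>0,
\end{equation}
because then any $b$ in the corresponding intersection $\bigcap_{i,j}(a+id)^{-j}E$ yields $b(a+id)^j\in E$ for all $0\le i,j\le k$.

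Geometrically, the target configuration $\{b(a+id)^j\}$ consists of $k+1$ geometric progressions of length $k+1$, all starting at the common point $b$ and having ratios forming the arithmetic progression $a,a+d,\dots,a+kd$. On the ergodic side, the commutativity of $(\na,\times)$ implies that the transformations $S_i:=T_{a+id}$, $i=0,\dots,k$, commute, and $T_{(a+id)^j}=S_i^{\,j}$. To establish \eqref{recur}, I would fix $a$ (say $a=1$) and treat $d$ as a free parameter, aiming to show
\[
\liminf_{N\to\infty}\frac1N\sum_{d=1}^{N}\mu\Bigl(\bigcap_{0\le i,j\le k}T_{(a+id)^j}^{-1}A\Bigr)>0,
\]
which is the content of the Bergelson-Leibman polynomial Szemer\'edi theorem applied to the commuting measure-preserving action generated by $\{T_p:p\text{ prime}\}$, with the polynomial family $\{d\mapsto (a+id)^j:0\le i,j\le k\}$. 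Positivity of the Ces\`aro average then produces a (syndetic) set of parameters $d$ for which \eqref{recur} holds.

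The main obstacle is the polynomial, nonlinear dependence of the exponents $(a+id)^j$ on $d$, coupled with the fact that these polynomials do not vanish at $d=0$. The classical Furstenberg multiple recurrence theorem (handling only linear one-parameter families) is insufficient, and the polynomial Szemer\'edi machinery must be invoked in its full strength, preceded by an ``anchoring'' step that rewrites $(a+id)^j=a^j+(\text{polynomial in }d\text{ vanishing at }0)$ so as to meet the hypothesis $p(0)=0$ required by that theorem. As an alternative avoiding the polynomial theorem, one may proceed by induction on $k$: the case $i=0$ reduces to the existence of long geometric progressions in multiplicatively large sets (which follows from Szemer\'edi's classical theorem via the logarithmic lift $n\mapsto\log n$), and extension to $i>0$ is achieved by an additional Szemer\'edi-type argument along the $i$-direction combined with the combined configurations of Theorem~\ref{310be05}.
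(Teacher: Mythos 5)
This statement is quoted by the paper as an external result (Theorem 3.15 of \cite{Be05}); the paper itself gives no proof, so your argument has to stand on its own. Your first two paragraphs are fine: the multiplicative Furstenberg correspondence principle (the $(\na,\times)$-analogue of Theorem~\ref{4.24}) does reduce the claim to finding $a,d$ with $\mu\bigl(\bigcap_{0\le i,j\le k}T_{(a+id)^j}^{-1}A\bigr)>0$, and the identity $T_{(a+id)^j}=T_{a+id}^{\,j}$ is correct.

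The gap is in the step where you invoke the Bergelson--Leibman polynomial Szemer\'edi theorem ``applied to the commuting action generated by $\{T_p:p\ \text{prime}\}$ with the polynomial family $d\mapsto(a+id)^j$''. That theorem applies to expressions of the form $T_1^{p_1(d)}\cdots T_t^{p_t(d)}$ with \emph{finitely many fixed} commuting transformations and \emph{integer-polynomial} exponents vanishing at $d=0$. Here the action is multiplicative: writing $a+id=\prod_p p^{v_p(a+id)}$ one has $T_{(a+id)^j}=\prod_p T_p^{\,j\,v_p(a+id)}$, and the exponent $j\,v_p(a+id)$ is not a polynomial in $d$ (the $p$-adic valuation along an arithmetic progression is highly irregular), while the set of primes involved is unbounded as $d$ varies. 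The identity $(a+id)^j=a^j+(\text{polynomial vanishing at }0)$ lives in $(\na,+)$ and is of no help for the $(\na,\times)$-action, since $T_{m+n}\ne T_mT_n$; so your ``anchoring'' step does not bring the configuration within the scope of the polynomial Szemer\'edi theorem. This is precisely the difficulty that makes Theorem 3.15 nontrivial, and it is skipped. The fallback does not repair it either: Theorem~\ref{310be05} only yields configurations $bF_2(a+F_1)=\{bq^j(a+id)\}$ in which the geometric ratio $q$ is \emph{decoupled} from the arithmetic progression, whereas here the ratio of each geometric progression must equal the corresponding term $a+id$ itself, and no induction on $k$ of the kind you sketch bridges that. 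To prove the statement one must engineer the configuration so that the free parameters enter the exponents of finitely many fixed commuting transformations polynomially (or otherwise reduce to a bona fide multiple-recurrence theorem); that reduction is the heart of the argument in \cite{Be05} and is exactly what is missing from your proposal.
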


Then one gets the following result.

\begin{thm}\label{ssss}
Any classical normal set contains, for any $n\in\na$, sets of the form
$\{b(a+id)^j;0\le i,j\le n\}$ with some $a,b,d\in\na$.
\end{thm}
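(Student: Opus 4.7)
The plan is to imitate the proof of Theorem \ref{classicalarerich}, but with Theorem \ref{315be05} playing the role that Theorem \ref{311be05} played there. So the strategy is to transport a ``bullseye'' configuration $\{b(a+id)^j:0\le i,j\le n\}$ living inside a multiplicatively large set into the classical normal set $A$ by means of a uniform multiplier $c$ obtained from Theorem \ref{winner}.

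More concretely, I would proceed as follows. Fix a classical normal $A\subset\na$ and an integer $n\ge 1$. Choose any F{\o}lner \sq\ $(K_m)$ in $(\na,\times)$ and invoke Theorem \ref{winner} to produce a set $E\subset\na$ of $(K_m)$-density $\frac12$ with the property that for every nonempty finite $\{n_1,\dots,n_k\}\subset E$, the intersection $A/n_1\cap\cdots\cap A/n_k$ has positive upper density in $(\na,+)$. In particular $E$ itself has positive multiplicative density and is therefore multiplicatively large. Then Theorem \ref{315be05} (applied to $E$ with the parameter $n$) gives us $a,b,d\in\na$ such that
\[
\{b(a+id)^j: 0\le i,j\le n\}\subset E.
\]
Enumerate this finite set as $\{n_1,n_2,\dots,n_{(n+1)^2}\}\subset E$. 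By the defining property of $E$, the intersection $A/n_1\cap A/n_2\cap\cdots\cap A/n_{(n+1)^2}$ has positive additive upper density, hence in particular is nonempty; pick any $c$ in it. Then $c\cdot n_\ell\in A$ for each $\ell$, which means
\[
\{cb(a+id)^j: 0\le i,j\le n\}\subset A.
\]
Setting $b'=cb$ exhibits the desired configuration $\{b'(a+id)^j:0\le i,j\le n\}$ inside $A$.

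Since all the machinery is already in place, there is no real obstacle to overcome; the only thing that even resembles a sticking point is the small bookkeeping observation that multiplying a configuration of the form $\{b(a+id)^j\}$ coordinatewise by a fixed integer $c$ produces a configuration of exactly the same shape (with $b$ replaced by $cb$ and $a,d$ unchanged). This is precisely what makes the ``multiplicative large $\to$ classical normal'' transfer via Theorem \ref{winner} work for these particular patterns.
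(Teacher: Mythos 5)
Your proposal is correct and is essentially the proof the paper intends: Theorem \ref{ssss} is stated immediately after Theorem \ref{315be05} with "Then one gets the following result," meaning the argument is the exact analogue of the proof of Theorem \ref{classicalarerich} — apply Theorem \ref{winner} to get the multiplicatively large set $E$, apply Theorem \ref{315be05} to find the configuration inside $E$, and then scale by an element $c$ of the (nonempty) intersection $\bigcap_\ell A/n_\ell$. Your closing observation that multiplying by $c$ preserves the shape of the pattern (absorbing $c$ into $b$) is precisely the point that makes the transfer work.
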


\begin{rem}
While Theorems \ref{classicalarerich} and \ref{ssss} guarantee that
any classical normal set contains arbitrarily long \emph{finite}
geometric progressions, it need not contain infinite geometric
progressions. Indeed, it is not hard to construct a set of density
zero which contains, for any $b,q\in\na$, a number of the form $bq^j$
for some $j$. Removing this set from a classical normal set results in
a desired example.
\end{rem}

\begin{rem}
Note that Theorem \ref{winner}, and thus Theorems
\ref{classicalarerich} and \ref{ssss}, are not valid for general
additively normal sets in $(\na,+)$. For example, one can show (see
\cite[Theorem 3.5]{BBHS06}) that there exist additively thick sets
which do not contain geometric progressions of length $3$,
$\{c,cr,cr^2\}$, where $r\in\mathbb Q\setminus\{1\}$ (cf. Remark
\ref{stip}).
\end{rem}

Many of the results of Section \ref{s4} are valid in a wider setup,
where one replaces normal sets with more general sets having strong
enough randomness properties. See for example \cite{Fi11}, where
configurations in so-called weakly mixing sets are studied.

\section{$(F_n)$-normal Liouville numbers}\label{fifa}

Let us recall that an irrational number $x$ is called a
\emph{Liouville number} if for every natural $k$ there exists a
rational number $\frac pq$ such that $|x-\frac pq|<\frac1{q^k}$.
Clearly, ``for every $k$'' can be equivalently replaced by ``for
arbitrarily large $k$'' (if $\frac pq$ is good for $k$, it is also
good for all $k'<k$). It is well known that the set $\mathcal L$ of
Liouville numbers is residual (dense $G_\delta$) but its Lebesgue
measure equals zero. This can be expressed concisely by saying that
this set is T-large and M-small. On the other hand, Theorem~\ref{B1}
and Corollary~\ref{normnumberscategory} imply that the set $\mathcal
N((F_n))$ of $(F_n)$-normal numbers is M-large and T-small (this
applies to both additive and multilplicative normality; we recall that
Theorem \ref{B1} requires a mild assumption on $(F_n)$ which is
satisfied e.g. when the \sq\ $|F_n|, \ n=1,2\dots$ is strictly
increasing). Thus, it is a priori not clear whether the sets $\mathcal
L$ and $\mathcal N((F_n))$ have a nonempty intersection. In this
section we will show that if $(F_n)$ is any F{\o}lner \sq\ in $(\na,+)$
or any nice F{\o}lner \sq\ in $(\na,\times)$ (see Section
\ref{Folners}) then $\mathcal L\cap\mathcal N((F_n))$ is not only
nonempty but in fact uncountable (contains a Cantor set). For results
dealing with Liouville numbers in the context of classical normality
see e.g. \cite{Bu02}.

\begin{thm}\label{new}
For every F{\o}lner \sq\ $(F_n)$ in $(\na,+)$ there exists an
$(F_n)$-normal Liouville number.
\end{thm}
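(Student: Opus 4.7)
The plan is to build a real number $x\in[0,1]$ whose binary expansion is simultaneously $(F_n)$-normal and Liouville by an inductive, stage-by-stage construction, exploiting freedom at each stage to produce a Cantor set of such $x$'s. At stage $k\ge1$, having defined $x_1,\ldots,x_{N_{k-1}}$, I would pick a \emph{normal period} $c_k\in\{0,1\}^{r_k}$, i.e.\ a finite word whose periodic extension $c_k^\infty$ exhibits, for every 0-1 word $w$ of length $\le k$, cyclic frequency exactly $2^{-|w|}$ (a de Bruijn word of order $k$, of length $r_k=2^k$, works). Extend $x$ on the stage interval $I_k=(N_{k-1},N_k]$ to be the appropriate window of $c_k^\infty$, and choose $N_k$ large enough to satisfy both the \emph{Liouville} bound $N_k\ge k(N_{k-1}+r_k)+k$ and the \emph{adaptivity} bound
\[
N_k\ \ge\ \max\{\max F_n\,:\,|F_n|\le(k+1)^2\},
\]
the latter being a finite quantity since $|F_n|\to\infty$. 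At each stage, choosing one of the $r_k\ge 2$ cyclic shifts of $c_k$ yields a binary branching that produces a Cantor set of limit sequences.

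Liouville-ness is immediate: the rational $p_k/q_k$ with binary expansion $0.x_1\cdots x_{N_{k-1}}\overline{(\text{chosen shift of }c_k)}$ has denominator $q_k\mid 2^{N_{k-1}}(2^{r_k}-1)$, hence $q_k\le 2^{N_{k-1}+r_k}$; since $x$ matches $p_k/q_k$ through position $N_k$, one has $|x-p_k/q_k|\le 2^{-N_k}<q_k^{-k}$.

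For $(F_n)$-normality, by Lemma \ref{nn} it suffices to handle ordinary words $w$ of length $m$. Decomposing $\mathsf{\tilde N}(w,x,F_n)=\sum_k\mathsf{\tilde N}(w,x,F_n\cap I_k)$, for $k\ge m$ one has $\mathsf{\tilde N}(w,x,F_n\cap I_k)=2^{-m}|F_n\cap I_k|+\Delta_k$, where $\Delta_k\le|F_n\cap(N_k-m,N_k]|\le m$ accounts for those $g\in F_n\cap I_k$ whose occurrence window $[g+1,g+m]$ straddles the interface into $I_{k+1}$; the contribution of the finitely many stages $k<m$ is bounded by the constant $N_{m-1}$. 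Letting $k_{\max}(n)$ denote the largest $k$ with $F_n\cap I_k\ne\emptyset$, the adaptivity bound forces that $F_n$ touching stage $k$ implies $|F_n|>k^2$, whence $k_{\max}(n)<\sqrt{|F_n|}$. Therefore the total boundary error is $\sum_k\Delta_k\le m\,k_{\max}(n)<m\sqrt{|F_n|}=o(|F_n|)$. Combining everything and dividing by $|F_n|\to\infty$, we obtain $\mathsf{\tilde N}(w,x,F_n)/|F_n|\to 2^{-m}$, as required.

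The main obstacle I expect is controlling the interface errors uniformly over arbitrary F{\o}lner \sq s $(F_n)$: the $F_n$'s may be distributed pathologically across many stages. The adaptivity bound on $N_k$ — pushing the stage interfaces far enough to the right that any $F_n$ that ``feels'' many stages must itself be large — is precisely what makes the pigeonhole bound $k_{\max}(n)<\sqrt{|F_n|}$ work, and hence resolves this difficulty.
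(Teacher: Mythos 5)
The Liouville half of your argument is sound, and the overall architecture (stages, an adaptivity bound tying stage boundaries to the given $(F_n)$, reduction to ordinary words via Lemma \ref{nn}) is reasonable. The fatal step is the per-stage frequency claim: you assert that for $k\ge m$ one has $\mathsf{\tilde N}(w,x,F_n\cap I_k)=2^{-m}|F_n\cap I_k|+\Delta_k$ with $|\Delta_k|\le m$, the only error coming from occurrence windows straddling the interface into $I_{k+1}$. This is false. On $I_k$ the sequence $x$ is a window of the periodic word $c_k^\infty$ of period $r_k=2^k$, and the function $g\mapsto\mathbbm 1_{[w]}(\sigma_g(x))$ has average exactly $2^{-m}$ only when averaged over complete periods; over an arbitrary subset of $I_k$ the average can be anything in $[0,1]$. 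And $F_n\cap I_k$ \emph{is} essentially arbitrary: a F{\o}lner set in $(\na,+)$ need not be an interval, and even its connected components may have lengths $\ell_n$ tending to infinity arbitrarily slowly (e.g.\ $F_n$ a union of $n$ disjoint intervals of length $\lfloor\log n\rfloor$ is F{\o}lner). Meanwhile a de Bruijn word of order $k$ contains $0^k$ as a factor, so $c_k^\infty$ has windows of length $k$ on which the word $\langle 1\rangle$ never occurs; components of $F_n$ of length $\ll k$ can all sit inside such windows. Your adaptivity bound only guarantees $|F_n|>k^2$ for sets reaching stage $k$ --- it says nothing about component lengths, and is in any case exponentially smaller than the period $2^k$ over which $c_k$ actually equidistributes. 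So the interior-of-stage error is uncontrolled and the normality claim does not follow.

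The two missing ingredients are precisely what the paper supplies. First, one must replace $(F_n)$ by an equivalent F{\o}lner sequence each of whose members is a disjoint union of intervals of length $\ge\ell_n$ with $\ell_n\to\infty$ (Lemma \ref{foint}), so that the relevant scale is $\ell_n$, not $|F_n|$. Second, the construction must be arranged so that, at the stage where $F_n$ lives, the normal building blocks are much \emph{shorter} than $\ell_n$ --- the opposite of your setup, where the period grows like $2^k$ while the guaranteed size of sets reaching stage $k$ grows only like $k^2$. The paper achieves this by taking as building blocks the prefixes $v_k$ of a fixed classical normal sequence (which are $(K,\varepsilon)$-normal over their \emph{whole} length, unlike sub-windows of a de Bruijn word), nesting them into a repetitive word to secure the Liouville property, and coupling the stage lengths $|w_k|$ to $n\mapsto\ell_n$ via the quantities $t_j$, so that every connected component of $F_n$ has length at least $|w_{k_n}|\gg|v_{k_n+2}|$ and is almost entirely covered by copies of asymptotically normal words (Lemma \ref{kkk}). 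Any repair of your argument would have to import both of these ideas, at which point it becomes the paper's proof.
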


The proof will be preceded by some generalities about F{\o}lner \sq s
in $(\na,+)$. Recall that two F{\o}lner \sq s $(F_n)$ and $(F'_n)$ in
an amenable semigroup $G$ are called equivalent if
$\frac{|F_n\triangle F'_n|}{|F_n|}\to 0$, and that if $(F_n)$,
$(F_n')$ are equivalent F{\o}lner \sq s then the notions of
$(F_n)$-normality and $(F_n')$-normality coincide.

\begin{lem}\label{foint}
Let $(F_n)$ be an arbitrary F{\o}lner \sq\ in $(\na,+)$. There exists a
\sq\ of natural numbers $(\ell_n)$ tending to infinity and a F{\o}lner
\sq\ $(F_n')$ equivalent to $(F_n)$ such that each set $F'_n$ is a
disjoint union of intervals, each of length at least~$\ell_n$.
\end{lem}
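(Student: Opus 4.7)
\textbf{Proof plan for Lemma \ref{foint}.}

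The plan is to construct $F_n'$ by simply discarding from $F_n$ all of its ``short'' maximal intervals. Concretely, for a finite set $F\subset\na$, write $F$ as a disjoint union of its maximal intervals $I_1(F),I_2(F),\ldots$ (the component intervals of $F$), and for each $k\in\na$ let
\[
U_k(F)=\bigcup\{I_j(F):|I_j(F)|<k\}.
\]
If $k$ is fixed and we delete $U_k(F)$ from $F$, the result is a disjoint union of intervals each of length at least $k$, because removing entire component intervals cannot merge the remaining ones. So the whole lemma reduces to the following quantitative claim.

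\emph{Key claim.} For every fixed $k\in\na$, $\dfrac{|U_k(F_n)|}{|F_n|}\xrightarrow[n\to\infty]{}0$.

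The proof of the claim is a short counting argument using the F{\o}lner condition. Suppose $x\in F$ belongs to a maximal interval $I=[a,b]$ with $|I|=b-a+1<k$. Let $j=b-x+1$, so $1\le j\le|I|\le k-1$ and $x+j=b+1\notin F$ (since $b$ is the right endpoint of a maximal interval). Hence
\[
U_k(F)\ \subset\ \bigcup_{j=1}^{k-1} R_j(F),\qquad \text{where } R_j(F)=\{x\in F:x+j\notin F\}=F\setminus(F-j).
\]
The F{\o}lner condition for $(F_n)$ (Definition \ref{fol}, with $g=j$) gives $|R_j(F_n)|/|F_n|\to 0$ for every fixed $j$, and summing over $j=1,\dots,k-1$ yields the claim.

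Given the claim, I would choose $\ell_n\to\infty$ by the standard diagonal trick: pick an increasing sequence $n_1<n_2<\cdots$ with $|U_k(F_n)|/|F_n|\le 1/k$ whenever $n\ge n_k$, and set $\ell_n=\max\{k:n_k\le n\}$ (and $\ell_n=1$ for $n<n_1$). Then define $F_n'=F_n\setminus U_{\ell_n}(F_n)$. By construction $F_n'$ is a disjoint union of intervals each of length at least $\ell_n$, and
\[
\frac{|F_n\triangle F_n'|}{|F_n|}=\frac{|U_{\ell_n}(F_n)|}{|F_n|}\le\frac{1}{\ell_n}\to 0,
\]
so $(F_n')$ is equivalent to $(F_n)$ in the sense of Definition \ref{equi}; in particular it inherits the F{\o}lner property from $(F_n)$ (as noted in the remark following Definition \ref{equi}).

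There is no real obstacle beyond the key claim, and even that amounts to the simple geometric observation that a point in a short component interval is a witness to the failure of invariance under a small shift. Checking that $F_n'$ is nonempty for large $n$ (which it is, since $|F_n'|/|F_n|\to 1$) and that $\ell_n\to\infty$ is immediate from the construction.
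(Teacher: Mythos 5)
Your proof is correct and takes essentially the same route as the paper's: the set $F_n\setminus U_{\ell}(F_n)$ you build coincides with the paper's $F_{n,K_{\ell}}+K_{\ell}$ (the $K_\ell$-core translated by $K_\ell=\{1,\dots,\ell\}$), since a point of $F_n$ lies in a maximal component of length at least $\ell$ if and only if it lies in some interval of $\ell$ consecutive integers contained in $F_n$. Your inclusion $U_k(F)\subset\bigcup_{j=1}^{k-1}R_j(F)$ with $R_j(F)=F\setminus(F-j)$ is exactly the F{\o}lner-boundary estimate the paper extracts from Lemma~\ref{estim}, and your diagonal choice of $\ell_n$ matches the paper's thresholds $n_\ell$.
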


\begin{proof}
Fix a \sq\ $(\varepsilon_\ell)_{\ell\ge 1}$ decreasing to zero. For
each $\ell$ there exists $n_\ell$ such that for every $n\ge n_\ell$,
the set $F_n$ is $(K_\ell,\frac{\varepsilon_\ell}{2\ell})$-\inv, where
$K_\ell$ stands for $\{1,2,\dots,\ell\}$. Then, by Lemma \ref{estim},
the $K_\ell$-core of $F_n$, which we denote by $F_{n,K_\ell}$, is an
$\varepsilon_\ell$-mo\-di\-fi\-cation of $F_n$. For each $n$ we define
$\ell_n$ as the unique $\ell$ satisfying the inequalities $n_\ell\le
n<n_{\ell+1}$. We set $F'_n=F_n$ for $n<n_1$, and for $n\ge n_1$,
$F'_n=F_{n,K_{\ell_n}}+K_{\ell_n}$. Now, for each $n$, $F'_n$ is an
$\varepsilon_{\ell_n}$-modification of $F_n$ (hence $(F_n')$ is a
F{\o}lner \sq\ equivalent to $(F_n)$), and it is a union of (not
necessarily disjoint) intervals of length $\ell_n$. The ``connected
components''\footnote{By a connected component of a set $F\subset\na$
  we mean an interval $I=\{a,a+1,\dots,b\}\subset F$ such that
  $a-1\notin F$ (this includes the case $a-1=0$) and $b+1\notin F$.}
of $F'_n$ are disjoint intervals of lengths at least $\ell_n$, as
required.
\end{proof}

We will establish now some technical facts about (a subclass of)
Liuoville numbers which will be utilized in the proof of Theorem
\ref{new}.

\begin{defn}\label{repe}
We will call a binary \sq\ $w\in\{0,1\}^\na$ \emph{repetitive} if it
is the limit of a \sq\ of words $w_k$ ($k\ge 1$) defined
inductively, as follows:
\begin{enumerate}
	\item $w_1=u_1$ is an arbitrary nonempty 0-1 word,
	\item for $k>1$, $w_k=w_{k-1}w_{k-1}\dots w_{k-1}u_k$, where
          $w_{k-1}$ is repeated at least $k-1$ times, and $u_k$ is an
          arbitrary nonempty 0-1 word.
\end{enumerate}
\end{defn}

\begin{prop}\label{liouville}
Any not eventually periodic repetitive \sq\ $w$ is the binary
expansion of a Liouville number $x$.
\end{prop}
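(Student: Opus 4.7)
The plan is to let $x\in[0,1]$ be the real number with binary expansion $w$, and to exploit the highly repetitive structure of $w$ to produce, for each large $k$, a rational approximant $p_k/q_k$ with error at most $q_k^{-k}$; this will immediately give the Liouville property.

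The natural approximants are the periodizations of the prefixes $w_k$. Setting $L_k=|w_k|$ and letting $N_k$ be the integer whose binary representation is $w_k$, the rational $r_k=N_k/(2^{L_k}-1)$ has purely periodic binary expansion $0.\overline{w_k}$. I would first verify by induction that each $w_k$ is a prefix of $w$, using that $w_{k+1}=w_k^{m_{k+1}}u_{k+1}$ begins with $w_k$. Then, since $w_{k+1}$ is itself a prefix of $w$ and begins with at least $m_{k+1}\ge k$ copies of $w_k$, the binary expansions of $x$ and $r_k$ will agree in at least the first $kL_k$ positions, so, writing $r_k=p_k/q_k$ in lowest terms,
\[
\left|x-\frac{p_k}{q_k}\right|\le 2^{-kL_k}<q_k^{-k},
\]
the last inequality using $q_k\le 2^{L_k}-1<2^{L_k}$.

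The one subtle point --- the main (minor) obstacle --- is guaranteeing the non-degeneracy $q_k\ge 2$ of the denominator, which fails precisely when $w_k\in\{0^{L_k},1^{L_k}\}$ so that $N_k\in\{0,2^{L_k}-1\}$. The hypothesis that $w$ is not eventually periodic rules this out for all sufficiently large $k$, since $w$ is in particular not eventually constant; hence $1\le N_k\le 2^{L_k}-2$, from which $N_k$ cannot be a multiple of $2^{L_k}-1$, so the reduced denominator $q_k$ satisfies $q_k\ge 2$. The same hypothesis forces $x$ to be irrational (binary expansions of rationals are eventually periodic), so $x\ne p_k/q_k$ and the bound above is nondegenerate. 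Finally, given any $m\in\na$, selecting $k\ge m$ large enough that $w_k$ is non-constant yields $q_k\ge 2$ and $|x-p_k/q_k|<q_k^{-k}\le q_k^{-m}$, establishing that $x$ is a Liouville number.
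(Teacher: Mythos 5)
Your proof is correct and follows essentially the same route as the paper: approximate $x$ by the periodization of the prefix $w_k$, note that the repetition structure forces agreement on the first $k|w_k|$ binary digits, and compare $2^{-k|w_k|}$ with $q_k^{-k}$ using $q_k<2^{|w_k|}$. Your additional check that the reduced denominator satisfies $q_k\ge 2$ (and that $x$ is irrational) is a sensible bit of bookkeeping that the paper's shorter argument leaves implicit.
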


\begin{proof}
Given $k$, consider the rational number $\frac pq$ represented by the
periodic \sq\ $w_kw_kw_k\dots$\,. Then $q<2^{|w_k|}$, hence
$\frac1{q^k}>2^{-k|w_k|}$. The difference $|x - \frac pq|$ is a number
whose first nonzero binary digit appears at a position larger than
$k|w_k|$, which means that $|x - \frac pq|\le2^{-k|w_k|}$, hence $|x -
\frac pq|<\frac1{q^k}$. Since $w$ is not eventually periodic, $x$ is
irrational, and thus it is a Liouville number.
\end{proof}

Notice that since in Definiton \ref{repe} the word $u_k$ is completely
arbitrary, in particular it may have the form $v_kv_k\dots v_k$ (where
the word $v_k$ and the number of repetitions are also arbitrary).
Using this observation, we can isolate a special class of repetitive
\sq s.

\begin{defn}\label{bal}
Let $(v_k)$ be a \sq\ of nonempty binary words. A repetitive \sq\ $w$
is said to be \emph{balanced} with respect to $(v_k)$ if, for each
$k\ge 1$, $u_k=v_kv_k\dots v_k$, where the number of repetitions is
such that the following two conditions hold:
\begin{align}
&\delta_k=\frac{\max\{|v_k|,|v_{k+1}|,|v_{k+2}|,|w_{k-1}|\}}{|w_k|}\to
  0,\label{piec}\\ &1-\gamma_k=\frac{|u_k|}{|w_k|}\to 1\label{szesc}.
\end{align}
\end{defn}
It is easy to see that given any \sq\ of nonempty words $(v_k)$, one
can construct a repetitive \sq\ $w$ which is balanced with respect to
$(v_k)$. One just needs to apply large enough number of repetitions of
$v_k$ in $u_k$ (depending on the lenghts $|v_k|$, $|v_{k+1}|$ and
$|v_{k+2}|$).

\begin{lem}\label{kkk}
Let $(v_k)$ be a \sq\ of nonempty binary words and let $w$ be a
repetitve \sq\ balanced with respect to $(v_k)$. For each $k\ge 2$
define $\varepsilon_k=2(\delta_k+\gamma_k)$ (see Definition
\ref{bal}). If \,$W$ is a subword of $w_{k+2}$ with $|W|\ge|w_k|$
then, for some $r,s,t\ge 0$ satisfying
$\frac{r|v_k|+s|v_{k+1}|+t|v_{k+2}|}{|W|}\ge 1-\varepsilon_k$, $W$
contains $r+s+t$ nonoverlapping subwords of which $r$ are copies of
$v_k$, $s$ are copies of $v_{k+1}$ and $t$ are copies of $v_{k+2}$.
\end{lem}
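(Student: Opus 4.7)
The plan is to exploit the hierarchical structure of $w_{k+2}$. Unfolding the recursive definition of a repetitive word balanced with respect to $(v_k)$, one sees that $w_{k+2}$ decomposes as a concatenation of elementary blocks (``atoms'') of four types: copies of $w_{k-1}$ (the ``junk''), and copies of $v_k$, $v_{k+1}$, $v_{k+2}$, aligned within the respective $u_k$-, $u_{k+1}$-, $u_{k+2}$-regions (since, by construction, each $u_j$ is a concatenation of copies of $v_j$). Every $v_j$-atom that lies entirely inside $W$ is a non-overlapping occurrence of $v_j$ as a subword of $W$, so letting $r$, $s$, $t$ denote the numbers of full $v_k$-, $v_{k+1}$-, $v_{k+2}$-atoms contained in $W$, the desired conclusion reduces to showing $r|v_k|+s|v_{k+1}|+t|v_{k+2}|\ge(1-\varepsilon_k)|W|$. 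The complement of this covered length inside $W$ lies in the union of (a) the junk atoms that meet $W$ and (b) the small pieces of $v_j$-atoms cut off by the two endpoints of $W$.

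For (a), the key structural observation is that within each $w_k$-block the junk forms a contiguous prefix of length exactly $\gamma_k|w_k|$, the suffix of length $(1-\gamma_k)|w_k|$ being the $u_k$-part. Consequently, inside each ``level'' (i.e., each maximal consecutive run of $w_k$-blocks inside a $w_{k+1}$-block) the junk is a periodic subset of period $|w_k|$ and duty-cycle $\gamma_k$, while inside the $u_{k+1}$- and $u_{k+2}$-regions there is no junk at all. A direct case analysis (handling sub-intervals lying in a single level, and sub-intervals that bridge a $u_{k+1}$- or $u_{k+2}$-region) shows that any sub-interval of $w_{k+2}$ of length at most $|w_k|$ meets the junk in a set of length at most $\gamma_k|w_k|$. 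Partitioning $W$ into $\lceil|W|/|w_k|\rceil$ consecutive pieces each of length $\le|w_k|$ yields
\[
|\mathrm{junk}\cap W|\le\bigl\lceil|W|/|w_k|\bigr\rceil\gamma_k|w_k|\le\gamma_k|W|+\gamma_k|w_k|\le 2\gamma_k|W|,
\]
where the last inequality uses the hypothesis $|W|\ge|w_k|$.

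For (b), each endpoint of $W$ lies in at most one atom. If that atom is a junk atom, its partial portion inside $W$ was already counted in (a); otherwise it is a $v_j$-atom for some $j\in\{k,k+1,k+2\}$ and contributes at most $|v_j|\le M:=\max\{|v_k|,|v_{k+1}|,|v_{k+2}|,|w_{k-1}|\}=\delta_k|w_k|$ to the uncovered part. Summing over the two endpoints and using $|W|\ge|w_k|$, the boundary loss is at most $2\delta_k|W|$. Combining (a) and (b), the portion of $W$ not covered by full $v_j$-atoms inside $W$ has length at most $2(\gamma_k+\delta_k)|W|=\varepsilon_k|W|$, giving the required bound on $r|v_k|+s|v_{k+1}|+t|v_{k+2}|$; the $r+s+t$ chosen subwords are non-overlapping since distinct atoms are disjoint by construction.

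The main obstacle is the local junk-density bound in (a): although the global density of junk inside $w_{k+2}$ is only $\gamma_k\gamma_{k+1}\gamma_{k+2}$, it can locally reach $\gamma_k$, so the bound cannot be obtained by a crude averaging argument. The proof crucially relies on the geometric fact that the junk within each $w_k$-block is concentrated in one contiguous prefix, which makes the count of junk inside any window of length $|w_k|$ exactly $\gamma_k|w_k|$ (within a level) and strictly smaller across region boundaries.
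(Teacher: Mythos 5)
Your argument is correct and follows essentially the same route as the paper: unfold $w_{k+2}$ into a concatenation of copies of $v_k,v_{k+1},v_{k+2}$ and $w_{k-1}$, charge the boundary losses to $2\delta_k|W|$, and charge the $w_{k-1}$-``junk'' to $2\gamma_k|W|$. The only (cosmetic) difference is how the $2\gamma_k|W|$ bound is obtained: the paper identifies the extremal window $W=(w_{k-1})^c(v_k)^{c'}(w_{k-1})^c$ directly, while you derive it from the local bound that any window of length at most $|w_k|$ meets the junk in measure at most $\gamma_k|w_k|$ (which is valid, since consecutive junk prefixes are separated by gaps of length at least $|u_k|=(1-\gamma_k)|w_k|$) and then partition $W$; both yield the same estimate.
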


\begin{proof}
Note that $w_{k+2}$ has the following structure:
$(w_{k+1})^a(v_{k+2})^{a'}$, and likewise
$w_{k+1}=(w_k)^b(v_{k+1})^{b'}$, $w_k=(w_{k-1})^c(v_k)^{c'}$ ($a\ge
k+1,b\ge k,c\ge k-1,a',b',c'\ge 1$). By successive substitution (two
times), we obtain that $w_{k+2}$ is a concatenation of (shifted)
copies of $v_k,v_{k+1},v_{k+2}$ and $w_{k-1}$. So is any subword $W$
of $w_{k+2}$, except that the copies covering the ends of $W$ may
extend beyond $W$ in which case the concatenation representing $W$
includes (at most two) \emph{end words} $V_1,V_2$ which are subwords
of either $v_k,v_{k+1},v_{k+2}$ or $w_{k-1}$.

To finish the proof we need to show that,
\[
\frac{p|w_{k-1}|+|V_1|+|V_2|}{|W|}<\varepsilon_k,
\]
where $p$ is the number of copies of $w_{k-1}$ in the concatenation
representing $W$. The fraction $\frac{p|w_{k-1}|}{|W|}$ is largest
precisely when $W=(w_{k-1})^c(v_k)^{c'}(w_{k-1})^c$ and then we have
\[
\frac{p|w_{k-1}|}{|W|}=\frac{2c|w_{k-1}|}{|w_k|+c|w_{k-1}|}<
2\frac{c|w_{k-1}|}{|w_k|}=2\gamma_k.
\]
The joint length of the end words not larger than
$2\max\{|v_k|,|v_{k+1}|,|v_{k+2}|,|w_{k-1}|\}$, so
$\frac{|V_1|+|V_2|}{|W|}<2\delta_k$. We have shown that the joint
length of the (nonoverlapping) copies of $v_k,v_{k+1}$ and $v_{k+2}$
which are subwords of $W$ is least
$(1-2\delta_k-2\gamma_{k-1})|W|=(1-\varepsilon_k)|W|$ and this is
precisely what we needed to show.
\end{proof}

\begin{proof}[Proof of Theorem \ref{new}]
Fix a F{\o}lner \sq\ $(F_n)$ in $(\na,+)$. In view of Lemma \ref{foint}
we can assume without loss of generality that if $\ell_n$ denotes the
length of the shortest connected component of $F_n$ then the
\sq\ $(\ell_n)$ tends to infinity. For each natural $j$ we define $t_j$
as the largest element of the set
\[
\bigcup_{\{n:\,\ell_n<j\}}F_n,
\]
i.e., $t_j$ is such that if $F_n$ has at least one connected component
shorter than $j$ then $F_n\subset\{1,2,\dots,t_j\}$.

Let $v\in\{0,1\}^\na$ be a classical normal \sq\ and let
$v_k=v|_{\{1,2,\dots,k\}}$. Note that the words $(v_k)$ are
\emph{asymptotically normal} in the following sense: for any nonempty
finite $K\subset\na$ and any $\varepsilon>0$, if $k$ is sufficiently
large then $v_k$ is $(K,\varepsilon)$-normal.

Let $w$ be a repetitive \sq\ which is balanced with respect to
$(v_k)$. By choosing the numbers of repetitions of $v_{k+2}$ in
$u_{k+2}$ (see Definition \ref{bal}) sufficiently large, we can
arrange that $|w_{k+2}|\ge t_{|w_{k+1}|}$, for each $k$. For each $n$
let $k_n$ be the unique integer satisfying the inequalities
$|w_{k_n}|\le\ell_n<|w_{k_n+1}|$. Notice that since the numbers
$\ell_n$ tend to infinity with $n$, so do the numbers $k_n$. By the
definition of the numbers $t_j$ and since $\ell_n<|w_{k_n+1}|$, we
have
$F_n\subset\{1,2,\dots,t_{|w_{k_n+1}|}\}\subset\{1,2,\dots,|w_{k_n+2}|\}$.
Thus, for any connected component $I$ of $F_n$, the word $W=w|_I$ is a
subword of length at least $|w_{k_n}|$ of $w_{k_n+2}$. Now,
Lemma~\ref{kkk} implies that at least the fraction
$1-\varepsilon_{k_n}$ of $w|_I$ is a constituted by nonoverlapping
copies of the words $v_{k_n}, v_{k_n+1}$ and $v_{k_n+2}$. Since
$\varepsilon_{k_n}\to 0$, it is now obvious that the blocks
$w|_{F_n}$\footnote{We use the term ``block'' because $F_n$ need not
  be an interval.} are asymptotically normal as $n$ grows to infinity,
i.e., that $w$ is $(F_n)$-normal. In particular, the number $x$ (whose
binary expansion is $w$) is irrational\footnote{Rational numbers are
  neither additively nor multiplicatively normal because their
  additive as well as multiplicative orbits are finite.}, hence it is
an $(F_n)$-normal Liouville number.
\end{proof}

\begin{rem}\label{huq}
If in the above construction we vary the classical normal element
$v$ (used to define the words $v_k$), while keeping the numbers of
repetitions of $v_k$ in $u_k$ unchanged, we obtain a continuous and
injective map $v\mapsto w$ sending classical normal \sq s to
$(F_n)$-normal repetitive \sq s. Moreover, since every $(F_n)$-normal
number is irrational, also the map $w\mapsto x$ (where $x$ is the
number whose binary expansion is $w$) is injective and continuous.
Thus, for every compact set $C$ consisting of classical normal \sq s,
the restriction to $C$ of the composition $v\mapsto w\mapsto x$ is a
homeomorphism of $C$ onto its image. Since the set of classical normal
\sq s contains a Cantor set, so does the set of $(F_n)$-normal
Liouville numbers.
\end{rem}

We now turn to constructing Liouville numbers which are
(multiplicatively) normal with respect to nice F{\o}lner \sq s. As we
shall see, repetitive \sq s are naturally well fitted for this kind of
normality. Recall (see Section \ref{Folners}) that for $m,M\in\na$ we
write $m\preccurlyeq M$ when $m|M$. If $m\preccurlyeq M$ and $M\neq
m$, we will write $m\prec M$. Recall also that a nice F{\o}lner
\sq\ $(F_n)$ in $(\na,\times)$ corresponds to a multiplicatively
increasing \sq\ $(L_n)$ of the leading parameters, i.e., natural
numbers such that, for each~$n$, $L_n\prec L_{n+1}$ and $F_n =\{m:
m\preccurlyeq L_n\}$.

\begin{lem}\label{misiu}
Given $k\ge 1$ and $\varepsilon>0$, there exists an
$m_{k,\varepsilon}$ such that for any $m$ and $M$ satisfying
$m_{k,\varepsilon}\preccurlyeq m\preccurlyeq M$, the interval
$\{m+1,\dots,(k+1)m\}$ contains at most a fraction $\varepsilon$ of
all divisors of $M$, i.e,
\[
\frac{|\{i:i\preccurlyeq M, \ m+1\le i\le(k+1)m\}|}{|\{i:i\preccurlyeq
  M\}|}\le\varepsilon.
\]
\end{lem}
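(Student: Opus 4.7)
The plan is to take $m_{k,\varepsilon}$ to be a sufficiently large power of a fixed prime, say $m_{k,\varepsilon} = 2^A$, where $A = A(k,\varepsilon)$ is to be chosen. The intuition is that once $m$ is divisible by $2^A$, the divisors of any multiple $M$ of $m$ are forced to take a wide range of $2$-adic valuations, so very few of them can be concentrated in the narrow multiplicative window $(m,(k+1)m]$.

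First, given $m_{k,\varepsilon} \preccurlyeq m \preccurlyeq M$, I would factor $M = 2^b M'$ with $M'$ odd and $b = v_2(M)$. Since $m_{k,\varepsilon} = 2^A$ divides $m$ and $m | M$, we have $b \ge A$. Every divisor of $M$ has a unique representation as $i = 2^c i'$ with $0 \le c \le b$ and $i' | M'$, so $d(M) = (b+1)\,d(M')$.

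Next, I would count, for each fixed odd divisor $i'$ of $M'$, how many values of $c$ yield $2^c i' \in (m,(k+1)m]$. This is equivalent to $c$ lying in the real half-open interval $(\log_2(m/i'),\,\log_2(m/i') + \log_2(k+1)]$, which has length $\log_2(k+1)$ and therefore contains at most $\lceil \log_2(k+1) \rceil$ integers. Summing over the $d(M')$ choices of $i'$ bounds the total count of divisors of $M$ in $(m,(k+1)m]$ by $\lceil\log_2(k+1)\rceil \cdot d(M')$, giving the fraction estimate
\[
\frac{|\{i : i \preccurlyeq M,\ m < i \le (k+1)m\}|}{d(M)} \le \frac{\lceil \log_2(k+1)\rceil}{b+1} \le \frac{\lceil\log_2(k+1)\rceil}{A+1}.
\]
Choosing $A$ large enough that the right-hand side is at most $\varepsilon$ (e.g., $A = \lceil \lceil \log_2(k+1) \rceil / \varepsilon \rceil$) completes the proof.

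The main conceptual obstacle is resisting the naive bound $|\{i : m < i \le (k+1)m\}| \le km$, which compared to $d(M)$ is hopelessly weak (since $d(M) = o(m)$). The right move is to exploit just a single prime's $2$-adic valuation: divisors of $M$ split into "fibers" indexed by the odd part, and within each fiber the elements form a geometric progression of ratio $2$, so at most $\lceil\log_2(k+1)\rceil$ of them can fit into any interval of multiplicative ratio $k+1$. Everything else in the calculation is routine.
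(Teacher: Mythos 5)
Your proof is correct and rests on the same key idea as the paper's: decompose the divisors of $M$ into fibers along the powers of a single fixed prime, observe that each fiber is a geometric progression so only boundedly many of its elements can land in a window of multiplicative ratio $k+1$, and then force the relevant $p$-adic valuation of $m_{k,\varepsilon}$ to be large. The only (cosmetic) difference is the choice of prime: the paper takes the smallest prime $p>k$, so each fiber meets the window in at most one point and the fraction is immediately at most $1/r\le\varepsilon$, whereas you fix $p=2$ and absorb the resulting factor $\lceil\log_2(k+1)\rceil$ into a larger exponent $A$.
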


\begin{proof}
Let $p$ be the smallest prime number strictly larger than $k$. Let
$r\in\na$ be such that $\frac1r\le\varepsilon$, and put
$m_{k,\varepsilon}=p^r$. Let $m$ be any multiple of
$m_{k,\varepsilon}$ and let $M$ be any multiple of $m$. The set of all
divisors of $M$ (which can be visualized as the anchored rectangular
box with the leading parameter $M$, see Section \ref{Folners}) splits
into disjoint union of one-dimensional sets of the form $aI=\{a, ap,
ap^2,\dots, ap^s\}$, where $a$ is not a multiple of $p$, and $p^s$ is
the largest power of $p$ dividing $M$. Clearly, $s\ge r$. Since $p\ge
k+1$, at most one element from any set $aI$ may fall in
$\{m+1,\dots,(k+1)m\}$. Thus at most the fraction
$\frac1s\le\frac1r\le\varepsilon$ of all divisors of $M$ may fall in
$\{m+1,\dots,(k+1)m\}$.
\end{proof}

\begin{thm}\label{lnormal}
For any nice F{\o}lner \sq\ $(F_n)$ in $(\na,\times)$ there exists an
$(F_n)$-normal Liouville number.
\end{thm}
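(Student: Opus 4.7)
The plan is to mirror the proof of Theorem \ref{new}, with Lemma \ref{misiu} playing the role of the additive boundary estimate there, and with a multiplicatively $(F_n)$-normal \sq\ $v\in\{0,1\}^{\na}$ (provided by Theorem \ref{B1}, since nice F{\o}lner \sq s have strictly increasing cardinalities) serving as the source of local randomness.

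Concretely, I would construct a repetitive \sq\ $w\in\{0,1\}^{\na}$ in the sense of Definition \ref{repe} whose binary expansion agrees with $v$ on a large set. Fix rapidly growing positive-integer lengths $(t_k)_{k\ge 1}$ with $t_{k-1}\mid t_k$, together with repetition counts $r_k\ge k-1$ (and $r_1=0$ by convention), and set $w_1=v|_{[1,t_1]}$ and, for $k\ge 2$,
\[
w_k\;=\;w_{k-1}^{r_k}\;v|_{(r_k t_{k-1},\,t_k]}.
\]
Imposing the balance condition $r_k t_{k-1}/t_k\to 0$, the resulting $w=\lim_k w_k$ is a balanced repetitive \sq, and since its ``$v$-parts'' $(r_k t_{k-1},t_k]$ carry genuine pieces of $v$ (which, being multiplicatively normal, is not eventually periodic), $w$ itself is not eventually periodic. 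By Proposition \ref{liouville}, the number whose binary expansion is $w$ is then a Liouville number. Writing $G=\bigcup_{k\ge 1}(r_k t_{k-1},t_k]\subset\na$, we have $w_i=v_i$ for every $i\in G$.

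It remains to verify $(F_n)$-normality of $w$. Given a nonempty finite $K\subset\na$ with $k_*=\max K$, a block $B\in\{0,1\}^K$, and $\varepsilon>0$, call $g\in F_n$ \emph{good} if $Kg\subset G$; for such $g$ we have $w|_{Kg}=v|_{Kg}$, so good $g$'s contribute identical block counts for $w$ and for $v$. A bad $g$ must satisfy $hg\in(t_{j-1},r_j t_{j-1}]$ for some $h\in K$ and $j\ge 2$, which places $g$ in the multiplicative tube $(t_{j-1}/k_*,\,r_j t_{j-1}]$ of multiplicative ratio at most $r_j k_*$. Applying Lemma \ref{misiu} with $m=t_{j-1}/k_*$ and parameter $r_j k_*-1$ bounds the $(F_n)$-contribution from level $j$ by $\varepsilon_j$, provided the integer $k_*\,m_{r_j k_*-1,\varepsilon_j}$ divides $t_{j-1}$. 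Taking $\varepsilon_j=\varepsilon\cdot 2^{-j}$ and arranging (by an exhaustion enumeration of pairs $(k_*,\varepsilon)$) each $t_k$ to be divisible by all the finitely many integers of this form required up to stage $k$, one obtains, for any prescribed $K$ and $\varepsilon$, that the limiting $(F_n)$-density of bad $g$'s is at most $\varepsilon$. A standard sandwich argument using $(F_n)$-normality of $v$ then yields
\[
\lim_n\frac{1}{|F_n|}\bigl|\{g\in F_n:w|_{Kg}=B\}\bigr|=2^{-|K|}.
\]

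The main obstacle is the bookkeeping in the selection of the $t_k$: they must be chosen preemptively highly composite so that Lemma \ref{misiu} applies at each level $j$ uniformly in the relevant parameters $(K,\varepsilon)$, while simultaneously the balance $r_k t_{k-1}/t_k\to 0$ is preserved; this is arranged by an inductive diagonal construction. Once the single-number version is established, a Cantor set of $(F_n)$-normal Liouville numbers is obtained as in Remark \ref{huq}, by varying $v$ over a Cantor family of multiplicatively $(F_n)$-normal \sq s (available via Theorem \ref{B1}) and noting that distinct choices of $v$ force distinct $w$'s since the $v$-parts have positive $(F_n)$-density.
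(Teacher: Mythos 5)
Your overall architecture matches the paper's: overwrite a multiplicatively $(F_n)$-normal sequence on a sparse union of intervals to make it repetitive in the sense of Definition \ref{repe}, invoke Proposition \ref{liouville} for the Liouville property, and use Lemma \ref{misiu} to show the overwritten set has $(F_n)$-density zero. However, there is a genuine gap in your application of Lemma \ref{misiu}. The lemma requires $m_{k,\varepsilon}\preccurlyeq m\preccurlyeq M$; you arrange the first divisibility (by making $t_{j-1}$ divisible by $k_*\,m_{r_jk_*-1,\varepsilon_j}$) but never the second, namely that $m=t_{j-1}/k_*$ \emph{divides} the leading parameter $L_n$ of $F_n$. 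Without $m\preccurlyeq L_n$ the proof of Lemma \ref{misiu} collapses (one loses the guarantee $s\ge r$ for the $p$-progressions among the divisors of $L_n$), and the conclusion can genuinely fail: the paper's closing remark shows, via Bertrand's postulate, that for \emph{any} prescribed sequence of intervals $\{m_k+1,\dots,km_k\}$ there are leading parameters $pL$ such that half of all divisors of $pL$ land in a single such interval. So intervals chosen ``preemptively highly composite,'' with no reference to the given chain $(L_n)$, can pick up upper $(F_n)$-density as large as $\tfrac12$ in the problematic middle regime where $t_{j-1}<L_n$ but $t_{j-1}\nmid L_n$.

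The paper closes exactly this gap by interleaving the interval endpoints with the leading parameters: it sets $m_k=\mathsf{LCM}(m_{k,2^{-k}},L_{n_k-1})$, where $n_k$ is the first index with $m_{k,2^{-k}}\preccurlyeq L_{n_k}$, so that $L_{n_k-1}\preccurlyeq m_k\preccurlyeq L_{n_k}$ and hence \emph{every} pair $(m_k,L_n)$ is comparable in the divisibility order. For each $n$ the indices then split into three classes: $km_k\le|F_n|^{1/3}$ (trivial counting), $m_k\ge L_n$ (the interval misses $F_n$ entirely), and the middle class, where comparability forces $m_k\preccurlyeq L_n$ and Lemma \ref{misiu} applies. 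You would need to build this dependence on $(L_n)$ into the choice of the $t_k$. Two lesser points: it is cleaner to bound the bad set by $\bigcup_{h\in K}h^{-1}\bigl((t_{j-1},r_jt_{j-1}]\bigr)$ and use translation invariance of density \eqref{transl}, avoiding the awkward division by $k_*$ altogether; and your deduction that $w$ is not eventually periodic is stated before normality of $w$ is established, whereas irrationality should simply be read off from $(F_n)$-normality of $w$ once that is proved.
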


\begin{proof}
The proof relies on choosing an arbitrary $(F_n)$-normal
0-1-\sq\ $\tilde w$ and modifying it on a set of $(F_n)$-density zero.
Clearly, then the modified \sq\ $w$ maintains $(F_n)$-normality. On
the other hand, we will make the \sq\ $w$ repetitive. Since
the number $x$ whose binary expansion is $w$ is multiplicatively
normal is not rational, Proposition~\ref{liouville} will imply that
$x$ is the desired $(F_n)$-normal Liouville number.

Given $k\ge 1$, Lemma \ref{misiu} applied for $k$ and
$\varepsilon=2^{-k}$ provides a number $m_{k,2^{-k}}$. Let $n_k$ be
the smallest index $n$ such that $m_{k,2^{-k}}\in F_n$ (i.e.,
$m_{k,2^{-k}}\preccurlyeq L_n$)
and let $m_k=\mathsf{LCM}(m_{k,2^{-k}}, L_{n_k-1})$. In this manner, we have
assured that $L_{n_k-1}\preccurlyeq m_k\preccurlyeq L_{n_k}$. Since
$m_k$ is a multiple of $m_{k,2^{-k}}$, the following holds:
\begin{equation}\label{misi}
m_k\preccurlyeq M \text{ \ \ implies \ \ }\frac{|\{i:i\preccurlyeq M,
  \ m_k+1\le i\le(k+1)m_k\}|}{|\{i:i\preccurlyeq M\}|}\le2^{-k}.
\end{equation}
Further, it is obvious that $m_k$ can be replaced by $m_{k'}$ with any
$k'\ge k$ ($m_{k'}$ has the above property with $k'$ thus also with
$k$). Hence, passing if necessary to a sub\sq, we can assume that
$m_{k+1}>(k+1)m_k$ for each $k$. Although the property
$L_{n_k-1}\preccurlyeq m_k\preccurlyeq L_{n_k}$ may be lost, we still
have for any natural indices $k$ and $n$, either $m_k\preccurlyeq L_n$
or $L_n\preccurlyeq m_k$.

Now we are in a position to define $w$. We let $u_1=w_1=\tilde
w|_{\{1,\dots,m_1\}}$. Next, we define $w_2=w_1w_1u_2$, where
$u_2=\tilde w|_{\{2m_1+1\dots,m_2\}}$. Notice that the coordinates on
which $w_2$ disagrees with $\tilde w_{\{1,\dots,m_2\}}$ (if any) are
contained in the interval $\{m_1+1,\dots 2m_1\}$. Then we define
$w_3=w_2w_2w_2u_3$, where $u_3=\tilde w|_{\{3m_2+1,m_3\}}$. Similarly,
the coordinates where $w_3$ disagrees with $\tilde w_{\{1,m_3-1\}}$
(if any) are contained in the union $\{m_1+1,\dots
2m_1\}\cup\{m_2+1,\dots 3m_2\}$. Continuing in this way we will define
a \sq\ of words $w_k$ converging to a \sq\ $w$ which agrees with
$\tilde w$ on the complement of the set
\begin{equation}\label{union}
\bigcup_{k\ge 1} \{m_k+1,\dots (k+1)m_k\}.
\end{equation}
According to Definition \ref{repe}, $w$ is a repetitive \sq. It
remains to show that the $(F_n)$-density of the union \eqref{union} is
zero. Given an $n\in\na$, we divide the indices $k$ into three classes
(some of them possibly empty): $k\in\mathbb S_n$ if
$km_k\le|F_n|^{\frac13}$, $k\in\mathbb L_n$ if $m_k\ge L_n$ and
$\mathbb M_n=\na\setminus(\mathbb S_n\cup\mathbb L_n)$.
\begin{itemize}
	\item For $k\in\mathbb S_n$ we have
\[
\frac{|\{m_k+1,\dots,(k+1)m_k\}\cap
  F_n|}{|F_n|}=\frac{km_k}{|F_n|}\le|F_n|^{-\frac23}.
\]
Because $|\mathbb S_n|\le|F_n|^{\frac13}$, we have
\[
\frac1{|F_n|}{\Bigl|\bigcup_{k\in\mathbb S_n} \{m_k+1,\dots
  (k+1)m_k\}\cap F_n \bigr|}\le|F_n|^{-\frac13}.
\]
\item For $k\in\mathbb L_n$, $F_n$ is disjoint from $\{m_k+1,\dots
  (k+1)m_k\}$, hence
\[
\frac1{|F_n|}{\Bigl|\bigcup_{k\in\mathbb L_n} \{m_k+1,\dots
  (k+1)m_k\}\cap F_n \bigr|}=0.
\]
\item For $k\in\mathbb M_n$, we have $L_n>m_k$, in particular
  $L_n\not\preccurlyeq m_k$ and thus $m_k\preccurlyeq L_n$. By
  \eqref{misi}, we have
\[
\frac1{|F_n|}|\{m_k+1,\dots (k+1)m_k\}\cap F_n|\le 2^{-k}.
\]
\end{itemize}

Putting the above three cases together, we get
\[
\frac1{|F_n|}{\Bigl|\bigcup_{k\in\na} \{m_k+1,\dots (k+1)m_k\}\cap F_n
\bigr|}\le|F_n|^{-\frac13}+\sum_{k\in\mathbb M_n} 2^{-k}.
\]
Since $|F_n|\to\infty$, the right hand side tends to zero with $n$
(note that every $k$ eventually falls in $\mathbb S_n$).
\end{proof}

\begin{rem}
Denote by $\mathbb D$ the complement in $\na$ of the union
\eqref{union}. Then $\mathbb D$ has $(F_n)$-density $1$ in
$(\na,\times)$, and $w|_\mathbb D=\tilde w|_\mathbb D$, where $\tilde
w$ is the $(F_n)$-normal \sq\ chosen at the beginning of the proof of
Theorem \ref{lnormal}. The construction of $w$ uses only the subwords
of $\tilde w$ appearing in $\tilde w|_\mathbb D$, hence the mapping
$\tilde w|_\mathbb D\mapsto w$ is injective (and obviously it is also
continuous). It is easy to see that there exists a Cantor set
consisting of $(F_n)$-normal elements $\tilde w$ on which the map
$\tilde w\mapsto\tilde w|_\mathbb D$ is injective. On this Cantor set,
the map $\tilde w\mapsto w$ is injective and continuous. Arguing as in
Remark \ref{huq}, we get that the map $w\mapsto x$ is injective and
continuous on this Cantor set. This implies that the set of
(multiplicatively) $(F_n)$-normal Liouville numbers contains a Cantor
set.
\end{rem}

\begin{rem}
The technique employed in the proof of Theorem \ref{lnormal} can be
utilized to obtain Liouville numbers with other properties. Let
$(F_n)$ be a nice F{\o}lner \sq\ in $(\na,\times)$ and let $P$ be any
property satisfied by a nonempty set of numbers and preserved under
zero $(F_n)$-density modifications of the binary expansions (for
example, the property of being generic for some multiplicatively
invariant, not necessarily Bernoulli, measure). Then there exist
Liouville numbers with property $P$.
\end{rem}

We conclude this section (and the paper) with an open problem.
\begin{ques}
Do there exist net-normal Liouville numbers?
\end{ques}
We remark that our technique does not allow us to produce such numbers.
Indeed, for any fixed \sq\ of intervals of the form
$\{m_k+1,\dots,km_k\}$, the union \eqref{union} has upper density at
least $\frac12$ for a suitable nice F{\o}lner \sq. To prove this, it
suffices to indicate for any leading parameter $L$ a multiple $pL$
such that half of divisors of $pL$ belong to one of the intervals
$\{m_k+1,\dots,km_k\}$. To this end, choose $k\ge 2L$ and a prime
number $p$ in $\{m_k+1,\dots,2m_k\}$ (such $p$ exists by Bertrand's
postulate). Then at least half of the divisors of $pL$ have the form
$pl$, where $l\preccurlyeq L$ (in particular $l\le L$) and then $m_k<
p\le pl \le 2m_kL \le km_k$.
\bigskip

\appendix
\section*{Appendix}

In this appendix we briefly discuss the original proof in \cite{Bo09}
of the fact that the set of normal numbers in $[0,1]$ has full
Lebesgue measure, and the controversies it generated. The proof has
two parts. In the first part Borel defines a number $x\in[0,1]$ to be
\emph{simply normal in base $b$} if the frequency of every digit
$0,\,1,\,\dots,\,b-1$ in the expansion of $x$ equals $\frac1b$. He
then shows that the set of numbers in $[0,1]$ which are simply normal
in base $b$ is of full Lebesgue measure. One can view this result as a
special case of the Strong Law of Large Numbers (SLLN). The proof is
based on what is now known as the Borel-Cantelli Lemma. We remark that
in this part it is inessential that $\z$ is a group. What matters is
that the functions $X_i=\lfloor b^ix\rfloor \mod b$ (which express the
digits in the base $b$ expansion of $x$) form a countable family of
independent identically distributed random variables and that the
averaging sets $F_n$ (in this case $\{1,2,\dots,n\}$) strictly
increase in cardinality. The F{\o}lner property and the inclusions
$F_n\subset F_{n+1}$ are not used. In the second part of the proof,
Borel defines a number $x$ to be \emph{completely normal} if for
every $k,m\ge 1$ numbers $b^mx$ (considered modulo 1) are simply normal in
base $b^k$. As a countable intersection of sets of full measure, the
set of completely normal numbers also has full measure. Then Borel
writes (for $b=10$):
\medskip

\begin{minipage}[c]{0.9\textwidth}
{\small ``La propri\'et\'e caract\'eristique d'un nombre normal est la
  suivante: \emph{un groupement quelconque de $p$ chiffres consdcutifs
    \'etant consid\'er\'e, si l'on d\'esigne par $c_n$ le nombre de
    fois que se rencontre ce groupement dam les n premiers chiffres
    d\'ecimaux, on~a:}
\begin{equation}
\lim_{n\to\infty} \frac{c_n}n = \frac1{10^p}\ .\text{ \ ''}\tag{*}
\end{equation}
\smallskip

(The characteristic property of a normal number is the following: for
any grouping of $p$ consecutive digits being considered, denoting by
$c_n$ the number of times this grouping occurs in the first $n$
decimal digits, one has (*).)}
\end{minipage}
\medskip

This ``characteristic property'' is exactly normality in terms of our
Definition \ref{clnor} (adapted to base 10). Borel does not prove
equivalence between his definition of ``complete normality'' and the
``propri\'et\'e caract\'eristique''. Perhaps Borel intentionally
skipped the proof (considering it fairly obvious), but this omission
triggered a long-lasting controversy (and confusion). In particular,
Champernowne \cite{Ch33}, Koksma~\cite{Ko37}, Copeland and Erd\H os
\cite{CE46}, Hardy and Wright \cite{HW45} explicitly or implicitly
used the unproved equivalence. To illustrate how far from obvious this
equivalence was at that time, let us quote what Donald D. Wall claimed
in his dissertation \cite{Wa49} (written in 1949 under the supervision
of Derrick H. Lehmer):
\medskip

\begin{minipage}[c]{0.9\textwidth}
{\small ``Actually, there seems to be little reason to believe that
  the classes are identical.''}
\end{minipage}
\medskip

\noindent In fact, Wall believed to be close to finding a
counterexample:
\medskip

\begin{minipage}[c]{0.9\textwidth}
{\small ``Certain aspects of the problem are discussed in some detail
  here, and the main result is a new method of constructing some class
  II numbers -- a method which seems to give hope of finding a class
  II number which is not in class III.''}
\end{minipage}
\medskip

Eventually the equivalence was established by I. Niven and H. S.
Zuckerman in 1951 \cite{NZ51} (see also \cite{Ca52}). Today this
equivalence is no longer controversial. Once we understand that
normality (in the sense of Borel's ``characteristic property'') of a
\sq\ $x$ implies normality of $x$ restricted to any infinite
arithmetic progression\footnote{Ironically, this
  implication was first proved by Wall in his dissertation, but
  apparently he has not realized that it solves the ``equivalence
  problem''. In modern times the implication follows immediately from
  the fact that K-systems (in particular Bernoulli systems) and
  systems with entropy zero (in particular periodic) are disjoint in
  the sense of Furstenberg. }
\medskip

Borel was also criticized for other gaps in his proof. One such
criticism appears in the 1910 book of Georg Faber \cite{Fa10} on page
400. It seems that Faber finds it unclear that the Lebesgue measure on
$[0,1]$ corresponds to the distribution of the i.i.d. process
$\{X_i\}_{i\ge 1}$, where the $X_i$'s are the random variables defined
above.
\medskip

\begin{minipage}[c]{0.9\textwidth}
{\small ``Sodann hat Herr Borel k\"urzlich nach Aufstellung geeigneter
  Definitionen \"uber Wahrscheinlichkeit bei einer abz\"ahlbaren Menge
  von Dingen bewiesen, dass die Wahrscheinlichkeit daf\"ur, dass ein
  Punkt der obigen Menge angeh\"ort, gleich Null ist. Die Vergleichung
  des obigen Satzes mit dem Borelschen Resultat legt die Frage nahe:
\smallskip

Ist die Wahrscheinlichkeit -- nach der Borelschen Festsetzung die
eventuell zur Beantwortung dieser Fragen zu erweitern w\"are --, dass
eine Zahl einer bestimmten vorgelegten Menge vom Masse Null
angeh\"ort, immer gleich Null? Und umgekehrt: Ist eine Menge immer vom
Masse Null, wenn die Wahrscheinlichkeit, dass ein Punkt ihr
angeh\"ort, gleich Null ist?

\medskip
(Next, shortly after establishing appropriate definitions about
probability associated with a countable set, Mr. Borel has proved that
the probability for a point to be an element of the above set equals
zero. The comparison of the above theorem with Borel's result suggests
the following question:

Is the probability -- which, according to Borel's definition, possibly
has to be extended in order to answer these questions -- that a number
belongs to a certain given set of mass zero, always zero? And
conversely: Does a set always have mass zero, if the probability that
a point belongs to it is zero?\footnote{We thank Christoph
  Kawan for helping us with the translation.})}
\end{minipage}
\medskip

Because of Faber's somewhat antiquated style and terminology, we are
not exactly sure what is bothering him, but from today's perspective,
the equivalence between the above two meanings of a null set leaves no
doubts. It is worth mentioning that Faber provides his own, different
proof of SLLN. A reference to Faber's proof is made in the following
passage in the survey \cite{Do96} by Joseph L. Doob, where he
indicates that Borel has actually proved only convergence in measure
rather than almost everywhere:

\medskip\medskip

\begin{minipage}[c]{0.9\textwidth}{\small
``Classical elementary probability calculations imply that this
    sequence of averages converges \underline{in measure} to 1/2, but
    a stronger mathematical version of the law of large numbers was
    the fact deduced by Borel---in an unmendably faulty proof---that
    this sequence of averages converges to 1/2 for (Lebesgue measure)
    \underline{almost every} value of $x$. A correct proof was given a
    year later by Faber, and much simpler proofs have been given
    since. [Fr\'echet remarked tactfully: <<Borel's proof is
      excessively short. It omits several intermediate arguments and
      assumes certain results without proof.>>]''}
\end{minipage}
\bigskip

\noindent So, what is actually wrong with Borel's proof of the SLLN? A
careful examination of Borel's proof reveals the following:
\begin{enumerate}
	\item On pages 250--252, it is proved, under the (implicit)
          assumption that a \sq\ of sets $A_n$ is independent, that if
          the \sq\ of probabilities $\mathbb P(A_n)$ is summable then
          the upper limit $\bigcap_{m\ge 1}\bigcup_{n\ge m} A_n$ has
          measure zero (which is a special case of what is today
          called the Borel--Cantelli Lemma).
	\item On page 259, in the proof of the fact that simply normal
          numbers form a set of full measure (in other words, in the
          proof of the SLLN for 0-1 valued random variables), the
          Borel--Cantelli Lemma is applied to sets $A_n$ which are not
          independent.
	\item The proof of the full version of the Borel--Cantelli
          Lemma is missing. Without it, Borel's proof indeed
          establishes (as pointed out by Doob) only the version of the
          Law of Large Numbers which involves the convergence in
          measure.
\end{enumerate}

\noindent So, formally speaking, Borel's proof does contain a gap. But
does that mean that the proof is ``unmendably faulty''? We are
inclined to accept Fr\'echet's assessment, that the proof was just
excessively short.

\bigskip
We conclude with a comment concerning the possibility of adapting
Borel's method to more general amenable groups.

The key property of $\z$ which is behind the equivalence between the
two Borel's definitions of normality is that $\z$ admits, for each
$k$, a \emph{monotiling} (tiling with one shape) with the shape being
the interval $\{0,1,\dots,k-1\}$. It is plausible that for
monotileable groups\footnote{A group $G$ is \emph{monotileable} if it
  admits monotilings with arbitrarily large shapes, by which we mean
  that any finite set $K\in G$ is eventually a subset of the shape of
  some monotiling.} Borel's definition of normality and his proof that
$(F_n)$-normal elements form a set of full measure $\lambda$ can be
adapted with not too much effort to a large class of F{\o}lner \sq s.
But it seems impossible to extend Borel's definition of normality to
elements of $\{0,1\}^G$, where $G$ is any infinitely countable
amenable group or semigroup. Although the notion of \emph{simple}
normality can be naturally defined in this case, and moreover, by
essentially the same proof as in the case of $\{0,1\}^\z$, one can
show that almost every element $x\in\{0,1\}^G$ is simply normal, it is
not clear what is the analog of the operation of changing the base
from $b$ to $b^k$. One would need to find a large finite set $S$ which
tiles the group (i.e., is a shape of a monotiling $\mathcal T$) and
then treat the blocks $B=x|_T$ (where $T=Sc$, $c\in C_S$, are the
tiles of $\mathcal T$) as new symbols (from the alphabet
$\{0,1,\dots,b-1\}^S$) associated to the centers $c$ of the tiles. It
is not known which groups (except residually finite) admit monotilings
with arbitrarily large shapes. In fact, it is an open problem whether
all countable amenable groups are monotileable. This is the reason
why in the proof of Theorem \ref{B1} we must use tiling with many
shapes which complicates the proof of this theorem.
\medskip

\bigskip
\textbf{Acknowledgements.}
The first author gratefully acknowledges the support of the NSF under
grant DMS-1500575. The research of the second author is supported by
the NCN (National Science Center, Poland) Grant 2013/08/A/ST1/00275.
The work of the third author was supported by the grant number 426602
from the Simons Foundation.

\end{document}